\documentclass[]{amsart}
\usepackage{graphicx}
\usepackage{amsaddr}
\usepackage{amssymb}
\usepackage{amsmath}
\usepackage{amsthm}
\usepackage{bm}
\usepackage{hyperref}
\usepackage{dsfont}
\usepackage[ruled,vlined]{algorithm2e}
\usepackage{multirow}
\usepackage{booktabs}
\usepackage{orcidlink}
\usepackage{thmtools}
\usepackage{enumerate}
\usepackage[backend=biber,bibencoding=utf-8,citestyle=numeric-comp]{biblatex}
\usepackage{cleveref}

\hypersetup{hidelinks}

\newcommand{\pluseq}{\mathrel{+}=}
\newcommand{\asteq}{\mathrel{*}=}

\newcommand{\zeps}[3]{Z_{#1}(#2,#3)}

\renewcommand{\d}{\,\mathrm{d}}
\newcommand{\sums}{\sideset{}{'} \sum}

\AtBeginDocument{%
  }

\newcounter{mythm}
\crefalias{mythm}{section}
\numberwithin{mythm}{section}

\declaretheorem[style=plain, sibling=mythm]{theorem}
\declaretheorem[style=plain, sibling=mythm]{lemma}
\declaretheorem[style=plain, sibling=mythm]{corollary}
\declaretheorem[style=definition, sibling=mythm]{definition}

\declaretheorem[style=remark, sibling=mythm]{remark}

\declaretheorem[style=remark, numbered=no, name=Remark]{remark*}

\renewcommand{\Re}[1]{\mathrm{Re}(#1)}
\renewcommand{\Im}[1]{\mathrm{Im}(#1)}

\usepackage{amsfonts} 
\DeclareMathSymbol{\shortminus}{\mathbin}{AMSa}{"39}

\NewBibliographyString{refname}
\NewBibliographyString{refsname}
\DefineBibliographyStrings{english}{%
  refname = {ref\adddot},
  refsname = {refs\adddot}
}

\DeclareCiteCommand{\ccite}
  {%
  \ifnum\thecitetotal=1
    \bibstring{refname}%
  \else%
    \bibstring{refsname}%
  \fi%
  \addspace\bibopenbracket%
  \usebibmacro{cite:init}%
   \usebibmacro{prenote}}
  {\usebibmacro{citeindex}%
   \usebibmacro{cite:comp}}
  {}
  {\usebibmacro{cite:dump}%
   \usebibmacro{postnote}%
   \bibclosebracket}

\newrobustcmd*{\Ccite}{\bibsentence\cite}

\newcommand{\diffop}[1][ y]{\nabla_{\;\!\!\!\bm #1}}

\def\XXint#1#2#3{{\setbox0=\hbox{$#1{#2#3}{\int}$}
     \vcenter{\hbox{$#2#3$}}\kern-.5\wd0}}

\DeclareUnicodeCharacter{0306}{\u{}} 

\newcommand{\anisoreq}{%
Let $\Lambda$
be a $d$-dimensional
lattice, let $\bm\alpha\in\mathds N^d$, let $\bm x,\bm y\in\mathds R^d$ and let $\nu\in\mathds C$ such that $\nu\neq d+|\bm\alpha|$ if both $\bm y\in\Lambda^*$ and $\chi(\bm\alpha)=1$.
}
\newcommand{\anisoregreq}{%
Let $\Lambda$
be a $d$-dimensional
lattice, let $\bm\alpha\in\mathds N^d$, let $\bm x\in\mathds{R}^d$,
let
$\bm y \in (\mathds R^d\setminus\Lambda^*)\cup\{\bm 0\}$ and let $\nu\in\mathds C$.
}

\begin{document}

\title[High-order derivatives of generalized Epstein zeta functions]{Computation of anisotropic singular sums from high-order derivatives of Epstein zeta functions}

\author{Andreas A. Buchheit}
\address{Department of Mathematics, Saarland University, 66123 Saarbrücken, Germany\\Department of Mathematics, ETH Zürich, 8092 Zürich, Switzerland}

\author{Jonathan K. Busse}
\address{German Aerospace Center (DLR), Institute of Software Technology, High-Performance Computing Department, Linder Höhe, Cologne, 51147, Germany \\
Saarland University, Department of Mathematics, 66123 Saarbrücken, Germany}

\begin{abstract}
The precise and efficient evaluation of large-scale lattice sums involving power-law kernels is a fundamental computational problem in the simulation of classical and quantum systems with long-range interactions. While methods for spatially isotropic kernels, some based on Epstein zeta functions, have advanced considerably in recent years, the anisotropic case has lagged behind, despite its broad relevance to both fundamental and effective interactions such as the dipole interaction in magnetic materials.
In this work, we solve this issue by defining and analyzing anisotropic Epstein zeta functions for which we derive stably computable representations obtained from wave vector derivatives of lattice sums over isotropic interaction kernels. These functions find direct application in the analytical and numerical study of anisotropically interacting lattice systems. Going further, they provide the correction term in an exact equivalence between discrete lattices and their continuous analogs in the singular Euler-Maclaurin (SEM) expansion, a recent generalization of the classical Euler-Maclaurin summation formula to lattices and summands involving power-law kernels. Their connection to high-order derivatives of zeta functions can be used to improve convergence rates of numerical algorithms, for instance in micromagnetics, or to provide rapidly convergent expansions suitable for precomputations of generalized zeta functions. We derive a stably computable representation of anisotropic Epstein zeta functions, including the possibility for analytically removing Rayleigh--Wood singularities, and we develop a numerical algorithm for their stable evaluation for any lattice, power-law decay exponent and anisotropy order. We benchmark the algorithm against closed-form identities, direct summation and multi-precision results, obtaining machine precision across various lattices, power-law exponents, and anisotropy orders corresponding to derivatives up to order $60$.
An implementation is provided along with the article.
\end{abstract}

\maketitle
\section{Introduction}

Large-scale singular lattice sums whose summands decay as a power law of the distance play a key role in the simulation of classical and quantum long-range interacting matter \cite{campaPhysicsLongRangeInteracting2014,defenuLongrangeInteractingQuantum2023}.
Based on various applications in pure and applied mathematics, as well as in physics and chemistry, sums of this kind have been the object of intense mathematical interests, see i.e.~\cite{borwein2013lattice}.
In general, interaction kernels depend on both the norm and the direction of the lattice summation vector.
A prototypical anisotropic and homogeneous $d$-dimensional interaction kernel here reads
$$
V_{\nu,\bm\alpha}(\bm z)
= \frac{\bm z^{\bm\alpha}}{\vert \bm z \vert^\nu}
\,,\qquad 
\bm z\in\mathds R^d\setminus\{\bm 0\}\,,
$$
with
power-law decay exponent $\nu\in\mathds C$, an anisotropy multi-index $\bm\alpha$ with $\bm z^{\bm\alpha}=(z^{(1)})^{\alpha^{(1)}}\ldots (z^{(d)})^{\alpha^{(d)}}$,
and where we write $V_\nu=V_{\nu,\bm0}$.
Lattice sums over $V_{3}$ and $V_{5,\bm e_i+\bm e_j}$ classically arise due to dipolar interactions
\cite{cohenDipolarSumsPrimitive1955}.
Such dipolar couplings have recently been utilized to achieve entanglement in quantum
spin-exchange experiments
\cite{baoDipolarSpinexchangeEntanglement2023,hollandOndemandEntanglementMolecules2023}.
Higher-order multipole expansions likewise include anisotropic interaction kernels \cite{cumminsApplicationsEwaldMethod1976}, here as iterated gradients of $V_{1}$, linking anisotropy and differentiation.
Anisotropic interaction kernels appear effectively in the leading three-body terms in perturbative expansions of potential energies in theoretical chemistry \cite{buchheitEpsteinZetaMethod2025,robles-navarroExactLatticeSummations2025};
the leading term, the Axilrod-Teller-Muto (ATM) potential, contributes around $8\%$ of the total cohesive energy in solid argon \cite{schwerdtfeger2016towards} and up to $50\%$ of the binding energy in bilayer graphene \cite{anatole2010two}.
In topological superconductivity, chiral pairing induces anisotropic interaction kernels in winding vectors \cite{viyuelaChiralTopologicalSuperconductors2018}.
Lattice sums over $V_{\nu,2\bm\alpha}$ appear in rapidly converging
high-order quadrature rules for
boundary integral equations
\cite{wuCorrectedTrapezoidalRules2021,chenSinCoTrapHighOrderLocally2026}
with, among others, applications in elliptic partial differential equations.
Beyond lattice sums, anisotropic interaction kernels appear as convolution kernels in Calderón–Zygmund singular integral operators
\cite[Sec. III]{steinSingularIntegralsDifferentiability2016}.

\begin{remark*}[Notation]
Throughout, the positive integer $d\in\mathds N_+$ denotes the dimension, we define $0^0=1$, we denote by $\bm e_j$ the $j$'th canonical unit vector, 
we denote by $\|\bm u\|_1=|u^{(1)}|+\ldots+|u^{(d)}|$, with absolute value $|\cdot|$, the one-norm
of a vector $\bm u=(u^{(1)},\ldots,u^{(d)})^T\in\mathds C^d$, 
we denote by $\mathds N=\{0,1,\dots\}$ the non-negative integers and call $|\bm\alpha|=\|\bm\alpha\|_1$ the absolute value of a multi-index $\bm\alpha\in\mathds N^d$, and otherwise denote by $|\bm\cdot|$ the Euclidean norm.
\end{remark*}

We begin our exposition by introducing the concept of a $d$-dimensional lattice.

\begin{definition}[Lattices]
Let \(A\in\mathds{R}^{d\times d}\) be regular.
We call the periodic set of points \(\Lambda = A\mathds{Z}^{d}\) a Bravais lattice with elementary cell volume  \(V_{\Lambda}=|\:\!\!\det A|\). We further denote by \(\Lambda^{\ast}=A^{-T}\mathds{Z}^{d}\) the reciprocal lattice.
\end{definition}

For vanishing anisotropy multi-index $\bm\alpha=\bm 0$ we recover the isotropic interaction kernel $V_{\nu}$, whose lattice sum augmented by an oscillatory factor is the Epstein zeta function \cite{epstein1903theorieI,epstein1903theorieII} that generalizes the classical Riemann zeta function to higher dimensions.

\begin{definition}[Epstein zeta function]
\label{epsteindef}
Let $\Lambda$
be a $d$-dimensional
lattice, let $\bm x,\bm y\in\mathds R^d$ and let $\nu\in\mathds C$.
The Epstein zeta function is defined as the meromorphic continuation of the lattice sum
$$
\zeps{\Lambda,\nu}{\bm x}{\bm y} = \sums_{\bm z \in \Lambda} 
e^{-2\pi i \bm y\cdot \bm z}V_{\nu}(\bm z-\bm x)
\,,\qquad 
\Re{\nu}>d\,,
$$
 to \(\nu\in\mathds C\), where the primed sum signifies the exclusion of the summand $\bm z=\bm x$.
  \end{definition}

The Epstein zeta function
has been key in
recent studies
in long-range interacting
classical and quantum many-body systems
such as
magnetism
\cite{kimFieldInducedMagnonDecays2025,yadavObservationUnprecedentedFractional2025,buchheitZetaExpansionLongrange2026},
in long-range interacting Ising models \cite{koziol2025melting,koziolQuantumAnnealingLattice2025},
and theoretical chemistry
\cite{robles-navarroExactLatticeSummations2025,robles-navarroLatticeInstabilitiesTransformation2026}.
This work introduces its natural anisotropic generalization as follows.

\begin{definition}[Anisotropic Epstein zeta function]
\label{def-zeta-aniso}
Let $\Lambda$
be a $d$-dimensional
lattice, let $\bm\alpha\in\mathds N^d$, let $\bm x,\bm y\in\mathds R^d$ and let $\nu\in\mathds C$.
The anisotropic Epstein zeta function is defined as the meromorphic continuation of the lattice sum
$$
Z_{\Lambda,\nu,\bm\alpha}(\bm x, \bm y)=
\sums_{\bm z \in \Lambda} e^{-2\pi i \bm y \cdot \bm z}V_{\nu,\bm\alpha}(\bm z-\bm x)\,
,\qquad \Re\nu>d+|\bm\alpha|\,,
$$
to $\nu\in \mathds  C$.
\end{definition}

The anisotropic Epstein zeta function is closely connected to both shift and wave vector derivatives of Epstein zeta functions, enabling its precomputation through series expansions.
This connection is due to the appearance of the anisotropic monomial $\bm z^{\bm\alpha}$ in partial derivatives, prominently in plane waves
$
\diffop^{\bm\alpha} e^{\bm z\cdot\bm y}
=\bm z^{\bm\alpha}e^{\bm z\cdot\bm y}
$,
and consequently
$$
\diffop^{\bm\alpha} \zeps{\Lambda,\nu}{\bm 0}{\bm y}
=(-2\pi i)^{|\bm\alpha|} Z_{\Lambda,\nu,\bm\alpha}(\bm 0,\bm y)\,,
$$
where $\diffop^{\bm\alpha}=\partial_{y^{(1)}}^{\alpha^{(1)}}\ldots\partial_{y^{(d)}}^{\alpha^{(d)}}$.
More broadly, vector
derivatives of Epstein zeta functions form the foundation of a recent, exponentially fast converging algorithm for computing magnetic interactions between arrays of solid bodies \cite{buchheitZetaExpansionLongrange2026}
and stabilize the computation of wave vector integrals over products of Epstein zeta functions \cite{buchheitEpsteinZetaMethod2025}.
There, the removal of singularities of the derivatives increases the radius of analyticity, improving the convergence rate of the quadrature routine.
The anisotropic Epstein zeta function can be decomposed into a power-law singularity and a regularized analytic part as a function of $\bm y$ near  $\bm y=\bm 0$
via
$$
Z_{\Lambda, \nu,\bm\alpha}^{\mathrm{reg}}(\bm 0,\bm y) = Z_{\Lambda,\nu,\bm\alpha}(\bm 0,\bm y) - \frac{\hat s^{(\bm\alpha)}_{\nu}(\bm y)}{(-2\pi i)^{|\bm\alpha|}V_{\Lambda}},\qquad \bm y\neq \bm 0\,,
$$
and continuously extended to $\bm y = \bm 0$, where $\hat s_\nu^{(\bm\alpha)}$ denotes the $\bm\alpha$-derivatives of the distributional Fourier transform of $s_\nu(\cdot)=\vert \bm \cdot\vert^{-\nu}$
and $V_{\Lambda}=\vert\:\!\!\det A\vert$ is the volume of the elementary lattice cell of $\Lambda=A\mathds Z^d$.
The regularized anisotropic Epstein zeta function plays a key role in the recently derived singular Euler-Maclaurin expansion (SEM), the generalization of the classical Euler-Maclaurin formula, known from analytic number theory, to physically relevant lattice sums with power-law singularities
\cite{buchheit2022efficient,buchheit2022singular,buchheit2023exact}.
In particular, the difference between integral and sum of an integrable function
$f_{\bm x}(\bm y)=s_\nu(\bm y-\bm x) g(\bm y)$
with band-limited $g$ and power-law singularity $s_{\nu}$
can be represented as 
a series over the derivatives $g^{(\bm\alpha)}$ of $g(\bm \cdot)$
$$
\sums_{\bm y\in\Lambda}f_{\bm x}(\bm y)
-\frac{1}{V_{\Lambda}}\int_{\mathds R^d}f_{\bm x}(\bm y)\d \bm y
=\sum_{\bm\alpha \in \mathds N^d}
c_{\bm\alpha}g^{(\bm\alpha)}(\bm x)
\,,
$$
where the coefficients
$c_{\bm\alpha}$
are given in terms of the regularized anisotropic Epstein zeta functions, which is a direct consequence of
\cite{buchheit2022singular}, with a numerical example in \Cref{sec:num-sem}.
This representation forms the basis for an efficient computation of singular sums at a numerical cost independent of the number of non-zero summands. 

Special cases of anisotropic Epstein zeta functions have long appeared in the literature, yet a unified treatment has not been established.
Computational treatments, such as the classical Ewald summation or the truncation of the lattice sum, 
have so far only been established for particular choices of the exponent \cite{cumminsApplicationsEwaldMethod1976}, multi-indices of small absolute value \cite{cohenDipolarSumsPrimitive1955,viyuelaChiralTopologicalSuperconductors2018} and vanishing shift- and wave vectors
\cite{wuCorrectedTrapezoidalRules2021,chenSinCoTrapHighOrderLocally2026}
and do not extend to arbitrary real parameters.
Differentiating Epstein zeta functions formally yields their anisotropic generalization; differentiation through point values, however, is numerically unstable,
rendering the computation of lattice sums over strongly anisotropic interaction kernels corresponding to
high-order derivatives intractable.
While the stable and efficient computation of Epstein zeta functions has been established in \cite{buchheitComputationPropertiesEpstein2026},
their series expansions necessary for precomputation have thus been inaccessible.
Convergence rates and stability of numerical schemes such as quadrature rules crucially depend on the distance to singularities, but a complete picture of the analytic properties of lattice sums with anisotropic interaction kernels has so far been missing.

We solve these challenges by deriving exact analytic representations of anisotropic lattice sums.
Our main result is a stably computable representation of the anisotropic Epstein zeta function 
that reaches the previously intractable strongly anisotropic regime, based on a harmonic decomposition of the differential operator.
This rests on the backward stable evaluation of the harmonic polynomials, for which we derive an algorithm.
We further derive a representation which straightforwardly corresponds to derivatives in both vector arguments and provide a complete discussion of its analytic properties.
Notably, we discuss the singularity structure and holomorphic extension in both the exponent $\nu\in\mathds C$ and, in the vector arguments $\bm x,\bm y$, to complex subsets of $\mathds C^d$.
We present an algorithm complete with error bounds for the anisotropic Epstein zeta function for arbitrary lattices, real exponents, real shift and wave vectors and arbitrary anisotropy strength $\bm\alpha\in\mathds N^d$, with benchmarks up to $|\bm\alpha|=60$.
The algorithm, including the harmonic polynomial routine, is implemented in EpsteinLib, a high-performance C library complete with Python, Julia and Mathematica bindings.
In numerical experiments, wherever independent reference values exist, we stably obtain
machine precision for the anisotropic Epstein zeta function
and backward stability for the harmonic polynomials.
In the most relevant case of one to three dimensions we observe a linear increase in runtime in $0\le |\bm\alpha|\le 10$
with evaluation times of less than a millisecond.
We showcase the library by the singular Euler-Maclaurin expansion for a non-trivial geometry in three dimensions.
Our stably computable representations
together with their implementation in EpsteinLib and the complete account of analytic properties 
establish the anisotropic Epstein zeta function as a powerful and versatile tool for computing long-range interacting classical and quantum lattices.

This work is structured as follows.
In \Cref{sec:base} we introduce the set zeta function, a generalization of the Epstein zeta function with a modified oscillating prefactor convenient for differentiation.
We derive a representation for the set zeta function derivatives for shifted lattices, suitable for the study of the analytic structure
of the anisotropic Epstein zeta function
and the computation of low order vector derivatives of the Epstein zeta function.
We further discuss the joint holomorphic extension of the anisotropic Epstein zeta function and the handling of its singularities, in particular
the regularization in the wave vector around the origin, where the singularity in $\bm y =\bm 0$ is removed.
In \Cref{sec:main} 
we present our main result, a stable method for the computation of the anisotropic Epstein zeta function and the regularization in the wave vector around the origin based on a harmonic decomposition of the derivative operator.
The algorithm for the computation of the anisotropic Epstein zeta function, based on our stably computable representation of the harmonic polynomials, is introduced in \Cref{sec:alg}.
We rigorously benchmark precision, stability and runtime of the implementation of the algorithm provided by EpsteinLib 
and showcase an application of the singular Euler-Maclaurin expansion
in \Cref{sec:num}.
We present our conclusions in \Cref{sec:outlook}.

\section{Analytic properties}

\label{sec:base}

This section focuses on the analytic properties of anisotropic lattice sums based on derivatives of isotropic lattice sums.
In \Cref{sec:base-defs}
we begin by introducing the wave vector derivatives of generalized set zeta functions,
which, in the case of shifted lattices, agree with anisotropic Epstein zeta functions up to an oscillatory prefactor.
In \Cref{sec:base-crandall}, we derive a Crandall type representation suitable for both the computation of low-order derivatives and the discussion of the holomorphic extension in \Cref{sec:base-hol}.
We conclude with a discussion of the handling of the singularities in \Cref{sec:base-sing}.

\subsection{Elementary properties}
\label{sec:base-defs}

For methodological reasons, we first define the wave vector derivatives of the set zeta function. 
The latter is a generalization of the Epstein zeta function to uniformly discrete sets which we also define here.
It includes an additional oscillatory Bloch prefactor that simplifies subsequent representation that make use of wave vector derivatives.

\begin{definition}[Set zeta derivatives]
\label{defsetzeta}
Let $L\subseteq \mathds R^d$ be uniformly discrete, that is,
$\inf\{|\bm x-\bm y|:\bm x,\bm y\in L,\ \bm y\neq \bm x\}>0$.
Let $\bm y\in \mathds R^d$, let $\nu\in \mathds C$
and let $\bm\alpha\in\mathds N^d$.
Then the set zeta derivatives are defined as
\[
Z^{(\bm\alpha)}_{L,\nu}(\bm y)=
\diffop^{\bm\alpha}\sums_{\bm z \in L} \frac{e^{-2\pi i \bm z\cdot \bm y}}{\vert \bm z\vert^\nu}\,,\qquad \mathrm{Re}(\nu)>d+|\bm\alpha|\,,
\]
and meromorphically continued to $\nu\in \mathds  C$, whenever such a continuation exists. 
In the case of no derivatives $\bm\alpha=\bm 0$ we call $Z^{(\bm 0)}_{L,\nu}(\bm y)$ the set zeta function for which we write
 $Z_{L,\nu}(\bm y)$.
\end{definition}

In \Cref{sec:zetadirectsum} we show that the lattice sum defined above converges absolutely.
The set zeta function was first introduced as a generalization of the Epstein zeta function to geometries with boundaries
 \cite{buchheitComputationLatticeSums2024}.
Set zeta derivatives with respect to $\bm x$ for $n$-dimensional lattices in $d$-dimensional space appear in \cite{buchheitZetaExpansionLongrange2026}.
In this work we focus on the special case of shifted lattices where the set zeta derivatives agree with anisotropic Epstein zeta functions up to an oscillatory prefactor, as the following lemma shows.
Here and throughout we directly state all identities involving anisotropic Epstein zeta functions and set zeta derivatives in full generality regarding the holomorphic extension in $\nu$. The region of holomorphy is discussed in \Cref{sec:base-hol}.

\begin{lemma}
\label{zeta-ani-set-zeta-der}
Let $\Lambda$ be a $d$-dimensional lattice and let $\bm x,\bm y\in\mathds R^d$ and let $\nu\in\mathds C$.
In the case of shifted lattices $L=\Lambda-\bm x$, 
the Epstein zeta function agrees, up to an oscillatory prefactor, with the set zeta function for shifted lattices 
$$
Z_{\Lambda,\nu}(\bm x,\bm y)
=
e^{-2\pi i\bm x\cdot\bm y}
Z_{\Lambda-\bm x,\nu}(\bm y)\,,
$$
if not both $\nu = d$ and $\bm y\in\Lambda^*$.
For $\bm\alpha\in\mathds N^d$,
the anisotropic Epstein zeta function agrees, up to an oscillatory prefactor, with the set zeta derivatives for shifted lattices
$$
Z_{\Lambda,\nu,\bm\alpha}(\bm x,\bm y)
=
(-2\pi i)^{-|\bm\alpha|}
e^{-2\pi i\bm x\cdot\bm y}
Z^{(\bm\alpha)}_{\Lambda-\bm x,\nu}(\bm y)\,,
$$
if not altogether $\nu=d+|\bm\alpha|$, $\bm y\in\Lambda^*$ and $\chi(\bm\alpha)=1$.
Here $\chi$ is the indicator function of the set of multi-indices with all-even components, that is $\chi(\bm\alpha)=1$ if all components of $\bm\alpha$ are even and $\chi(\bm\alpha)=0$ otherwise.
\end{lemma}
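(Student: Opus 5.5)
The plan is to prove both identities first in the half-plane of absolute convergence, where they reduce to term-wise computations. The meromorphic continuation in $\nu$ is then transferred by the identity theorem. The exceptional parameter sets are exactly where one side has a pole, so the identity is only asserted elsewhere.

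Step 1 (convergent region, $\bm\alpha=\bm 0$). For $\Re\nu>d$, substitute $\bm w=\bm z-\bm x$ in the defining sum of $Z_{\Lambda,\nu}(\bm x,\bm y)$. The point $\bm w$ ranges over $L=\Lambda-\bm x$, and the excluded summand $\bm z=\bm x$ becomes $\bm w=\bm 0$, matching the primed sum in \Cref{defsetzeta}. Since $e^{-2\pi i\bm y\cdot\bm z}=e^{-2\pi i\bm x\cdot\bm y}e^{-2\pi i\bm y\cdot\bm w}$, the first identity holds term-wise.

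Step 2 (convergent region, general $\bm\alpha$). For $\Re\nu>d+|\bm\alpha|$, the differentiated series $\sum'_{\bm w\in L}(-2\pi i)^{|\bm\alpha|}\bm w^{\bm\alpha}e^{-2\pi i\bm w\cdot\bm y}|\bm w|^{-\nu}$ converges absolutely and uniformly in $\bm y$, by the absolute convergence result referenced in \Cref{sec:zetadirectsum} applied with exponent $\Re\nu-|\bm\alpha|>d$. Hence we may differentiate under the sum, using $\diffop^{\bm\alpha}e^{-2\pi i\bm w\cdot\bm y}=(-2\pi i)^{|\bm\alpha|}\bm w^{\bm\alpha}e^{-2\pi i\bm w\cdot\bm y}$. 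This gives
$$Z^{(\bm\alpha)}_{\Lambda-\bm x,\nu}(\bm y)=(-2\pi i)^{|\bm\alpha|}\sums_{\bm w\in\Lambda-\bm x}e^{-2\pi i\bm w\cdot\bm y}V_{\nu,\bm\alpha}(\bm w).$$
The shift argument of Step 1 then identifies the right-hand side with $(-2\pi i)^{|\bm\alpha|}e^{2\pi i\bm x\cdot\bm y}Z_{\Lambda,\nu,\bm\alpha}(\bm x,\bm y)$.

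Step 3 (continuation). Both sides, as functions of $\nu$, are defined as meromorphic continuations of the sums in Steps 1 and 2, and $e^{-2\pi i\bm x\cdot\bm y}$ does not depend on $\nu$. Since the two sides agree on a half-plane, they agree as meromorphic functions on $\mathds C$. The statement must exclude only the points where they have a pole.

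I expect the main obstacle to be locating these poles precisely; I would treat it with a Crandall/Ewald-type splitting, anticipating \Cref{sec:base-crandall}. Write $|\bm w|^{-\nu}=\Gamma(\nu/2)^{-1}\int_0^\infty t^{\nu/2-1}e^{-\pi|\bm w|^2t}\,\mathrm dt$ (after rescaling) and split the integral at $t=1$. The large-$t$ part gives an entire function of $\nu$. For the small-$t$ part, apply Poisson summation over $\Lambda-\bm x$. The only singular contribution is from the reciprocal vector $\bm k\in\Lambda^*$ with $\bm k=\bm y$. Differentiating in $\bm y$, it is proportional to $\int_0^1 t^{(\nu-d)/2-1}\diffop^{\bm\alpha}e^{-\pi|\bm y-\bm k|^2/t}\,\mathrm dt$. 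At $\bm y=\bm k$, the $\bm\alpha$-derivative of the Gaussian vanishes unless all $\alpha^{(j)}$ are even, in which case it is a nonzero multiple of $t^{-|\bm\alpha|/2}$. That gives a simple pole exactly at $\nu=d+|\bm\alpha|$, and only when $\bm y\in\Lambda^*$ and $\chi(\bm\alpha)=1$. The pole at $\nu=0$ and below, from $\Gamma(\nu/2)^{-1}$ and the excluded $\bm w=\bm 0$ term, cancels and is harmless. So away from the stated exceptional set both sides are holomorphic and the identity holds pointwise, which proves the lemma.
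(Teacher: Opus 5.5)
Your proposal is correct and follows the paper's proof. The paper likewise identifies the two series term by term in the half-plane of absolute convergence, via the shift $\bm w=\bm z-\bm x$ and termwise differentiation justified by \Cref{zetadirectsum}, and then invokes uniqueness of the meromorphic continuation. Your Ewald splitting at $t=1$ with Poisson summation is what the paper packages as the derivative Crandall representation and uses in \Cref{hol}, where the only pole is likewise the reciprocal term at $\bm p=-\bm y$, located at $\nu=d+|\bm\alpha|$ when $\bm y\in\Lambda^*$ and $\chi(\bm\alpha)=1$.
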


\begin{proof}
For the first and second part of the statement we may restrict our discussion to $\mathrm{Re}(\nu)>d$ and $\mathrm{Re}(\nu)>d+|\bm\alpha|$ respectively. 
The statement then follows by the uniqueness of the holomorphic continuation, whose existence we prove in \Cref{hol} for the regions as stated.

For the first part observe that direct sums of the Epstein zeta function, see \Cref{epsteindef}, and the set zeta function, see \Cref{defsetzeta}, agree up to the oscillatory factor $e^{\pm 2\pi i \bm x\cdot \bm y}$ that does not change the convergence of the sum.
By the uniqueness of the meromorphic continuation both sides agree in $\nu\in\mathds C$.

By \Cref{zetadirectsum}, the direct sum of the anisotropic Epstein zeta function, see \Cref{def-zeta-aniso}, agrees with the differentiated series of the set zeta derivative, see \Cref{defsetzeta}, up to the factor $(-2\pi i)^{\pm|\bm\alpha|}e^{\pm 2\pi i\bm x\cdot \bm y}$ which concludes the second part of the proof.
\end{proof}

\begin{remark}
\label{diff-then-cont}
Note that in the second equation in \Cref{zeta-ani-set-zeta-der},
the right-hand side is understood as the meromorphic continuation of 
the set zeta derivatives $Z^{(\bm\alpha)}_{\Lambda-\bm x,\nu}(\bm y)$ from $\Re{\nu}>d+|\bm\alpha|$ to $\nu\in\mathds C$.
In contrast,
the derivative $\diffop^{\bm\alpha}$ of the meromorphically continued set zeta function $Z_{\Lambda-\bm x,\nu}(\bm y)$ with $\Re{\nu}\le d+|\bm\alpha|$ is in general ill defined at $\bm y\in\Lambda^*$ due to power-law singularities at reciprocal lattice points as discussed in \Cref{sec:base-sing}.
\end{remark}

The anisotropic Epstein zeta function in turn, can be used to compute the derivatives of Epstein zeta functions in both wave and shift vector.

\begin{lemma}
Let $\Lambda$ be a $d$-dimensional lattice and let $\bm\alpha\in\mathds N^d$.
Let $\nu\in\mathds C\setminus\{d\}$ and $\bm y\in\mathds R^d$, then for any $\bm x\in\mathds R^d\setminus \Lambda$, the $\bm\alpha$-derivatives of the Epstein zeta function
with respect to $\bm x$ are given by
$$
\diffop[x]^{\bm\alpha} 
\zeps{\Lambda,\nu}{\bm x}{\bm y}=
a_{\Lambda,\nu,\bm\alpha}\,
e^{-2\pi i \bm x\cdot \bm y}
Z_{\Lambda^*\!,d-\nu,\bm\alpha}(\bm y,-\bm x)\,,
$$
with the prefactor $a_{\Lambda,\nu,\bm\alpha}$ defined as
$$
a_{\Lambda,\nu,\bm\alpha}=(2\pi i)^{|\bm\alpha|}
\frac{\pi^{\nu-d/2}}{V_{\Lambda}}\frac{\Gamma((d-\nu)/2)}{\Gamma(\nu/2)}\,,
$$
in the sense of the limit for $\nu\in d+2\mathds N_+$.
Let $\nu\in\mathds C$ and $\bm x\in\mathds R^d$, then for any $\bm y\in\mathds R^d\setminus \Lambda^*$, the $\bm\alpha$-derivatives of the Epstein zeta function
with respect to $\bm y$ are given by
$$
\diffop^{\bm\alpha} 
\zeps{\Lambda,\nu}{\bm x}{\bm y}
=
\sum_{\bm\beta \le \bm\alpha}
b_{\bm\alpha,\bm\beta}(\bm x)
Z_{\Lambda,\nu,\bm\beta}(\bm x,\bm y)
\,,
$$
where $b_{\bm\alpha,\bm\beta}$ denotes the monomial
$$
b_{\bm\alpha,\bm\beta}(\bm x)=
(-2\pi i)^{|\bm\alpha|}\binom{\bm\alpha}{\bm\beta}\bm x^{\bm\alpha-\bm\beta}\,,
$$
with
$\binom{\bm\alpha}{\bm\beta}=\binom{\alpha^{(1)}}{\beta^{(1)}}\ldots\binom{\alpha^{(d)}}{\beta^{(d)}}$
for $\bm\alpha,\bm\beta\in\mathds N^d$.
\end{lemma}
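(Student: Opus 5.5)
The plan is to prove the two parts separately. The first part rests on the functional equation of the Epstein zeta function; the second is a Leibniz rule combined with \Cref{zeta-ani-set-zeta-der}.

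\emph{Shift derivatives.} The starting point is the classical functional equation (as in \cite{buchheitComputationPropertiesEpstein2026}):
$$
\zeps{\Lambda,\nu}{\bm x}{\bm y}=\frac{\pi^{\nu-d/2}}{V_\Lambda}\frac{\Gamma((d-\nu)/2)}{\Gamma(\nu/2)}\,e^{-2\pi i\bm x\cdot\bm y}\,\zeps{\Lambda^*,d-\nu}{\bm y}{-\bm x}.
$$
It holds for $\nu\neq d$, and for $\nu\in d+2\mathds N_+$ it is understood in the limit sense, since there the left side is entire while $\Gamma((d-\nu)/2)$ has poles that cancel against zeros. One could instead rewrite the right side as $e^{-2\pi i\bm x\cdot\bm y}e^{2\pi i\bm x\cdot\bm y}\cdots$. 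It is cleaner to use the first part of \Cref{zeta-ani-set-zeta-der} with $\Lambda^*$ in place of $\Lambda$ and shift $\bm y$: this gives $e^{-2\pi i\bm x\cdot\bm y}\zeps{\Lambda^*,d-\nu}{\bm y}{-\bm x}$ as a fixed multiple of $Z_{\Lambda^*-\bm y,d-\nu}(-\bm x)$, because the phase prefactor $e^{-2\pi i\bm y\cdot(-\bm x)}$ cancels. The whole $\bm x$-dependence therefore sits in the wave-vector argument of a set zeta function.

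Since $\bm x\notin\Lambda$, the point $-\bm x$ is not in the reciprocal of $\Lambda^*$, namely $\Lambda$. Hence the set zeta function is smooth there and its derivatives coincide with the continued set zeta derivatives, via the holomorphic extension results of \Cref{sec:base-hol}. Differentiating in $\bm x$ gives a factor $(-1)^{|\bm\alpha|}$ from the argument $-\bm x$. Applying the second part of \Cref{zeta-ani-set-zeta-der} then produces $(-2\pi i)^{|\bm\alpha|}$, so together the factor is $(2\pi i)^{|\bm\alpha|}$. Finally, reinstating the phase gives the claimed $a_{\Lambda,\nu,\bm\alpha}e^{-2\pi i\bm x\cdot\bm y}Z_{\Lambda^*,d-\nu,\bm\alpha}(\bm y,-\bm x)$.

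The excluded case of \Cref{zeta-ani-set-zeta-der} would need $-\bm x\in\Lambda$, which is ruled out. The case $\nu\in d+2\mathds N_+$ follows by continuity in $\nu$, because both sides are holomorphic in $\nu$ near such points for fixed $\bm x\notin\Lambda$.

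\emph{Wave-vector derivatives.} By the first part of \Cref{zeta-ani-set-zeta-der},
$$
\zeps{\Lambda,\nu}{\bm x}{\bm y}=e^{-2\pi i\bm x\cdot\bm y}Z_{\Lambda-\bm x,\nu}(\bm y),
$$
which is valid because $\bm y\notin\Lambda^*$. I would apply the multivariate Leibniz rule. The derivative $\diffop^{\bm\alpha-\bm\beta}e^{-2\pi i\bm x\cdot\bm y}$ equals $(-2\pi i)^{|\bm\alpha-\bm\beta|}\bm x^{\bm\alpha-\bm\beta}e^{-2\pi i\bm x\cdot\bm y}$. The derivative $\diffop^{\bm\beta}Z_{\Lambda-\bm x,\nu}(\bm y)$ equals the continued set zeta derivative $Z^{(\bm\beta)}_{\Lambda-\bm x,\nu}(\bm y)$, which by \Cref{zeta-ani-set-zeta-der} is $(-2\pi i)^{|\bm\beta|}e^{2\pi i\bm x\cdot\bm y}Z_{\Lambda,\nu,\bm\beta}(\bm x,\bm y)$. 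Multiplying these, the phases cancel and the powers combine to $(-2\pi i)^{|\bm\alpha|}$, which yields $b_{\bm\alpha,\bm\beta}$.

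\emph{Main obstacle.} The main obstacle in both parts is the same: justifying that differentiating the meromorphically continued function agrees with the continued differentiated series (cf. \Cref{diff-then-cont}). I would argue it as follows. For $\mathrm{Re}(\nu)$ large, term-by-term differentiation is legitimate by the absolute and locally uniform convergence from \Cref{sec:zetadirectsum}. Away from the reciprocal lattice, both sides are jointly holomorphic in $\nu$ and smooth in the vector argument, by \Cref{sec:base-hol}. The derivative of a family that is holomorphic in $\nu$ and smooth in $\bm y$ is again holomorphic in $\nu$, so the identity extends to all $\nu$ by uniqueness of continuation.
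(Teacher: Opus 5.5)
Your proposal is correct and follows the paper's proof essentially step for step. For the shift derivatives you use the functional equation, absorb the phase to get the set zeta function $Z_{\Lambda^*-\bm y,d-\nu}(-\bm x)$, differentiate (picking up $(-1)^{|\bm\alpha|}$) and convert back with \Cref{zeta-ani-set-zeta-der}; for the wave-vector derivatives you apply the Leibniz rule to $e^{-2\pi i\bm x\cdot\bm y}Z_{\Lambda-\bm x,\nu}(\bm y)$ and use the same lemma. Beyond that, you spell out two points the paper only covers by citing smoothness from the companion paper: that differentiating the continuation agrees with the continued set zeta derivatives (by joint holomorphy and the identity theorem in $\nu$), and that the case $\nu\in d+2\mathds N_+$ follows by taking the limit.
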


\begin{proof}
Given $\nu\in\mathds C\setminus\{d\}$ and $\bm y\in \mathds R^d$, the Epstein zeta function is smooth in $\bm x\in\mathds R^d\setminus\Lambda$ by \cite[Th. 8]{buchheitComputationPropertiesEpstein2026}, so differentiation is justified.
We employ the functional equation  \cite[Cor. 7]{buchheitComputationPropertiesEpstein2026}
$$
\zeps{\Lambda,\nu}{\bm x}{\bm y}
=
\frac{\pi^{\nu-d/2}}{V_{\Lambda}}
\frac{\Gamma((d-\nu)/2)}{\Gamma(\nu/2)}
e^{-2\pi i\bm x\cdot\bm y}
\zeps{\Lambda^*\!,d-\nu}{\bm y}{-\bm x}
$$
where the vector arguments of the last term are swapped, 
so that the last two factors are a set zeta function where the oscillatory factor is removed
$$
e^{2\pi i\bm y\cdot(-\bm x)}
\zeps{\Lambda^*\!,d-\nu}{\bm y}{-\bm x}
=
Z_{\Lambda^*-\bm y,d-\nu}(-\bm x)\,.
$$
Differentiation with respect to $\bm x$ thus yields 
$$
\diffop[x]^{\bm\alpha}
\zeps{\Lambda,\nu}{\bm x}{\bm y}
=
(-1)^{|\bm\alpha|}\frac{\pi^{\nu-d/2}}{V_{\Lambda}}
\frac{\Gamma((d-\nu)/2)}{\Gamma(\nu/2)}
Z^{(\bm\alpha)}_{\Lambda^*-\bm y,d-\nu}(-\bm x)
$$
and the statement follows from rewriting the set zeta derivative on the right-hand side as an anisotropic Epstein zeta function, see \Cref{zeta-ani-set-zeta-der}.

Given $\nu\in\mathds C$ and $\bm x\in\mathds R^d$, the Epstein zeta function is smooth in $\bm y\in\mathds R^d\setminus\Lambda^*$ by \cite[Th. 8]{buchheitComputationPropertiesEpstein2026}.
By \Cref{zeta-ani-set-zeta-der}, we have
$
Z_{\Lambda,\nu}(\bm x,\bm y)
=
e^{-2\pi i\bm x\cdot\bm y}
Z_{\Lambda-\bm x,\nu}(\bm y)
$.
Applying $\diffop^{\bm\alpha}$ to both sides yields
$$
\diffop^{\bm\alpha}Z_{\Lambda,\nu}(\bm x,\bm y)
=
\sum_{\bm\beta\le\bm\alpha}\binom{\bm\alpha}{\bm\beta}
(-2\pi i \bm x)^{\bm\alpha-\bm\beta}
e^{-2\pi i\bm x\cdot\bm y}
Z^{(\bm\beta)}_{\Lambda-\bm x,\nu}(\bm y)
$$
by the product rule for the derivative.
By \Cref{zeta-ani-set-zeta-der}
we can write the last three factors on the right-hand side in terms of the anisotropic Epstein zeta function
where $(-2\pi i)^{|\bm\beta|}e^{-2\pi i\bm x\cdot\bm y}$ is consumed, that is
$$
(-2\pi i \bm x)^{\bm\alpha-\bm\beta}
e^{-2\pi i \bm x\cdot\bm y} Z^{(\bm\beta)}_{\Lambda-\bm x,\nu}(\bm y)
=
(-2\pi i)^{|\bm\alpha|}\bm x^{\bm\alpha-\bm \beta} Z_{\Lambda,\nu,\bm\beta}(\bm x,\bm y)
$$
which concludes the proof.
\end{proof}

The anisotropic Epstein zeta function exhibits numerous symmetries in its arguments $\bm{x}$ and $\bm{y}$, as well as with respect to lattice rescaling, which we summarize in the following lemma.

\begin{lemma}[Symmetries] 
\label{lem:syms}
\anisoreq 
Then:
     \begin{enumerate}
\item (Inversion)  Inversion of $\bm x$ equals inversion of $\bm y$ up to a sign,
$$
Z_{\Lambda, \nu,\bm\alpha}(-\bm x,\bm y) 
= 
(-1)^{|\bm\alpha|}Z_{\Lambda, \nu,\bm\alpha}(\bm x,- \bm y)
\,.
$$
\item (Translation) The anisotropic Epstein zeta function is, up to a prefactor, $\Lambda$-periodic in $\bm x$ and $\Lambda^\ast$-periodic in $\bm y$, that is, for $\bm v \in \Lambda$ and $\bm p \in \Lambda^*$, it holds that
$$
Z_{\Lambda, \nu,\bm\alpha}(\bm x+\bm v,\bm y+\bm p) = e^{-2 \pi i  \bm y \cdot \bm v} Z_{\Lambda, \nu,\bm\alpha}(\bm x,\bm y).
$$
 \item (Scaling) For $\mu >0$,
$$ Z_{\Lambda, \nu,\bm\alpha}(\bm x,\bm y)
= \mu^{\nu-|\bm\alpha|}Z_{\mu \Lambda, \nu,\bm\alpha}(\mu \bm x,\bm y / \mu).
$$
     \end{enumerate}
\end{lemma}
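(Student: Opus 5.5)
The plan is to prove all three identities first for $\Re{\nu}>d+|\bm\alpha|$, where the anisotropic Epstein zeta function is the absolutely convergent lattice sum of \Cref{def-zeta-aniso}. In each identity both sides are meromorphic in $\nu$; the existence of these continuations is established in \Cref{sec:base-hol}, as already used in the proof of \Cref{zeta-ani-set-zeta-der}. Uniqueness of meromorphic continuation then extends the identities to every $\nu$ allowed by the hypotheses. The excluded case $\nu=d+|\bm\alpha|$ with $\bm y\in\Lambda^*$ and $\chi(\bm\alpha)=1$ is exactly where a pole may sit, so it is excluded on both sides simultaneously. This is consistent because all three transformations preserve the conditions $\bm y\in\Lambda^*$ and $\nu=d+|\bm\alpha|$: $-\bm y\in\Lambda^*$, $\bm y+\bm p\in\Lambda^*$, and $\bm y/\mu\in(\mu\Lambda)^*$.

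For \emph{inversion}, I will substitute $\bm z\mapsto-\bm z$ in the sum, which is a bijection of $\Lambda$. The summand $e^{-2\pi i\bm y\cdot\bm z}V_{\nu,\bm\alpha}(\bm z+\bm x)$ becomes $e^{2\pi i\bm y\cdot\bm z}V_{\nu,\bm\alpha}(-(\bm z-\bm x))$. Homogeneity of the monomial gives $(-\bm w)^{\bm\alpha}=(-1)^{|\bm\alpha|}\bm w^{\bm\alpha}$, while $|{-\bm w}|=|\bm w|$. The excluded summand $\bm z=-\bm x$ maps to $\bm z=\bm x$, so the primed sums correspond.

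For \emph{translation}, I will substitute $\bm z\mapsto\bm z+\bm v$ with $\bm v\in\Lambda$. This turns $V_{\nu,\bm\alpha}(\bm z-\bm x-\bm v)$ into $V_{\nu,\bm\alpha}(\bm z-\bm x)$ and multiplies the phase by $e^{-2\pi i(\bm y+\bm p)\cdot\bm v}$. The phase simplifies using $e^{-2\pi i\bm p\cdot\bm z}=1=e^{-2\pi i\bm p\cdot\bm v}$, since $\bm p\cdot\bm u\in\mathds Z$ for $\bm p\in\Lambda^*$ and $\bm u\in\Lambda$. Here I will verify this integrality from $\Lambda^*=A^{-T}\mathds Z^d$.

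For \emph{scaling}, I will write $\mu\Lambda$ sums as sums over $\mu\bm z$ with $\bm z\in\Lambda$. The phase $(\bm y/\mu)\cdot(\mu\bm z)=\bm y\cdot\bm z$ is unchanged, and $V_{\nu,\bm\alpha}(\mu(\bm z-\bm x))=\mu^{|\bm\alpha|-\nu}V_{\nu,\bm\alpha}(\bm z-\bm x)$ by homogeneity of degree $|\bm\alpha|-\nu$. It remains to note $(\mu\Lambda)^*=\mu^{-1}\Lambda^*$.

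The only delicate point is the continuation step. The factor $\mu^{\nu-|\bm\alpha|}$ and the phase $e^{-2\pi i\bm y\cdot\bm v}$ are entire in $\nu$, so identities between meromorphic functions that hold on the half-plane $\Re{\nu}>d+|\bm\alpha|$ propagate to all $\nu$ in the stated domain. This requires only that both sides are meromorphic (indeed holomorphic off the excluded point) on the connected domain $\mathds C$, which I take from \Cref{sec:base-hol}.
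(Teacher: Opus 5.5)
Your proposal is correct and matches the paper's proof. Each identity is checked by reindexing the absolutely convergent lattice sum for $\Re{\nu}>d+|\bm\alpha|$ (via $\bm z\mapsto-\bm z$, $\bm z\mapsto\bm z+\bm v$, and writing $\mu\Lambda=\{\mu\bm z:\bm z\in\Lambda\}$), then extended to the stated $\nu$ by uniqueness of the continuation from \Cref{hol}; your extra check that the excluded pole configuration is preserved under $\bm y\mapsto-\bm y$, $\bm y\mapsto\bm y+\bm p$ and $\bm y\mapsto\bm y/\mu$ (using $(\mu\Lambda)^*=\mu^{-1}\Lambda^*$) is a detail the paper leaves implicit.
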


\begin{proof}
We may restrict our discussion to $\mathrm{Re}(\nu)>d+|\bm\alpha|$ by the uniqueness of the analytic continuation, whose existence for the region as stated we prove in \Cref{hol}.
There, the anisotropic Epstein zeta function is defined via an absolutely convergent lattice sum, see \Cref{zetadirectsum}.
The first part is a direct consequence of $\Lambda=-\Lambda$ and
$$
V_{\nu,\bm\alpha}((-\bm z)-\bm x)
=
(-1)^{|\bm\alpha|}
V_{\nu,\bm\alpha}(\bm z-(-\bm x))\,.
$$
For the second property, rewrite $Z_{\Lambda, \nu,\bm\alpha}(\bm x+\bm v,\bm y+\bm p)$ as
$$
e^{-2\pi i \bm y\cdot \bm v}\sums_{\bm z \in \Lambda} \, 
e^{-2\pi i \bm p \cdot \bm z}e^{-2\pi i \bm y \cdot (\bm z-\bm v)}V_{\nu,\bm\alpha}(\bm z-(\bm x+\bm v))
$$
so the identity follows from
$\Lambda-\bm v = \Lambda$
and
$e^{-2\pi i \bm p \cdot \bm z} =1$ as $\bm p\cdot \bm z\in \mathds Z$ for $\bm z\in \Lambda$ since $\bm p\in \Lambda^\ast$.
The scaling symmetry for the anisotropic Epstein zeta function is obtained via rewriting $Z_{\mu \Lambda, \nu,\bm\alpha}(\mu \bm x,\bm y / \mu)$ as
\[
\sums_{\bm z \in \Lambda}\, 
\frac{(\mu\bm z-\mu\bm x)^{\bm\alpha}}{\left|\mu \bm z - \mu \bm x\right|^\nu} 
e^{-2 \pi i \bm y/\mu \cdot \mu \bm z}
= 
\mu^{|\bm\alpha|-\nu} 
Z_{\Lambda, \nu,\bm\alpha}(\bm x,\bm y)
\,.
\qedhere
\]
\end{proof}

The anisotropic Epstein zeta function vanishes for various particular parameter choices due to symmetry.

\begin{lemma}[Symmetry related zeros]
\label{syms-zero}
Let $\Lambda$ be a $d$-dimensional lattice, let $\bm\alpha\in\mathds N^d$,
let $\nu\in\mathds C$ and let $\bm x,\bm y\in\mathds R^d$.
Then $Z_{\Lambda,\nu,\bm\alpha}(\bm x,\bm y)=0$ if any of the following holds.
\begin{enumerate}
    \item (Inversion) We have that $|\bm\alpha|$ is odd, $\bm x\in\Lambda$ and $2\bm y\in\Lambda^*$.
    \item (Shift vector mirroring) There exists $j\in\{1,\ldots,d\}$ such that $\Lambda$ is mirror-symmetric in the $j$'th component,  $\alpha^{(j)}$ is odd, $2x^{(j)} \bm e_j\in\Lambda$ and  $y^{(j)}+p^{(j)} = 0$
for some $\bm p\in\Lambda^*$.
    \item (Wave vector mirroring) There exists $j\in\{1,\ldots,d\}$ such that $\Lambda$ is mirror-symmetric in the $j$'th component,  $\alpha^{(j)}$ is odd, $2y^{(j)} \bm e_j\in\Lambda^*$ and  $x^{(j)}+v^{(j)} = 0$
for some $\bm v\in\Lambda$.
\end{enumerate}
\end{lemma}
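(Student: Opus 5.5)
The plan is to prove each of the three statements in the region $\Re{\nu}>d+|\bm\alpha|$, where $Z_{\Lambda,\nu,\bm\alpha}$ is given by an absolutely convergent lattice sum (\Cref{zetadirectsum}). We then extend to all admissible $\nu$ by uniqueness of the meromorphic continuation established in \Cref{hol}: a holomorphic function of $\nu$ that vanishes on a half-plane vanishes identically on its connected domain of holomorphy. At the excluded point $\nu=d+|\bm\alpha|$ with $\bm y\in\Lambda^*$ and $\chi(\bm\alpha)=1$ no claim is needed. Moreover, case (1) forces $|\bm\alpha|$ odd and cases (2),(3) force some $\alpha^{(j)}$ odd, so $\chi(\bm\alpha)=0$ and that pole never occurs in the situations considered. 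In each case we exhibit a reflection $R$ that leaves the summation set invariant and changes the sign of every summand's combined contribution, giving $Z=-Z$.

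For (1) we combine the symmetries of \Cref{lem:syms}. Inversion gives $Z(-\bm x,\bm y)=(-1)^{|\bm\alpha|}Z(\bm x,-\bm y)$. Since $\bm x\in\Lambda$, translation yields $Z(-\bm x,\bm y)=Z(\bm x,\bm y)\,e^{2\pi i\bm y\cdot 2\bm x}$, and $e^{4\pi i \bm y\cdot\bm x}=1$ because $2\bm y\in\Lambda^*$ and $\bm x\in\Lambda$. Since $2\bm y\in\Lambda^*$, translation in the wave vector gives $Z(\bm x,-\bm y)=Z(\bm x,\bm y-2\bm y)=Z(\bm x,\bm y)$. Together these give $Z=-Z$ for odd $|\bm\alpha|$.

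For (2) and (3) we interpret mirror symmetry in the $j$'th component as $R_j\Lambda=\Lambda$, where $R_j$ flips the sign of the $j$'th coordinate. Then also $R_j\Lambda^*=\Lambda^*$, since $R_j$ is orthogonal. Substituting $\bm z\mapsto R_j\bm z$ in the direct sum uses $(R_j\bm w)^{\bm\alpha}=(-1)^{\alpha^{(j)}}\bm w^{\bm\alpha}$ and $|R_j\bm w|=|\bm w|$, which gives the mirror identity
$$
Z_{\Lambda,\nu,\bm\alpha}(\bm x,\bm y)=(-1)^{\alpha^{(j)}}Z_{\Lambda,\nu,\bm\alpha}(R_j\bm x,R_j\bm y)\,.
$$
In case (2) we have $R_j\bm x=\bm x-2x^{(j)}\bm e_j$ with $2x^{(j)}\bm e_j\in\Lambda$, so translation by $\bm v=-2x^{(j)}\bm e_j$ produces the factor $e^{-2\pi i\bm y\cdot\bm v}=e^{4\pi i x^{(j)}y^{(j)}}$. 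The hypothesis $y^{(j)}=-p^{(j)}$ for some $\bm p\in\Lambda^*$ is used here: the phase equals $e^{-2\pi i\,\bm p\cdot(2x^{(j)}\bm e_j)}=1$, because $\bm p\cdot 2x^{(j)}\bm e_j=2x^{(j)}p^{(j)}$ is an integer. For $R_j\bm y$ we write $R_j\bm y=\bm y+2p^{(j)}\bm e_j$ and must show $2p^{(j)}\bm e_j\in\Lambda^*$. This follows from $\bm p-R_j\bm p=2p^{(j)}\bm e_j\in\Lambda^*$, using mirror invariance of $\Lambda^*$. Wave vector periodicity then removes this shift, giving $Z=-Z$. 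Case (3) is the dual argument with the roles of $\Lambda$ and $\Lambda^*$ exchanged: $R_j\bm y=\bm y-2y^{(j)}\bm e_j$ is a $\Lambda^*$-translate, and $R_j\bm x=\bm x+2v^{(j)}\bm e_j$ with $2v^{(j)}\bm e_j=\bm v-R_j\bm v\in\Lambda$.

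The main obstacle is the bookkeeping of the phase $e^{-2\pi i\bm y\cdot\bm v}$ that appears in the translation symmetry whenever $\bm x$ is shifted. In case (3) this phase is $e^{-4\pi i y^{(j)}v^{(j)}}$, and it must be checked to equal $1$ using $2y^{(j)}\bm e_j\in\Lambda^*$ and $\bm v\in\Lambda$, so that $2y^{(j)}v^{(j)}\in\mathds Z$. The same care is needed to confirm that the precise meaning of ``mirror-symmetric'' gives $R_j\Lambda=\Lambda$ and hence $R_j\Lambda^*=\Lambda^*$.
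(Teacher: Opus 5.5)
Your proof is correct. Part (1) is the paper's argument, differing only in the order of the translations: the paper first moves $\bm x$ to $\bm 0$, while you translate by $-2\bm x$ and $-2\bm y$ directly.

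For (2) and (3) the underlying mechanism is also the paper's: the reflection $R_j$ is a lattice symmetry, and it flips the sign of $(\bm z-\bm x)^{\bm\alpha}$ when $\alpha^{(j)}$ is odd. You organize it differently, though.
\begin{itemize}
\item You first prove the standalone mirror identity $Z_{\Lambda,\nu,\bm\alpha}(\bm x,\bm y)=(-1)^{\alpha^{(j)}}Z_{\Lambda,\nu,\bm\alpha}(R_j\bm x,R_j\bm y)$.
\item You then return from $(R_j\bm x,R_j\bm y)$ to $(\bm x,\bm y)$ with the translation symmetry.
\item This obliges you to show $R_j\Lambda^*=\Lambda^*$, to use $2p^{(j)}\bm e_j=\bm p-R_j\bm p\in\Lambda^*$ (dually $2v^{(j)}\bm e_j=\bm v-R_j\bm v\in\Lambda$), and to check that the phase $e^{-2\pi i\bm y\cdot\bm v}$ equals one.
\end{itemize}

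The paper instead translates first so that the relevant component vanishes: $\tilde y^{(j)}=0$ in (2) and $\tilde x^{(j)}=0$ in (3). It then pairs summands directly under $z^{(j)}\mapsto 2x^{(j)}-z^{(j)}$ in (2) and $z^{(j)}\mapsto -z^{(j)}$ in (3). This way it never needs mirror invariance of $\Lambda^*$. Its only phase bookkeeping is that $e^{-2\pi i z^{(j)}y^{(j)}}$ is either $1$ or a sign that is even in $z^{(j)}$.

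Your route gives a reusable functional equation and makes the duality between (2) and (3) explicit; the paper's is more elementary and local. Your remark that $\chi(\bm\alpha)=0$ in all three cases is worth keeping. It guarantees the pole at $\nu=d+|\bm\alpha|$ never occurs, so the extension from $\Re\nu>d+|\bm\alpha|$ to all $\nu\in\mathds C$ is legitimate, and the paper leaves this implicit.
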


\begin{proof}
Let $\bm v=-\bm x$ and $\bm p=-2\bm y$, so that by the conditions we have $\bm v\in\Lambda$ and $\bm p\in\Lambda^*$, and by the translational symmetry in \Cref{lem:syms}
we have
$$
q\,Z_{\Lambda,\nu,\bm\alpha}(\bm x,\bm y)
=
Z_{\Lambda,\nu,\bm\alpha}(\bm 0,-\bm y)
\,,
$$
with the phase $q=e^{-2\pi i \bm y\cdot \bm v}$.
By \Cref{lem:syms}, from the inversion symmetry, and since $|\bm\alpha|$ odd, it follows that
$
Z_{\Lambda,\nu,\bm\alpha}(\bm 0,-\bm y)
=
-Z_{\Lambda,\nu,\bm\alpha}(\bm 0,\bm y)
$,
and by the translational symmetry for $\bm p = \bm 0$ we have
$
-Z_{\Lambda,\nu,\bm\alpha}(\bm 0,\bm y)
=
-q\,Z_{\Lambda,\nu,\bm\alpha}(\bm x,\bm y)
$.
Since the phase $q$ does not vanish, we have shown that
$Z_{\Lambda,\nu,\bm\alpha}(\bm x,\bm y)
=
-Z_{\Lambda,\nu,\bm\alpha}(\bm x,\bm y)$, so that the anisotropic Epstein zeta function vanishes.

For the mirror related symmetries, we restrict our discussion to $\mathrm{Re}(\nu)>d+|\bm\alpha|$ so that the statement extends to $\nu\in\mathds C$ by the identity theorem for holomorphic functions since zero is an entire function.
For $\bm u\in\mathds R^d$ use the notation 
$$\bm u^{[j]}=(u^{(1)},u^{(2)},\ldots,u^{(j-1)},u^{(j+1)},\ldots,u^{(d)})^T\in\mathds R^{d-1}$$
for vectors  where the $j$'th component is missing.
First assume
 $2x^{(j)} \bm e_j\in\Lambda$
and $y^{(j)}+p^{(j)} = 0$.
Let $\tilde{\bm y}=\bm y+\bm p$, then by the translational symmetry, rewrite
$ Z_{\Lambda,\nu,\bm\alpha}(\bm x,\bm y)$ as
$$
Z_{\Lambda, \nu,\bm\alpha}(\bm x,\tilde{\bm y}) = 
\sums_{\bm z \in \Lambda}\, 
(z^{(j)}-x^{(j)})^{\alpha^{(j)}}\frac{(\bm z^{[j]}-\bm x^{[j]})^{\bm\alpha^{[j]}}e^{-2\pi i\bm z^{[j]}\cdot \tilde{\bm y}^{[j]}}}{\big((\bm z^{[j]} - \bm x^{[j]})^2+(z^{(j)}-x^{(j)})^2\big)^{\nu/2}}
\,,
$$
where we used that $e^{-2\pi i z^{(j)}\tilde{y}^{(j)}}=1$ since $\tilde{y}^{(j)}=0$.
By \Cref{zetadirectsum} the lattice sum converges absolutely so that reordering of the summands does not change its value.
Note that $\Lambda$ is mirror symmetric in the $j$'th component and that $\Lambda =2x^{(j)}\bm e_j+\Lambda$ since $2x^{(j)}\bm e_j\in\Lambda$
so that $z^{(j)}\mapsto 2 x^{(j)}-z^{(j)}$ is a symmetry of the lattice.
This maps $z^{(j)}-x^{(j)}$ to $-(z^{(j)}-x^{(j)})$
and since $\alpha^{(j)}$ is odd we
have that the summand 
changes sign under this map and the fixed points $z^{(j)}=x^{(j)}$ contribute zero as $\alpha^{(j)}\geq 1$,
so pairing these values makes the sum vanish.

Now assume $2y^{(j)} \bm e_j\in\Lambda^*$
and $x^{(j)}+v^{(j)} = 0$
and, with $\tilde{\bm x}=\bm x+\bm v$, rewrite
$e^{-2 \pi i  \bm y \cdot \bm v} Z_{\Lambda,\nu,\bm\alpha}(\bm x,\bm y)$ as
$$
Z_{\Lambda, \nu,\bm\alpha}(\tilde{\bm x},\bm y) = 
\sums_{\bm z \in \Lambda}\, 
e^{-\pi i \bm z\cdot (2 y^{(j)}\bm e_j)}
(z^{(j)})^{\alpha^{(j)}}
\frac{(\bm z^{[j]}-\tilde{\bm x}^{[j]})^{\bm\alpha^{[j]}}e^{-2\pi i\bm z^{[j]}\cdot \bm y^{[j]}}}{\big((\bm z^{[j]} - \tilde{\bm x}^{[j]})^2+(z^{(j)})^2\big)^{\nu/2}}
\,.
$$
As $(2 y^{(j)}\bm e_j)\in\Lambda^*$ and $\bm z\in\Lambda$ we have
$
e^{-\pi i \bm z\cdot (2 y^{(j)}\bm e_j)}
=\pm 1
$
where the sign is fixed by the parity of the integer $2z^{(j)}y^{(j)}$, which is invariant under $z^{(j)}\mapsto -z^{(j)}$.
Since $\alpha^{(j)}$ is odd, and due to the mirror symmetry of the lattice, the summand is odd in $z^{(j)}$, with the terms $z^{(j)}=0$ vanishing, so the sum vanishes and the claim follows as $e^{-2\pi i\bm y\cdot\bm v}\neq 0$.
\end{proof}

In the following section, we derive a representation that allows to access the meromorphic continuation.

\subsection{Derivative Crandall representation}
\label{sec:base-crandall}

We access the holomorphic continuation of the anisotropic Epstein zeta function through a Crandall type representation
which one of the authors introduced for the set zeta function in 
\cite{buchheitComputationLatticeSums2024}
based on the work of Richard  Crandall 
\cite{crandall2012unified}.
The Crandall representation is obtained by the classical Riemann splitting formulated at the level of the kernel $|\bm\cdot|^{-\nu}$.
For an extensive discussion of these ideas and the Crandall representation of the Epstein zeta function, see \cite[Sec. 2]{buchheitComputationPropertiesEpstein2026}.
Here, we summarize the main definitions and properties, beginning with a definition of the upper Crandall function.

\begin{definition}[Upper Crandall function]
\label{def:crandall-upper}
Let $\nu\in\mathds C$ and $\bm z\in D$
with the complex sector
$$
D=\{\bm u\in\mathds C^d:|\Re{\bm u}|>|\Im{\bm u}|\},
$$
where real and imaginary parts are applied component-wise and where $\vert \bm \cdot \vert$ denotes the Euclidean norm.
Furthermore use the notation
$$
\bm z^2=\sum_{j=1}^d(z^{(j)})^2.
$$
We define the upper Crandall function $G_{\nu}(\bm z)$ as
$$
G_{\nu}(\bm{z})= 
	\frac{\Gamma(\nu/2,\pi \bm{z}^2)}{(\pi \bm{z}^2)^{\nu/2}}
,\qquad G_{\nu}(\bm 0)
=-\frac{2}{\nu},
$$
where $\Gamma(s,x)$,
$s,x\in\mathds C$, $\Re x>0$
denotes the upper incomplete Gamma function, defined by the holomorphic continuation of
$$
\Gamma(s,x)=
\int_x^{\infty}t^{s}e^{-t}\frac{\d t}{t}
,\qquad \Re{s}>0,
$$
to  
$s\in\mathds C$.
\end{definition}

The upper Crandall function is convenient for series representations, due to its smoothness and superexponential decay as stated in the following lemma, which is a part of \cite[Lem. 4]{buchheitComputationPropertiesEpstein2026}.

\begin{lemma}[Properties of the upper Crandall function]
\label{crandall-upper-prop}
Let $\nu \in\mathds C$ and $\bm z\in D$.
The upper Crandall function $G_{\nu}(\bm z)$ is a jointly holomorphic function in $(\nu,\bm z)\in \mathds C \times D$.
Define
$$
u=\pi \mathrm{Re}(\bm z^2),\quad v=\max\{0,\mathrm{Re}(\nu/2)-1\}\,.
$$ 
We then have
$$
|G_\nu(\bm z)| \leq \frac{e^{-u}}{u-v}\,, \quad u>v\,.
$$
Let $\Re{\nu}<0$, then $\lim_{\bm z\to\bm 0}G_{\nu}(\bm z)=G_{\nu}(\bm 0)$.
\end{lemma}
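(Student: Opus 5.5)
The plan is to reduce everything to the one-variable function $F(s,w)=\Gamma(s,w)\,w^{-s}$ evaluated at $s=\nu/2$, $w=\pi\bm z^2$. Three claims need proof: joint holomorphy, the decay bound, and the limit at $\bm z=\bm 0$.

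\textbf{Joint holomorphy.} On $D$ we have $\Re(\bm z^2)=|\Re{\bm z}|^2-|\Im{\bm z}|^2>0$, so $w=\pi\bm z^2$ lies in the right half plane. For $\Re{s}>0$ I substitute $t=w\tau$ and rotate the contour, which is admissible since $\Re w>0$, and obtain
$$
F(s,w)=\int_1^\infty \tau^{s-1}e^{-w\tau}\d\tau.
$$
This integral converges absolutely and locally uniformly for all $s\in\mathds C$ and $\Re w>0$. By Morera's theorem and differentiation under the integral, it is jointly holomorphic in $(s,w)$. It agrees with $\Gamma(s,w)w^{-s}$ on $\Re s>0$, so it is the continuation for all $s$. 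Composing with the holomorphic map $(\nu,\bm z)\mapsto(\nu/2,\pi\bm z^2)$ gives joint holomorphy of $G_\nu$ on $\mathds C\times D$.

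\textbf{Decay bound.} Put $a=\Re{s}-1$ and $u=\Re w$. Then
$$
|F|\le\int_1^\infty \tau^{a}e^{-u\tau}\d\tau .
$$
If $a\le0$, bound $\tau^a\le1$ to get $e^{-u}/u\le e^{-u}/(u-v)$ with $v=0$. If $a>0$, write $\tau^a=e^{a\log\tau}$ and use $\log\tau\le\tau-1$, which gives
$$
|F|\le\int_1^\infty e^{-(u-a)\tau+a-a}\,e^{0}\cdots = e^{-u}/(u-a)
$$
for $u>a$, with $a=v$. This is the key elementary step: the convexity estimate $\tau^a\le e^{a(\tau-1)}$ absorbs the polynomial growth into a shifted exponential rate.

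\textbf{Limit at the origin.} For $\Re\nu<0$ we have $\Re s<0$, so $\Gamma(s,w)=\Gamma(s)-\gamma(s,w)$ may fail. Instead I use the integral directly:
$$
F(s,w)=w^{-s}\int_w^\infty t^{s-1}e^{-t}\d t .
$$
Since $\Re s<0$, $t^{s-1}$ is integrable at infinity without the exponential factor. Write $e^{-t}=1+(e^{-t}-1)$. The first part yields $w^{-s}\cdot(-w^{s}/s)=-1/s$. The remainder is bounded by $|w|^{-\Re s}\int_{|w|}^{\infty}\min(t,2)\,t^{\Re s-1}\,\d t$ along a ray, which tends to $0$ as $w\to0$ because $-\Re s>0$ makes $|w|^{-\Re s}\to0$ while the integral stays bounded for $-1<\Re s<0$. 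More generally it is $O(|w|^{-\Re s}+|w|)$ after splitting at $t=1$. Hence $G_\nu(\bm z)\to-1/s=-2/\nu$ as $\bm z\to\bm 0$.

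\textbf{Main obstacle.} I expect the hardest part to be handling complex $w$ uniformly as $\bm z\to\bm0$ within $D$, that is, justifying the contour choice and the uniform estimates for the limit. I would resolve this by working with the ray representation $t=w\tau$ throughout, which turns the $w$-dependence into the explicit factor $w^{-s}$ times integrals over $\tau\in[1,\infty)$ with $\Re w>0$.
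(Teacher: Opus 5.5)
Your proof is correct. The paper gives no argument of its own here; it quotes the lemma as part of \cite[Lem.~4]{buchheitComputationPropertiesEpstein2026}. You instead give a self-contained proof from the ray representation $G_\nu(\bm z)=\int_1^\infty \tau^{\nu/2-1}e^{-\pi\bm z^2\tau}\d\tau$. After the substitution $\tau=t^{-2}$, this is exactly the representation $\int_{-1}^{1}|t|^{-\nu}e^{-\pi\bm z^2/t^2}\,\mathrm{d}t/|t|$ that the paper itself uses later, in \Cref{sec:crandall-derivatives} and in \Cref{crandall-real-prop}. So your argument fits the paper's own toolkit. From one formula you obtain all three claims: holomorphy by locally uniform convergence on $\mathds C\times\{\Re{w}>0\}$, the decay bound via $\tau^{a}\le e^{a(\tau-1)}$, and the limit at the origin. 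The paper trades this for brevity.

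A few small repairs are needed.
\begin{enumerate}
\item The display in your decay estimate is garbled. After using $\tau^a\le e^{a(\tau-1)}$, the integrand is $e^{-(u-a)\tau-a}$. It integrates to $e^{-a}e^{-(u-a)}/(u-a)=e^{-u}/(u-a)$.
\item Your rate $O(|w|^{-\Re{s}}+|w|)$ for the remainder is off by a factor $\log(1/|w|)$ in the borderline case $\Re{s}=-1$. The bound still tends to zero, so your conclusion is unaffected.
\item Your remark that $\Gamma(s,w)=\Gamma(s)-\gamma(s,w)$ ``may fail'' is imprecise. The identity holds for $s\notin-\mathds N$ after continuation; only the integral formula for $\gamma$ breaks down. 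Your detour avoids the issue in any case.
\end{enumerate}

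The obstacle you anticipate does not arise. Along the ray $t=w\tau$ the phases cancel exactly, so $|w^{-s}t^{s-1}|=|w|^{-\Re{s}}|t|^{\Re{s}-1}$. Also $|e^{-t}-1|\le\min\{|t|,2\}$ for $\Re{t}\ge 0$. Hence your remainder bound is already uniform in $\arg w\in(-\pi/2,\pi/2)$. The limit therefore holds as $\bm z\to\bm 0$ through all of $D$, not only along real directions.
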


The following Crandall type representation conveniently accesses the meromorphic continuation of the anisotropic Epstein zeta function through two super-exponentially converging series involving the derivatives of the exponential and derivatives of the upper Crandall function.
These derivatives of upper Crandall functions are given by sums of shifted upper Crandall functions with polynomial weights as shown in \cite[Lem. 4.1]{buchheitZetaExpansionLongrange2026}, for which we provide a new proof for the complex case in \Cref{sec:crandall-derivatives}.

\begin{theorem}[Derivative Crandall representation of the anisotropic Epstein zeta function]
\label{derivative-crandall-representation}
\anisoreq
Then,
for any splitting parameter $\lambda>0$, it holds that
\begin{align*}
Z_{\Lambda,\nu,\bm\alpha}(\bm x,\bm y)
=
\frac{(\pi/\lambda^2)^{\nu/2}}{\Gamma(\nu/2)}
\Bigg[
&\sum_{\bm z\in \Lambda-\bm x}
\bm z^{\bm\alpha}
G_{\nu}(\bm{z}/\lambda)e^{-2\pi i\bm y\cdot (\bm z+\bm x)}
\\
&\quad+
(-2\pi i)^{-|\bm\alpha|}
\frac{\lambda^{d+|\bm\alpha|}}{V_{\Lambda}}\sum_{\bm p\in\Lambda^{\ast}+\bm y}
G^{(\bm\alpha)}_{d-\nu}
(\lambda \bm p)e^{-2\pi i\bm x\cdot\bm p}
\Bigg]\,.
\end{align*}
The derivatives of the upper Crandall function, which we call upper Crandall derivatives, are given by
$$
G_\nu^{(\bm\alpha)}(\bm u) = 
\sum_{2\bm\beta\le \bm\alpha}p_{\bm\alpha,\bm \beta}(\bm u)
G_{\nu+2|\bm\alpha|-2|\bm\beta|}( \bm u)
\,, 
\qquad \bm u\in D
$$
with $D$ as in \Cref{def:crandall-upper}.
Furthermore, for $\Re\nu+|\bm\alpha|<0$ and $\bm u\in\mathds R^d\setminus\{\bm 0\}$, the derivatives of the upper Crandall function continuously extend to $\bm u=\bm 0$ with
$$
G_{\nu}^{(\bm\alpha)}(\bm 0)= 
-2\chi(\bm\alpha)\frac{p_{\bm\alpha,\bm\alpha/2}(\bm 0)}{\nu+|\bm\alpha|}\,,
$$
where the right-hand side forms the
meromorphic continuation to $\nu\in\mathds C$ 
with a simple pole at $\nu=-|\bm\alpha|$ if and only if $\chi(\bm\alpha)=1$.
Here, $p_{\bm\alpha,\bm \beta}$ is the monomial
$$
p_{\bm\alpha,\bm \beta}(\bm y)
=
(-\pi)^{|\bm\alpha|-|\bm \beta|}
(\bm\alpha-\bm\beta)_{\bm\beta}
\binom{\bm\alpha}{\bm \beta}  
(2\bm y)^{\bm\alpha-2 \bm \beta}\,,
$$
and
$\chi(\bm\alpha)=1$ if all components of $\bm\alpha$ are even and $\chi(\bm\alpha)=0$ otherwise.
Furthermore,
 $(\nu)_{k}=\nu(\nu-1)\ldots(\nu-k+1)$, $\nu\in\mathds C$,  $k\in\mathds N_+$, $(\nu)_{0}=1$ denotes the falling Pochhammer symbol and we write
$(\bm\alpha)_{\bm\beta}=(\alpha^{(1)})_{\beta^{(1)}}\ldots (\alpha^{(d)})_{\beta^{(d)}}$.
\end{theorem}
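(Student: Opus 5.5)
Our plan is to prove the representation first for $\Re\nu>d+|\bm\alpha|$ and then extend it to all admissible $\nu$ by holomorphy. In that half-plane, \Cref{zeta-ani-set-zeta-der} gives $Z_{\Lambda,\nu,\bm\alpha}(\bm x,\bm y)=(-2\pi i)^{-|\bm\alpha|}e^{-2\pi i\bm x\cdot\bm y}Z^{(\bm\alpha)}_{\Lambda-\bm x,\nu}(\bm y)$. So it suffices to differentiate the known Crandall representation of the set zeta function for $L=\Lambda-\bm x$ \cite{buchheitComputationLatticeSums2024,buchheitComputationPropertiesEpstein2026}. That representation reads
$$
Z_{\Lambda-\bm x,\nu}(\bm y)=\frac{(\pi/\lambda^2)^{\nu/2}}{\Gamma(\nu/2)}\Big[\sum_{\bm z\in\Lambda-\bm x}G_\nu(\bm z/\lambda)e^{-2\pi i\bm y\cdot\bm z}+\frac{\lambda^d}{V_\Lambda}\sum_{\bm p\in\Lambda^*+\bm y}G_{d-\nu}(\lambda\bm p)e^{-2\pi i\bm x\cdot(\bm p-\bm y)}\Big].
$$
It is obtained from Riemann splitting of $\Gamma(\nu/2)|\bm z|^{-\nu}\pi^{-\nu/2}$ into upper and lower incomplete gamma parts, followed by Poisson summation of the lower part. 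The sign and phase conventions should be checked against \cite[Sec.~2]{buchheitComputationPropertiesEpstein2026}.

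We then apply $\diffop^{\bm\alpha}$ termwise. Superexponential decay from \Cref{crandall-upper-prop}, together with the Cauchy estimates on $D$, justifies this and makes both series converge locally uniformly. In the first series, $\bm y$ appears only in the plane wave, so differentiation produces $(-2\pi i\bm z)^{\bm\alpha}$. In the second series, $\bm p=\bm q+\bm y$ with $\bm q\in\Lambda^*$, so the factor $e^{-2\pi i\bm x\cdot\bm q}$ is independent of $\bm y$ and the derivative falls entirely on $G_{d-\nu}(\lambda(\bm q+\bm y))$, giving $\lambda^{|\bm\alpha|}G^{(\bm\alpha)}_{d-\nu}(\lambda\bm p)$. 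Multiplying by $(-2\pi i)^{-|\bm\alpha|}e^{-2\pi i\bm x\cdot\bm y}$ produces exactly the stated bracket.

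Next we establish the formula for $G^{(\bm\alpha)}_\nu$. Writing $G_\nu(\bm u)=g(\bm u^2)$ with $g(s)=\Gamma(\nu/2,\pi s)(\pi s)^{-\nu/2}$, one checks that $g'(s)=-\pi\, g_{\nu+2}(s)$, i.e. $\partial_{u^{(j)}}G_\nu=-2\pi u^{(j)}G_{\nu+2}$. This follows by differentiating the integral representation, or from $G_\nu(\bm u)=\int_1^\infty t^{\nu/2-1}e^{-\pi t\bm u^2}\d t$. Since the $d$ coordinates factorize, the multivariate Faà di Bruno formula, or induction on $|\bm\alpha|$ with one coordinate at a time, gives a sum over $\bm\beta$ with $2\bm\beta\le\bm\alpha$. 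The coefficients are those of the one-dimensional Hermite-type expansion
$$
\partial_u^{a}e^{-\pi tu^2}=\sum_{2b\le a}(-\pi t)^{a-b}\frac{a!}{b!(a-2b)!}(2u)^{a-2b}e^{-\pi tu^2},
$$
and the identity $\frac{a!}{b!(a-2b)!}=(a-b)_b\binom{a}{b}$ matches $p_{\bm\alpha,\bm\beta}$. Each power $t^{|\bm\alpha|-|\bm\beta|}$ then shifts the index to $\nu+2|\bm\alpha|-2|\bm\beta|$. Using the integral representation with $e^{-\pi t\bm u^2}$ for $\bm u\in D$ makes this holomorphic bookkeeping clean.

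For the value at $\bm u=\bm 0$, only the term $2\bm\beta=\bm\alpha$ survives, which requires $\chi(\bm\alpha)=1$. Its index is $\nu+|\bm\alpha|$, whose real part is negative, so \Cref{crandall-upper-prop} gives the limit $-2/(\nu+|\bm\alpha|)$. The terms with $2\bm\beta\neq\bm\alpha$ carry a monomial vanishing at $\bm 0$ times a bounded factor, because their index $\nu+2|\bm\alpha|-2|\bm\beta|$ does not exceed $\nu+|\bm\alpha|$ in real part and so also has negative real part. The resulting expression is meromorphic in $\nu$ with the claimed pole.

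Finally, we extend to general $\nu$. The right-hand side is holomorphic in $\nu$: the first series is entire, and in the second series every term with $\bm p\neq\bm 0$ is entire. The only possible exception is the term $\bm p=\bm 0$, which occurs when $\bm y\in\Lambda^*$, namely $G^{(\bm\alpha)}_{d-\nu}(\bm 0)$, which has a pole only at $d-\nu=-|\bm\alpha|$ with $\chi(\bm\alpha)=1$. The factor $1/\Gamma(\nu/2)$ is entire. The identity theorem then yields the statement in the stated region. We expect the main obstacle to be uniform convergence of the differentiated series, needed to interchange derivatives and sums and to ensure holomorphy in $\nu$. We will handle it via the bound in \Cref{crandall-upper-prop} applied to each $G_{\nu+2k}$ together with polynomial growth of $p_{\bm\alpha,\bm\beta}$.
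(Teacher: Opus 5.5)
Your route is the same as the paper's: differentiate the Crandall representation of $Z_{\Lambda-\bm x,\nu}$ termwise for $\Re\nu>d+|\bm\alpha|$, obtain the upper Crandall derivatives from an integral representation plus Fa\`a di Bruno, and extend by the identity theorem. However, your continuity argument at $\bm u=\bm 0$ is wrong as written.

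Since $2\bm\beta\le\bm\alpha$ gives $2|\bm\beta|\le|\bm\alpha|$, the index of the $\bm\beta$-term is $t=\nu+2|\bm\alpha|-2|\bm\beta|=\nu+|\bm\alpha|+m$ with $m=|\bm\alpha|-2|\bm\beta|\ge 0$. It is therefore \emph{at least} $\nu+|\bm\alpha|$ in real part, not at most, and $\Re{t}$ can be positive or zero. In that case $G_t$ is not bounded near the origin. By \Cref{crandall-lower-prop}, $G_t(\bm u)=\Gamma(t/2)(\pi\bm u^2)^{-t/2}-g_t(\bm u)$ grows like $|\bm u|^{-\Re{t}}$ for $\Re{t}>0$, and $G_0$ grows like $-\log(\pi\bm u^2)$.

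Two concrete cases, both with $d=1$ and $\alpha=2$, show this:
\begin{itemize}
\item For $\nu=-3$ (so $\Re\nu+|\alpha|<0$), the $\beta=0$ term is $4\pi^2u^2G_1(u)$ with $G_1(u)\sim|u|^{-1}$.
\item For $\nu=-4$, the $\beta=0$ term is $4\pi^2u^2G_0(u)$, which has a logarithmic singularity.
\end{itemize}
So ``a monomial vanishing at $\bm 0$ times a bounded factor'' fails.

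What is needed is a comparison of rates, which the paper isolates in \Cref{continuity-crandall-upper}. The monomial $p_{\bm\alpha,\bm\beta}$ is homogeneous of degree $m$, so $|p_{\bm\alpha,\bm\beta}(\bm u)|\le C|\bm u|^m$. At the same time $\Re{t}-m=\Re\nu+|\bm\alpha|<0$. Hence:
\begin{itemize}
\item for $0<\Re{t}<m$ the product is $O(|\bm u|^{m-\Re{t}})\to 0$;
\item for $\Re{t}\le 0$ with $t\neq 0$ your boundedness argument does apply;
\item for $t=0$, which forces $m>0$, one uses $|\bm u|^m|\log(\pi\bm u^2)|\to 0$.
\end{itemize}

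This gap is not confined to the side claim about $G^{(\bm\alpha)}_\nu(\bm 0)$. When $\bm y\in\Lambda^*$, you must identify the $\bm p=-\bm y$ term of the termwise-differentiated series with $G^{(\bm\alpha)}_{d-\nu}(\bm 0)$ in the half-plane $\Re\nu>d+|\bm\alpha|$. The intermediate factors are singular there as well: $d=1$, $\alpha=2$, $\nu=7/2$ gives $4\pi^2u^2G_{3/2}(u)$ with $G_{3/2}(u)\sim|u|^{-3/2}$. So your identity-theorem step for $\bm y\in\Lambda^*$ rests on this repaired argument.

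A minor point in the extension step: the first series is not entire by itself when $\bm x\in\Lambda$ and $\bm\alpha=\bm 0$, since $G_\nu(\bm 0)=-2/\nu$. Only its product with $1/\Gamma(\nu/2)$ is entire, via $G_\nu(\bm 0)/\Gamma(\nu/2)=-1/\Gamma(\nu/2+1)$.
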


\begin{proof}
In this proof we derive the derivative Crandall representation for $\Re\nu>d+|\bm\alpha|$.
In the proof of \Cref{hol} we show that the right-hand side of the derivative Crandall representation is entire in $\nu$ if $\bm y\notin\Lambda^*$ or $\chi(\bm\alpha)=0$
and that the right-hand side is holomorphic in $\nu\in\mathds C\setminus\{d+|\bm\alpha|\}$ for $\bm y\in\Lambda^*$ and $\chi(\bm\alpha)=1$.
Therefore the derivative Crandall representation is the unique holomorphic extension of the anisotropic Epstein zeta function and the left-hand side is understood as this continuation.
The explicit representation for the upper Crandall derivatives is derived in \Cref{sec:crandall-derivatives}.

In what follows we will show a derivative Crandall representation for the set zeta derivatives, that is we will show that
\begin{align*}
Z_{\Lambda-\bm x,\nu}^{(\bm{\alpha})}(\bm y)
=
\frac{(\pi/\lambda^2)^{\nu/2}}{\Gamma(\nu/2)}
\Bigg[
&\sum_{\bm z\in \Lambda-\bm x}
(-2\pi i \bm z)^{\bm\alpha}
G_{\nu}(\bm{z}/\lambda)e^{-2\pi i\bm y\cdot \bm z}
\\
&\quad+
\frac{\lambda^{d+|\bm\alpha|}}{V_{\Lambda}}\sum_{\bm p\in\Lambda^{\ast}}
G^{(\bm\alpha)}_{d-\nu}
(\lambda(\bm p+\bm y))e^{-2\pi i\bm x\cdot\bm p}
\Bigg]\,.
\end{align*}
Then, the derivative Crandall representation for the anisotropic Epstein zeta function is a direct consequence of \Cref{zeta-ani-set-zeta-der} as the nowhere-vanishing oscillatory prefactor can be absorbed in the two series whose convergence remains unchanged.

Let $\Re\nu>d+|\bm\alpha|$.
By the Crandall representation for the Epstein zeta function 
\cite[Th. 6]{buchheitComputationPropertiesEpstein2026}
and by \Cref{zeta-ani-set-zeta-der}, we have
\begin{align*}
Z_{\Lambda-\bm x,\nu}(\bm y)
=
\frac{(\pi/\lambda^2)^{\nu/2}}{\Gamma(\nu/2)}
\Bigg[
&\sum_{\bm z\in \Lambda-\bm x}
G_{\nu}(\bm{z}/\lambda)e^{-2\pi i\bm y\cdot \bm z}
\\
&+
\frac{\lambda^{d}}{V_{\Lambda}}\sum_{\bm p\in\Lambda^{\ast}}
G_{d-\nu}
(\lambda(\bm p+\bm y))e^{-2\pi i\bm x\cdot\bm p}
\Bigg]\,.
\end{align*}
We have
$\diffop^{\bm\alpha}e^{-2\pi i \bm y\cdot\bm z}=(-2\pi i \bm z)^{\bm\alpha}e^{-2\pi i \bm y\cdot\bm z}$
and by the chain rule
$
\diffop^{\bm\alpha}G_{d-\nu}(\lambda(\bm p+\bm y))
=
\lambda^{|\bm\alpha|}G^{(\bm\alpha)}_{d-\nu}(\lambda(\bm p+\bm y))
$
so the derivative Crandall representation follows from differentiating under the sum.
We show that $\diffop^{\bm\alpha}$ commutes with the sums
by showing that a majorant of the $\diffop^{\bm\gamma}$ differentiated series with some sufficiently large central cutout
$$
\sum_{\substack{\bm z\in \Lambda-\bm x \\|\bm z|>r}}
(-2\pi i \bm z)^{\bm\gamma}
G_{\nu}(\bm{z}/\lambda)e^{-2\pi i\bm y\cdot \bm z}
+
\frac{\lambda^{d+|\bm\gamma|}}{V_{\Lambda}}\sum_{\substack{\bm p\in\Lambda^{\ast}\\ |\bm p|>r}}
G^{(\bm\gamma)}_{d-\nu}
(\lambda(\bm p+\bm y))e^{-2\pi i\bm x\cdot\bm p}
$$
converges absolutely and locally uniformly in $\bm y\in\mathds R^d$ for $\bm\gamma\in\mathds N^d$, $\bm\gamma\le\bm\alpha$ similar to the proof of \Cref{zetadirectsum}.

The absolute value of the differentiated summand of the first sum is bounded by 
$
(2\pi)^{|\bm\gamma|}|\bm z|^{|\bm\gamma|}|G_{\nu}(\bm z/\lambda)|
$
uniformly in $\bm y$, 
so exchanging summation and differentiation is justified by the superexponential decay of the upper Crandall function, see \Cref{crandall-upper-prop}.

For the second sum let $K\subseteq\mathds R^d$ be compact
and choose $r>0$ large enough so that for any 
$\bm p\in\mathds R^d$ with $|\bm p|>r$ we have
$(\bm p+\bm y)^2\ge \bm p^2/2$
and
$\bm p^2\ge 2 (v+1)/(\pi \lambda^2)$
with $v\ge 0$
and $v\ge \max\{(d-\Re\nu+t)/2-1:t\in\{0,2,\ldots,2|\bm\alpha|\}\}$
for all $\bm y\in K$.
Then by \Cref{crandall-upper-prop} we have
$$
|G_{d-\nu+2|\bm\gamma|-2|\bm\beta|}(\lambda (\bm p+\bm y))|
\le e^{-\pi \lambda^2 \bm p^2/2}
$$
for $|\bm p|>r$
for all $2\bm \beta\le\bm\gamma$ uniformly in $\bm y\in K$.
In addition choose $r$ large enough so that
$2|\bm p+\bm y| \le 3|\bm p|$ 
and $3\lambda|\bm p|\ge 1$
for $|\bm p|>r$
so we have the loose upper bound
$$
|p_{\bm\gamma,\bm\beta}(\lambda(\bm p+\bm y))| 
\le 
(3\lambda\pi)^{|\bm\gamma|}|\bm\gamma|^{|\bm\gamma|}
|\bm\gamma|!
|\bm p|^{|\bm\gamma|}\,
$$
for all $2\bm \beta\le \bm\gamma$ uniformly in $\bm y\in K$.
Excluding the finite set of summation indices
$$
\sum_{\substack{\bm p\in\Lambda^* \\ |\bm p|>r}}
|G^{(\bm\gamma)}_{d-\nu}(\lambda(\bm p+\bm y))
e^{-2\pi i \bm x\cdot\bm p}|
\le 
C_{\bm\gamma,\lambda}
\sum_{\substack{\bm p\in\Lambda^* \\ |\bm p|>r}}
|\bm p|^{|\bm\gamma|}
e^{-\pi \lambda^2|\bm p|^2/2}
$$
with $C_{\bm\gamma,\lambda}=
(3\lambda\pi)^{|\bm\gamma|}|\bm\gamma|^{|\bm\gamma|}
|\bm\gamma|!(\sum_{2\bm \beta\le\bm\gamma}1)$
converges uniformly in $\bm y\in K$ due to the superexponential decay of the upper bound.
As $K$ was arbitrary both sums converge locally uniformly in $\bm y\in\mathds R^d$.
\end{proof}

\begin{remark}[Derivative Crandall representations for set zeta functions]
The beginning of the proof of \Cref{derivative-crandall-representation} features the derivative Crandall representation   for the set zeta derivatives.
A Crandall type representation of set zeta derivatives with respect to $\bm x$ for $n$-dimensional lattices in $d$-dimensional space can be found in \cite{buchheitZetaExpansionLongrange2026}.
For Crandall representation for the set zeta function without derivatives in geometries with boundaries, see
\cite{buchheitComputationLatticeSums2024}.
\end{remark}

The derivative Crandall representation in
\Cref{derivative-crandall-representation} forms the meromorphic continuation of the anisotropic Epstein zeta function in both the exponent and in the vector arguments
and allows for the removal of singularities without cancellation error as will be discussed in \Cref{sec:base-sing}.
It is exact and can be employed in computations, though precision is lost in floating point arithmetic for large $|\bm\alpha|$.
A stably computable representation for arbitrary anisotropy corresponding to both low- and high-order derivatives based on a harmonic decomposition of the derivative operator is derived in \Cref{sec:main}.
We now employ the derivative Crandall representation to discuss the holomorphic extension of anisotropic Epstein zeta functions.

\subsection{Holomorphic extension}
\label{sec:base-hol}

The anisotropic Epstein zeta function can be holomorphically extended in the complex plane in its various arguments, where singularities can occur at the shifted dimension $\nu\to d+|\bm\alpha|$ as well as in the lattice $\bm x\in \Lambda$ and the reciprocal lattice $\bm y\in\Lambda^*$.

\begin{theorem}[Holomorphic extension of the anisotropic Epstein zeta function]
\label{hol}
Define the set $D_L\subseteq \mathds C^d$ as the  following intersection of $d$-dimensional complex cones centered at $L\subseteq \mathds R^d$,
$$
D_L=\{\bm u\in\mathds C^d:|\Re{\bm u}-\bm z|>|\Im{\bm u}|\ \forall \bm z\in L\}\,,
$$
and let $\chi$ be as in \Cref{zeta-ani-set-zeta-der}.
Let $\Lambda$ be a $d$-dimensional lattice, let $\bm\alpha\in\mathds N^d$ and let $\nu\in\mathds C$.
For $\bm x \notin \Lambda$ and $\bm y \notin \Lambda^*$, the anisotropic Epstein zeta function can be holomorphically extended to  
$$
(\nu,\bm x,\bm y)\in 
\mathds C\times D_{\Lambda}\times D_{\Lambda^*}\,.
$$
For $\bm x\in\Lambda$ and $\bm y \notin \Lambda^*$, the anisotropic Epstein zeta function can be holomorphically extended to 
$$(\nu,\bm y)\in \mathds C\times D_{\Lambda^*}\,.
$$
For
$\bm y \in \Lambda^*$, $\bm x \notin \Lambda$, and $\chi(\bm\alpha)=0$, the anisotropic Epstein zeta  function can be holomorphically extended to 
$$
(\nu,\bm x)\in \mathds C\times D_{\Lambda}\,,
$$
and for $\chi(\bm\alpha)=1$
 it can be holomorphically extended to
$$
(\nu,\bm x)\in \Big( \mathds C\setminus\{d+|\bm\alpha|\}\Big)\times D_{\Lambda}
$$
with a simple pole at $\nu = d+|\bm\alpha|$.
Finally, for $\bm x\in\Lambda$, $\bm y\in\Lambda^*$, and $\chi(\bm\alpha)=0$, the anisotropic Epstein zeta function is holomorphic
in $\nu\in\mathds C$
and for $\chi(\bm\alpha)=1$ 
it is holomorphic
in $\nu\in\mathds C\setminus\{d+|\bm\alpha|\}$ with a simple pole at $\nu=d+|\bm\alpha|$.
\end{theorem}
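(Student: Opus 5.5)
We work directly with the right-hand side of the derivative Crandall representation in \Cref{derivative-crandall-representation}, fix a splitting parameter $\lambda>0$ (say $\lambda=1$), and show that it defines a holomorphic function in the claimed domains. Since it agrees with the absolutely convergent lattice sum of \Cref{def-zeta-aniso} for $\Re\nu>d+|\bm\alpha|$ and real arguments, it is the unique holomorphic extension. The prefactor $(\pi/\lambda^2)^{\nu/2}/\Gamma(\nu/2)$ is entire in $\nu$. The argument then reduces to two claims: that the real-space series over $\bm z\in\Lambda-\bm x$ and the reciprocal series over $\bm p\in\Lambda^*+\bm y$ are jointly holomorphic, and where exceptional terms occur.

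First I treat the tails. For the real-space sum, each summand $(\bm z-\bm x)^{\bm\alpha}G_\nu((\bm z-\bm x)/\lambda)e^{-2\pi i \bm y\cdot\bm z}$, with $\bm x$ now complex, is jointly holomorphic in $(\nu,\bm x,\bm y)$ whenever $\bm z-\bm x\in D$. This holds precisely when $\bm x\in D_\Lambda$, by \Cref{crandall-upper-prop} after noting $D_\Lambda$ is defined so that $\bm z-\bm x$ lies in the sector $D$ for every lattice point. The same reasoning applies to the reciprocal summands $G^{(\bm\alpha)}_{d-\nu}(\lambda(\bm p+\bm y))$ via the explicit formula as a finite polynomial-weighted combination of $G$'s, with $\bm y\in D_{\Lambda^*}$. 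For local uniform convergence on compacts $K$ of $\mathds C\times D_\Lambda\times D_{\Lambda^*}$, I bound $\Re((\bm z-\bm x)^2)=|\Re(\bm z-\bm x)|^2-|\Im\bm x|^2\ge |\bm z|^2/2$ for $|\bm z|>r(K)$. I then invoke the bound $|G_\nu|\le e^{-u}/(u-v)$ with $v$ uniform over the compact $\nu$-range, while the oscillatory factors grow at most like $e^{2\pi|\Im\bm y||\bm z|}$ and $e^{2\pi|\Im \bm x||\bm p|}$. The Gaussian decay dominates these exponential factors and the polynomial weights, exactly as in the majorant argument in the proof of \Cref{derivative-crandall-representation}. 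Weierstrass' theorem then gives joint holomorphy of the tails, and finitely many remaining terms are holomorphic individually. This settles the case $\bm x\notin\Lambda$, $\bm y\notin\Lambda^*$.

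Next I handle the exceptional terms. If $\bm x\in\Lambda$ (so $\bm x$ is held fixed), the real-space sum contains $\bm z=\bm 0$, which is excluded by the primed sum, or equivalently contributes $\bm 0^{\bm\alpha}G_\nu(\bm 0)$. This term is either absent or, when $\bm\alpha=\bm0$, equal to $-2/\nu$, whose pole is cancelled by $1/\Gamma(\nu/2)$; this entire term must be checked carefully against the Crandall derivation. All other terms are as before, now holomorphic in $(\nu,\bm y)$. If $\bm y\in\Lambda^*$, the reciprocal sum contains the term $\bm p=\bm 0$, namely $G^{(\bm\alpha)}_{d-\nu}(\bm 0)$. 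By \Cref{derivative-crandall-representation}, this equals $-2\chi(\bm\alpha)p_{\bm\alpha,\bm\alpha/2}(\bm 0)/(d-\nu+|\bm\alpha|)$, meromorphically continued. It therefore vanishes identically when $\chi(\bm\alpha)=0$. When $\chi(\bm\alpha)=1$ it has a simple pole at $\nu=d+|\bm\alpha|$ with nonzero residue, since $p_{\bm\alpha,\bm\alpha/2}(\bm 0)=(-\pi)^{|\bm\alpha|/2}(\bm\alpha/2)_{\bm\alpha/2}\binom{\bm\alpha}{\bm\alpha/2}\neq0$. Multiplied by the nonvanishing prefactor $\pi^{\nu/2}/\Gamma(\nu/2)$ at $\nu=d+|\bm\alpha|>0$, this yields the claimed simple pole. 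The combined case $\bm x\in\Lambda$, $\bm y\in\Lambda^*$ follows by applying both observations simultaneously.

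The main obstacle is the uniform exponential-versus-Gaussian estimate for complex $\bm x,\bm y$ near the boundary of the cones. It requires that $\Re$ of the squared argument stays comparable to $|\bm z|^2$ uniformly on compacts, which I would derive from the strict cone inequality together with compactness. A secondary subtlety is justifying the use of the continuous extension $G^{(\bm\alpha)}_{d-\nu}(\bm 0)$ as the correct value of the $\bm p=\bm 0$ term for all $\nu$. I would establish this for $\Re(d-\nu)+|\bm\alpha|<0$, where the continuity statement in \Cref{derivative-crandall-representation} applies, and then extend by the identity theorem.
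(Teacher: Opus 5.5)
Your proposal is correct and follows essentially the same route as the paper: termwise holomorphy of the derivative Crandall representation, a locally uniform Gaussian majorant for the tails that dominates the exponential growth of the oscillatory factors, and separate treatment of the $\bm z=\bm 0$ and $\bm p=\bm 0$ terms. You are slightly more careful than the paper in checking $p_{\bm\alpha,\bm\alpha/2}(\bm 0)\neq 0$ and in justifying the value of the $\bm p=\bm 0$ term via the identity theorem. One small slip: the real-space sum in the Crandall representation is not primed, so for $\bm\alpha=\bm 0$ and $\bm x\in\Lambda$ the $\bm z=\bm 0$ term is present with value $G_\nu(\bm 0)=-2/\nu$ (it is what accounts for the excluded summand of the original lattice sum), rather than being ``equivalently'' excluded; your conclusion that it becomes entire after multiplication by $1/\Gamma(\nu/2)$ is nonetheless right.
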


\begin{proof}
Let $(\nu,\bm x,\bm y)\in W$ with $W=
\mathds C\times D_\Lambda \times D_{\Lambda^\ast}$.
By the properties of the upper Crandall functions, see \Cref{crandall-upper-prop}, and by the representation for the upper Crandall derivatives in \Cref{derivative-crandall-representation}
each summand in the derivative Crandall representation is individually holomorphic in $\nu\in\mathds C$, in each component of $\bm x\in D_{\Lambda}$ and in each component of $\bm y\in D_{\Lambda^*}$
and the prefactor $(\pi/\lambda^2)^{\nu/2}/\Gamma(\nu/2)$ is entire in $\nu\in\mathds C$.
By Hartogs' theorem \cite[Theorem 2.2.8]{hormanderIntroductionComplexAnalysis1990}
it follows that each summand is jointly holomorphic in 
$W$.
For the joint holomorphy of the anisotropic Epstein zeta function it is therefore enough to show that both sums in the derivative Crandall representation with a central cutout converge absolutely and locally uniformly in $W$, that is, there exists an $r>0$, such that
$f_r\colon W\to [0,\infty)$,
\begin{align*}
f_r(\nu,\bm x,\bm y)&=\sum_{\substack{\bm z\in \Lambda\\ |\bm z|> r}}
|(\bm z-\bm x)^{\bm\alpha}
G_{\nu}((\bm z-\bm x)/\lambda)e^{-2\pi i\bm y\cdot \bm z}|
\\
&\quad
+(2\pi)^{-|\bm\alpha|}\frac{\lambda^{d+|\bm\alpha|}}{V_{\Lambda}}\sum_{\substack{\bm p\in\Lambda^{\ast} \\ |\bm p|>r}}|G^{(\bm\alpha)}_{d-\nu}
(\lambda(\bm p+\bm y))e^{-2\pi i\bm x\cdot(\bm p+\bm y)}|
\end{align*}
converge uniformly in a compact subset $K$ of $W$.
Assume $(\nu,\bm x,\bm y)\in K$ and $\bm z,\bm p\in\mathds R^d$ with $\min\{|\bm z|,|\bm p|\}>r$.
Choose $r>0$ large enough such that $|\bm z-\bm x|\le 2|\bm z|$, $\Re{(\bm z-\bm x)^2}\ge \bm z^2/2$ and 
$\bm z^2\ge 2 \lambda^2 (v_1+1)/\pi$
with $v_1=\max \{0,\Re\nu/2-1\}$.
Then by \Cref{crandall-upper-prop}
$$
|(\bm z-\bm x)^{\bm\alpha}
G_{\nu}((\bm z-\bm x)/\lambda)
|
\le 2^{|\bm\alpha|} |\bm z|^{|\bm\alpha|}
e^{-\pi |\bm z|^2/(2\lambda^2)}\,.
$$
Furthermore, choose $r$ large enough such that
$\Re{(\bm p+\bm y)^2}\ge \bm p^2/2$
and
$\bm p^2\ge 2 (v_2+1)/(\pi \lambda^2)$
with $v_2\ge 0$
and $v_2\ge \max\{(d-\Re\nu+t)/2-1:t\in\{0,2,\ldots,2|\bm\alpha|\}\}$
as well as
$2|\bm p+\bm y| \le 3|\bm p|$ 
and $3\lambda|\bm p|\ge 1$.
Then, by the representation for the derivatives of the upper Crandall function in \Cref{derivative-crandall-representation} and with $C_{\bm\gamma,\lambda}=
(3\lambda\pi)^{|\bm\gamma|}|\bm\gamma|^{|\bm\gamma|}
|\bm\gamma|!(\sum_{2\bm \beta\le\bm\gamma}1)$ defined as in its proof we have
$$
|G^{(\bm\alpha)}_{d-\nu}(\lambda(\bm p+\bm y))
e^{-2\pi i \bm x\cdot\bm p}|
\le 
C_{\bm\alpha,\lambda}
|\bm p|^{|\bm\alpha|}
e^{-\pi \lambda^2|\bm p|^2/2}\,.
$$
Finally let $c_1,c_2,c_3$ be large enough such that
$$
\sup_{(\nu,\bm x,\bm y)\in K}|e^{-2\pi i \bm y\cdot \bm z}|
\le 
e^{c_1|\bm z|}
,\qquad 
\sup_{(\nu,\bm x,\bm y)\in K}|e^{-2\pi i \bm x\cdot(\bm p+\bm y)}|
\le 
c_2e^{c_3|\bm p|}\,.
$$
Then the majorant
\begin{align*}
f_r(\nu,\bm x,\bm y)\le
&
2^{|\bm\alpha|}\sum_{\substack{\bm z\in \Lambda\\ |\bm z|> r}}
|\bm z|^{|\bm\alpha|}e^{c_1|\bm z|}e^{-\pi |\bm z|^2/(2\lambda^2)}
\\
&\quad+
c_2
(2\pi)^{-|\bm\alpha|}
C_{\bm\alpha,\lambda}\frac{\lambda^{d+|\bm\alpha|}}{V_{\Lambda}}\sum_{\substack{\bm p\in\Lambda^{\ast} \\ |\bm p|>r}}
|\bm p|^{|\bm\alpha|}
e^{c_3|\bm p|}e^{-\pi \lambda^2|\bm p|^2/2}\,.
\end{align*}
converges uniformly in $K$ as the superexponential factors in each sum dominate the exponential and polynomial factors.
From compact uniform convergence then follows holomorphy of the anisotropic Epstein zeta function on $W$, see \cite[Theorem 10.28]{rudin1987real}.

The case of $\bm x\in\Lambda$ and $\bm y\in D_{\Lambda^*}$ proceeds in analogy, as the apparent singularity in the summand $\bm z=\bm x$
vanishes for $\bm\alpha\neq \bm 0$ and for $\bm\alpha = \bm 0$ we recover the Epstein zeta function with an oscillatory prefactor which can be holomorphically extended in $(\nu,\bm y)\in \mathds C\times D_{\Lambda^*}$ by \cite[Th. 8]{buchheitComputationPropertiesEpstein2026}.

In case that $\bm y\in\Lambda^*$, we find that the term $\bm p= -\bm y$ is of the form
$$
\chi(\bm\alpha)(-2\pi i)^{-|\bm\alpha|}
\frac{(\pi/\lambda^2 )^{\nu/2}}{\Gamma(\nu/2)}
\frac{\lambda^{d+|\bm\alpha|}}{V_{\Lambda}}
\frac{2 p_{\bm\alpha,\bm\alpha/2}(\bm 0)}{(\nu-d-|\bm\alpha|)}
\,,
$$
which exhibits a simple pole at $\nu = d+|\bm\alpha|$ if $\chi(\bm\alpha)=1$, that is, if all components of $\bm\alpha$ are even. Thus for $\bm y\in \Lambda^*$ and $\chi(\bm\alpha)=1$, the anisotropic Epstein zeta function 
can be holomorphically extended to
$(\nu, \bm x)\in (\mathds C\setminus \{d+|\bm\alpha|\})\times D_{\Lambda}$. Finally, the discussion of the $\bm z = \bm x$ and $\bm p = -\bm y$ terms above also shows that for $\bm x \in \Lambda$ and $\bm y\in \Lambda^\ast$, the anisotropic Epstein zeta function is meromorphic in $\nu \in \mathds C$ with at most a simple pole at $\nu=d+|\bm\alpha|$ that appears if and only if $\chi(\bm\alpha)=1$.
\end{proof}

\begin{remark}
\label{hol-set-zeta}
The set zeta derivatives for shifted lattices can be holomorphically extended to the holomorphy regions of the anisotropic Epstein zeta function
as they differ by
the prefactor
$(-2\pi i)^{\pm |\bm\alpha|}e^{\pm 2\pi i \bm x\cdot\bm y}$
that does not vanish and that is entire in the superset $(\nu,\bm x,\bm y)\in\mathds C\times \mathds C^d\times \mathds C^d$ of all holomorphy regions, 
that is to say, \Cref{hol} holds verbatim with $Z_{\Lambda,\nu,\bm\alpha}(\bm x,\bm y)$ replaced by
$Z^{(\bm\alpha)}_{\Lambda-\bm x,\nu}(\bm y)$.
\end{remark}

We now discuss the handling of the various singularities in the vector arguments, enabling evaluations near singularities without cancellation.

\subsection{Singularity handling}
\label{sec:base-sing}

\begin{figure}
    \centering
    \includegraphics[width=1\linewidth]{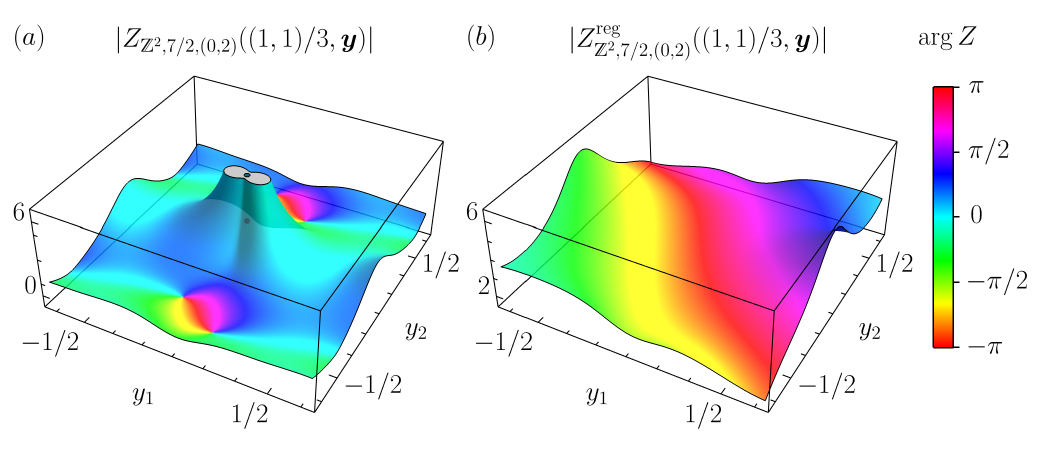}
    \caption{
    Two-dimensional anisotropic Epstein zeta function (a) and its regularized counterpart (b) for $\nu=7/2$, $\Lambda=\mathds Z^2$, $\bm x=(1,1)/3$ and $\bm\alpha=(0,2)$ as a function of $\bm y\in[-1/2,1/2]^2$,
    where the height of the surface encodes the absolute function values and the color encodes the arguments of the complex function values according to the cyclic colormap shown on the right.
    The anisotropic Epstein zeta function exhibits discontinuities at wave vector points in the reciprocal lattice, in particular at the origin $\bm y=\bm 0$, where the absolute value of the function at the origin, approximately equal to $3.11$, can be seen as a dark dot through the transparent surface (a). The regularized anisotropic Epstein zeta function as a function of \(\bm y\) is analytic in the elementary cell of the reciprocal lattice $\bm y \in [-1/2,1/2]^2$ and coincides with the anisotropic Epstein zeta function at $\bm y=\bm 0$ (b).}
    \label{fig:reg}
\end{figure}

The anisotropic Epstein zeta function exhibits power-law singularities near shift vectors in the lattice and near wave vectors in the reciprocal lattice.
These singularities appear due to the non-analyticity of the upper Crandall function at zero argument
$$
G_{\nu}(\bm u)\propto 
\frac{\Gamma(\nu/2)}{|\bm u|^{\nu}}
,\qquad 
\bm u\to\bm 0
$$
where $\bm u\in\mathds R^d$ and $\nu\in\mathds C$, $\Re{\nu}>0$, which is apparent from \Cref{def:crandall-upper} and the finiteness of $\Gamma(\nu/2)$.
The singularities at wave vectors in the reciprocal lattice correspond to Rayleigh--Wood anomalies whose handling is nontrivial in numerical algorithms for wave scattering, i.e.~in quasi-periodic structures \cite{denlinger2017fast,wood1902xlii,bruno2020evaluation}. 
The singularities in the wave vector around the origin as seen in \Cref{fig:reg} are of special interest, due to their appearance in integrals involving Epstein zeta function factors, 
such as in anomalous quantum spin wave dispersion relations
\cite{buchheitComputationPropertiesEpstein2026},
superconductors with long-range interactions \cite{buchheit2023exact}
or many-body zeta functions
\cite{buchheitEpsteinZetaMethod2025,robles-navarroExactLatticeSummations2025}.
Splitting the Epstein zeta function into a power-law singularity and a regular part enables integration without cancellation error.
A Taylor series of the resulting regularized Epstein zeta function, see \cite{buchheitEpsteinZetaMethod2025}, leads to what we will introduce here as the regularized anisotropic Epstein zeta function.

In general, any diverging summand in the derivative Crandall representation, \Cref{derivative-crandall-representation}, can be regularized as a function of the vector arguments, by replacing the upper Crandall function with the lower Crandall function, which is defined as follows.

\begin{definition}[Lower Crandall function]
\label{def:crandall-lower}
Let $\nu\in\mathds C\setminus(-2\mathds N)$ and $\bm z\in \mathds C^d$.
For $\bm z\neq \bm 0$ we define the lower Crandall function as
$$
g_{\nu}(\bm z)=
\frac{\gamma(\nu/2,\pi \bm{z}^2)}{(\pi \bm{z}^2)^{\nu/2}}
$$
and continuously extend to $g_{\nu}(\bm 0)=2/\nu$.
Here $\gamma(s,x)$, $x\in\mathds C$ denotes the lower incomplete Gamma function defined by the holomorphic extension of 
$$
\gamma(s,x)
=
\int_0^x t^s e^{-t}\frac{\d t}{t}
,\qquad 
\Re{s}>0
$$
to $s\in \mathds C\setminus (-\mathds N)$.
\end{definition}

For $\nu\notin (-2\mathds N)$,
the lower Crandall function is the regular part of the upper Crandall function as a function of the vector argument as the following lemma shows, which is a part of 
\cite[Lem. 4]{buchheitComputationPropertiesEpstein2026}.

\begin{lemma}[Properties of the lower Crandall function]
\label{crandall-lower-prop}
The lower Crandall function $g_{\nu}(\bm u)$ 
is jointly holomorphic in
$(\nu,\bm u)\in (\mathds C\setminus(-2\mathds N))\times \mathds C^d$.
Let $(\nu,\bm u) \in (\mathds C\setminus(-2\mathds N)) \times D$, then
we have the fundamental relation
$$
\frac{\Gamma(\nu/2)}{(\pi\bm u^2)^{\nu/2}}
=
g_{\nu}(\bm u)+G_{\nu}(\bm u)
$$  
where $D$ as in \Cref{def:crandall-upper}.
\end{lemma}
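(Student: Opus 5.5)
The argument works directly at the level of the incomplete Gamma functions, using $\gamma(s,x)+\Gamma(s,x)=\Gamma(s)$. Holomorphy is handled first, then the fundamental relation.

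\emph{Holomorphy.} We reduce $g_\nu$ to an entire function of $\bm z^2$. For $\Re{s}>0$, substituting $t=x\tau$ in the defining integral gives
$$
\frac{\gamma(s,x)}{x^{s}}=\int_0^1\tau^{s-1}e^{-x\tau}\d\tau=\sum_{k=0}^\infty\frac{(-x)^k}{k!\,(s+k)}\,.
$$
The series converges locally uniformly in $(s,x)\in(\mathds C\setminus(-\mathds N))\times\mathds C$, since its terms are bounded by $|x|^k/(k!\,\mathrm{dist}(s,-\mathds N))$. It is therefore jointly holomorphic there, and by uniqueness of continuation it represents the holomorphic extension of $\gamma(s,x)/x^{s}$.

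We now set $s=\nu/2$ and $x=\pi\bm z^2$. The map $\bm z\mapsto\pi\bm z^2$ is a polynomial, so $g_\nu(\bm z)$ is jointly holomorphic on $(\mathds C\setminus(-2\mathds N))\times\mathds C^d$. The $k=0$ term gives $g_\nu(\bm 0)=2/\nu$, which matches the continuous extension in \Cref{def:crandall-lower}. Joint holomorphy follows from Hartogs' theorem, or directly from locally uniform convergence of a series of jointly holomorphic terms.

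\emph{Fundamental relation.} We prove it first for $\Re\nu>0$ and $\bm u\in D\cap\mathds R^d$, i.e.\ real $\bm u\neq\bm 0$. There $x=\pi\bm u^2>0$, both defining integrals converge, and $\gamma(s,x)+\Gamma(s,x)=\int_0^\infty t^{s-1}e^{-t}\d t=\Gamma(s)$. Dividing by $x^{s}$ yields the identity. We then extend in two steps.

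First, extend to complex $\bm u\in D$ with $\Re\nu>0$. On $D$ we have $\Re(\bm u^2)\ge|\Re\bm u|^2-|\Im\bm u|^2>0$, so $(\pi\bm u^2)^{\nu/2}$ is defined by the principal branch and is holomorphic. Both sides are then holomorphic in $\bm u\in D$: $G_\nu$ by \Cref{crandall-upper-prop}, and $g_\nu$ by the first part.

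Second, extend in $\nu$. Both sides are holomorphic in $\nu\in\mathds C\setminus(-2\mathds N)$:
\begin{itemize}
\item $\Gamma(\nu/2)$ has poles only at $-2\mathds N$;
\item $g_\nu$ is holomorphic there by the first part;
\item $G_\nu$ is entire in $\nu$ by \Cref{crandall-upper-prop}.
\end{itemize}

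It remains to justify the identity theorem on $D$. The set $D$ is not connected as a whole, since it consists of cones around the real orthants' directions. However, every point of $D$ can be joined to a real point inside $D$ by scaling $\Im\bm u\to0$, because the condition $|\Re\bm u|>|\Im\bm u|$ is preserved along $\bm u_t=\Re\bm u+it\Im\bm u$ for $t\in[0,1]$. Hence each connected component of $D$ meets $\mathds R^d\setminus\{\bm 0\}$. The identity theorem then applies componentwise in $\bm u$ and in $\nu$, and the fundamental relation holds on all of $(\mathds C\setminus(-2\mathds N))\times D$.

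The main point needing care is consistency of branches. The expression $\gamma(s,x)$ alone is multivalued, which is why we work with $\gamma(s,x)/x^{s}$, a single-valued entire function of $x$. Likewise, $\Gamma(s,x)$ in the definition of $G_\nu$ must use the principal branch on $\Re x>0$, matching the one implicitly fixed in \Cref{def:crandall-upper}. Under these choices the decomposition is consistent.
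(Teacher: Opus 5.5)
The paper does not prove this lemma: it is imported as part of \cite[Lem.~4]{buchheitComputationPropertiesEpstein2026}. Your proposal therefore supplies a self-contained argument, and it is correct.

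The series $\gamma(s,x)/x^{s}=\sum_{k\ge 0}(-x)^k/(k!\,(s+k))$ gives joint holomorphy on $(\mathds C\setminus(-2\mathds N))\times\mathds C^d$ and the value $g_\nu(\bm 0)=2/\nu$. It also fixes the meaning of the definition at points $\bm z\neq\bm 0$ where $\pi\bm z^2$ is zero or lies on the branch cut. The relation then follows from $\gamma(s,x)+\Gamma(s,x)=\Gamma(s)$ for $x>0$, $\mathrm{Re}(s)>0$, followed by continuation, first in $\bm u$ and then in $\nu$.

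Two minor corrections:
\begin{enumerate}
\item Your claim that $D$ is disconnected holds only for $d=1$. For $d\ge 2$, the homotopy $\bm u\mapsto \mathrm{Re}(\bm u)+it\,\mathrm{Im}(\bm u)$, $t\in[0,1]$, retracts $D$ onto the connected set $\mathds R^d\setminus\{\bm 0\}$. This is harmless, because you only use that every component meets $\mathds R^d\setminus\{\bm 0\}$.
\item The identity theorem you need in $\bm u$ is the version for agreement on a nonempty relatively open subset of the totally real slice $\mathds R^d$, not on an open subset of $\mathds C^d$. It holds because at a real point all complex partial derivatives agree with the real ones, and this deserves a sentence.
\end{enumerate}

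You can also avoid the continuation in $\bm u$ by using the integral representations from the paper's appendix:
\[
g_\nu(\bm u)=\int_{-1}^{1}|t|^{\nu}e^{-\pi t^2\bm u^2}\,\mathrm{d}t/|t|,\qquad G_\nu(\bm u)=\int_{-1}^{1}|t|^{-\nu}e^{-\pi\bm u^2/t^2}\,\mathrm{d}t/|t|.
\]
After substituting $t\mapsto 1/t$ in the second, their sum is the Gaussian moment
\[
\int_{\mathds R}|t|^{\nu-1}e^{-\pi t^2\bm u^2}\,\mathrm{d}t=\Gamma(\nu/2)(\pi\bm u^2)^{-\nu/2},
\]
valid for $\mathrm{Re}(\nu)>0$ and $\mathrm{Re}(\bm u^2)>0$ with the principal branch. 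Only the continuation in $\nu$ then remains.
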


Special care needs to be taken in the case of $\nu\in(-2\mathds N)$, where a logarithmic factor appears due to the singularities of the Gamma function as will be seen in what follows.
We define the regularized anisotropic Epstein zeta function as the difference of the anisotropic Epstein zeta function with the derivative of the wave vector power-law singularity at the origin as follows.

\begin{definition}[Regularized anisotropic Epstein zeta function]
\label{def-zeta-aniso-reg}
Let $\Lambda$ be a $d$-dimensional lattice, let $\bm x,\bm y\in\mathds R^d$,
 let $\nu\in \mathds C$ and let $\bm\alpha\in\mathds N^d$.
We define the regularized anisotropic Epstein zeta function via 
\[Z_{\Lambda, \nu,\bm\alpha}^{\mathrm{reg}}(\bm x,\bm y) =e^{2\pi i\bm x\cdot\bm y} Z_{\Lambda,\nu,\bm\alpha}(\bm x,\bm y) -\frac{\hat s^{(\bm\alpha)}_{\nu}(\bm y)}{(-2\pi i)^{|\bm\alpha|}V_{\Lambda}}
,\qquad \bm y\neq \bm 0
\]
and continuously extend to $\bm y = \bm 0$. Here,
$\hat s^{(\bm\alpha)}_\nu$ denotes the $\bm\alpha$-derivative of the distributional Fourier transform of $s_\nu(\bm\cdot)=\vert \bm \cdot\vert^{-\nu}$,
which for $\nu \not\in (d+2\mathds N)$ is given by
\cite[p. 363]{gelfand1964generalizedI}
\[\hat{s}_\nu(\bm y) = \frac{\pi^{\nu/2}}{\Gamma(\nu/2)}\Gamma\big((d-\nu)/2\big)
(\pi \bm y^2)^{(\nu - d)/2}.
\]
For $\nu= d+ 2 \ell$ with $\ell\in\mathds N$, this Fourier transform is uniquely defined up to a polynomial of order $2\ell$, see \cite{hormander2003analysisI}. We adopt the choice
\[
\hat s_{d+2\ell}(\bm y)= \frac{\pi^{\ell+d/2}}{\Gamma(\ell+d/2)}\frac{(-1)^{\ell+1}}{\ell!} 
( \pi \bm y^2 )^{\ell} \log (\pi  \bm y^{2}).
\]
In the case of $\bm\alpha=\bm 0$
we write $Z^{\rm reg}_{\Lambda,\nu}(\bm x,\bm y)$ which corresponds to the regularized Epstein zeta function \cite[Def. 5]{buchheitComputationPropertiesEpstein2026}.
\end{definition}

Equivalently, the regularized anisotropic Epstein zeta function can be understood as the derivative of the regularized Epstein zeta function up to a prefactor that depends on the absolute value of the multi-index.

\begin{lemma}
\label{aniso-reg-derivative-id}
\anisoregreq
Then
$$
\diffop^{\bm\alpha} 
Z^{\rm reg}_{\Lambda,\nu}(\bm x,\bm y)
=
(-2\pi i)^{|\bm\alpha|}
Z^{\rm reg}_{\Lambda,\nu,\bm\alpha}(\bm x,\bm y)\,.
$$
\end{lemma}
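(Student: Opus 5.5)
Since the statement concerns $\bm y\in(\mathds R^d\setminus\Lambda^*)\cup\{\bm 0\}$, I split into two cases: $\bm y\neq\bm 0$, where everything is smooth and the identity follows from the definitions, and $\bm y=\bm 0$, which needs an analytic (continuity) argument.

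\emph{Case $\bm y\notin\Lambda^*$.} On the open set $\mathds R^d\setminus\Lambda^*$ (so $\bm y\neq\bm 0$), \Cref{def-zeta-aniso-reg} gives
\[
Z^{\rm reg}_{\Lambda,\nu}(\bm x,\bm y)=e^{2\pi i\bm x\cdot\bm y}Z_{\Lambda,\nu}(\bm x,\bm y)-\frac{\hat s_\nu(\bm y)}{V_\Lambda}.
\]
By \Cref{zeta-ani-set-zeta-der}, the first term equals the set zeta function $Z_{\Lambda-\bm x,\nu}(\bm y)$. By \Cref{hol} and \Cref{hol-set-zeta}, this function is holomorphic, hence smooth, in $\bm y$ away from $\Lambda^*$. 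The function $\hat s_\nu$ is smooth on $\bm y\neq\bm 0$, including the logarithmic choice for $\nu\in d+2\mathds N$. I then apply $\diffop^{\bm\alpha}$ termwise. The key point is that $\diffop^{\bm\alpha}$ of the meromorphic continuation of $Z_{\Lambda-\bm x,\nu}$ coincides with the continuation $Z^{(\bm\alpha)}_{\Lambda-\bm x,\nu}$ of \Cref{defsetzeta} when $\bm y\notin\Lambda^*$. To see this, note that both sides are holomorphic in $\nu$ (by \Cref{hol}, the derivative of a jointly holomorphic function is jointly holomorphic). They agree for $\Re\nu>d+|\bm\alpha|$ by differentiating under the absolutely convergent sum, as in the proof of \Cref{derivative-crandall-representation}. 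The identity theorem then gives equality for all $\nu$. Applying \Cref{zeta-ani-set-zeta-der} once more converts $Z^{(\bm\alpha)}_{\Lambda-\bm x,\nu}(\bm y)$ into $(-2\pi i)^{|\bm\alpha|}e^{2\pi i\bm x\cdot\bm y}Z_{\Lambda,\nu,\bm\alpha}(\bm x,\bm y)$. Subtracting $\hat s^{(\bm\alpha)}_\nu/V_\Lambda$ gives exactly $(-2\pi i)^{|\bm\alpha|}Z^{\rm reg}_{\Lambda,\nu,\bm\alpha}(\bm x,\bm y)$ by the definition.

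\emph{Case $\bm y=\bm 0$.} Here I show that $Z^{\rm reg}_{\Lambda,\nu}(\bm x,\cdot)$ is smooth (indeed analytic) on a neighbourhood $U$ of $\bm 0$ containing no other reciprocal lattice point. Once that is established, its derivative at $\bm 0$ is the limit of derivatives at $\bm y\to\bm 0$, and by the first case these equal $(-2\pi i)^{|\bm\alpha|}Z^{\rm reg}_{\Lambda,\nu,\bm\alpha}(\bm x,\bm y)$. That limit is exactly the continuous extension used to define $Z^{\rm reg}_{\Lambda,\nu,\bm\alpha}(\bm x,\bm 0)$; in particular the argument shows the extension exists.

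For the regularity, I use the Crandall representation (\Cref{derivative-crandall-representation} with $\bm\alpha=\bm 0$) for $\nu\notin d+2\mathds N$. I isolate the summand $\bm p=\bm 0$, namely $G_{d-\nu}(\lambda\bm y)$, and split it with \Cref{crandall-lower-prop} into $\Gamma((d-\nu)/2)(\pi\lambda^2\bm y^2)^{(\nu-d)/2}-g_{d-\nu}(\lambda\bm y)$. Multiplied by the prefactor $(\pi/\lambda^2)^{\nu/2}\lambda^d/(\Gamma(\nu/2)V_\Lambda)$, the singular part is exactly $\hat s_\nu(\bm y)/V_\Lambda$, so it cancels against the subtracted term. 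The remainder, $-g_{d-\nu}(\lambda\bm y)$ together with the series over $\bm p\neq\bm 0$ and the direct series, is analytic in $\bm y\in U$. This follows from \Cref{crandall-lower-prop} and the locally uniform convergence argument in the proof of \Cref{hol}.

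\textbf{Main obstacle: $\nu=d+2\ell$.} Here $g_{d-\nu}$ is undefined because $d-\nu=-2\ell\in-2\mathds N$, and $\Gamma((d-\nu)/2)$ has a pole. I would handle this by continuity in $\nu$. For fixed $\bm y\in U\setminus\{\bm 0\}$, the limit $\nu\to d+2\ell$ of $\hat s_\nu(\bm y)$ minus its polynomial pole part produces the logarithmic formula chosen in \Cref{def-zeta-aniso-reg}. Likewise, the combination $-g_{d-\nu}(\lambda\bm y)+\frac{\lambda^{d}(\pi/\lambda^2)^{\nu/2}}{\Gamma(\nu/2)}\bigl[\Gamma(\tfrac{d-\nu}{2})(\pi\lambda^2\bm y^2)^{(\nu-d)/2}\bigr]-\hat s_\nu$ stays analytic in $\bm y$ in the limit. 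The pole terms are polynomials of degree $2\ell$ in $\bm y$ and cancel. Thus the regular part remains analytic near $\bm 0$. Alternatively, I would cite the analyticity of $Z^{\rm reg}_{\Lambda,\nu}$ near $\bm 0$ established in \cite{buchheitComputationPropertiesEpstein2026}, which covers all $\nu$, and reduce the case $\bm y=\bm 0$ to that result.
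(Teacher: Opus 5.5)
Your proposal is correct and follows essentially the paper's route. For $\bm y\neq\bm 0$ you obtain the identity from \Cref{def-zeta-aniso-reg} and the two equations of \Cref{zeta-ani-set-zeta-der}, and at $\bm y=\bm 0$ you argue by continuity from the analyticity of $Z^{\rm reg}_{\Lambda,\nu}(\bm x,\cdot)$ near the origin \cite[Th. 11]{buchheitComputationPropertiesEpstein2026}. You go slightly beyond the paper in two places: you justify via the identity theorem in $\nu$ that differentiating the continued set zeta function gives the continued set zeta derivative off $\Lambda^*$, and you deduce existence of the continuous extension at $\bm 0$ from the smoothness of the left-hand side rather than by forward reference to \Cref{hol-reg}.
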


\begin{proof}
By 
\cite[Th. 11]{buchheitComputationPropertiesEpstein2026}
we have that the regularized Epstein zeta function can be holomorphically extended to $\bm y\in D_{\Lambda^*\setminus \{\bm 0\}}$,
so the left-hand side is well defined.
Let $\bm y\neq \bm 0$.
The regularized Epstein zeta function has the same oscillatory prefactor convention as the set zeta derivatives so that by \Cref{def-zeta-aniso-reg} and the first equation of \Cref{zeta-ani-set-zeta-der} we have
$$
\diffop^{\bm\alpha}
Z^{\rm reg}_{\Lambda,\nu}(\bm x,\bm y)
=
Z^{(\bm\alpha)}_{\Lambda-\bm x,\nu}(\bm y)
-
\frac{1}{V_{\Lambda}}
\hat s^{(\bm\alpha)}_{\nu}(\bm y)
$$
By the second equation of \Cref{zeta-ani-set-zeta-der} the right-hand side is equal to
$$
(-2\pi i)^{|\bm\alpha|}
e^{2\pi i\bm x\cdot\bm y} Z_{\Lambda,\nu,\bm\alpha}(\bm x,\bm y) -\frac{\hat s^{(\bm\alpha)}_{\nu}(\bm y)}{V_{\Lambda}}
$$
and factoring out the prefactor and \Cref{def-zeta-aniso-reg} yields the representation for the derivatives.
In \Cref{hol-reg} we will show that the regularized anisotropic Epstein zeta function continuously extends to $\bm y=\bm 0$ so that the statement holds at the origin.
\end{proof}

Due to the removal of a designated singularity at a reciprocal lattice point, the regularized anisotropic Epstein zeta function does not satisfy the translation symmetry in the wave vector shown in \Cref{lem:syms}, but a scaling symmetry can be derived as follows.

\begin{corollary}[Scaling symmetry of the regularized anisotropic Epstein zeta functions]
\label{lem:syms-reg}
\anisoregreq
Let $\mu>0$.
Then, for $\nu\notin (d+2\mathds N)$, we have
$$
Z^{\rm reg}_{\Lambda,\nu,\bm\alpha}(\bm x,\bm y) 
=
\mu^{\nu-|\bm\alpha|} 
Z^{\rm reg}_{\mu\Lambda,\nu,\bm\alpha}(\mu\bm x,\bm y/\mu)\,.
$$
Furthermore define $Y_{\ell}(\bm y)=(\bm y^2)^{\ell}/\ell!$ with $\ell\in\mathds N$.
Then for $\nu= d+2\ell$ we have
$$
Z^{\rm reg}_{\Lambda,\nu,\bm\alpha}(\bm x,\bm y) 
=
\mu^{\nu-|\bm\alpha|} 
Z^{\rm reg}_{\mu\Lambda,\nu,\bm\alpha}(\mu\bm x,\bm y/\mu) 
+
\frac{(-1)^{\ell} \pi^{2\ell+d/2}\log(\mu^2)}{(-2\pi i)^{|\bm\alpha|}V_{\Lambda}\Gamma(\ell+d/2)}Y_{\ell}^{(\bm\alpha)}(\bm y)\,.
$$
\end{corollary}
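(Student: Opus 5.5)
The approach is to reduce everything to the scaling symmetry of the non-regularized function in \Cref{lem:syms} together with the homogeneity of the distributional Fourier transform $\hat s_\nu$. First take $\bm y\neq\bm 0$, so that \Cref{def-zeta-aniso-reg} applies directly on both sides. The hypotheses transfer to the scaled lattice: $(\mu\Lambda)^*=\Lambda^*/\mu$, so $\bm y\notin\Lambda^*$ if and only if $\bm y/\mu\notin(\mu\Lambda)^*$, and $\bm y=\bm 0$ if and only if $\bm y/\mu=\bm 0$.

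Write out $\mu^{\nu-|\bm\alpha|}Z^{\rm reg}_{\mu\Lambda,\nu,\bm\alpha}(\mu\bm x,\bm y/\mu)$ using the definition and treat its two pieces separately.

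The zeta piece carries the phase $e^{2\pi i\,\mu\bm x\cdot\bm y/\mu}=e^{2\pi i\bm x\cdot\bm y}$, which is unchanged by the scaling. By the scaling part of \Cref{lem:syms}, $\mu^{\nu-|\bm\alpha|}Z_{\mu\Lambda,\nu,\bm\alpha}(\mu\bm x,\bm y/\mu)=Z_{\Lambda,\nu,\bm\alpha}(\bm x,\bm y)$. So the zeta parts match exactly.

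The singular piece is $\mu^{\nu-|\bm\alpha|}\hat s^{(\bm\alpha)}_\nu(\bm y/\mu)/((-2\pi i)^{|\bm\alpha|}V_{\mu\Lambda})$, with $V_{\mu\Lambda}=\mu^dV_\Lambda$. Here $\hat s^{(\bm\alpha)}_\nu(\bm y/\mu)$ means the derivative evaluated at $\bm y/\mu$. I would apply $\diffop^{\bm\alpha}$ to the function $\bm y\mapsto\hat s_\nu(\bm y/\mu)$ and use the chain rule, which gives
$$
\diffop^{\bm\alpha}\big[\hat s_\nu(\bm y/\mu)\big]=\mu^{-|\bm\alpha|}\hat s_\nu^{(\bm\alpha)}(\bm y/\mu).
$$
The argument then splits by whether $\nu$ is exceptional.

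\emph{Case $\nu\notin d+2\mathds N$.} The explicit formula shows $\hat s_\nu$ is homogeneous of degree $\nu-d$, so $\hat s_\nu(\bm y/\mu)=\mu^{d-\nu}\hat s_\nu(\bm y)$. Differentiating both sides and using the chain rule gives
$$
\hat s^{(\bm\alpha)}_\nu(\bm y/\mu)=\mu^{d-\nu+|\bm\alpha|}\hat s^{(\bm\alpha)}_\nu(\bm y).
$$
Multiplying by $\mu^{\nu-|\bm\alpha|-d}$ returns exactly $\hat s^{(\bm\alpha)}_\nu(\bm y)/((-2\pi i)^{|\bm\alpha|}V_\Lambda)$. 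This proves the first identity.

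\emph{Case $\nu=d+2\ell$.} Set $c=\pi^{\ell+d/2}(-1)^{\ell+1}/(\Gamma(\ell+d/2)\ell!)$, so that $\hat s_{d+2\ell}(\bm y)=c\,(\pi\bm y^2)^\ell\log(\pi\bm y^2)$. Substituting $\bm y/\mu$ gives
$$
\hat s_{d+2\ell}(\bm y/\mu)=\mu^{-2\ell}\big[\hat s_{d+2\ell}(\bm y)-c\,\pi^\ell\ell!\,\log(\mu^2)\,Y_\ell(\bm y)\big].
$$
Differentiate this with the same chain rule and multiply by $\mu^{2\ell-|\bm\alpha|}\mu^{|\bm\alpha|}\mu^{-2\ell}=1$ (the net power of $\mu$). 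This produces $\hat s^{(\bm\alpha)}_{d+2\ell}(\bm y)$ plus the extra term
$$
-c\,\pi^\ell\ell!\,\log(\mu^2)\,Y^{(\bm\alpha)}_\ell(\bm y),\qquad c\,\pi^\ell\ell!=\frac{(-1)^{\ell+1}\pi^{2\ell+d/2}}{\Gamma(\ell+d/2)}.
$$
This term enters $Z^{\rm reg}$ with a minus sign. Moving it to the other side of the identity yields the stated correction $(-1)^\ell\pi^{2\ell+d/2}\log(\mu^2)\,Y^{(\bm\alpha)}_\ell(\bm y)/((-2\pi i)^{|\bm\alpha|}V_\Lambda\Gamma(\ell+d/2))$.

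Finally, the case $\bm y=\bm 0$ follows by letting $\bm y\to\bm 0$ in both identities. The regularized function extends continuously to the origin (shown in the paper's later \Cref{hol-reg}), and $Y_\ell^{(\bm\alpha)}$ is a polynomial, hence continuous.

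The only delicate point is bookkeeping. One must consistently read $\hat s^{(\bm\alpha)}_\nu(\bm y/\mu)$ as a derivative evaluated at a point and not as a derivative of the composed function, and track the exact prefactor and sign of the logarithmic term. The rest is mechanical.
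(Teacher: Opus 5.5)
Your proof is correct and follows the paper's argument. You reduce to the scaling symmetry of $Z_{\Lambda,\nu,\bm\alpha}$ from \Cref{lem:syms} and use the homogeneity of $\hat s_\nu$, or its logarithmic scaling defect when $\nu=d+2\ell$, differentiated via the chain rule. You then extend to $\bm y=\bm 0$ by continuity; your explicit prefactor and sign bookkeeping for the logarithmic correction term checks out.
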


\begin{proof}
Let $\bm y\neq \bm 0$.
For $\nu\notin (d+2\mathds N)$  the scaling symmetry of the regularized anisotropic Epstein zeta function follows from the scaling symmetry of the anisotropic Epstein zeta function, 
from \Cref{def-zeta-aniso-reg},
$$
\frac1{V_{\mu\Lambda}}\hat{s}_{\nu}(\bm y/\mu)
=
\frac 1{\mu^dV_{\Lambda}}\mu^{d-\nu}\hat{s}_\nu(\bm y)\,,
$$
where a $\mu^{-|\bm\alpha|}$ factor appears on the left-hand side by the chain rule when applying $\diffop^{\bm\alpha}$ to both sides of the equation above.
For $\nu=d+2\ell$ we obtain the correction term from
\[
\frac1{V_{\mu\Lambda}}\hat{s}_{d+2\ell}(\bm y/\mu)
=
\frac1{\mu^{d}V_{\Lambda}}\mu^{-2\ell}\Big(\hat{s}_{d+2\ell}(\bm y)
+
(-1)^{\ell} \frac{\pi^{2\ell+d/2}\log (\mu^2)}{\Gamma(\ell+d/2)}Y_{\ell}(\bm y)
\Big)\,.
\]
Differentiating both sides and applying the chain rule gives the statement, where we note that by the regularity of the regularized anisotropic Epstein zeta function and since the correction term is a polynomial in $\bm y$, all sides of the equations continuously extend to $\bm y = \bm 0$.
\end{proof}

The Crandall type representation for the regularized anisotropic Epstein zeta function, where we replace the diverging upper Crandall function with the lower Crandall function reads as follows.

\begin{theorem}[Derivative Crandall representation for the regularized anisotropic Epstein zeta function]
\label{low-ders-aniso-reg}
Let $\Lambda$ be a $d$-dimensional lattice, let $\bm x \in\mathds{R}^d$, let
$\bm y \in (\mathds R^d\setminus\Lambda^*)\cup\{\bm 0\}$, let $\nu \in \mathds{C}$ and let $\bm\alpha\in\mathds N^d$.
Then for $\lambda > 0$,
\begin{align*}
Z^{\rm reg}_{\Lambda,\nu,\bm\alpha}(\bm x,\bm y)
=
&\frac{(\pi/\lambda^2)^{\nu/2}}{\Gamma(\nu/2)}
\Bigg[
\sum_{\bm{z}\in\Lambda-\bm x}
\bm z^{\bm\alpha}G_{\nu}(\bm z/\lambda)e^{-2\pi i\bm y\cdot \bm z} 
\\
&\quad
+
(-2\pi i)^{-|\bm\alpha|}
\frac{\lambda^{d}}{V_{\Lambda}}
\Big[
G_{d-\nu,\lambda}^{\mathrm{reg},(\bm\alpha)}(\bm y)+
\lambda^{|\bm\alpha|}
\sum_{\substack{\bm p\in\Lambda^{\ast} \\ \bm p \neq \bm 0}}G_{d-\nu}^{(\bm\alpha)}
(\lambda (\bm p+\bm y))e^{-2\pi i \bm{x}\cdot \bm p}
\Big]\Bigg]
\end{align*}
where the regularized upper Crandall function is defined as
$$
G_{\nu,\lambda}^{\rm reg}(\bm y)
=
G_{\nu}(\lambda\bm y)-
\lambda^{-\nu}\pi^{-(d-\nu)/2}\Gamma((d-\nu)/2)\hat{s}_{d-\nu}(\bm y).
$$
If \(\nu\notin -2\mathds N\)
or $\nu=-2\ell$ with $\ell\in\mathds N$ and $\ell <|\lceil \bm\alpha/2\rceil|$ we have the $\bm\alpha$-derivatives
$$
G_{\nu,\lambda}^{\mathrm{reg},(\bm\alpha)}(\bm y)=
-\lambda^{|\bm\alpha|}g_{\nu}^{(\bm\alpha)}(\lambda\bm y)
\,,
$$
and for any $\ell\in\mathds N$ it holds that
$$
G^{\rm reg,(\bm\alpha)}_{-2\ell,\lambda}(\bm y)
=(-\pi \lambda^2)^{\ell}(H_\ell-\gamma -2\log\lambda)Y_\ell^{(\bm\alpha)}(\bm y)
-\sum_{\substack{j=0 \\ j\neq \ell}}^{\infty}\frac{(-\pi \lambda^2)^j}{(j-\ell)}Y_j^{(\bm\alpha)}(\bm y)\,,
$$
where $Y_\ell$ as in \Cref{lem:syms-reg}.
Here, \(\gamma\) is the Euler–Mascheroni constant,
 \(H_\ell\) is the \(\ell\)'th harmonic number
and the ceiling function $\lceil \bm\cdot \rceil$ is applied component wise.
\end{theorem}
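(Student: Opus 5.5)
We start from the derivative Crandall representation of the set zeta derivatives established at the beginning of the proof of \Cref{derivative-crandall-representation}, first for $\Re\nu>d+|\bm\alpha|$ and $\bm y\notin\Lambda^*$. By \Cref{def-zeta-aniso-reg} and \Cref{zeta-ani-set-zeta-der}, $e^{2\pi i\bm x\cdot\bm y}Z_{\Lambda,\nu,\bm\alpha}(\bm x,\bm y)=(-2\pi i)^{-|\bm\alpha|}Z^{(\bm\alpha)}_{\Lambda-\bm x,\nu}(\bm y)$. Consequently the direct-space sum reproduces the first series of the claim, and the reciprocal sum splits into the term $\bm p=\bm 0$, namely $\lambda^{d+|\bm\alpha|}G^{(\bm\alpha)}_{d-\nu}(\lambda\bm y)/V_\Lambda$, plus the stated sum over $\bm p\neq\bm 0$.

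The key observation is then that the prefactor identity
\[
\frac{(\pi/\lambda^2)^{\nu/2}}{\Gamma(\nu/2)}\,\frac{\lambda^{d}}{V_\Lambda}\,\lambda^{-(d-\nu)}\pi^{-(d-\nu)/2}\Gamma((d-\nu)/2)\,\hat s_{d-\nu}^{(\bm\alpha)}(\bm y)
\]
reduces to $\hat s^{(\bm\alpha)}_\nu(\bm y)/V_\Lambda$ for $\nu\notin d+2\mathds N$. This follows from the explicit formula for $\hat s_\nu$ in \Cref{def-zeta-aniso-reg}, which expresses $\hat s_\nu$ through a power $(\pi\bm y^2)^{(\nu-d)/2}$, after a short Gamma function bookkeeping. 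Hence subtracting $\hat s_\nu^{(\bm\alpha)}/((-2\pi i)^{|\bm\alpha|}V_\Lambda)$ exactly turns the $\bm p=\bm 0$ term into $\lambda^dG^{\mathrm{reg},(\bm\alpha)}_{d-\nu,\lambda}(\bm y)$, with the chain rule producing the factor $\lambda^{|\bm\alpha|}$ inside the definition. This proves the representation for $\bm y\notin\Lambda^*$ and $\Re\nu$ large.

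Next we identify $G^{\mathrm{reg}}$. For $d-\nu\notin-2\mathds N$, \Cref{crandall-lower-prop} gives $G_{d-\nu}(\lambda\bm y)=\Gamma((d-\nu)/2)(\pi\lambda^2\bm y^2)^{-(d-\nu)/2}-g_{d-\nu}(\lambda\bm y)$. The first term equals the subtracted $\hat s$ term, so $G^{\rm reg}_{d-\nu,\lambda}(\bm y)=-g_{d-\nu}(\lambda\bm y)$, and differentiating yields the stated formula. Since $g$ is entire in $\bm y$ and holomorphic in its order away from $-2\mathds N$, the right-hand side is holomorphic in $\nu$ and continuous (indeed analytic) at $\bm y=\bm 0$. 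By uniqueness of the holomorphic continuation (\Cref{hol} for the other terms), the identity therefore extends to all admissible $\nu$ and, by continuity, to $\bm y=\bm 0$.

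For the exceptional orders, i.e.\ $d-\nu=-2\ell$, we expand $g_{-2\ell+2\varepsilon}$ in the power series $\gamma(s,x)x^{-s}=\sum_j(-x)^j/(j!(s+j))$, so that $g_{\mu}(\bm u)=2\sum_j(-\pi\bm u^2)^j/(j!(\mu+2j))$. We then take the limit $\varepsilon\to0$ of $-g-\text{(pole of }\Gamma\text{ times power)}$. The $j=\ell$ term has a pole, which cancels against the pole of $\Gamma(-\ell+\varepsilon)(\pi\lambda^2\bm y^2)^{\ell-\varepsilon}$. Expanding $\Gamma(-\ell+\varepsilon)=\frac{(-1)^\ell}{\ell!}(1/\varepsilon+H_\ell-\gamma+O(\varepsilon))$ yields the constant $H_\ell-\gamma$, together with a $\log(\pi\lambda^2\bm y^2)$. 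The $\log(\pi\bm y^2)$ part matches the chosen $\hat s_{d+2\ell}$ convention, which leaves $-2\log\lambda$. The remaining $j\neq\ell$ terms give $-\sum_j(-\pi\lambda^2)^jY_j/(j-\ell)$.

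The main obstacle is this limit computation: checking that the logarithmic normalization of $\hat s_{d+2\ell}$ in \Cref{def-zeta-aniso-reg}, including its prefactor $\pi^{\ell+d/2}(-1)^{\ell+1}/(\Gamma(\ell+d/2)\ell!)$, combines with the outer factor $(\pi/\lambda^2)^{\nu/2}/\Gamma(\nu/2)$ to leave exactly $(-\pi\lambda^2)^\ell(H_\ell-\gamma-2\log\lambda)Y_\ell$. Finally, we note that when $\ell<|\lceil\bm\alpha/2\rceil|$, the $\bm\alpha$-derivative annihilates $Y_\ell$ and all $Y_j$ with $j$ too small. The only surviving terms are then those with $j>\ell$, which coincide with $-\lambda^{|\bm\alpha|}g^{(\bm\alpha)}_{-2\ell}(\lambda\bm y)$ defined by the regular part of the series; this explains the extended range of the first formula.
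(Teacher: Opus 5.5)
Your route is the paper's: subtract $\hat s^{(\bm\alpha)}_\nu/((-2\pi i)^{|\bm\alpha|}V_\Lambda)$ from the $\bm p=\bm 0$ term of the derivative Crandall representation. Then use the fundamental relation to get $-g$, and for $d-\nu=-2\ell$ use the expansion of $z^\ell\Gamma(-\ell,z)$. Your $\varepsilon$-limit re-derives that expansion instead of citing Abramowitz--Stegun, which is legitimate because $G_\mu$ is entire in its order $\mu$.

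The key prefactor identity, however, is wrong as displayed. Substituting $\nu\mapsto d-\nu$ in the definition of $G^{\rm reg}_{\nu,\lambda}$ gives the subtracted term $\lambda^{\nu-d}\pi^{-\nu/2}\Gamma(\nu/2)\hat s_\nu(\bm y)$. It does not give $\lambda^{-(d-\nu)}\pi^{-(d-\nu)/2}\Gamma((d-\nu)/2)\hat s_{d-\nu}(\bm y)$. Your expression actually evaluates to $V_\Lambda^{-1}\nabla^{\bm\alpha}|\bm y|^{-\nu}$, i.e.\ $s_\nu$ rather than $\hat s_\nu$. It also contradicts your next paragraph, where you correctly identify the subtracted term with $\Gamma((d-\nu)/2)(\pi\lambda^2\bm y^2)^{-(d-\nu)/2}$. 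With the correct substitution you need neither Gamma bookkeeping nor the explicit form of $\hat s_\nu$:
\[
\frac{(\pi/\lambda^2)^{\nu/2}}{\Gamma(\nu/2)}\,\lambda^{d}\cdot\lambda^{\nu-d}\pi^{-\nu/2}\Gamma(\nu/2)=1 .
\]
So the subtraction works verbatim for every $\nu$, including $\nu\in d+2\mathds N$ with the logarithmic convention for $\hat s_\nu$.

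This slip leaves a real gap. Because of it you restrict to $\nu\notin d+2\mathds N$ and then appeal to holomorphic continuation in $\nu$. That continuation cannot reach $\nu=d+2\ell$ with $\ell\ge|\lceil\bm\alpha/2\rceil|$. There $\hat s^{(\bm\alpha)}_\nu$ has a pole with residue proportional to $Y^{(\bm\alpha)}_\ell\neq 0$, and hence so does the regularized function in the generic convention, cf.\ \Cref{hol-reg}. The logarithmic value at $\nu=d+2\ell$ is therefore not a limit of the generic case. Your exceptional-order paragraph only evaluates $G^{\rm reg}_{-2\ell,\lambda}$; it does not prove the representation at these $\nu$.

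The fix is what the paper does. For $\bm y\notin\Lambda^*$ the derivative Crandall representation already holds for all $\nu\in\mathds C$, so subtract directly at each $\nu$ and extend to $\bm y=\bm 0$ by continuity. No continuation in $\nu$ is needed.

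Two minor slips in the limit computation:
\begin{enumerate}
\item The combination is $\Gamma(-\ell+\varepsilon)(\pi\lambda^2\bm y^2)^{\ell-\varepsilon}-g_{-2\ell+2\varepsilon}(\lambda\bm y)$. With a minus sign in front of the Gamma term the two poles would add rather than cancel.
\item The outer factor $(\pi/\lambda^2)^{\nu/2}/\Gamma(\nu/2)$ plays no role there. The subtracted term in $G^{\rm reg}_{-2\ell,\lambda}$ is simply $\frac{(-1)^{\ell+1}}{\ell!}(\pi\lambda^2\bm y^2)^\ell\log(\pi\bm y^2)$, and $\log(\pi\bm y^2)-\log(\pi\lambda^2\bm y^2)=-2\log\lambda$ produces the constant.
\end{enumerate}
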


\begin{proof}
Let $\bm y\neq \bm 0$.
By \Cref{zeta-ani-set-zeta-der}, we have
$$
Z^{\rm reg}_{\Lambda,\nu,\bm\alpha}(\bm x,\bm y)
=
(-2\pi i)^{-|\bm\alpha|}\Big(
Z^{(\bm\alpha)}_{\Lambda-\bm x,\nu}(\bm y)
-\frac{1}{V_{\Lambda}}\hat s_{\nu}^{(\bm\alpha)}(\bm y)\Big)
$$
and the representation is immediate from subtracting the singularity from the summand $\bm p=\bm 0$ in the derivative Crandall representation, \Cref{derivative-crandall-representation}, with the prefactor $e^{2\pi i\bm x\cdot\bm y}$ absorbed.
For $\nu\notin (-2\mathds N)$, we have
$$
G_{\nu,\lambda}^{\mathrm{reg},(\bm\alpha)}(\bm y)
=\diffop^{\bm\alpha}\Big[G_{\nu}(\lambda \bm y) - \frac{\Gamma(\nu/2)}{(\pi \lambda^2 \bm y^2)^{\nu/2}}\Big]
$$
where the right-hand side simplifies to the derivatives of lower Crandall function with a minus sign by the fundamental relation, see \Cref{crandall-lower-prop}.
For $\ell\in\mathds N$, write
$$
G_{-2\ell,\lambda}^{\operatorname{reg},(\bm\alpha)}(\bm y)=
\diffop^{\bm\alpha}\Big[
(\pi\lambda^2\bm y^2)^\ell\Big(\Gamma(-\ell,\pi\lambda^2\bm y^2)
+\frac{(-1)^\ell}{\ell!} (\log (\pi\lambda^2\bm y^2)-\log\lambda^2)\Big)\Big]
$$
and apply the series representation
$$z^{\ell}\Gamma(-\ell,z)=
\frac{(-z)^\ell}{\ell!}(H_\ell-\gamma-\log z)-\sum_{\substack{j=0 \\ j \neq \ell}}^{\infty}
\frac{(-z)^j}{(j-\ell)j!}
$$
see \cite[Eq. (5.1.12) \& (6.5.9)]{abramowitz}
at \(z=\pi\lambda^2\bm y^2\) to obtain
$$
G^{\rm reg,(\bm\alpha)}_{-2\ell,\lambda}(\bm y)
=(-\pi \lambda^2)^{\ell}(H_\ell-\gamma -2\log\lambda)Y_\ell^{(\bm\alpha)}(\bm y)
-\sum_{\substack{j=0 \\ j\neq \ell}}^{\infty}\frac{(-\pi \lambda^2)^j}{(j-\ell)}Y_j^{(\bm\alpha)}(\bm y)\,.
$$
For $\ell <|\lceil\bm\alpha/2\rceil|$, the order of the derivative is larger than the order of the polynomial $(\bm y^2)^{\ell}$ so that $Y_\ell^{(\bm\alpha)}(\bm y)$ vanishes and we have
$$
G^{\rm reg,(\bm\alpha)}_{-2\ell,\lambda}(\bm y)
=
-\diffop^{\bm\alpha}\sum_{\substack{j=0 \\ j\neq \ell}}^{\infty}\frac{(-\pi \lambda^2 \bm y^2)^j}{(j-\ell)j!}
\,.
$$
The series on the right-hand side is the series expansion of $g_{\nu}(\lambda \bm y)$ without the singular term $j=\ell$, which $\diffop^{\bm\alpha}$ annihilates, by \cite[Eq. (6.5.4) \& (6.5.29)]{abramowitz}.
By the chain rule we have $\diffop^{\bm\alpha}g_{\nu}(\lambda\bm y)=\lambda^{|\bm\alpha|}g^{(\bm\alpha)}_{\nu}(\lambda\bm y)$ which concludes the proof.
\end{proof}

\begin{remark}
\label{remark:hol-low-ders-aniso-reg}
\Cref{low-ders-aniso-reg} corresponds to the derivative Crandall representation for the regularized Epstein zeta function \cite[Th. 9]{buchheitComputationPropertiesEpstein2026} at $\bm\alpha=\bm 0$.
The regularized Crandall function was first introduced there.
In \cite[Lem. 10]{buchheitComputationPropertiesEpstein2026} it is shown that $G^{\rm reg}_{\nu,\lambda}(\bm y)$ can be holomorphically extended to $\bm y\in\mathds C^d$, where $\nu\in\mathds C$ and $\lambda>0$ are fixed.
\end{remark}

As a consequence, $Z^{\rm reg}_{\Lambda,\nu,\bm\alpha}(\bm x,\bm y)$ with $\bm\alpha=\bm 0$ is holomorphic in $\bm y$ in a complex region around the origin, see \cite[Th. 11]{buchheitComputationPropertiesEpstein2026}, which we will now generalize to $\bm\alpha\in\mathds N^d$.
To that end we first provide explicit representations for the derivatives of the regularized Crandall function, whose existence is guaranteed by \Cref{remark:hol-low-ders-aniso-reg}.
The derivatives of the lower Crandall function are given similarly to the derivatives of the upper Crandall function derived in \Cref{derivative-crandall-representation}.
The identity for real vector arguments is a part of \cite[Lem. 4.1]{buchheitZetaExpansionLongrange2026}. 
We give a new proof that extends to complex vector arguments in \Cref{sec:crandall-derivatives}.

\begin{lemma}[Lower Crandall derivatives]
\label{crandall-lower-der}
Let $\bm\alpha \in \mathds N^d$,
let $\nu\in\mathds C\setminus(-2|\lceil\bm\alpha/2\rceil|-2\mathds N)$ and let $\bm u\in \mathds C^d$.
Then the derivatives of the lower Crandall function, which we call lower Crandall derivatives, are given by
$$
g_\nu^{(\bm\alpha)}(\bm u) = 
\sum_{2\bm\beta\le \bm\alpha}p_{\bm\alpha,\bm \beta}(\bm u)
g_{\nu+2|\bm\alpha|-2|\bm\beta|}(\bm u)\,,
$$
where, for $\nu=-2\ell$, $\ell\in\mathds N$, $\ell<|\lceil\bm\alpha/2\rceil|$, we define $g_{\nu}^{(\bm\alpha)}(\bm u)$ in terms of the right-hand side.
At $\bm u=\bm 0$, the right-hand side of
$$
g_{\nu}^{(\bm\alpha)}(\bm 0)= 
2\chi(\bm\alpha)\frac{p_{\bm\alpha,\bm\alpha/2}(\bm 0)}{\nu+|\bm\alpha|} 
$$
forms the meromorphic continuation to 
$\nu\in\mathds C$ with a simple pole at $\nu=-|\bm\alpha|$ if and only if $\chi(\bm\alpha)=1$.
Here,
$p_{\bm\alpha,\bm\beta}$ and $\chi$ are defined as in \Cref{derivative-crandall-representation}.
\end{lemma}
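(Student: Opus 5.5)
We derive the lower Crandall derivatives from the power-series structure of $g_\nu$. The starting point is the radial representation
$$
g_\nu(\bm u)=h_\nu(\pi\bm u^2),\qquad h_\nu(w)=\sum_{j=0}^\infty \frac{(-w)^j}{j!\,(j+\nu/2)},
$$
which follows from integrating the exponential series termwise in the definition of $\gamma(s,x)$. This series is entire in $w$, so it gives the joint holomorphy in $\bm u\in\mathds C^d$ directly and avoids branch issues of $(\pi\bm u^2)^{\nu/2}$. The relation $\frac{\mathrm d}{\mathrm dw}\big(w^{\nu/2}h_\nu(w)\big)=w^{\nu/2-1}e^{-w}$ yields the key one-step identity
$$
h_\nu'(w)=-h_{\nu+2}(w).
$$
This identity can also be checked coefficientwise from the series. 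Hence, by the chain rule, $\partial_{u^{(j)}}g_\nu(\bm u)=-2\pi u^{(j)}g_{\nu+2}(\bm u)$.

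Next, we compute $\diffop[u]^{\bm\alpha}$ of a composite $h(\pi\bm u^2)$ with $\bm u^2=\sum_j (u^{(j)})^2$. Since the variables separate, we apply a one-dimensional Hermite-type formula in each coordinate: for $f(t)=\phi(\pi t^2)$,
$$
\partial_t^{n}f=\sum_{2b\le n}\frac{n!}{b!\,(n-2b)!}(2\pi t)^{n-2b}\pi^{b}\phi^{(n-b)}(\pi t^2).
$$
This follows by induction on $n$, or from Faà di Bruno applied to $t\mapsto\pi t^2$, whose derivatives vanish beyond order two. Iterating over the coordinates, each partial in a coordinate only raises the derivative order of $h$ in the common argument $\pi\bm u^2$. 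This gives
$$
g_\nu^{(\bm\alpha)}(\bm u)=\sum_{2\bm\beta\le\bm\alpha}\frac{\bm\alpha!}{\bm\beta!(\bm\alpha-2\bm\beta)!}\pi^{|\bm\alpha|-|\bm\beta|}(2\bm u)^{\bm\alpha-2\bm\beta}h_\nu^{(|\bm\alpha|-|\bm\beta|)}(\pi\bm u^2).
$$
Substituting $h_\nu^{(k)}=(-1)^kh_{\nu+2k}$ and checking that
$$
\frac{\bm\alpha!}{\bm\beta!(\bm\alpha-2\bm\beta)!}=\binom{\bm\alpha}{\bm\beta}(\bm\alpha-\bm\beta)_{\bm\beta},
$$
we recover exactly $p_{\bm\alpha,\bm\beta}(\bm u)\,g_{\nu+2|\bm\alpha|-2|\bm\beta|}(\bm u)$. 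The same argument applies verbatim to $G_\nu$, since $\Gamma(s,x)$ satisfies the same differential relation with the opposite sign in the $x$-integral bound. This should be recorded in parallel for \Cref{derivative-crandall-representation}.

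For the parameter range, the right-hand side involves $g_{\nu+2k}$ with $k=|\bm\alpha|-|\bm\beta|\ge|\bm\alpha|-\lfloor\bm\alpha/2\rfloor$ summed componentwise, that is, $k\ge|\lceil\bm\alpha/2\rceil|$. Each such term is therefore holomorphic for $\nu\notin-2|\lceil\bm\alpha/2\rceil|-2\mathds N$. For $\nu\notin-2\mathds N$, the identity holds by the computation above. For the remaining values $\nu=-2\ell$ with $\ell<|\lceil\bm\alpha/2\rceil|$, the right-hand side is taken as the definition, consistent with the statement. It is also the holomorphic continuation in $\nu$, which matches \Cref{low-ders-aniso-reg}, where the singular $j=\ell$ term is annihilated.

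Finally, at $\bm u=\bm 0$, only terms with $\bm\alpha-2\bm\beta=\bm 0$ survive, which requires $\chi(\bm\alpha)=1$ and $\bm\beta=\bm\alpha/2$. Using $g_\mu(\bm 0)=2/\mu$ with $\mu=\nu+2|\bm\alpha|-|\bm\alpha|=\nu+|\bm\alpha|$ gives the stated value. This expression is meromorphic in $\nu$ with a simple pole at $-|\bm\alpha|$ precisely when $\chi(\bm\alpha)=1$, because $p_{\bm\alpha,\bm\alpha/2}(\bm 0)\neq0$: the Pochhammer factor $(\bm\alpha/2)_{\bm\alpha/2}=(\bm\alpha/2)!$ is nonzero. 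The main obstacle is the bookkeeping of the combinatorial coefficient identity and of the excluded set of $\nu$. The analysis itself is routine once the entire series for $h_\nu$ is in hand.
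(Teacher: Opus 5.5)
Your proof is correct, but you get the radial derivatives by a different mechanism than the paper does.

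\textbf{The paper's route.} The paper uses the integral representation $g_\nu(\bm u)=\int_{-1}^{1}|t|^{\nu}e^{-\pi t^2\bm u^2}\,\mathrm{d}t/|t|$. It differentiates under the integral with the Fa\`a di Bruno formula (\Cref{derivative-fy2}) applied to the Gaussian, and reads off $g_{\nu+2|\bm\alpha|-2|\bm\beta|}$ from the extra powers $t^{2(|\bm\alpha|-|\bm\beta|)}$. That integral only converges for $\Re{\nu}>0$, so the paper then invokes joint holomorphy of both sides and the identity theorem in $\nu$ to reach all $\nu\notin-2\mathds N$. Only after that does it declare the right-hand side the definition at $\nu=-2\ell$, $\ell<|\lceil\bm\alpha/2\rceil|$.

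\textbf{Your route.} You use the same Fa\`a di Bruno combinatorics, but identify the radial derivatives through the entire series $h_\nu$ and the recurrence $h_\nu'=-h_{\nu+2}$. That recurrence holds coefficientwise for every $\nu\notin-2\mathds N$ and every complex argument. This gives three things:
\begin{enumerate}
\item The identity holds on the whole range $\nu\notin-2\mathds N$, $\bm u\in\mathds C^d$ in one step.
\item No differentiation under an integral and no continuation argument in $\nu$ are needed.
\item It is explicit that at the exceptional points $\nu=-2\ell$ the right-hand side is the $\bm\alpha$-derivative of the series with the singular monomial $(\bm u^2)^\ell$ removed. This is exactly the interpretation that \Cref{low-ders-aniso-reg} relies on.
\end{enumerate}

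\textbf{What the paper's route buys.} It keeps the proof parallel to the upper Crandall case in \Cref{derivative-crandall-representation}. There the integral representation holds for all $\nu$ on the sector $D$, and no entire series is available.

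Your evaluation at $\bm u=\bm 0$ is the same as the paper's. Your explicit check that $p_{\bm\alpha,\bm\alpha/2}(\bm 0)\neq 0$, since $(\bm\alpha/2)_{\bm\alpha/2}=(\bm\alpha/2)!$, spells out the step the paper leaves to ``the same argument as in the proof above''.
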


The derivatives of the polynomial $Y_\ell$ in \Cref{low-ders-aniso-reg} can be represented as follows.

\begin{lemma}
\label{derivative-y2n}
Let $\ell\in\mathds N$, let $\bm\alpha\in\mathds N^d$ and let
$\bm y\in\mathds R^d$.
Then
$$
Y_\ell^{(\bm\alpha)}(\bm y)
=
\sum_{\substack{\bm\beta \ge\bm\alpha/2 \\ |\bm\beta| = \ell }}
\frac{(2\bm\beta)_{\bm\alpha}}{\bm\beta!}
\bm y^{2\bm\beta-\bm\alpha}\,,
$$
and in particular 
$Y^{(\bm\alpha)}_\ell(\bm y)=0$ if $\ell < |\lceil \bm\alpha/2\rceil|$.
Here, $Y_\ell(\bm \cdot)$ is defined as in \Cref{lem:syms-reg}.
\end{lemma}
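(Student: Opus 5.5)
The identity follows from expanding $Y_\ell$ with the multinomial theorem and differentiating term by term. Since $\bm y^2=\sum_{j}(y^{(j)})^2$, the multinomial theorem gives
$$
Y_\ell(\bm y)=\frac{1}{\ell!}\Big(\sum_{j=1}^d (y^{(j)})^2\Big)^{\ell}
=\sum_{\substack{\bm\beta\in\mathds N^d\\ |\bm\beta|=\ell}}\frac{1}{\bm\beta!}\,\bm y^{2\bm\beta}\,,
$$
because the multinomial coefficient $\ell!/\bm\beta!$ cancels against the prefactor $1/\ell!$. Here $\bm\beta!=\beta^{(1)}!\cdots\beta^{(d)}!$, which is the notation implicit in the statement.

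Next we apply $\diffop^{\bm\alpha}$ termwise to this finite sum. It factorizes over coordinates, and in one variable we have $\partial^{a}t^{2b}=(2b)_a\,t^{2b-a}$ with the falling Pochhammer symbol. This expression vanishes automatically when $a>2b$, since the factor $(2b-2b)=0$ then appears in $(2b)_a$. Hence
$$
\diffop^{\bm\alpha}\bm y^{2\bm\beta}=(2\bm\beta)_{\bm\alpha}\,\bm y^{2\bm\beta-\bm\alpha}\,,
$$
and every term with $2\bm\beta\not\ge\bm\alpha$ vanishes. Restricting the sum to $\bm\beta\ge\bm\alpha/2$ therefore gives the claimed formula. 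The constraint also ensures the exponents $2\bm\beta-\bm\alpha$ are nonnegative, so no negative powers (e.g.\ at $\bm y$ with zero components) ever appear.

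For the vanishing statement, $\bm\beta\ge\bm\alpha/2$ with integer components is equivalent to $\bm\beta\ge\lceil\bm\alpha/2\rceil$ componentwise. This forces $|\bm\beta|\ge|\lceil\bm\alpha/2\rceil|$, so the index set is empty when $\ell<|\lceil\bm\alpha/2\rceil|$, and $Y^{(\bm\alpha)}_\ell=0$.

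The argument has no real obstacle. The only point needing care is the bookkeeping that the Pochhammer factor handles the cutoff $2\beta^{(j)}<\alpha^{(j)}$ correctly, including the case $\beta^{(j)}=0$ with the convention $0^0=1$ and $(\cdot)_0=1$.
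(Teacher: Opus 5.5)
Your proof is correct and matches the paper's argument essentially step for step. Both expand $Y_\ell=(\bm y^2)^\ell/\ell!$ by the multinomial theorem and apply $\diffop^{\bm\alpha}\bm y^{2\bm\beta}=(2\bm\beta)_{\bm\alpha}\bm y^{2\bm\beta-\bm\alpha}$ termwise, which is zero unless $2\bm\beta\ge\bm\alpha$; the vanishing then follows because the index set is empty when $\ell<|\lceil\bm\alpha/2\rceil|$.
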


\begin{proof}
By the multinomial theorem, we have
$$
Y_\ell^{(\bm\alpha)}(\bm y)
=\frac{1}{\ell!}\sum_{\substack{\bm\beta\ge \bm 0\\|\bm\beta| =\ell}}\frac{\ell!}{\bm\beta!}\diffop^{\bm\alpha}\bm y^{2\bm\beta}\,.
$$
The first equation of the statement follows from
$$
\diffop^{\bm\alpha}\bm y^{\bm\gamma}=
	\begin{cases}
(\bm\gamma)_{\bm\alpha}\bm y^{\bm\gamma-\bm\alpha}
&\bm\alpha\le\bm\gamma, \\
0&\text{else},
\end{cases}
$$
with $\bm\gamma\in\mathds N^d$,
for $\bm\gamma=2\bm\beta$. 

For the second part of the statement let $\ell <|\lceil \bm\alpha/2\rceil|$.
Then the sum is empty, as $\bm\beta\in\mathds N^d$ and $\bm\beta\ge\bm\alpha/2$ 
imply $\bm\beta\ge\lceil\bm\alpha/2\rceil$
which is a contradiction with $|\bm\beta|=\ell$,
so that the derivative vanishes.
\end{proof}

The regularized anisotropic Epstein zeta function can be holomorphically extended as a function of the wave vector around the origin.

\begin{theorem}[Holomorphic extension of the regularized anisotropic Epstein zeta function]
\label{hol-reg}
Let $\Lambda$ be a $d$-dimensional lattice, let $\bm\alpha\in\mathds N^d$ and let 
$
D_L=\{\bm u\in\mathds C^d:|\Re{\bm u}-\bm z|>|\Im{\bm u}|\ \forall \bm z\in L\}\,,
$
be as in \Cref{hol}.
Then the regularized anisotropic Epstein zeta function can be holomorphically extended to \[
(\nu,\bm x, \bm y)\in \big(\mathds C\setminus (d+2|\lceil \bm\alpha/2\rceil| +2\mathds N)\big)\times D_\Lambda\times D_{\Lambda^\ast\setminus\{\bm 0\}}.
\]
For $\nu\in (d+2|\lceil \bm\alpha/2\rceil| +2 \mathds N)$, the regularized anisotropic Epstein zeta function can be holomorphically extended to
\[(\bm x, \bm y) \in D_\Lambda\times D_{\Lambda^\ast\setminus\{\bm 0\}}.
\]
Finally, for $\bm x\in \Lambda$, the regularized anisotropic Epstein zeta function can be holomorphically extended to
\[
(\nu,\bm y) \in \big(\mathds C\setminus (d+2|\lceil \bm\alpha/2\rceil| +2\mathds N)\big)\times D_{\Lambda^\ast\setminus\{\bm 0\}}.
\]
\end{theorem}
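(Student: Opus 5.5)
Our plan is to use the derivative Crandall representation of the regularized anisotropic Epstein zeta function, \Cref{low-ders-aniso-reg}. We handle its three parts separately and then combine them with Hartogs' theorem and locally uniform convergence, as in the proof of \Cref{hol}. The three parts are the direct sum over $\Lambda-\bm x$, the reciprocal sum over $\bm p\in\Lambda^*\setminus\{\bm 0\}$, and the regularized term $G^{\mathrm{reg},(\bm\alpha)}_{d-\nu,\lambda}(\bm y)$.

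\textbf{The two lattice sums.} Both are treated verbatim as in the proof of \Cref{hol}. Each summand is jointly holomorphic in $(\nu,\bm x,\bm y)$ on $\mathds C\times D_\Lambda\times D_{\Lambda^*\setminus\{\bm 0\}}$. This follows from \Cref{crandall-upper-prop} and the explicit upper Crandall derivative formula of \Cref{derivative-crandall-representation}, since $\lambda(\bm p+\bm y)\in D$ for $\bm p\neq\bm 0$. The majorants built there (super-exponential Gaussian decay against polynomial and exponential growth) give locally uniform absolute convergence on compacta. Removing the $\bm p=\bm 0$ term only enlarges the admissible $\bm y$-domain from $D_{\Lambda^*}$ to $D_{\Lambda^*\setminus\{\bm 0\}}$. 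For $\bm x\in\Lambda$ the summand $\bm z=\bm 0$ is either absent or multiplied by $\bm z^{\bm\alpha}=0$, so we argue as in the corresponding case of \Cref{hol}. The prefactor $(\pi/\lambda^2)^{\nu/2}/\Gamma(\nu/2)$ is entire.

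\textbf{The regularized term (main obstacle).} Joint holomorphy in $(\nu,\bm y)\in\mathds C\times\mathds C^d$ away from the excluded set is the step we expect to be hardest. For $d-\nu\notin -2\mathds N$, \Cref{low-ders-aniso-reg} gives $-\lambda^{|\bm\alpha|}g^{(\bm\alpha)}_{d-\nu}(\lambda\bm y)$. By \Cref{crandall-lower-der}, this is a finite sum of monomials times $g_{d-\nu+2|\bm\alpha|-2|\bm\beta|}$ with $2\bm\beta\le\bm\alpha$. By \Cref{crandall-lower-prop}, these are jointly holomorphic as long as no index lies in $-2\mathds N$. Lemma \Cref{crandall-lower-der} extends this holomorphy to all $\nu$ with $d-\nu\notin -2|\lceil\bm\alpha/2\rceil|-2\mathds N$, i.e.\ $\nu\notin d+2|\lceil\bm\alpha/2\rceil|+2\mathds N$. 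For the remaining cases $\nu=d+2\ell$ with $\ell<|\lceil\bm\alpha/2\rceil|$, we must check that the right-hand side is genuinely holomorphic in $\nu$ there. We would do this by expanding $g_{s}(\bm u)=\sum_j (-\pi\bm u^2)^j/(j!(j+s/2))$, which is entire in $\bm u$. The pole at $s=-2\ell$ sits only in the term $j=\ell$, which is proportional to $Y_\ell$. Its $\bm\alpha$-derivative vanishes by \Cref{derivative-y2n}. Hence $\diffop^{\bm\alpha}$ of the remaining series is jointly holomorphic near $s=-2\ell$, with uniform convergence on compacta giving holomorphy in $(\nu,\bm y)$. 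This identifies the removable singularity, and the limit agrees with the series formula in \Cref{low-ders-aniso-reg}.

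\textbf{Excluded exponents and assembly.} For $\nu\in d+2|\lceil\bm\alpha/2\rceil|+2\mathds N$ we fix $\nu$ and use the explicit series for $G^{\mathrm{reg},(\bm\alpha)}_{-2\ell,\lambda}$ in \Cref{low-ders-aniso-reg}. It is a polynomial $Y^{(\bm\alpha)}_\ell$ plus a power series in $\bm y$ with factorially decaying coefficients. It is therefore entire in $\bm y$, while the lattice sums are holomorphic in $(\bm x,\bm y)$ as above. Combining all three parts gives the claimed regions. Since the function agrees with the definition for real $\bm y\neq\bm 0$ with $\bm y\notin\Lambda^*$, its value at $\bm y=\bm 0$ is the continuous extension asserted in \Cref{def-zeta-aniso-reg}.
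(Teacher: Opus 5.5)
Your proposal is correct and follows essentially the paper's route. The two lattice sums are handled exactly as in \Cref{hol} once the $\bm p=\bm 0$ term is removed, and the regularized term is controlled through \Cref{low-ders-aniso-reg} and \Cref{crandall-lower-der} using $|\bm\alpha|-|\bm\beta|\ge|\lceil\bm\alpha/2\rceil|$.

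The differences are minor. Your power-series check at $\nu=d+2\ell$, $\ell<|\lceil\bm\alpha/2\rceil|$, only re-derives what \Cref{low-ders-aniso-reg} already states. For the excluded exponents the paper cites \Cref{remark:hol-low-ders-aniso-reg} where you use the explicit series, which is equally valid. For $\bm\alpha=\bm 0$ and $\bm x\in\Lambda$, the $\bm z=\bm 0$ summand is present rather than absent, but it is harmless because $G_\nu(\bm 0)/\Gamma(\nu/2)=-1/\Gamma(\nu/2+1)$ is entire.
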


\begin{proof}
Let $\lambda>0$ be fixed. We show compact uniform convergence and holomorphy of every summand in Crandall's representation.
Note that beyond the summand at $\bm p=\bm 0$, the derivative Crandall representation of the anisotropic Epstein zeta function, \Cref{derivative-crandall-representation}, is the same as the derivative Crandall representation for the regularized anisotropic Epstein zeta function, \Cref{low-ders-aniso-reg}, with an oscillatory exponential factor $e^{2\pi i\bm x\cdot\bm y}$ independent of the summation variables.
Therefore, uniform convergence is the same as in the proof of \Cref{hol}.

We separately investigate the holomorphy of the derivative Crandall representation without the regularized Crandall function and the holomorphy of $G^{\mathrm{reg},(\bm\alpha)}_{\nu,\lambda}(\bm y)$, so that the intersection of the domains of holomorphy yields the desired result. 
Removing the $\bm p=\bm 0$ summand 
$G^{(\bm\alpha)}_{d-\nu}(\lambda(\bm 0+\bm y))$ in the derivative Crandall representation, \Cref{derivative-crandall-representation},
extends the holomorphy to
$$
(\nu,\bm x, \bm y)\in \mathds C\times D_\Lambda\times D_{\Lambda^\ast\setminus\{\bm 0\}}
$$
and for $\bm x\in\Lambda$ to
$
(\nu,\bm y) \in  \mathds C\times D_{\Lambda^\ast\setminus\{\bm 0\}}
$. 

Let $\nu\in\mathds C\setminus (d+2|\lceil \bm\alpha/2\rceil| +2\mathds N)$ then, by \Cref{low-ders-aniso-reg} we have that
$G^{\mathrm{reg},(\bm\alpha)}_{d-\nu,\lambda}(\bm y)$ is a derivative of a Crandall function that by \Cref{crandall-lower-der} is a sum of terms of the form
$$
p_{\bm\alpha,\bm\beta}(\bm y) 
g_{d-\nu+2|\bm\alpha|-2|\bm\beta|}(\bm y)
$$
with $\bm\beta \in\mathds N^d$ with $2\bm\beta\le\bm\alpha$.
Here the first factor is a polynomial that is holomorphic in $\bm y\in\mathds C^d$ and by \Cref{crandall-lower-prop}
the second factor is jointly holomorphic in
$(\nu,\bm y)\in \big(\mathds C\setminus(d+2|\bm\alpha|-2|\bm\beta|+2\mathds N)\big)\times \mathds C^d$.
It follows that $G^{\mathrm{reg},(\bm\alpha)}_{d-\nu,\lambda}(\bm y)$
is a finite sum of functions jointly holomorphic in
$$
(\nu,\bm y)\in \big(\mathds C\setminus(d+2|\lceil \bm\alpha/2\rceil|+2\mathds N)\big)\times \mathds C^d
$$
where we used that 
$2\bm\beta\le\bm\alpha$ implies
$\bm\alpha-\bm\beta\ge \bm\alpha-\lfloor\bm\alpha/2\rfloor =\lceil\bm\alpha/2\rceil$.

Finally let $\nu\in(d+2|\lceil \bm\alpha/2\rceil|+2\mathds N)$, then $G^{\mathrm{reg},(\bm\alpha)}_{d-\nu,\lambda}(\bm y)$ is holomorphic in $\bm y\in\mathds C^d$ by \Cref{remark:hol-low-ders-aniso-reg}.
\end{proof}

We have shown that the derivative Crandall representations \Cref{derivative-crandall-representation} and \Cref{low-ders-aniso-reg} are convenient tools for the study of the analytic properties of the anisotropic Epstein zeta function and its regularized counterpart.
These representations are exact and can be used for computations, though caution is warranted in floating point arithmetic.
On the one hand, numerical experiments show that derivatives of the regularized Crandall function can be stably computed through
\Cref{crandall-lower-der}
and \Cref{derivative-y2n}
for arbitrary anisotropy.
On the other hand, unfortunately the representation for the derivatives of the non-regularized upper Crandall function in \Cref{derivative-crandall-representation} introduces catastrophic cancellation errors for high-order anisotropy, rendering \Cref{derivative-crandall-representation} unsuitable for stable computations.
We informally call the anisotropy high-order if $|\bm\alpha|$ is large, and low-order otherwise.
In the following section, we derive stably computable representations for arbitrary anisotropy through a harmonic decomposition of the partial derivative.

\section{Stably computable representations}
\label{sec:main}

Both the analytic structure and abundant numerical evidence, such as the exponential growth of the error in \Cref{fig:fig_stab} in the later \Cref{sec:num}, suggest that computing upper Crandall derivatives in floating point arithmetic introduces systematic loss in precision for high-order anisotropic Epstein zeta functions, and that stably computable representations cannot be obtained from applying the partial derivatives directly at the level of the derivative Crandall representation in \Cref{derivative-crandall-representation}.
More promisingly, even one-dimensional anisotropy $\alpha=2n$, $n\in\mathds N$ amounts to a shift in the power-law exponent
$$
Z_{\Lambda,\nu,2 n}(x,y)
=
Z_{\Lambda,\nu-2 n}(x,y)\,,
$$
which follows directly from the lattice sum, see \Cref{def-zeta-aniso}.
Here $x,y\in\mathds R$ and $\nu$ and $\Lambda$ are as in \Cref{zeta-ani-set-zeta-der}.
Investigating high-order differential operators that can be applied to the set zeta function without incurring loss of precision, we find the following multi-dimensional generalization of this convenient property.

\begin{lemma}
\label{poly-laplacian}
Let $\Lambda$
be a $d$-dimensional
lattice, let $n\in\mathds N$, let $\bm x,\bm y\in\mathds R^d$ and let $\nu\in\mathds C$ such that $\nu\neq d+2n$ if $\bm y\in\Lambda^*$. 
Then
$$
\Delta^n Z_{\Lambda-\bm x,\nu}(\bm y)=
(-4\pi^2)^nZ_{\Lambda-\bm x,\nu-2n}(\bm y)\,,
$$
where the poly-Laplacian $\Delta^n$ acts on $\bm y$,
and where the left-hand side is understood as the meromorphic continuation of the termwise differentiated series, see \Cref{diff-then-cont}.
\end{lemma}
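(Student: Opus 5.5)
The proof works in the half-plane of absolute convergence and then extends to all $\nu$ by uniqueness of the meromorphic continuation, as in \Cref{zeta-ani-set-zeta-der} and \Cref{lem:syms}.

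First let $\Re\nu>d+2n$. By \Cref{zetadirectsum}, termwise differentiation of the set zeta series is justified up to order $2n$. Each plane wave satisfies
$$
\Delta e^{-2\pi i\bm z\cdot\bm y}=(-2\pi i)^2\bm z^2e^{-2\pi i\bm z\cdot\bm y}=-4\pi^2|\bm z|^2e^{-2\pi i\bm z\cdot\bm y}\,,
$$
so iterating gives $\Delta^n e^{-2\pi i\bm z\cdot\bm y}=(-4\pi^2)^n|\bm z|^{2n}e^{-2\pi i\bm z\cdot\bm y}$. The termwise differentiated series is therefore
$$
(-4\pi^2)^n\sums_{\bm z\in\Lambda-\bm x}\frac{e^{-2\pi i\bm z\cdot\bm y}}{|\bm z|^{\nu-2n}}=(-4\pi^2)^nZ_{\Lambda-\bm x,\nu-2n}(\bm y)\,,
$$
which is exactly the direct sum defining the right-hand side, since $\Re(\nu-2n)>d$.

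Equivalently, expanding $\Delta^n=\sum_{|\bm\beta|=n}\frac{n!}{\bm\beta!}\diffop^{2\bm\beta}$ by the multinomial theorem, the left-hand side is a finite linear combination of set zeta derivatives $Z^{(2\bm\beta)}_{\Lambda-\bm x,\nu}(\bm y)$. Each of these is understood as its meromorphic continuation in the sense of \Cref{diff-then-cont}.

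It remains to extend the identity to all $\nu$. By \Cref{hol-set-zeta}, each $Z^{(2\bm\beta)}_{\Lambda-\bm x,\nu}(\bm y)$ with $|2\bm\beta|=2n$ continues holomorphically in $\nu$, except for a possible simple pole at $\nu=d+2n$ when $\bm y\in\Lambda^*$; note $\chi(2\bm\beta)=1$ always. The right-hand side $Z_{\Lambda-\bm x,\nu-2n}(\bm y)$ likewise continues with at most a pole at $\nu-2n=d$ when $\bm y\in\Lambda^*$, by \Cref{zeta-ani-set-zeta-der} and \Cref{hol} with $\bm\alpha=\bm 0$. Both sides are thus holomorphic on the connected set $\mathds C$ (or $\mathds C\setminus\{d+2n\}$ when $\bm y\in\Lambda^*$) and agree on the open half-plane $\Re\nu>d+2n$. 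The identity theorem then gives equality everywhere, and the excluded point $\nu=d+2n$ for $\bm y\in\Lambda^*$ is exactly the one excluded in the statement.

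The only delicate point is making sure the left-hand side is read as the continuation of the termwise differentiated series, not as $\Delta^n$ applied to the continued function. The latter would be ill defined at $\bm y\in\Lambda^*$, as noted in \Cref{diff-then-cont}. With that reading, no further obstacle arises; the computation itself is elementary.
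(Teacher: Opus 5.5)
Your proof is correct and takes essentially the same route as the paper. The paper computes termwise for $\Re\nu>d+2n$ via \Cref{zetadirectsum} and the plane-wave identity, then extends by uniqueness of the meromorphic continuation using \Cref{hol-set-zeta}. Your multinomial expansion and your separate check that each $Z^{(2\bm\beta)}_{\Lambda-\bm x,\nu}(\bm y)$ continues with at most a pole at $\nu=d+2n$ spell out explicitly what the paper states in one line: the right-hand side supplies the unique continuation that defines the left-hand side.
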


\begin{proof}
Let $\Re\nu > d+2n$,
then by \Cref{zetadirectsum} we can differentiate under the sum
$$
\Delta^n Z_{\Lambda-\bm x, \nu}(\bm y)
	=
    (-4\pi^2)^n 
    \sums_{\bm{z}\in \Lambda-\bm x} 
    |\bm z|^{2n}
    \frac{e^{-2\pi i \bm y\cdot \bm{z}}}{|\bm{z}|^{\nu}}\,
$$
where the right-hand side is the shifted set zeta function and where we used that
$$
\Delta^n e^{-2\pi i\bm z\cdot\bm y}
=
((-2\pi i \bm z)^2)^n e^{-2\pi i\bm z\cdot\bm y}\,.
$$
By \Cref{hol-set-zeta} the right-hand side of the lemma is the meromorphic continuation to $\nu\in\mathds C$ as stated and the left-hand side is understood in the sense of this unique meromorphic continuation.
\end{proof}

Since anisotropic Epstein zeta functions agree with set zeta derivatives up to an oscillatory prefactor, see \Cref{zeta-ani-set-zeta-der},
and since
$\Delta^n$ can be written as a sum of partial derivatives,
some linear combinations of anisotropic Epstein zeta functions correspond to shifted Epstein zeta functions without anisotropy, even though the individual summands cannot be stably computed that way.
By a harmonic decomposition, that is a decomposition in terms of homogeneous harmonic polynomials, the partial derivative splits into
poly-Laplacians and factors that isolate the anisotropic contributions, where the poly-Laplacians can be applied without differentiating and the remaining differential operators have stably computable closed-form expressions when applied to the upper Crandall functions.

In \Cref{sec:harmonic-decomp} we introduce a harmonic decomposition
and show how the associated differential operators act on upper Crandall functions.
Based on this decomposition of arbitrary partial derivatives, 
we derive a Crandall type representation of the anisotropic Epstein zeta function in \Cref{sec:harmonic-aniso} 
and an analogous representation for the regularized anisotropic Epstein zeta function in \Cref{sec:harmonic-reg}.

\subsection{Harmonic decomposition}
\label{sec:harmonic-decomp}

By decomposing the partial derivative in terms of poly-Laplacians and homogeneous harmonic polynomials,  we aim to generalize \Cref{poly-laplacian} to general partial derivatives, enabling the stable computation of the anisotropic Epstein zeta function for arbitrary anisotropy.

\begin{definition}[Harmonic polynomials]
\label{harmonic-def}
A $d$-dimensional polynomial
$h\colon\mathds R^d\to\mathds C$ is called harmonic
if $\Delta h(\bm y)=0$ for every $\bm y\in\mathds R^d$.
\end{definition}

We will employ homogeneous harmonic polynomials implicitly defined by the harmonic decomposition of a monomial,
as shown in the first part of the following remark, a result that can be found in \cite[Th. 5.7]{axlerHarmonicFunctionTheory2001}.
The second part of the remark is an explicit representation
\cite[Lem. 4.10]{flaviDecompositionsPowersQuadrics2025}, which we will sometimes use in proofs to derive properties of harmonic polynomials.

\begin{remark} Let $\bm y\in\mathds R^d$.
\label{harmonic-decomp}
\begin{enumerate}
\item (Harmonic decomposition)
The monomial $\bm y^{\bm\alpha}$, $\bm\alpha\in\mathds N^d$, can be written as a sum of products of two factors: powers $|\bm y|^{2k}$, and
uniquely determined homogeneous harmonic polynomials
 $h_{k,\bm\alpha}$
 of degree $|\bm\alpha|-2k$, $k\in\mathds N$, $2k\le |\bm\alpha|$; that is to say
$$
\bm y^{\bm\alpha}
=
\sum_{k=0}^{\lfloor |\bm\alpha|/2\rfloor}
|\bm y|^{2k}h_{k,\bm\alpha}(\bm y)
\,,\qquad \bm y \in\mathds R^d\,.
$$
\item (Direct representation) Let $d\ge 2$. Each homogeneous harmonic polynomial
$h_m(\bm y)$ of degree $m\in\mathds N$
is a combination of $m$'th powers of linear forms
$(\bm a\cdot \bm y)^m$
associated with a finite number of isotropic vectors 
$V\subseteq \{\bm a\in\mathds C^d:\bm a^2=0\}$; that is to say
$$
h_m(\bm y) = \sum_{\bm a\in V}(\bm a\cdot \bm y)^mc_{\bm a}
\,,\qquad
\bm y\in\mathds R^d\,,
$$
with coefficients
$c_{\bm a}\in\mathds C$ independent of $\bm y$.
\end{enumerate}
\end{remark}

Replacing $\bm y$ by $\diffop$, the first part of \Cref{harmonic-decomp} yields the harmonic decomposition of the partial derivative
into
poly-Laplacians and factors that capture the anisotropic contributions to the derivative
$$
\diffop^{\bm\alpha}
=
\sum_{k=0}^{\lfloor |\bm\alpha|/2\rfloor}
\Delta^kh_{k,\bm\alpha}(\diffop)
\,,
$$
where the poly-Laplacian $\Delta^k$ acts on $\bm y$.
As a consequence of the second part of \Cref{harmonic-decomp}, applying the anisotropic differential operator to the Crandall functions leaves them unchanged beyond a shift of the power-law and a vector dependent pre-factor.

\begin{lemma}
\label{harmonic-crandall}
Let $\bm y\in \mathds R^d\setminus\{\bm 0\}$,
$\nu\in\mathds C$ and let $h_m$ be a $d$-dimensional homogeneous harmonic polynomial of degree $m\in\mathds N$.  Then
$$
h_{m}(\diffop)G_{\nu}(\bm y)
=
(-2\pi)^mh_m(\bm y)G_{\nu+2m}(\bm y)\,.
$$
\end{lemma}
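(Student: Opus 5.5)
The plan is to reduce the statement to powers of linear forms via the direct representation in the second part of \Cref{harmonic-decomp}, and then exploit that $G_\nu$ is a radial function of $\bm y^2$ whose derivative in $\bm y^2$ is again an upper Crandall function. For $d\ge 2$, write $h_m(\bm y)=\sum_{\bm a\in V}c_{\bm a}(\bm a\cdot\bm y)^m$ with isotropic $\bm a$, i.e.\ $\bm a^2=0$. Correspondingly, $h_m(\diffop)=\sum_{\bm a\in V}c_{\bm a}(\bm a\cdot\diffop)^m$. Both sides of the claim are linear in $h_m$, so it suffices to show
$$
(\bm a\cdot\diffop)^m G_\nu(\bm y)=(-2\pi)^m(\bm a\cdot\bm y)^m G_{\nu+2m}(\bm y)
$$
for every isotropic $\bm a\in\mathds C^d$.

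\emph{Step 1 (one-variable derivative).} Write $G_\nu(\bm y)=F_\nu(\bm y^2)$ with $F_\nu(t)=(\pi t)^{-\nu/2}\Gamma(\nu/2,\pi t)$ for $t>0$. From $\partial_x\Gamma(s,x)=-x^{s-1}e^{-x}$ and the recurrence $\Gamma(s+1,x)=s\Gamma(s,x)+x^se^{-x}$, one gets
$$
\frac{\d}{\d x}\big[x^{-s}\Gamma(s,x)\big]=-x^{-s-1}\Gamma(s+1,x).
$$
This holds for all $s\in\mathds C$, since both sides are holomorphic in $s$ for fixed $x>0$ and agree for $\Re s>0$. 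With $x=\pi t$ and $s=\nu/2$, this gives $F_\nu'(t)=-\pi F_{\nu+2}(t)$, and by induction $F_\nu^{(m)}=(-\pi)^mF_{\nu+2m}$.

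\emph{Step 2 (isotropic directional derivatives).} For any smooth $F$, use $(\bm a\cdot\diffop)\bm y^2=2\,\bm a\cdot\bm y$ and $(\bm a\cdot\diffop)(\bm a\cdot\bm y)=\bm a^2=0$. The chain and product rules then give, by induction on $m$,
$$
(\bm a\cdot\diffop)^mF(\bm y^2)=(2\,\bm a\cdot\bm y)^mF^{(m)}(\bm y^2),
$$
because every term in which a derivative hits a factor $\bm a\cdot\bm y$ vanishes. This isotropy cancellation is the key point; it is exactly why harmonic (rather than general) polynomials produce no lower-order terms. Combining with Step 1 gives $(-2\pi)^m(\bm a\cdot\bm y)^mG_{\nu+2m}(\bm y)$, and summing over $\bm a\in V$ with weights $c_{\bm a}$ yields the lemma. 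The restriction $\bm y\ne\bm 0$ ensures $\bm y^2>0$, so $F_\nu$ is smooth there.

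The case $d=1$ is not covered by the direct representation and is handled separately. There the homogeneous harmonic polynomials are $c$ (degree $0$) and $cy$ (degree $1$). Degree $0$ is trivial, and degree $1$ follows from Step 1 via $\partial_yF_\nu(y^2)=2yF_\nu'(y^2)=-2\pi yG_{\nu+2}(y)$.

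I expect the main obstacle to be a careful justification of the incomplete Gamma derivative identity for all $\nu\in\mathds C$, since $\Gamma(s,x)$ is defined by continuation for $\Re s\le 0$. The plan is to argue by holomorphy in $s$ as indicated in Step 1, possibly citing the standard identity from \cite{abramowitz}.
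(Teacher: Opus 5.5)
Your proposal is correct and follows essentially the paper's route. Your Step~2, including the separate treatment of $d=1$, is precisely \Cref{harmonic-applied-to-f(y2)}, $h_m(\diffop)f(\bm y^2)=2^mh_m(\bm y)f^{(m)}(\bm y^2)$, which the paper proves by the same isotropic-linear-form cancellation.

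The only difference is in the radial derivatives. The paper applies that lemma to the Gaussian $e^{-\eta\bm y^2}$ and moves $h_m(\diffop)$ under the integral representation $G_\nu(\bm y)=\int_{-1}^{1}|t|^{-\nu}e^{-\pi\bm y^2/t^2}\,\mathrm{d}t/|t|$, whereas you apply it to $F_\nu$ directly via $F_\nu'=-\pi F_{\nu+2}$. The obstacle you anticipate is harmless: for fixed $x>0$ the integral defining $\Gamma(s,x)$ converges for every $s\in\mathds C$, so the derivative and recurrence identities hold directly without any continuation argument.
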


\begin{proof}
By \Cref{harmonic-applied-to-f(y2)} and the chain rule, for $\eta\in\mathds R$,
$$
h_m(\diffop)e^{-\eta \bm y^2}
=
(-2\eta)^mh_m(\bm y)e^{-\eta\bm y^2}
$$
and by the integral representation
$$
h_m(\diffop)
G_{\nu}(\bm y)=h_m(\bm y)
\int_{-1}^{1}|t|^{-\nu}(-2\pi/t^2)^me^{-\pi\bm y^2/t^2}\frac{\mathrm{d} t}{|t|}\,,
$$
where the integral is a shifted upper Crandall function with a prefactor $(-2\pi)^m$ as stated.
\end{proof}

\subsection{Anisotropic Epstein zeta functions}
\label{sec:harmonic-aniso}

The harmonic decomposition of the differential operator yields a stable representation of the anisotropic Epstein zeta function.

\begin{theorem}[Harmonic-decomposition Crandall representation]
\label{harmonic-decomposition-crandall-representation}
\anisoreq
Then for any $\lambda>0$, it holds that
\begin{align*}
Z_{\Lambda,\nu,\bm\alpha}(\bm x,\bm y)
=
\sum_{k=0}^{\lfloor |\bm\alpha|/2\rfloor}
\frac{(\pi/\lambda^2)^{\nu/2-k}}{\Gamma(\nu/2-k)}
\Bigg[
\sum_{\bm z\in \Lambda-\bm x}
h_{k,\bm\alpha}(\bm z)G_{\nu-2k}(\bm z/\lambda)e^{-2\pi i\bm y\cdot (\bm z+\bm x)}\\
+(-1)^k
\frac{\lambda^{d+2|\bm\alpha|-4k}}{i^{|\bm\alpha|}V_{\Lambda}}
\sum_{\bm p\in \Lambda^{\ast}+\bm y}
h_{k,\bm\alpha}(\bm p)G_{d-\nu+2(|\bm\alpha|-k)}
(\lambda\bm p)e^{-2\pi i\bm x\cdot \bm p}
\Bigg]
\end{align*}
where $h_{k,\bm\alpha}$ are the homogeneous harmonic polynomials in \Cref{harmonic-decomp} and where each summand in which $h_{k,\bm\alpha}$ vanishes is understood to be zero, irrespective of the other factors.
\end{theorem}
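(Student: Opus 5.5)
Our approach is to prove the identity for $\Re\nu>d+|\bm\alpha|$ at the level of set zeta derivatives and then transfer it to the anisotropic Epstein zeta function via \Cref{zeta-ani-set-zeta-der}. The full range then follows by uniqueness of the meromorphic continuation. For the continuation we will check, as in the proof of \Cref{hol}, that both series on the right-hand side converge absolutely and locally uniformly and are holomorphic in $\nu$ away from the stated exceptional point. This uses the superexponential decay in \Cref{crandall-upper-prop} together with the polynomial growth of $h_{k,\bm\alpha}$. The factor $1/\Gamma(\nu/2-k)$ is entire, so the only possible singularity comes from the term $\bm p=\bm 0$ when $\bm y\in\Lambda^*$. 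There we use the convention that summands with a vanishing harmonic polynomial are set to zero. Since $h_{k,\bm\alpha}(\bm 0)=0$ unless $2k=|\bm\alpha|$, this leaves the single potential pole coming from $G_{d-\nu+|\bm\alpha|}(\bm 0)=-2/(d-\nu+|\bm\alpha|)$. That pole is present only when the constant $h_{|\bm\alpha|/2,\bm\alpha}$ is nonzero, which should match $\chi(\bm\alpha)=1$.

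For the derivation we start from the harmonic decomposition of the differential operator,
$\diffop^{\bm\alpha}=\sum_k\Delta^k h_{k,\bm\alpha}(\diffop)$,
applied to $Z_{\Lambda-\bm x,\nu}(\bm y)$. The order of operations matters: for each $k$ we first apply $\Delta^k$ using \Cref{poly-laplacian}, which replaces $Z_{\Lambda-\bm x,\nu}$ by $(-4\pi^2)^k Z_{\Lambda-\bm x,\nu-2k}$. We then apply $h_{k,\bm\alpha}(\diffop)$ to the Crandall representation of $Z_{\Lambda-\bm x,\nu-2k}$ from \cite[Th. 6]{buchheitComputationPropertiesEpstein2026}. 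The two operators commute as constant-coefficient operators, and for $\Re\nu$ large all termwise differentiations are justified exactly as in the proof of \Cref{derivative-crandall-representation}. This produces the prefactor $(\pi/\lambda^2)^{\nu/2-k}/\Gamma(\nu/2-k)$. In the real-space sum, $h_{k,\bm\alpha}(\diffop)e^{-2\pi i\bm y\cdot\bm z}=h_{k,\bm\alpha}(-2\pi i\bm z)e^{-2\pi i \bm y\cdot\bm z}$. Homogeneity gives $(-2\pi i)^{|\bm\alpha|-2k}h_{k,\bm\alpha}(\bm z)$, and combined with $(-4\pi^2)^k=(-2\pi i)^{2k}$ this yields the overall $(-2\pi i)^{|\bm\alpha|}$. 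That factor is absorbed by \Cref{zeta-ani-set-zeta-der}, together with the phase $e^{-2\pi i\bm x\cdot\bm y}$.

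In the reciprocal sum the term is $G_{d-\nu+2k}(\lambda(\bm p+\bm y))$. Here we apply \Cref{harmonic-crandall} together with the chain rule, which gives
$$\lambda^{m}(-2\pi)^m h_{k,\bm\alpha}(\lambda(\bm p+\bm y))G_{d-\nu+2k+2m}(\lambda(\bm p+\bm y)),\qquad m=|\bm\alpha|-2k.$$
The index becomes $d-\nu+2(|\bm\alpha|-k)$, and homogeneity contributes a further $\lambda^{m}$. Collecting powers gives $\lambda^{d+2|\bm\alpha|-4k}$. The constants combine as $(-4\pi^2)^k(-2\pi)^{m}/(-2\pi i)^{|\bm\alpha|}$, which reduces to $(-1)^k i^{-|\bm\alpha|}$ after simplification. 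Reindexing $\bm p+\bm y\mapsto\bm p\in\Lambda^*+\bm y$ and attaching the phase gives $e^{-2\pi i\bm x\cdot\bm p}$.

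The main obstacle is the point $\bm p+\bm y=\bm 0$, because \Cref{harmonic-crandall} is stated only for nonzero arguments. For $\bm y\notin\Lambda^*$ this point never occurs. For $\bm y\in\Lambda^*$ we first prove the identity for nearby $\bm y\notin\Lambda^*$ with $\Re\nu$ large and then pass to the limit. When $m>0$ the term behaves like $h_{k,\bm\alpha}(\bm u)G_{\cdot}(\bm u)\to0$ for suitable $\nu$, consistent with the zero convention; this can be justified via the continuity statement in \Cref{crandall-upper-prop} for negative exponent, followed by continuation in $\nu$. As an alternative we could match against \Cref{derivative-crandall-representation} termwise, using the value $G^{(\bm\alpha)}_{d-\nu}(\bm 0)$. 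Finally, we would double-check the sign bookkeeping $(-2\pi)^m$ versus $(2\pi i)$ carefully.
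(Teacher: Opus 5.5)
Your proposal is correct and is essentially the paper's proof: the same splitting $\diffop^{\bm\alpha}=\sum_k\Delta^k h_{k,\bm\alpha}(\diffop)$, \Cref{poly-laplacian} for $\Delta^k$, and \Cref{harmonic-crandall} with the chain rule applied to the $\bm\alpha=\bm 0$ Crandall representation. For $\bm y\in\Lambda^*$ you likewise take a limit at large $\Re\nu$ and then use the identity theorem in $\nu$. Two differences are harmless. The paper obtains that limit already for $\Re\nu>d+|\bm\alpha|$ via \Cref{continuity-crandall-upper}, whereas your use of \Cref{crandall-upper-prop} needs $\Re\nu>d+2|\bm\alpha|$, which is equally fine. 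The fact you left as ``should match'' is \Cref{h-vanish}, and only the direction $\chi(\bm\alpha)=0\Rightarrow h_{|\bm\alpha|/2,\bm\alpha}=0$ is needed.

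The one point you skip concerns $\bm x\in\Lambda$. There the real-space term $\bm z=\bm 0$ carries $G_{\nu-2k}(\bm 0)=-2/(\nu-2k)$. So what keeps the right-hand side free of singularities away from $\bm p=\bm 0$ is that the product $G_{\nu-2k}(\bm 0)/\Gamma(\nu/2-k)=-1/\Gamma(\nu/2-k+1)$ is entire, not merely that $1/\Gamma(\nu/2-k)$ is entire.
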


\begin{proof}
Let $\bm y\notin \Lambda^*$.
By the identity for the poly-Laplacian derivatives for the set zeta function, see \Cref{poly-laplacian}, and the harmonic decomposition of the partial derivative we have
$$
Z^{(\bm\alpha)}_{\Lambda-\bm x,\nu}(\bm y)
=
\sum_{k=0}^{\lfloor |\bm\alpha|/2\rfloor}
(-2\pi i)^{2k}h_{k,\bm\alpha}(\diffop) Z_{\Lambda-\bm x,\nu-2k}(\bm y).
$$
and from the equivalence of the set zeta derivatives with the anisotropic Epstein zeta function up to a phase factor, see \Cref{zeta-ani-set-zeta-der}, it follows that
$$
Z_{\Lambda,\nu,\bm\alpha}(\bm x,\bm y)
=e^{-2\pi i \bm x\cdot\bm y}
\sum_{k=0}^{\lfloor |\bm\alpha|/2\rfloor}
\frac{h_{k,\bm\alpha}(\diffop)}{(-2\pi i)^{|\bm\alpha|-2k}} \Big[ 
e^{2\pi i \bm x\cdot\bm y}Z_{\Lambda,\nu-2k}(\bm x,\bm y)
\Big]\,.
$$
Applying the derivative Crandall representation for the anisotropic Epstein zeta function, \Cref{derivative-crandall-representation}, at $\bm\alpha=\bm 0$ we have that $Z_{\Lambda,\nu,\bm\alpha}(\bm x,\bm y)$ is equal to
\begin{align*}
e^{-2\pi i\bm x\cdot\bm y}\sum_{k=0}^{\lfloor |\bm\alpha|/2\rfloor}
\frac{h_{k,\bm\alpha}(\diffop)}{(-2\pi i)^{|\bm\alpha|-2k}} 
&\frac{(\pi/\lambda^2)^{\nu/2-k}}{\Gamma(\nu/2-k)}
\Bigg[
\sum_{\bm z\in \Lambda-\bm x}
G_{\nu-2k}(\bm{z}/\lambda)e^{-2\pi i\bm y\cdot \bm z}\\
&\qquad+
\frac{\lambda^d}{V_{\Lambda}}\sum_{\bm p\in\Lambda^{\ast}}
G_{d-(\nu-2k)}
(\lambda(\bm p+\bm y))e^{-2\pi i\bm x\cdot \bm p}
\Bigg]\,.
\end{align*}
Here 
we freely exchange the nowhere-vanishing oscillatory prefactor $e^{\pm 2\pi i \bm x\cdot\bm y}$ with the two series whose convergence remains unchanged.
Analogously to the reasoning in the proof of \Cref{derivative-crandall-representation}, we may exchange the differential operator with the sum due to the superexponential decay of the upper Crandall function, see \Cref{crandall-upper-prop}.
For the first sum note that for any $d$-dimensional homogeneous polynomial $h_m(\bm y)$ of degree $m\in\mathds N$ we have
$$
\frac{h_m(\diffop)}{(-2\pi i)^{m}}e^{-2\pi i\bm y\cdot \bm z}
=
h_m(\bm z)e^{-2\pi i\bm y\cdot \bm z}
$$
which follows from applying all partial derivatives sequentially and noting that the degree of $h_{k,\bm\alpha}$ is $|\bm\alpha|-2k$.
For the second sum let $\bm u=\lambda(\bm p+\bm y)$ and use that by the chain rule
$$
\frac{h_{k,\bm\alpha}(\diffop)}{(-2\pi i)^{|\bm\alpha|-2k}}G_{d-(\nu-2k)}(\bm u)
=(-1)^k\frac{\lambda^{2|\bm\alpha|-4k}}{i^{|\bm\alpha|}}
h_{k,\bm\alpha}(\bm p+\bm y) G_{d-\nu+2(|\bm\alpha|-k)}(\bm u)
$$
so that the equation in the theorem follows.

Let $\Re{\nu}>d+|\bm\alpha|$ and let $\bm y\in\Lambda^*$.
Since $\Lambda^*+\bm y=\Lambda^*$, it
is enough to consider the $\bm p$-dependence
$$
h_{k,\bm\alpha}(\bm p)G_{d-\nu+2(|\bm\alpha|-k)}(\lambda \bm p) e^{-2\pi i \bm x\cdot\bm p}
$$
of the summand in the second series
at $\bm p=\bm 0$.
Note $h_{k,\bm\alpha}(\bm p)$ is homogeneous of degree $m=|\bm\alpha|-2k$ and
for $t=d-\nu+2(|\bm\alpha|-k)$ we have $\Re t-m=d+|\bm\alpha|-\Re\nu<0$
so that by \Cref{continuity-crandall-upper} the display above continuously extends to $\bm p= \bm 0$.
By \Cref{h-vanish} we have that
$h_{k,\bm\alpha}(\bm 0)=0$ if
$\chi(\bm\alpha)=0$ or $2k\neq |\bm\alpha|$, so the limit of the product above vanishes unless $2k=|\bm\alpha|$, in which case, with all prefactors, it equals
$$
-2
(-1)^{|\bm\alpha|/2}\chi(\bm\alpha)
\frac{(\pi/\lambda^2)^{(\nu-|\bm\alpha|)/2}}{\Gamma((\nu-|\bm\alpha|)/2)}
\frac{\lambda^{d}}{i^{|\bm\alpha|}V_{\Lambda}}
\frac{h_{\lfloor|\bm\alpha|/2\rfloor,\bm\alpha}(\bm 0)}{(d+|\bm\alpha|-\nu)}\,,
$$
which is meromorphic in $\nu\in\mathds C$
with a simple pole at $\nu=d+|\bm\alpha|$ if $\chi(\bm\alpha)=1$.
Both the prefactor $(\pi/\lambda^2)^{\nu/2-k}/\Gamma(\nu/2-k)$ and the first sum are entire in $\nu$, in particular the singularity of the real-space summand in $\bm x\in\Lambda$ gets removed since
$$
\frac{G_{\nu-2k}(\bm 0)}{\Gamma(\nu/2-k)}
=-\frac{1}{\Gamma(\nu/2-k+1)}\,.
$$
For $\Re{\nu}>d+|\bm\alpha|$ both sides of the statement are continuous in $\bm y\in\mathds R^d$, 
the left-hand side by \Cref{zetadirectsum} and the right-hand side by the extension just shown together with the locally uniform convergence of both series as in the proof of \Cref{hol}.
For $\bm y\in\Lambda^*$, both sides are holomorphic in $\nu$ in the region as stated, the left-hand side
by the holomorphy of the anisotropic Epstein zeta function, see \Cref{hol}, and the right-hand side preceding discussion, so that the statement follows by the identity theorem.
\end{proof}

The main result allows for the stable and efficient evaluation of anisotropic Epstein zeta functions, given a suitable implementation of the harmonic polynomials.
We demonstrate the resulting stability in \Cref{sec:num}
and provide a backward stable algorithm for the harmonic polynomials  in \Cref{sec:alg}.

\subsection{Regularized anisotropic Epstein zeta functions}
\label{sec:harmonic-reg}

We derive a stable representation for the regularized anisotropic Epstein zeta function, based on the harmonic decomposition of the partial derivative.
We begin by showing how the poly-Laplacian and the differential operator obtained from the harmonic polynomials act on the power-law singularity, the proof of which can be found in \Cref{sec:appendix-reg}.

\begin{lemma}
\label{harmonic-decomp-sing}
Let $h_m$ denote a homogeneous harmonic polynomial of degree $m\in\mathds N$,
let $\bm y\in\mathds R^d\setminus\{\bm 0\}$, and let
 $k\in\mathds N$.
Let $\nu\in\mathds C\setminus(d+2(k+m+\mathds N))$, then
$$
h_m(\diffop)\Delta^k 
\hat{s}_\nu(\bm y)
=
\frac{2^k(-2\pi^2)^{m+k}}{(\nu/2-k-1)_{m}}
h_m(\bm y)
\hat{s}_{\nu-2(m+k)}(\bm y)\,,
$$
in the sense of the limit in $\nu$ if $(\nu/2-k-1)_{m}=0$.
Let $\nu=d+2\ell$ for
 $\ell\in\mathds N$, $\ell\ge k+m$,
then the right-hand side is multiplied by
$$
\Big(1+\frac{H_{\ell,k,m+2k,d}}{\log(\pi\bm y^2)}
\Big)
$$
where 
$H_{\ell,k,n,d}=H_{\ell}-H_{\ell+k-n}+\psi(\ell+d/2)-\psi(\ell+d/2-k)$.
Here, $H_{\ell}$ denotes the $\ell$'th harmonic number and $\psi(\cdot)=\Gamma'(\cdot)/\Gamma(\cdot)$ denotes the digamma function.
\end{lemma}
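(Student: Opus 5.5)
The plan is to reduce everything to radial calculus in the variable $t=\bm y^2$, treating the two factors of the operator separately. First apply $\Delta^k$ to the radial function $\hat s_\nu(\bm y)=c_\nu\,(\bm y^2)^{(\nu-d)/2}$, with $c_\nu=\pi^{\nu-d/2}\Gamma((d-\nu)/2)/\Gamma(\nu/2)$. Then apply $h_m(\diffop)$ to the resulting radial function by means of \Cref{harmonic-applied-to-f(y2)}, which (as in the proof of \Cref{harmonic-crandall}) gives
\[
h_m(\diffop)f(\bm y^2)=2^m h_m(\bm y)\,f^{(m)}(\bm y^2)
\]
for smooth $f$ on $(0,\infty)$. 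Since $\bm y\neq\bm 0$, everything is classical differentiation, and no distributional issues arise.

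\textbf{Generic case} $\nu\notin d+2\mathds N$. For radial $f(t)$ one has $\Delta f(\bm y^2)=4tf''(t)+2df'(t)$. Applied to $t^a$ this gives $\Delta(\bm y^2)^a=4a(a+d/2-1)(\bm y^2)^{a-1}$. Iterating $k$ times with $a=(\nu-d)/2$ yields
\[
\Delta^k(\bm y^2)^a=4^k(a)_k(\nu/2-1)_k(\bm y^2)^{a-k}.
\]
Applying $h_m(\diffop)$ then contributes $2^m h_m(\bm y)(a-k)_m(\bm y^2)^{a-k-m}$. The remaining work is bookkeeping of Gamma factors.
\begin{itemize}
\item Write $(a)_k(a-k)_m=(a)_{k+m}=\Gamma(a+1)/\Gamma(a+1-k-m)$, and use the reflection formula to relate it to $\Gamma((d-\nu)/2)/\Gamma((d-\nu)/2+k+m)$ up to a sign $(-1)^{k+m}$.
\item Write $(\nu/2-1)_k=\Gamma(\nu/2)/\Gamma(\nu/2-k)$.
\item Compare with $c_{\nu-2(m+k)}$. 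The powers of $\pi$ give $\pi^{2(m+k)}$. The quotient $\Gamma(\nu/2-k)/\Gamma(\nu/2-k-m)$ leaves exactly $1/(\nu/2-k-1)_m$ in the denominator.
\item Collect $2^{2k}2^m(-1)^{m+k}\pi^{2(m+k)}=2^k(-2\pi^2)^{m+k}$, as claimed.
\end{itemize}
Where $(\nu/2-k-1)_m=0$, both sides are meromorphic in $\nu$ for fixed $\bm y\neq\bm 0$ (indeed entire on the left outside $d+2\mathds N$), so the limit interpretation follows by continuity in $\nu$.

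\textbf{Logarithmic case} $\nu=d+2\ell$ with $\ell\ge k+m$. I would compute directly rather than take a limit, since the chosen normalization of $\hat s_{d+2\ell}$ is not itself a limit of $\hat s_\nu$. Up to the constant $C_\ell=\pi^{2\ell+d/2}(-1)^{\ell+1}/(\Gamma(\ell+d/2)\ell!)$, one has $\hat s_{d+2\ell}=C_\ell\,t^\ell\log(\pi t)$ with $t=\bm y^2$. I would handle it in two steps.
\begin{itemize}
\item The Laplacian gives $\Delta(t^b\log(\pi t))=4b(b+d/2-1)t^{b-1}\log(\pi t)+4(2b+d/2-1)t^{b-1}$. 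Inductively, $\Delta^k t^\ell\log(\pi t)=4^k(\ell)_k(\ell+d/2-1)_k\,t^{\ell-k}\big(\log(\pi t)+c\big)$. The constant $c$ is the logarithmic derivative of $(b)_k(b+d/2-1)_k$ at $b=\ell$, namely $H_\ell-H_{\ell-k}+\psi(\ell+d/2)-\psi(\ell+d/2-k)$.
\item Differentiating $m$ times in $t$ adds $H_{\ell-k}-H_{\ell-k-m}$ in the same way. Telescoping gives $H_\ell-H_{\ell-k-m}$, which matches $H_{\ell,k,m+2k,d}$ since $\ell+k-(m+2k)=\ell-k-m$.
\end{itemize}
Finally I would check that the prefactor $4^k2^m(\ell)_{k+m}(\ell+d/2-1)_kC_\ell$, divided by $C_{\ell-k-m}$, equals $2^k(-2\pi^2)^{m+k}/(\ell+d/2-k-1)_m$. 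The ratio $(\ell)_{k+m}\,(\ell-k-m)!/\ell!=1$ handles the factorials, and the sign $(-1)^{k+m}$ comes from $C_\ell/C_{\ell-k-m}$. Factoring $\log(\pi t)$ out of $\log(\pi t)+H_{\ell,k,m+2k,d}$ gives the stated multiplier.

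\textbf{Main obstacle.} I expect the difficulty to lie in the logarithmic case. Two points need care. First, the inductive formula for $\Delta^k$ on $t^b\log t$ must be set up cleanly, most simply by writing $t^b\log t=\partial_b t^b$ and differentiating the pure-power identity in $b$. That trick makes the digamma and harmonic-number constants appear automatically as logarithmic derivatives of the Pochhammer products. Second, the requirement $\ell\ge k+m$ must be used so that no Pochhammer factor vanishes and the logarithm survives. If the $\partial_b$ trick works, the whole lemma reduces to the generic computation plus one differentiation in the exponent.
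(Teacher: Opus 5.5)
Your computations in the two cases you treat are correct. They follow the paper's route: the radial formula for $\Delta^k(\bm y^2)^b$, the $\partial_b$ trick producing the digamma and harmonic-number constants, \Cref{harmonic-applied-to-f(y2)} for $h_m(\diffop)$, and telescoping.

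The gap is in your case split. You cover $\nu\notin d+2\mathds N$ and $\nu=d+2\ell$ with $\ell\ge k+m$. But the first formula is also asserted for $\nu=d+2\ell$ with $0\le\ell<k+m$, because these values do not lie in $d+2(k+m+\mathds N)$. Your generic computation does not apply there: the left-hand side uses the logarithmic normalization of $\hat s_{d+2\ell}$, and your constant $c_\nu$ has a pole at these points. Your limit remark only treats zeros of $(\nu/2-k-1)_m$ with $\nu\notin d+2\mathds N$. At these exponents the logarithm must drop out and the result is again a pure power. The mechanism is exactly what your last paragraph rules out: a Pochhammer factor vanishing at $b=\ell$. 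These cases also matter later, since the harmonic-decomposition representation of the regularized function invokes the first formula for every $\nu\notin d+2(k+m+\mathds N)$.

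You can close the gap by computing directly, as the paper does via \Cref{delta-ysquared} and \Cref{h-ysquared}.
\begin{itemize}
\item If $\ell<k$, then $(\ell)_k=0$. In $\partial_b[4^k(b)_k(b+d/2-1)_k t^{b-k}]_{b=\ell}$ only $\partial_b(b)_k|_{b=\ell}=(-1)^{k-\ell-1}\ell!\,(k-\ell-1)!$ survives, by \Cref{pochhammer-der}. One checks that $\Delta^k\hat s_{d+2\ell}=(-4\pi^2)^k\hat s_{d+2(\ell-k)}$ with the non-logarithmic normalization. Your generic step for $h_m(\diffop)$ then applies.
\item If $k\le\ell<k+m$, your $\Delta^k$ formula gives $t^{\ell-k}(\log(\pi t)+c)$. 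The $m$ derivatives in $t$ annihilate $t^{\ell-k}$. Moreover $\frac{d^m}{dt^m}(t^{\ell-k}\log t)=(-1)^{m-\ell+k-1}(\ell-k)!\,(m-\ell+k-1)!\,t^{\ell-k-m}$, and its constant matches $\hat s_{d+2(\ell-k-m)}$.
\end{itemize}

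Alternatively, you can reuse your generic result through a limit. Set $\epsilon=(\nu-d)/2-\ell$. Then $\hat s_\nu-\hat s_{d+2\ell}=(a_{-1}/\epsilon+a_0)(\pi\bm y^2)^\ell+O(\epsilon)$, with constants $a_{-1},a_0$ independent of $\bm y$. After removing the pole term, this is jointly smooth in $(\epsilon,\bm y)$ for $\bm y\neq\bm 0$, so derivatives commute with $\epsilon\to0$. Since $h_m(\diffop)\Delta^k|\bm y|^{2\ell}$ is proportional to $(\ell)_{k+m}$, it vanishes whenever $\ell<k+m$. Hence the left-hand side at $d+2\ell$ is the limit of your generic left-hand side. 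The right-hand side is continuous there (in the stated limit sense), because $\nu-2(k+m)$ stays away from $d+2\mathds N$.
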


The Crandall type representation for the regularized anisotropic Epstein zeta function that utilizes the harmonic decomposition of the singularity reads as follows.

\begin{theorem}[Harmonic-decomposition Crandall representation of the regularized anisotropic Epstein zeta function]
Let $\Lambda$ be a $d$-dimensional lattice, let $\bm x \in\mathds{R}^d$, let
$\bm y \in (\mathds R^d\setminus\Lambda^*)\cup\{\bm 0\}$, let $\nu \in \mathds{C}$ and let $\bm\alpha\in\mathds N^d$.
Then for $\lambda > 0$ it holds that
\begin{align*}
Z^{\rm reg}_{\Lambda,\nu,\bm\alpha}(\bm x,\bm y)
&=
\sum_{k=0}^{\lfloor |\bm\alpha|/2\rfloor}
\frac{(\pi/\lambda^2)^{\nu/2-k}}{\Gamma(\nu/2-k)}
\Bigg[
\sum_{\bm z\in \Lambda-\bm x}
h_{k,\bm\alpha}(\bm z)G_{\nu-2k}(\bm{z}/\lambda)e^{-2\pi i\bm y\cdot \bm z}
\\
&\qquad\quad+(-1)^k
\frac{\lambda^{d}}{i^{|\bm\alpha|}V_{\Lambda}}
\bigg[
h_{k,\bm\alpha}(\bm y)G^{\mathrm{reg}}_{d-\nu,\lambda,k,|\bm\alpha|}(\bm y)
\\
&\qquad\qquad\quad
+\lambda^{2|\bm\alpha|-4k}\sum_{\substack{\bm p\in\Lambda^{\ast}+\bm y\\ \bm p\neq \bm y}}
h_{k,\bm\alpha}(\bm p)G_{d-\nu+2(|\bm\alpha|-k)}
(\lambda \bm p)e^{-2\pi i\bm x\cdot(\bm p-\bm y)}
\bigg]
\Bigg].
\end{align*}
If $t\in\mathds C\setminus(-2\mathds N)$ or $t=-2\ell$ with $\ell\in\mathds N$ and $\ell<n-k$, the regularized Crandall function for the harmonic method is defined as
$$
G^{\mathrm{reg}}_{t,\lambda,k,n}(\bm y)
=-\lambda^{2n-4k}g_{t+2(n-k)}(\lambda \bm y)\,.
$$
If $\ell\in\mathds N$, $\ell\ge n-k$, we define
$$
G^{\mathrm{reg}}_{-2\ell,\lambda,k,n}(\bm y)
=
\lambda^{2n-4k}
G_{2(n-\ell-k)}(\lambda \bm y)
+
\lambda^{2(\ell-k)}
\frac{ (- \pi \bm y^2 )^{\ell+k-n} }{(\ell+k-n)!}
\Big(
\log (\pi  \bm y^{2})
+
H_{\ell,k,n,d}\Big)
$$
where the following representation is numerically stable around $\bm y = \bm 0$
$$
G^{\mathrm{reg}}_{-2\ell,\lambda,k,n}(\bm y)
=
\lambda^{2(\ell-k)}
\frac{(-\pi\bm y^2)^{\ell+k-n}}{(\ell+k-n)!}H'_{\ell,\lambda,k,d}
-\lambda^{2n-4k}\!\!\!\!\sum_{\substack{j=0 \\ j\neq \ell+k-n}}^{\infty}\frac{(-\pi\lambda^2 \bm y^2)^j}{(j+n-\ell-k)j!}
$$
Here,  
$H_{\ell,k,n,d}$ is defined as in \Cref{harmonic-decomp-sing}
and
$
H'_{\ell,\lambda,k,d}=H_{\ell}+\psi(\ell+d/2)-\psi(\ell+d/2-k)-\gamma -2\log\lambda
$
with the digamma function $\psi$ and the harmonic numbers $H_\ell$.
\end{theorem}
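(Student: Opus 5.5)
Start from the non-regularized harmonic-decomposition Crandall representation, \Cref{harmonic-decomposition-crandall-representation}, for $\bm y\notin\Lambda^*$, $\bm y\neq\bm 0$. Multiply by $e^{2\pi i\bm x\cdot\bm y}$; this absorbs the $\bm x$-dependence of the real-space phase, so the first series becomes $e^{-2\pi i\bm y\cdot\bm z}$, and in the reciprocal series it gives $e^{-2\pi i\bm x\cdot(\bm p-\bm y)}$. Split off the summand $\bm p=\bm y$ (i.e.\ the reciprocal point $\bm 0$). This leaves exactly the real-space series and the cut-out reciprocal series of the claimed formula, and the single term
$$
\frac{(\pi/\lambda^2)^{\nu/2-k}}{\Gamma(\nu/2-k)}(-1)^k\frac{\lambda^{d+2|\bm\alpha|-4k}}{i^{|\bm\alpha|}V_\Lambda}h_{k,\bm\alpha}(\bm y)G_{d-\nu+2(|\bm\alpha|-k)}(\lambda\bm y)
$$
for each $k$. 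Next subtract $\hat s^{(\bm\alpha)}_\nu(\bm y)/((-2\pi i)^{|\bm\alpha|}V_\Lambda)$ and distribute it over $k$ using the harmonic decomposition $\diffop^{\bm\alpha}=\sum_k\Delta^kh_{k,\bm\alpha}(\diffop)$. Since $\Delta^k$ and $h_{k,\bm\alpha}(\diffop)$ commute, \Cref{harmonic-decomp-sing} with $m=|\bm\alpha|-2k$ turns each piece into an explicit multiple of $h_{k,\bm\alpha}(\bm y)\hat s_{\nu-2(|\bm\alpha|-k)}(\bm y)$. This also covers the logarithmic correction factor when $\nu\in d+2\mathds N$.

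The core step is to show, for each $k$, that the $\bm p=\bm 0$ Crandall term minus its share of the singularity equals the prefactor times $h_{k,\bm\alpha}(\bm y)G^{\rm reg}_{d-\nu,\lambda,k,|\bm\alpha|}(\bm y)$. Write $t=d-\nu$ and $n=|\bm\alpha|$.

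In the generic case $t+2(n-k)\notin-2\mathds N$, first check that the constant from \Cref{harmonic-decomp-sing}, together with $\hat s_\nu$ and the prefactor $(\pi/\lambda^2)^{\nu/2-k}/\Gamma(\nu/2-k)$, reproduces exactly $\lambda^{2n-4k}\Gamma(s/2)(\pi\lambda^2\bm y^2)^{-s/2}$ with $s=t+2(n-k)$. Here one uses $\Gamma(\nu/2-k)=\Gamma(\nu/2)(\nu/2-k)\cdots$, the Pochhammer factor $(\nu/2-k-1)_m$ in the lemma, and $\Gamma((d-\nu)/2)$ in $\hat s_\nu$. The fundamental relation of \Cref{crandall-lower-prop} then gives $G_s-\Gamma(s/2)(\pi\lambda^2\bm y^2)^{-s/2}=-g_s$. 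This is precisely the first definition of $G^{\rm reg}$, including the case $t=-2\ell$ with $\ell<n-k$.

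In the case $t=-2\ell$ with $\ell\ge n-k$, i.e.\ $\nu=d+2\ell$, the subtracted singularity is the log-type term with the $H_{\ell,k,n,d}$ correction. Matching constants, with $\log(\pi\bm y^2)$ rather than $\log(\pi\lambda^2\bm y^2)$, yields the second definition directly. For the stable form, insert the series $z^{j}\Gamma(-j,z)$ from the proof of \Cref{low-ders-aniso-reg} with $j=\ell+k-n$ and $z=\pi\lambda^2\bm y^2$. The $\log z$ terms cancel against $\log(\pi\bm y^2)$, leaving $-2\log\lambda$. Collecting $H_j-\gamma$ with $H_{\ell,k,n,d}$ gives $H'_{\ell,\lambda,k,d}$, since $H_{\ell+k-n}$ cancels. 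The remaining sum, after the index shift, is the stated power series.

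Finally, extend the identity to $\bm y=\bm 0$. Both sides are continuous there: the left by \Cref{hol-reg}, and the right because $h_{k,\bm\alpha}(\bm y)G^{\rm reg}$ is entire in $\bm y$ and the cut-out reciprocal series converges locally uniformly, as in the proof of \Cref{hol}. Extend in $\nu$ by the identity theorem wherever the generic derivation needs $\nu$ avoiding poles. I expect the main obstacle to be the bookkeeping of constants: Gamma ratios, powers of $\lambda$ and $\pi$, signs $(-1)^k i^{-n}$ versus $(-2\pi i)^{-n}$, and especially the digamma/harmonic-number constants in the logarithmic case. I would verify these first at $k=0$, $\bm\alpha=\bm 0$ against \Cref{low-ders-aniso-reg}.
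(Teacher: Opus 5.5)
Your proposal is correct and takes essentially the same route as the paper. You split off the $\bm p=\bm y$ reciprocal term of the harmonic-decomposition Crandall representation, distribute $\hat s^{(\bm\alpha)}_\nu$ over $k$ via the harmonic decomposition and the singularity lemma, use the fundamental relation in the generic case and the series for $z^p\Gamma(-p,z)$ in the logarithmic case, and extend to $\bm y=\bm 0$ by continuity. The minor differences are that you compare terms directly as multiples of $h_{k,\bm\alpha}(\bm y)$ rather than dividing by it and repairing zeros of $h_{k,\bm\alpha}$ by continuity, as the paper does, and that your identity-theorem step in $\nu$ is unnecessary, since the singularity lemma already holds pointwise once the Gamma factors combine into $1/\Gamma(\nu/2-k)$.
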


\begin{proof}
By \Cref{harmonic-decomp} we can write
$$
\hat s^{(\bm\alpha)}_{\nu}(\bm y)
=
\sum_{k=0}^{\lfloor|\bm\alpha|/2\rfloor}h_{k,\bm\alpha}(\diffop)\Delta^k \hat s_{\nu}(\bm y)\,.
$$
Subtracting the singularity from the $\bm p=\bm y$ summand in the second sum of \Cref{harmonic-decomposition-crandall-representation}, we have
\begin{align*}
&Z_{\Lambda,\nu,\bm\alpha}^{\mathrm{reg}}(\bm x,\bm y)
=
\sum_{k=0}^{\lfloor |\bm\alpha|/2\rfloor}
\frac{(\pi/\lambda^2)^{\nu/2-k}}{\Gamma(\nu/2-k)}
\Bigg[
\sum_{\bm z\in \Lambda-\bm x}
h_{k,\bm\alpha}(\bm z)G_{\nu-2k}(\bm{z}/\lambda)e^{-2\pi i\bm y\cdot \bm z}
\\
&
+\frac{(-1)^k\lambda^{d}}{V_{\Lambda}i^{|\bm\alpha|}}\bigg[h_{k,\bm\alpha}(\bm y)f_{\nu}(\bm y)+
\lambda^{2|\bm\alpha|-4k}\!\!\!\!
\sum_{\substack{\bm p\in\Lambda^{\ast}+\bm y\\ \bm p\neq \bm y}}
h_{k,\bm\alpha}(\bm p)G_{d-\nu+2(|\bm\alpha|-k)}
(\lambda \bm p)e^{-2\pi i\bm x\cdot(\bm p-\bm y)}
\bigg]
\Bigg]
\end{align*}
where we, for $\bm y\neq \bm 0$ and $h_{k,\bm\alpha}(\bm y)\neq  0$, formally define $f_{\nu}(\bm y)$ as the difference between the $\bm p=\bm y$ summand and the singularity with all the factors as
$$
\lambda^{2|\bm\alpha|-4k}
G_{d-\nu+2(|\bm\alpha|-k)}(\lambda \bm y)
-(-1)^k
\frac{(-2\pi)^{-|\bm\alpha|}}{h_{k,\bm\alpha}(\bm y)}
\frac{\Gamma(\nu/2-k)}{\lambda^d(\pi/\lambda^2)^{\nu/2-k}}
h_{k,\bm\alpha}(\diffop)
\Delta^k 
\hat s_{\nu}(\bm y)
$$
and smoothly extend to $\bm y=\bm 0$ and $h_{k,\bm\alpha}(\bm y)=\bm 0$.
The statement follows from showing that
$
f_{\nu}
=
G_{d-\nu,\lambda,k,|\bm\alpha|}^{\rm reg}\,.
$

Let $\nu\notin(d+2(k+m+\mathds N))$,
where $m=|\bm\alpha|-2k$ is the degree of $h_{k,\bm\alpha}$, then by \Cref{harmonic-decomp-sing}
$$
f_{\nu}(\bm y)
=
\lambda^{2|\bm\alpha|-4k}
\Big(
G_{d-\nu+2(|\bm\alpha|-k)}(\lambda \bm y)
-
\frac{\Gamma((d-\nu)/2+|\bm\alpha|-k)}{(\pi \lambda^2 \bm y^2)^{(d-\nu)/2+|\bm\alpha|-k}}
\Big)
$$
so $f_{\nu}=G^{\rm reg}_{d-\nu,\lambda,k,|\bm\alpha|}$ by the fundamental relation, see \Cref{crandall-lower-prop}.

Let $\ell\in\mathds N$, $\ell \ge m+k$, then
by \Cref{harmonic-decomp-sing}
$$
f_{d+2\ell}(\bm y)=
\lambda^{2|\bm\alpha|-4k}\Big(
G_{2(|\bm\alpha|-\ell-k)}(\lambda \bm y)
+\frac{ (- \pi \lambda^2\bm y^2 )^{\ell+k-|\bm\alpha|} }{(\ell+k-|\bm\alpha|)!}
\Big(
\log (\pi  \bm y^{2})
+
H_{\ell,k,|\bm\alpha|,d}\Big)
\Big)
\,.
$$

To obtain a stable representation around $\bm y=\bm 0$
we follow the proof of \Cref{low-ders-aniso-reg}.
To that end, let 
$p=\ell+k-|\bm\alpha|$ and $z=\pi\lambda^2\bm y^2$, so
that the equation above is given by
$$
f_{d+2\ell}(\bm y)=
\lambda^{2|\bm\alpha|-4k}
z^p\Big(\Gamma(-p,z)
+\frac{(-1)^{p}}{p!} (\log z-\log\lambda^2+H_{\ell,k,|\bm\alpha|,d})\Big)
$$
and the series representation
\cite[Eq. (5.1.12), (6.5.9)]{abramowitz} as in the proof of \Cref{low-ders-aniso-reg} yields the last equality of the statement.
Since  $f_{\nu}(\bm y)$, $\nu\in\mathds C$, $\bm y\in\mathds R^d$ is independent of $h_{k,\bm\alpha}(\bm y)$ and continuous in $\bm y\in\mathds R^d$, see the holomorphicity of the lower Crandall function in \Cref{crandall-lower-prop} and use the continuity of power series,
the representation holds for $h_{k,\bm\alpha}(\bm y)= 0$ and $\bm y=\bm 0$. 
\end{proof}

This Crandall type representation for the regularized anisotropic Epstein zeta function, obtained by the harmonic decomposition of the derivative operator, enables efficient computations for high-order anisotropy.

\section{Algorithm}
\label{sec:alg}

In this section we present an algorithm for stably computing anisotropic Epstein zeta functions for arbitrary anisotropy, based on the harmonic-decomposition Crandall representation.
\Cref{alg} is a generalization of the algorithm for the computation of the Epstein zeta function introduced in \cite[Sec. 3]{buchheitComputationPropertiesEpstein2026}; many details carry over to the anisotropic case.
In particular, a thorough discussion regarding the preprocessing of the arguments and the details of the computation of incomplete Gamma functions can be found there.
In what follows, we focus on the most relevant case of real parameters, we set the value of the splitting parameter to one, $\lambda = 1$, and solve the additional numerical challenges of stably evaluating
anisotropic Epstein zeta functions, in particular the backward stable evaluation of harmonic polynomials and the handling of cancellation errors for large exponents.
In \Cref{sec:harmonic-computation} we derive  an explicit representation of the harmonic polynomials, where the coefficients are given by sums over dyadic rationals that can be computed exactly.
In \Cref{sec:cancellation} we show how to avoid cancellation in the real space sum for large exponents
and 
 near the shifted poles of the Gamma or upper Crandall function.
Also note the discussion of suitable truncation parameters in \Cref{sec:appendix-trunc}.

\begin{algorithm}
\small
\DontPrintSemicolon
\caption{Computation of anisotropic Epstein zeta functions.}
\KwIn{$d\in\mathds{N}_+$, $A\in \mathds{R}^{d\times d}$ regular, $\bm x,\bm y\in\mathds{R}^d$, $\nu\in\mathds{R}$, $\bm\alpha\in\mathds N^d$, $r>0$}

\BlankLine
\underline{\bfseries 1. Rescale lattice matrix and vectors
$\phantom{j}$
}

$a = V_{\Lambda}^{1/d},\quad A \gets A/a,\quad \bm x \gets \bm x/a,\quad \bm y \gets a\bm y$

\BlankLine
\underline{\bfseries 2. Project $\bm x$ and $\bm y$ to elementary lattice cells}

$\tilde{\bm x} = A^{-1}\bm x,\quad \tilde{\bm y} = A^{T}\bm y$

\For{$j = 1,\ldots,d$}{
    $\tilde{x}^{(j)} \gets \bigl((\tilde{x}^{(j)}+1/2)\bmod 1\bigr)-1/2$

    $\tilde{y}^{(j)} \gets \bigl((\tilde{y}^{(j)}+1/2)\bmod 1\bigr)-1/2$
}

$\tilde{\bm x} \gets A\tilde{\bm x},\quad \tilde{\bm y} \gets A^{-T}\tilde{\bm y}$

\BlankLine
\underline{\bfseries 3. Handle pole and symmetry related zeros}

\If{$\nu = d+|\bm\alpha|\;\land\;\tilde{\bm y} = \bm 0 \land \chi(\bm\alpha)=1$}{
    \Return NaN
}

\If{$|\bm\alpha|\;\text{odd}\;\land\;\tilde{\bm x}=\bm 0\;\land\;2\tilde{\bm y}\in\Lambda^*$}{
    \Return $0$
}

\If{$\exists j:\ z^{(j)}\mapsto -z^{(j)}\;\text{is a symmetry of}\;\Lambda\;\land\;\alpha^{(j)}\;\text{odd}$
$\phantom{..}\land \big((2\tilde{x}^{(j)}\bm e_j\in\Lambda\;\land\;
 (\Lambda\cap\mathds R\bm e_j)\cdot\tilde{\bm y}\subseteq\mathds Z)\;\lor\;(2\tilde{y}^{(j)}\bm e_j\in\Lambda^*\;\land\;
 (\Lambda^*\cap\mathds R\bm e_j)\cdot\tilde{\bm x}\subseteq\mathds Z)\big)$%
 }{
    \Return $0$
}

\BlankLine
\underline{\bfseries 4. Summation in real and reciprocal space}

$\textnormal{result}=0$

$\textnormal{choose\_sing\_branch}=\nu> 10 \land |\bm\alpha|\ge 2$

\For{$k=0,1,\ldots,\lfloor |\bm\alpha|/2\rfloor$}{

     $\textnormal{sum\_real}=0,\quad \textnormal{sum\_reci}=0,\quad\textnormal{sum\_total}=0$

    \If{$\nu\in2k-2\mathds N$}{
        \If{$\tilde{\bm x}= \bm 0\land \nu=2k=|\bm\alpha|\land \chi(\bm\alpha)=1$}{
        $\textnormal{sum\_total}=-h_{k,\bm\alpha}(\bm 0)e^{-2\pi i \bm x\cdot\bm y}$
        }
    }
    \Else{
        \For{$\bm v \in \mathds{Z}^d:\, \bm z=A\bm v-\tilde{\bm x},\,|\bm z|\le r$}{

            \If{$\textnormal{choose\_sing\_branch}\land\, \|\bm v\|_{1}\le 1$}{
                $\textnormal{crandall}=-g_{\nu-2k}(\bm z)$
            }
            \Else{
                $\textnormal{crandall}=G_{\nu-2k}(\bm z)$
            }
            $\textnormal{sum\_real} \pluseq h_{k,\bm\alpha}(\bm z)\exp(-2\pi i\,A\bm v\cdot\tilde{\bm y})\,\textnormal{crandall}$
            \quad\tcp{Kahan}   
        }

        $\textnormal{sum\_real}\asteq \exp(-2\pi i(\bm x-\tilde{\bm x})\cdot\bm y)$
    
        \BlankLine
        \For{$\bm p \in (A^{-T}\mathds{Z}^d+\tilde{\bm y}):\,|\bm p|\le r,\, h_{k,\bm\alpha}(\bm p)\neq 0$}{
            
            $\textnormal{sum\_reci}\pluseq
            h_{k,\bm\alpha}(\bm p)\exp(-2\pi i\,\bm x\cdot \bm p)G_{d-\nu+2(|\bm\alpha|-k)}(\bm p)
            $\quad\tcp{Kahan}
        }
        
        $\textnormal{sum\_reci}\asteq (-1)^k\, i^{-|\bm\alpha|}$
        
        $\textnormal{sum\_total}=(\textnormal{sum\_real}+\textnormal{sum\_reci})/\Gamma(\nu/2-k)$
    }
    
    $\textnormal{result}\pluseq \pi^{(\nu-2k)/2}\,\textnormal{sum\_total} $
}
\If{$\textnormal{choose\_sing\_branch}$}{

    $\textnormal{sum\_sing\_outer}=0$
    \For{$\bm{v} \in \{-1,0,1\}^d:\bm z=A\bm v-\tilde{\bm x}\neq \bm 0,\,\|\bm v\|_1\le 1$}{

        $\textnormal{sum\_sing\_inner}=0$
        
        \For{$k=0,1,\ldots\lfloor|\bm\alpha|/2\rfloor$}{

            \If{$\nu\notin 2k-2\mathds N$}{
                $\textnormal{sum\_sing\_inner}\pluseq
                   |\bm z|^{2k}\,h_{k,\bm\alpha}(\bm z)
                $\quad\tcp{Kahan}
            }
            
        }
        $\textnormal{sum\_sing\_outer}\pluseq \exp(-2 \pi i A\bm v\cdot\tilde{\bm y})\,|\bm z|^{-\nu}\,\textnormal{sum\_sing\_inner}$\quad\tcp{Kahan}
    }

    $\textnormal{sum\_sing\_outer}\asteq \exp(-2\pi i(\bm x-\tilde{\bm x})\cdot\bm y)$

    $\textnormal{result}\pluseq \textnormal{sum\_sing\_outer}$

}

$\textnormal{result}\asteq a^{|\bm\alpha|-\nu}$

\BlankLine
\Return $\textnormal{result}$
\label{alg}
\end{algorithm}

\subsection{Harmonic polynomials}
\label{sec:harmonic-computation}

We begin by deriving a stably computable representation of the harmonic polynomials as defined by the harmonic decomposition, \Cref{harmonic-decomp};
the representation is
based on \cite{axlerHarmonicFunctionTheory2001} and a proof is provided in \Cref{sec:appendix-alg}.

\begin{theorem}[Efficient computation of harmonic polynomials]
\label{harmonic-comp}
Let $\bm y\in\mathds R^d$, let $\bm\alpha\in\mathds N^d$
and let $h_{k,\bm\alpha}$ be
the harmonic homogeneous polynomials
of degree $|\bm\alpha|-2k$, $k\in\mathds N$, $2k\le |\bm\alpha|$
implicitly defined in \Cref{harmonic-decomp} by
$$
\bm y^{\bm\alpha}
=
\sum_{k=0}^{\lfloor |\bm\alpha|/2\rfloor}
|\bm y|^{2k}h_{k,\bm\alpha}(\bm y)
\,,\qquad \bm y \in\mathds R^d\,.
$$
Then, the harmonic polynomials admit the representation
$$
h_{k,\bm\alpha}(\bm y)=
\omega_{k,|\bm\alpha|}
\!\!\!
\sum_{\substack{
2\bm\gamma\ge\bm\alpha
\\
|\bm\gamma|=|\bm\alpha|-k
}}
\!\!\!\!
c_{k,\bm\alpha,\bm\gamma}
\;
\bm y^{2\bm\gamma-\bm\alpha}\,.
$$
Here, the coefficients are the dyadic rationals
$$
c_{k,\bm\alpha,\bm\gamma}
=
\!\!\!\!\!\!\!
\sum_{\substack{
\bm\beta\ge\bm0
\\
\bm 0 
\le 
\bm\alpha+\bm\beta-\bm\gamma
\le \bm\alpha/2
}}
\!\!\!\!\!\!
(-2)^{-|\bm\beta|}
\,
(\bm\gamma-\bm\beta)_{\bm\alpha+\bm\beta-\bm\gamma}
\,
q_{k,|\bm\alpha|,|\bm\beta|}
\,
\frac{|\bm\beta|!}{\bm\beta!}
\binom{|\bm\beta|+k}{k}
\binom{\bm\alpha}{\bm\alpha+\bm\beta-\bm\gamma}
\,,
$$
and the global constant is defined by
$$
\frac{1}{\omega_{k,n}}
=
2^k q_{k,n,0}\prod_{j=1}^k (2n+d-2k-2j)
$$
for $n\in\mathds N$, $n\ge2k$,
where 
$$
q_{k,n,m}
=
\prod_{j=m+1}^{\lfloor n/2\rfloor -k}(2n+d-2-4k-2j)
$$
for $m\in\mathds N$, $m\le \lfloor n/2\rfloor -k$.
\end{theorem}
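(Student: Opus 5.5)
The plan is to obtain $h_{k,\bm\alpha}$ by applying $\Delta^k$ to the harmonic decomposition and then projecting onto the harmonic part of top degree. This projection has a closed form as an alternating sum of powers of $\Delta$ (cf.\ \cite{axlerHarmonicFunctionTheory2001}). Expanding everything with the multinomial theorem should then produce the stated monomial representation.

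\emph{Step 1: isolating $h_{k,\bm\alpha}$.} Write $n=|\bm\alpha|$. For a homogeneous harmonic $h$ of degree $m$ we have the elementary identity
$$
\Delta\big(|\bm y|^{2i}h(\bm y)\big)=2i(2i+2m+d-2)\,|\bm y|^{2i-2}h(\bm y).
$$
Iterating it $k$ times and applying $\Delta^k$ to $\bm y^{\bm\alpha}=\sum_i|\bm y|^{2i}h_{i,\bm\alpha}(\bm y)$ annihilates all terms with $i<k$. The result is
$$
\Delta^k\bm y^{\bm\alpha}=\sum_{i\ge k}C_{i,k}\,|\bm y|^{2(i-k)}h_{i,\bm\alpha}(\bm y),
$$
with explicit products $C_{i,k}$. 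Since $h_{k,\bm\alpha}$ has degree $n-2k$, it is exactly the harmonic component of $q=\Delta^k\bm y^{\bm\alpha}$, up to the factor $C_{k,k}=\prod_{j=1}^k 2j\,(2n+d-2k-2j)$. This factor is the source of the product $2^k\prod_{j=1}^k(2n+d-2k-2j)$ in $\omega_{k,n}$. The leftover $k!$ will later be absorbed into $\binom{|\bm\beta|+k}{k}$.

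\emph{Step 2: harmonic projection.} For a homogeneous $q$ of degree $m=n-2k$, I would invoke (or reprove by applying $\Delta$ and checking that the sum telescopes) the projection formula
$$
\pi(q)=\sum_{j=0}^{\lfloor m/2\rfloor}\frac{(-1)^j|\bm y|^{2j}\Delta^j q}{2^j j!\prod_{i=1}^{j}(2m+d-2-2i)}.
$$
To avoid fractions, I multiply numerator and denominator by the full product $q_{k,n,0}=\prod_{i=1}^{\lfloor n/2\rfloor-k}(2n+d-2-4k-2i)$. Note that $2m+d-2=2n+d-2-4k$. The $j$-th coefficient then becomes $(-1)^j q_{k,n,j}/(2^j j!\, q_{k,n,0})$, which explains both $q_{k,n,|\bm\beta|}$ and the factor $q_{k,n,0}$ in $\omega_{k,n}$.

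\emph{Step 3: multinomial bookkeeping.} Here I set $j=|\bm\beta|$ and expand
$$
|\bm y|^{2j}=\sum_{|\bm\beta|=j}\frac{j!}{\bm\beta!}\bm y^{2\bm\beta},
\qquad
\Delta^{k+j}\bm y^{\bm\alpha}=\sum_{|\bm\delta|=k+j}\frac{(k+j)!}{\bm\delta!}(\bm\alpha)_{2\bm\delta}\,\bm y^{\bm\alpha-2\bm\delta}.
$$
Matching exponents, $\bm\alpha-2\bm\delta+2\bm\beta=2\bm\gamma-\bm\alpha$, gives $\bm\delta=\bm\alpha+\bm\beta-\bm\gamma$. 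Then $|\bm\gamma|=n-k$, and $\bm 0\le\bm\delta\le\bm\alpha/2$ is exactly the summation range, which also forces $2\bm\gamma\ge\bm\alpha$. Next I split $(\bm\alpha)_{2\bm\delta}=(\bm\alpha)_{\bm\delta}(\bm\alpha-\bm\delta)_{\bm\delta}$, so that $(\bm\alpha)_{\bm\delta}/\bm\delta!=\binom{\bm\alpha}{\bm\delta}$ and $\bm\alpha-\bm\delta=\bm\gamma-\bm\beta$. This yields the factor $(\bm\gamma-\bm\beta)_{\bm\alpha+\bm\beta-\bm\gamma}\binom{\bm\alpha}{\bm\alpha+\bm\beta-\bm\gamma}$.

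The scalar factors then combine as follows. The factor $(k+j)!/(j!\,k!)$ gives $\binom{|\bm\beta|+k}{k}$. The factor $j!/\bm\beta!$ from the expansion of $|\bm y|^{2j}$ remains as $|\bm\beta|!/\bm\beta!$. Finally, $(-1)^j/2^j$ gives $(-2)^{-|\bm\beta|}$.

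\emph{Main obstacle.} The step I expect to be delicate is Step 3: checking that the powers of $2$, the factorials and the $q$-products recombine exactly into $\omega_{k,n}$ and $c_{k,\bm\alpha,\bm\gamma}$, with no stray $k!$ or $2^k$. I would verify this first for $d=1$ and for small $n$ against the direct decomposition before writing the general computation. The dyadic-rational claim then follows immediately, because every factor in $c_{k,\bm\alpha,\bm\gamma}$ is an integer except $(-2)^{-|\bm\beta|}$.
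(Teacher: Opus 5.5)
Your proposal is correct and follows the same route as the paper. Both write $h_{k,\bm\alpha}$ as a combination of $|\bm y|^{2j}\Delta^{j+k}\bm y^{\bm\alpha}$, expand both factors multinomially, substitute $\bm\delta=\bm\alpha+\bm\beta-\bm\gamma$, and split $(\bm\alpha)_{2\bm\delta}=(\bm\alpha)_{\bm\delta}(\bm\alpha-\bm\delta)_{\bm\delta}$. The one difference is that the paper cites this representation from Axler's Theorem 5.21, with recursively defined coefficients, and simply states their closed form $\overline{c}_{k,n,j+k}=(-2)^{-j}\omega_{k,n}q_{k,n,j}/(k!\,j!)$, whereas your Steps 1--2 (iterating $\Delta$ on $|\bm y|^{2i}h$, then the explicit harmonic projection) derive exactly this closed form directly. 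The factors then recombine with no stray $k!$ or $2^k$, so the obstacle you anticipated does not arise.
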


The stability of \Cref{alg} rests on the backward stable evaluation of the harmonic polynomials of the harmonic decomposition in \Cref{harmonic-decomp}.
Further experiments show that backward stability can be achieved
by exactly computing the coefficients 
$c_{k,\bm\alpha,\bm\gamma}$ 
through exact addition of dyadic rationals demonstrated in \Cref{sec:num-harmonic}.
These coefficients are independent of the vector arguments $\bm y$ that appear as summation variables in \Cref{alg}.
Therefore the computational per-evaluation cost of the harmonic polynomials can be greatly reduced by computing all possible coefficients once, see \Cref{tab:harmonic_high_order}.

\subsection{Numerical stability}
\label{sec:cancellation}

In \Cref{alg} we employ Kahan summation \cite{kahan} to significantly reduce roundoff errors; the details can be found in \cite[Sec. 3.4]{buchheitComputationPropertiesEpstein2026} where the numerical stability of the Crandall representation for the Epstein zeta function, which corresponds to the harmonic-decomposition Crandall representation for $\bm\alpha=\bm 0$, is demonstrated in detail.
Structurally, the stability of the harmonic-decomposition Crandall representation, \Cref{harmonic-decomposition-crandall-representation}, is similar to a sum of $k=0,\ldots,\lfloor |\bm\alpha|/2\rfloor$ derivative Crandall representations with vanishing anisotropy $\bm\alpha$.
We will demonstrate in the next section that this representation is stable for high-order anisotropy, where only a few special cases, that will be discussed here, need special consideration.

For large exponents $\nu>10$ and $|\bm\alpha|\ge 2$, 
a small number of summands near the origin of the real space sum demand special care.
There, adding the summands near $\bm z = \bm 0$ for different values of $k$
causes catastrophic cancellation errors in the resulting anisotropic Epstein zeta function.
In contrast, the derivative Crandall representation, \Cref{derivative-crandall-representation}, does not suffer from this particular error but introduces systematic errors in each individual upper Crandall derivative for high-order anisotropy.
For large exponents we employ the following lemma where the singularity is extracted and its summation order is changed, where a proof can be found in \Cref{sec:appendix-alg}.

\begin{lemma}
\label{high-exp}
Let $\Lambda$
be a $d$-dimensional
lattice, let $\bm\alpha\in\mathds N^d$, let $\bm x,\bm y\in\mathds R^d$ and let $\nu\in\mathds C$.
Let $M\subseteq \Lambda-\bm x$ be bounded and let $\lambda>0$. 
Then it holds that
\begin{align*}
&\sum_{\substack{k=0\\ k\neq \nu/2}}^{\lfloor |\bm\alpha|/2\rfloor}
\frac{(\pi/\lambda^2)^{(\nu-2k)/2}}{\Gamma((\nu-2k)/2)}
\sum_{\bm z\in M}
h_{k,\bm\alpha}(\bm z)G_{\nu-2k}(\bm{z}/\lambda)e^{-2\pi i\bm y\cdot (\bm z+\bm x)}
\\
=&\sums_{\substack{\bm z\in M}}
|\bm z|^{-\nu}
e^{-2\pi i\bm y\cdot (\bm z+\bm x)}
\sum_{\substack{k=0\\ k\notin \nu/2+\mathds N}}^{\lfloor |\bm\alpha|/2\rfloor}
h_{k,\bm\alpha}(\bm z)
|\bm z|^{2k}
\\
&-\sum_{\substack{k=0\\ k\notin \nu/2+\mathds N}}^{\lfloor |\bm\alpha|/2\rfloor}
\frac{(\pi/\lambda^2)^{(\nu-2k)/2}}{\Gamma((\nu-2k)/2)}
\sum_{\bm z\in M}
h_{k,\bm\alpha}(\bm z)g_{\nu-2k}(\bm{z}/\lambda)e^{-2\pi i\bm y\cdot (\bm z+\bm x)}\,.
\end{align*}
\end{lemma}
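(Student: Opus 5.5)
The identity is a finite rearrangement. I would prove it summand by summand, applying the fundamental relation of \Cref{crandall-lower-prop} to each pair $(k,\bm z)$ and then exchanging the two finite sums over $k$ and $\bm z$. Since $M$ is bounded and $\Lambda-\bm x$ is uniformly discrete, $M$ is finite, so no convergence issues arise. Throughout, write $s_k=\nu-2k$. The indices split into two classes: those with $k\notin\nu/2+\mathds N$, for which $s_k\notin -2\mathds N$, and those with $k\in\nu/2+\mathds N$, $k\neq\nu/2$.

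\emph{Step 1: vanishing terms.} Take $k\in\nu/2+\mathds N$ with $k\neq \nu/2$. Then $s_k\in-2\mathds N_+$, so $1/\Gamma(s_k/2)=0$. By \Cref{def:crandall-upper} and \Cref{crandall-upper-prop}, $G_{s_k}(\bm z/\lambda)$ is finite for $\bm z\neq\bm 0$. At $\bm z=\bm 0$ it equals $-2/s_k$, which is also finite because $s_k\neq 0$. Hence these summands of the left-hand side vanish identically. This is exactly why the right-hand side may restrict to $k\notin\nu/2+\mathds N$. The index $k=\nu/2$, where $s_k=0$ and $G_0(\bm 0)$ is undefined, is excluded on the left by hypothesis.

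\emph{Step 2: remaining terms with $\bm z\neq\bm 0$.} For $k\notin\nu/2+\mathds N$ we have $s_k\notin-2\mathds N$, and a nonzero real vector $\bm z/\lambda$ lies in $D$. The fundamental relation therefore gives
$$
\frac{(\pi/\lambda^2)^{s_k/2}}{\Gamma(s_k/2)}G_{s_k}(\bm z/\lambda)
=\frac{(\pi/\lambda^2)^{s_k/2}}{(\pi\bm z^2/\lambda^2)^{s_k/2}}-\frac{(\pi/\lambda^2)^{s_k/2}}{\Gamma(s_k/2)}g_{s_k}(\bm z/\lambda).
$$
The first term on the right is $|\bm z|^{-s_k}=|\bm z|^{-\nu}|\bm z|^{2k}$, where the branch of the power is consistent because $\bm z^2>0$. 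Multiplying by $h_{k,\bm\alpha}(\bm z)e^{-2\pi i\bm y\cdot(\bm z+\bm x)}$ and summing over $k$ and over $\bm z\in M\setminus\{\bm 0\}$ produces the primed double sum on the right-hand side after exchanging the finite sums. It also produces the corresponding $\bm z\neq\bm 0$ part of the $g$-sum.

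\emph{Step 3: the term $\bm z=\bm 0$.} This term can occur only when $\bm x\in\Lambda$, and it is the one point needing care. The primed sum excludes it, but the $g$-sum includes it. Here I would use $G_{s}(\bm 0)=-2/s=-g_s(\bm 0)$, from \Cref{def:crandall-upper} and \Cref{def:crandall-lower}. This is valid for $s_k\notin-2\mathds N$, in particular $s_k\neq 0$. With it, the left summand $h_{k,\bm\alpha}(\bm 0)G_{s_k}(\bm 0)$ equals $-h_{k,\bm\alpha}(\bm 0)g_{s_k}(\bm 0)$, which matches the right-hand side exactly.

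I expect the only real obstacle to be this bookkeeping of exceptional indices: the poles of $\Gamma$, the value $s_k=0$, and the origin $\bm z=\bm 0$. The core of the argument is a direct application of \Cref{crandall-lower-prop}.
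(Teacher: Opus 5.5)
Your proposal is correct and follows essentially the same route as the paper. The paper likewise applies the fundamental relation of \Cref{crandall-lower-prop} to $\bm z\neq\bm 0$ and $k\notin\nu/2+\mathds N$, and exchanges the finite sums. It then absorbs the $\bm z=\bm 0$ summand via $G_{\nu-2k}(\bm 0)=-g_{\nu-2k}(\bm 0)$ and drops the bounded terms with $k\in\nu/2+\mathds N_+$, which vanish by the zeros of $1/\Gamma$.
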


In \Cref{high-exp} the $2k=\nu$ summand is excluded due to the singularities of the lower Crandall function. The limit in $\nu$ of the left hand side of that statement is well defined and evaluated in \Cref{alg} as shown in the following lemma, where a proof is provided in \Cref{sec:appendix-alg}.

\begin{lemma}
\label{special-k-sum}
\anisoreq 
For $k\in \{0,1,\ldots,\lfloor |\bm\alpha|/2\rfloor\}$ consider an individual $k$-summand in the harmonic-decomposition Crandall representation, \Cref{harmonic-decomposition-crandall-representation}, as a function of $\nu$, that is, for $\lambda>0$, consider the function
\begin{align*}
\nu\mapsto
&\frac{(\pi/\lambda^2)^{\nu/2-k}}{\Gamma(\nu/2-k)}
\Bigg[
\sum_{\bm z\in \Lambda-\bm x}
h_{k,\bm\alpha}(\bm z)G_{\nu-2k}(\bm z/\lambda)e^{-2\pi i\bm y\cdot (\bm z+\bm x)}\\
&+(-1)^k
\frac{\lambda^{d+2|\bm\alpha|-4k}}{i^{|\bm\alpha|}V_{\Lambda}}
\sum_{\bm p\in \Lambda^{\ast}+\bm y}
h_{k,\bm\alpha}(\bm p)G_{d-\nu+2(|\bm\alpha|-k)}
(\lambda\bm p)e^{-2\pi i\bm x\cdot \bm p}
\Bigg]\,,
\end{align*}
where
each summand in which $h_{k,\bm\alpha}$ vanishes is understood to be zero, irrespective of the other factors.
Then, this function is well defined at $\nu\in 2k-2\mathds N$ in the sense of the limit.
This limit 
is equal to $-h_{k,\bm\alpha}(\bm 0)e^{-2\pi i\bm x\cdot \bm y}$ if
$\nu= |\bm\alpha|=2k$, $\chi(\bm\alpha)=1$ and $\bm x\in\Lambda$
and vanishes otherwise.
\end{lemma}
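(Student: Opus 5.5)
Let $\nu_0=2k-2j$ with $j\in\mathds N$. The approach is to treat the two series separately and exploit the zero of $1/\Gamma(\nu/2-k)$ at $\nu_0$: the prefactor $(\pi/\lambda^2)^{\nu/2-k}/\Gamma(\nu/2-k)$ is entire and has a simple zero at $\nu_0$. The limit of the $k$-summand is therefore zero unless one of the bracketed terms has a pole at $\nu_0$. In that case the limit is the residue of the bracket times the derivative of $1/\Gamma$. Concretely, near $\nu_0$ we have $1/\Gamma(\nu/2-k)=(-1)^j j!\,(\nu-\nu_0)/2+O((\nu-\nu_0)^2)$.

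First consider the real-space series. By \Cref{crandall-upper-prop}, every summand with $\bm z\neq\bm 0$ is entire in $\nu$. The cutoff estimates in the proof of \Cref{hol} give locally uniform convergence, so the tail is holomorphic in $\nu$ and the product with the prefactor tends to $0$. The only possible exception is a summand at $\bm z=\bm 0$, which occurs iff $\bm x\in\Lambda$. If $h_{k,\bm\alpha}(\bm 0)=0$ it is zero by convention. By \Cref{h-vanish}, $h_{k,\bm\alpha}(\bm 0)\neq0$ requires $2k=|\bm\alpha|$ and $\chi(\bm\alpha)=1$. In that case $G_{\nu-2k}(\bm 0)=-2/(\nu-2k)$, and combined with the prefactor this gives $-h_{k,\bm\alpha}(\bm 0)/\Gamma(\nu/2-k+1)$, as already noted in the proof of \Cref{harmonic-decomposition-crandall-representation}. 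This expression is entire and equals $-h_{k,\bm\alpha}(\bm 0)/\Gamma(1-j)$ at $\nu_0$. It is nonzero only for $j=0$, i.e.\ $\nu=2k=|\bm\alpha|$. Multiplying by the phase $e^{-2\pi i\bm y\cdot(\bm z+\bm x)}$ at $\bm z=\bm 0$ gives $e^{-2\pi i\bm x\cdot\bm y}$, and the claimed value $-h_{k,\bm\alpha}(\bm 0)e^{-2\pi i\bm x\cdot\bm y}$ follows.

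Second, the reciprocal series. Summands with $\bm p\neq\bm 0$ are entire in $\nu$, so the same uniform-convergence argument kills them in the limit. A summand at $\bm p=\bm 0$ exists only if $\bm y\in\Lambda^*$, and it is nonzero only when $h_{k,\bm\alpha}(\bm 0)\neq0$, i.e.\ when $2k=|\bm\alpha|$ and $\chi(\bm\alpha)=1$. In that case its value is $h_{k,\bm\alpha}(\bm 0)G_{d-\nu+|\bm\alpha|}(\bm 0)=-2h_{k,\bm\alpha}(\bm 0)/(d+|\bm\alpha|-\nu)$. This has its only pole at $\nu=d+|\bm\alpha|$, which is excluded by the hypothesis of \Cref{anisoreq}. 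Since $d\ge1$, that pole also never lies in $2k-2\mathds N=|\bm\alpha|-2\mathds N$. The term is therefore finite at $\nu_0$ and is annihilated by the zero of the prefactor.

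The main obstacle is the bookkeeping at the origin in both sums: getting the right sign and factors in $G(\bm 0)/\Gamma$, and confirming via \Cref{h-vanish} that nonzero $h_{k,\bm\alpha}(\bm 0)$ forces $2k=|\bm\alpha|$ and $\chi(\bm\alpha)=1$. Uniform convergence is routine, since it reuses the majorants from \Cref{hol} on a compact $\nu$-neighbourhood of $\nu_0$.
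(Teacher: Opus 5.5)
Your proposal is correct and follows essentially the same route as the paper. Both arguments go as follows: the non-origin summands are entire in $\nu$ and are killed by the zero of $1/\Gamma(\nu/2-k)$; the reciprocal $\bm p=\bm 0$ term has its only pole at $\nu=d+2(|\bm\alpha|-k)\notin 2k-2\mathds N$ and so is also annihilated; and only the real-space $\bm z=\bm 0$ term survives, giving $-h_{k,\bm\alpha}(\bm 0)e^{-2\pi i\bm x\cdot\bm y}$ at $\nu=2k=|\bm\alpha|$ via \Cref{h-vanish}. (You silently drop the factor $(\pi/\lambda^2)^{\nu/2-k}$ when writing $-h_{k,\bm\alpha}(\bm 0)/\Gamma(\nu/2-k+1)$, but it equals $1$ at $\nu=2k$ and is finite elsewhere, so the result is unaffected.)
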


In the following corollary we recover the trivial zeros of the anisotropic Epstein zeta function, which is the generalization of
\cite[Lem. 13]{buchheitComputationPropertiesEpstein2026} 
to the anisotropic case.

\begin{corollary}
\label{cor-special-case-early-return}
Let $\Lambda$ be a $d$-dimensional lattice, let $\ell\in\mathds N$, let $\bm x,\bm y\in \mathds R^d$ and let $\bm\alpha\in\mathds N^d$. 
Then
$$
Z_{\Lambda,-2\ell,\bm\alpha}(\bm x,\bm y)
=
\begin{cases}
   - e^{-2\pi i \bm x\cdot\bm y} 
    & \ell=0,\ |\bm\alpha|=0,\text{ and } \bm x\in\Lambda, 
    \\
0 &\text{else}.
\end{cases}
$$
\end{corollary}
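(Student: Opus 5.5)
Write $\nu=-2\ell$ and evaluate $Z_{\Lambda,-2\ell,\bm\alpha}(\bm x,\bm y)$ through the harmonic-decomposition Crandall representation, \Cref{harmonic-decomposition-crandall-representation}. Since $-2\ell\neq d+|\bm\alpha|$, the requirement of \anisoreq\ is met, so the representation is valid at $\nu=-2\ell$ in the sense of the holomorphic continuation of \Cref{hol}. The function is thus written as a finite sum over $k=0,\ldots,\lfloor|\bm\alpha|/2\rfloor$ of the $k$-summands considered in \Cref{special-k-sum}.

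For every $k$ we have $-2\ell\in 2k-2\mathds N$, because $-2\ell\le 0\le 2k$. Hence \Cref{special-k-sum} applies to each $k$-summand individually, and each such summand has a well-defined limit at $\nu=-2\ell$. Since the left-hand side is holomorphic (in particular continuous) in $\nu$ near $-2\ell$, its value equals the sum of these limits.

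By \Cref{special-k-sum}, the $k$-th limit vanishes unless $\nu=|\bm\alpha|=2k$, $\chi(\bm\alpha)=1$ and $\bm x\in\Lambda$. The condition $-2\ell=|\bm\alpha|=2k$ with $\ell,k,|\bm\alpha|\ge0$ forces $\ell=0$, $k=0$ and $\bm\alpha=\bm 0$, in which case $\chi(\bm 0)=1$ holds automatically. So at most the single term $k=0$ survives, and only when $\ell=0$, $|\bm\alpha|=0$ and $\bm x\in\Lambda$. Its value is then $-h_{0,\bm 0}(\bm 0)e^{-2\pi i\bm x\cdot\bm y}$.

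The remaining step is to identify $h_{0,\bm 0}$. From \Cref{harmonic-decomp}, $\bm y^{\bm 0}=1=h_{0,\bm 0}(\bm y)$, so $h_{0,\bm 0}(\bm 0)=1$, which gives the stated case distinction. The only delicate point is justifying that the value at $\nu=-2\ell$ equals the sum of the limits of the individual $k$-summands. This holds because $-2\ell$ is not the possible pole $d+|\bm\alpha|$, so both sides are continuous there and the finite sum commutes with the limit. I expect no further obstacle beyond checking this condition.
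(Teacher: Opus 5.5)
Your proof is correct and follows the paper's argument: you apply \Cref{special-k-sum} to each $k$-summand of \Cref{harmonic-decomposition-crandall-representation} at $\nu=-2\ell\in 2k-2\mathds N$. The one difference is in the isotropic case. The paper handles $\bm\alpha=\bm 0$ separately by citing the earlier isotropic result, whereas you read it off the non-vanishing branch of \Cref{special-k-sum} with $h_{0,\bm 0}\equiv 1$, which keeps the argument self-contained. Your explicit continuity justification for passing from the value at $\nu=-2\ell$ to the sum of the limits is a detail the paper leaves implicit.
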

\begin{proof}
Let $\bm\alpha=\bm 0$.
Then the anisotropic Epstein zeta function coincides with the Epstein zeta function so the statement holds by
\cite[Lem. 13]{buchheitComputationPropertiesEpstein2026}.
Let $\bm\alpha\neq\bm 0$ and $\nu=-2\ell$.
Then by \Cref{special-k-sum} all $k$-summands
in the harmonic-decomposition Crandall representation, \Cref{harmonic-decomposition-crandall-representation}, vanish
since, for all $k\in \{0,1,\ldots,\lfloor |\bm\alpha|/2\rfloor\}$ we have $\nu\in 2k-2\mathds N$ and  $\nu\le 0\neq |\bm\alpha|$.
\end{proof}

The condition for the first early zero-return statement in the third step of  \Cref{alg} is equivalent to the conditions on the inversion related symmetry zero in \Cref{syms-zero},
since $\tilde{\bm x}=\bm 0$ is equivalent to $\bm x\in\Lambda$ and $2\tilde{\bm y}\in\Lambda^*$ to $2\bm y\in\Lambda^*$.
The condition for the second early zero-return statement matches the conditions on the mirror symmetry related zeros in \Cref{syms-zero}
due to the following statement.

\begin{lemma}
\label{equv-sym-zeros}
Let $j\in\{1,\ldots,d\}$, let $\Lambda$ be a $d$-dimensional lattice that is mirror-symmetric in the $j$'th component, and let $\bm u\in\mathds R^d$.
Then 
$(\Lambda^*\cap\mathds R\bm e_j)\cdot \bm u\subseteq\mathds Z$
if and only if
there exists some $\bm v\in \Lambda$ such that $u^{(j)}+v^{(j)}=0$.
Both conditions of this equivalence are invariant under $\bm u\mapsto \bm u+\bm w$, $\bm w\in\Lambda$, so in particular under the projection to the elementary lattice cell in the second step of \Cref{alg},
and the statement holds with $\Lambda$ and $\Lambda^*$ interchanged.
\end{lemma}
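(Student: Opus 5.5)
The plan is to reduce both conditions to a statement about the coordinate projection $\pi_j(\Lambda)=\{v^{(j)}:\bm v\in\Lambda\}$, which is a subgroup of $\mathds R$. The second condition, the existence of $\bm v\in\Lambda$ with $u^{(j)}+v^{(j)}=0$, says exactly $-u^{(j)}\in\pi_j(\Lambda)$. Since $\pi_j(\Lambda)$ is a group, this is equivalent to $u^{(j)}\in\pi_j(\Lambda)$.

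For the first condition, I describe $\Lambda^*\cap\mathds R\bm e_j$ directly. A vector $t\bm e_j$ lies in $\Lambda^*$ if and only if $t\bm e_j\cdot\bm v=t v^{(j)}\in\mathds Z$ for all $\bm v\in\Lambda$. Hence
$$
\Lambda^*\cap\mathds R\bm e_j=\{t\bm e_j: t\,\pi_j(\Lambda)\subseteq\mathds Z\},
$$
and the first condition becomes $t\,u^{(j)}\in\mathds Z$ for all such $t$. To compare this with $u^{(j)}\in\pi_j(\Lambda)$, I need $\pi_j(\Lambda)$ to be cyclic, $\pi_j(\Lambda)=a\mathds Z$ with $a>0$. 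This is where mirror symmetry enters, and it is the only real obstacle: without it, $\pi_j(\Lambda)$ may be dense, the dual set collapses to $\{\bm 0\}$, and the equivalence fails.

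Let $R_j$ denote the reflection $z^{(j)}\mapsto -z^{(j)}$, so that by assumption $R_j\Lambda=\Lambda$. For $\bm v\in\Lambda$ we then have $\bm v-R_j\bm v=2v^{(j)}\bm e_j\in\Lambda$. The set $\Lambda\cap\mathds R\bm e_j$ is a discrete subgroup of a line, so it equals $c\mathds Z\bm e_j$ for some $c\ge 0$. Because $\Lambda$ spans $\mathds R^d$, some $\bm v\in\Lambda$ has $v^{(j)}\neq 0$, which forces $c>0$. Therefore $\pi_j(\Lambda)\subseteq \tfrac c2\mathds Z$ is a nontrivial discrete subgroup of $\mathds R$, so $\pi_j(\Lambda)=a\mathds Z$ with $a>0$. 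It follows that $\{t:t\,a\mathds Z\subseteq\mathds Z\}=a^{-1}\mathds Z$. The first condition now reads $u^{(j)}/a\in\mathds Z$, that is $u^{(j)}\in a\mathds Z=\pi_j(\Lambda)$, which proves the equivalence.

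For the invariance, replacing $\bm u$ by $\bm u+\bm w$ with $\bm w\in\Lambda$ shifts $u^{(j)}$ by $w^{(j)}\in\pi_j(\Lambda)$. Membership of $u^{(j)}$ in the group $\pi_j(\Lambda)$ is therefore unchanged, and so are both conditions. The projection in the second step of \Cref{alg} changes $\bm x$ by a lattice vector, so it is covered by this invariance. For the statement with $\Lambda$ and $\Lambda^*$ interchanged, I note that $R_j$ is orthogonal with $R_j^{-T}=R_j$. Hence $\Lambda^*=(R_j\Lambda)^*=R_j^{-T}\Lambda^*=R_j\Lambda^*$, so $\Lambda^*$ is also mirror-symmetric in the $j$'th component. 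Applying the argument above to $\Lambda^*$ and using $(\Lambda^*)^*=\Lambda$ gives the interchanged version, including its invariance under $\bm u\mapsto\bm u+\bm w$ with $\bm w\in\Lambda^*$.
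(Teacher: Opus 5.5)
Your proof is correct and takes essentially the same route as the paper. Both reduce the two conditions to membership of $u^{(j)}$ in the projected group $W=\{z^{(j)}:\bm z\in\Lambda\}$, use the mirror symmetry ($2z^{(j)}\bm e_j\in\Lambda$) to show $W=\mu\mathds Z$, identify $\Lambda^*\cap\mathds R\bm e_j=\mu^{-1}\mathds Z\bm e_j$, and transfer the mirror symmetry to $\Lambda^*$ via the orthogonality of the reflection. Your observation that $W\neq\{0\}$ because $\Lambda$ spans $\mathds R^d$ makes explicit a step the paper leaves implicit.
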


Note that \Cref{equv-sym-zeros} concerns the equality of the second non-translationally invariant conditions in \Cref{syms-zero}.
Since $2x^{(j)}\bm e_j\in\Lambda$ if and only if $2(x^{(j)}+v^{(j)})\bm e_j\in\Lambda$, $\bm v\in\Lambda$, the third step of the algorithm is a valid application of \Cref{syms-zero}.
Since the special cases of \Cref{cor-special-case-early-return}
are already covered by \Cref{special-k-sum},
there is no need to catch these zeros in an early return statement in \Cref{alg}.
We finally note that special care needs to be taken when evaluating \Cref{alg} very close to symmetry-related zeros of \Cref{syms-zero}, where the error grows with the inverse distance to the zero and with the anisotropy order
which can be mitigated by appropriately recentering the summation shells.
This is not pursued further here in favor of a symmetric addition of the terms $\bm z$ and $-\bm z$ enhancing the stability near $\bm x=\bm y=\bm 0$.

\section{Numerical Experiments}
\label{sec:num}

This section provides a detailed analysis of the error, the stability and the runtime of our algorithm for the anisotropic Epstein zeta function and the harmonic polynomials for an extensive set of real parameters.
To the best of our knowledge, no other implementation of the anisotropic Epstein zeta function for general real arguments in $d>1$ dimensions exists.
Furthermore, we are unaware of any analytic representations of anisotropic Epstein zeta functions for non-trivial arguments $|\bm\alpha|>0$ and $\nu\in\mathds R$.

To reliably benchmark the error of our algorithm,
in \Cref{sec:num-prec}
we exploit symmetries in special cases where anisotropic Epstein zeta functions reduce to shifted Epstein zeta functions and compare to values obtained by direct summation for large exponents, and we derive a new representation for a certain eight-dimensional anisotropic Epstein zeta function in terms of a Dirichlet series of the Ramanujan tau function and compare against reference values obtained by a high-precision Mathematica prototype.
In \Cref{sec:num-stab}
we showcase the stability of the algorithm in terms of various real parameters $\nu\in\mathds R$, $\bm x,\bm y\in\mathds R^d$ and $\bm\alpha\in\mathds N^d$, most notably the stability in the order of the anisotropy $|\bm\alpha|$.
Furthermore we show the linear evaluation of anisotropic Epstein zeta functions for the most relevant case of low- to mid-order anisotropy in \Cref{sec:num-time}.
The stability and runtime of our algorithm for the computation of the harmonic polynomials based on the representation in \Cref{harmonic-comp} is shown in \Cref{sec:num-harmonic}.
Finally we present an application of anisotropic Epstein zeta functions, as coefficients of the singular Euler-Maclaurin expansion in a non-trivial 3D geometry in \Cref{sec:num-sem}.

\subsection{Machine precision for real parameters against independent reference values}
\label{sec:num-prec}

Let $E$ denote the minimum of the absolute and relative error
$$
E=\min \{E_{\rm{abs}},E_{\rm rel}\}\,,
$$
of our algorithm in comparison with high-precision reference values.
One example where such values can be obtained is by the representation for the sum of anisotropic Epstein zeta functions corresponding to applying the poly-Laplacian to set zeta derivatives, see \Cref{poly-laplacian}.
These sums can be exactly computed as a shifted Epstein zeta function, that is, 
in particular for $n\in\mathds N$, a lattice $\Lambda$, $\bm x,\bm y\in\mathds R^d$ and $\nu\in\mathds R$ such that $\nu\neq d+2n$ if $\bm y\in\Lambda^*$ we have
$$
\sum_{\substack{\bm\alpha\ge\bm 0 \\ |\bm\alpha|=n}}
\frac{n!}{\bm\alpha!}
Z_{\Lambda,\nu,2\bm\alpha}(\bm x,\bm y)
=
Z_{\Lambda,\nu-2n}(\bm x,\bm y)
\,.
$$
where by \Cref{zeta-ani-set-zeta-der} we expressed the set zeta derivatives in terms of anisotropic Epstein zeta functions.
In \Cref{tab:poly_laplace} we compute the error when representing the shifted Epstein zeta function through a sum of anisotropic Epstein zeta functions
over a grid of real values $0\le \nu \le 1$ and $\bm x,\bm y\in[0,1/2]^d$ in one, two and three dimensions.
We achieve machine precision with $E<7 \cdot 10^{-14}$ for $d\in\{1,2,3\}$ and anisotropy order $2n\in\{2,4,\ldots,12\}$.
For general values of $\nu$ and in particular if $Z_{\Lambda,\nu-2n}(\bm x,\bm y)$ is small relative to $Z_{\Lambda,\nu,2\bm\alpha}(\bm x,\bm y)$ cancellation errors occur on the left-hand side of the equation above which is why this benchmark features a restricted range of exponents $\nu$.
In what follows, we consider benchmarks for single anisotropic Epstein zeta functions.

\begin{table}[h]
\centering
\renewcommand*{\arraystretch}{1.2}
\begin{tabular}{c cccccc}
\toprule
$d$  & $n=1$ & $n=2$ & $n=3$ & $n=4$ & $n=5$ & $n=6$ \\
\midrule

$1$ & $6.21\cdot 10^{-16}$ & $1.32\cdot 10^{-15}$ & $3.08\cdot 10^{-15}$ & $3.36\cdot 10^{-15}$ & $3.37\cdot 10^{-15}$ & $7.02\cdot 10^{-15}$ \\

$2$ & $9.93\cdot 10^{-16}$ & $1.85\cdot 10^{-15}$ & $4.23\cdot 10^{-15}$ & $4.66\cdot 10^{-15}$ & $7.66\cdot 10^{-15}$ & $1.49\cdot 10^{-14}$ \\

$3$ & $9.24\cdot 10^{-16}$ & $3.01\cdot 10^{-15}$ & $7.39\cdot 10^{-15}$ & $2.02\cdot 10^{-14}$ & $2.56\cdot 10^{-14}$ & $6.27\cdot 10^{-14}$ \\

\bottomrule
\end{tabular}
 \caption{
Minimum of absolute and relative error of anisotropic Epstein zeta functions
$n!\sum_{|\bm\alpha|=n}Z_{\mathds Z^d,\nu,2\bm\alpha}(\bm x,\bm y)/\bm\alpha!$, which corresponds to the sum of set zeta derivatives from the poly-Laplacian
$
\Delta^n Z_{\mathds Z^d-\bm x,\nu}(\bm y)
$,
over a range of values $\bm y,\bm x$ and $\nu$ and dimensions $d=1,\ldots,3$ and orders $n=1,\ldots,6$ of the poly-Laplace operator.
For each fixed $d$, $n$, the maximum error for a grid of $\nu$, $\bm x$ and $\bm y$ values is shown,
where
$0\le\nu\le 1$, and
$\bm x,\bm y\in [0,1/2]^d$ 
in increments of $\Delta \nu=1/10$ and $\Delta x^{(j)}=\Delta y^{(j)}=1/5$.
We observe that the error has a slight increase in the dimension $d$ and the anisotropy order $n$ where, for all $n\in\{1,\ldots,6\}$ the maximal error is 
$E<8\cdot 10^{-15}$
for $d=1$,
$E<2\cdot 10^{-14}$ for $d=2$
and 
$E<7\cdot 10^{-14}$ for $d=3$.
}
\label{tab:poly_laplace}
\end{table}

For second-order anisotropy $\bm\alpha = 2 \bm e_j$, $1\le j\le d$ and all-equal vector arguments,
where we recall
$\bm e_j$ to be the $j$'th canonical unit vector in $d$-dimensions, the poly-Laplacian identity can be used to compute a single anisotropic Epstein zeta function for all-equal vector arguments as follows.
For $n=1$ and all-equal vectors $\bm x,\bm y$ with components $x,y\in\mathds R$ respectively
and $\nu\in\mathds C$ with $\nu\neq d+2$ if $\bm y\in\Lambda^*$
the equation above becomes
$$
Z_{\mathds Z^d,\nu, 2 \bm e_j}(x\bm 1,y \bm 1)
=
\frac{1}{d}
Z_{\mathds Z^d,\nu-2}(x \bm 1, y\bm 1)
$$
where $\bm 1=(1,\ldots,1)^T\in\mathds R^d$ is the all-one vector in $d$ dimensions, where we used that through the lattice sum representation one can immediately see that $Z_{\mathds Z^d,\nu, 2 \bm e_i}(x\bm 1,y \bm 1)=Z_{\mathds Z^d,\nu, 2 \bm e_j}(x\bm 1,y \bm 1)$ for all $1\le i,j\le d$.
In \Cref{tab:zetapartial2}
we benchmark the accuracy and stability of anisotropic Epstein zeta functions such as on the left-hand side in the equation above by comparing it to the reference values obtained by the Epstein zeta function on the right-hand side through EpsteinLib.
For each dimension $d\in\{1,\ldots,6\}$ the median $E^{\rm med}$ and maximum $E^{\rm max}$ error over a grid of values $-10\le \nu\le 10$ and $\bm x=x\bm 1$, $\bm y=y\bm 1$ with $0\le x,y\le 1/2$ in increments of
$\Delta \nu=1/10$ and $\Delta x=\Delta y= 1/100$
is shown.
Through dimensions one to six these grids add up to over three million values whose maximum error is bounded by $2\cdot 10^{-13}$.

\begin{table}[h]
\centering
\renewcommand*{\arraystretch}{1.2}
\resizebox{\textwidth}{!}{%
\begin{tabular}{c cccccc}
\toprule
 & $d=1$ & $d=2$ & $d=3$ & $d=4$ & $d=5$ & $d=6$ \\
\midrule

$E^{\rm med}$ & $4.19\cdot 10^{-17}$ & $5.13\cdot 10^{-17}$ & $9.06\cdot 10^{-18}$ & $8.33\cdot 10^{-17}$ & $1.39\cdot 10^{-17}$ & $5.24\cdot 10^{-17}$ \\

$E^{\rm max}$ & $4.55\cdot 10^{-15}$ & $3.83\cdot 10^{-15}$ & $1.78\cdot 10^{-13}$ & $1.53\cdot 10^{-13}$ & $1.40\cdot 10^{-14}$ & $1.53\cdot 10^{-14}$ \\

\bottomrule
\end{tabular}
}
 \caption{%
Error of the anisotropic Epstein zeta function
for the square lattice $\Lambda=\mathds Z^d$, for all-equal vector arguments $\bm x=x\bm 1$, $\bm y= y\bm 1$, and second-order anisotropy $\bm\alpha=2 \bm e_1$ in $d=1,\ldots,6$ dimensions 
where the median and the maximum of the error
over a grid of values $-10\le \nu \le 10$
and $0\le x,y\le 1/2$ in increments of
$\Delta\nu=1/10$ and $\Delta x=\Delta y=1/100$ is shown,
and where the singularities $\nu=d+2$, $\bm y=\bm 0$ are skipped.
The value of the anisotropic Epstein zeta function $Z_{\mathds Z^d,\nu,2 \bm e_1}(x\bm 1,y\bm 1)$
is compared with the reference value given in terms of the shifted Epstein zeta function as
$Z_{\mathds Z^d,\nu-2}(x\bm 1,y\bm 1)/d$.
The maximum errors are bounded by $E^{\rm max}<2 \cdot 10^{-13}$, and the median errors are below $E^{\rm med}<10^{-16}$.
}
\label{tab:zetapartial2}
\end{table}

We now present an identity for a special case of the eight-dimensional anisotropic Epstein zeta function for non-trivial arguments in terms of a Dirichlet series over the Ramanujan tau function $\tau(k)$, $k\in\mathds N_+$ defined by the relation
\cite[Eq. 27.14.18]{NIST:DLMF}
$$
\sum_{k=1}^{\infty}\tau(k)x^k
=
x\prod_{k=1}^{\infty}(1-x^k)^{24}
,\qquad x\in (-1,1)
\,,
$$
where a proof of the following identity is provided in \Cref{sec:appendix-num}.

\begin{lemma}
\label{ramanujan}
Denote by $L(\nu)$, $\nu\in\mathds C$ the Ramanujan tau $L$-function, that is defined by the meromorphic continuation of the Dirichlet series
$$
L(\nu)=
\sum_{k=1}^{\infty}\frac{\tau(k)}{k^{\nu}}
,\qquad \Re{\nu}>8
$$
to $\nu\in\mathds C$.
Then it holds that
$$
Z_{\mathds Z^8,\nu,\bm 1}(-\bm 1/2,\bm 1/2)
=
2^{-\nu/2}
L(\nu/2)\,.
$$
Here, $\tau$ is the Ramanujan tau function
and $\bm 1=(1,\ldots,1)^T\in\mathds R^8$ is the all-one vector.
\end{lemma}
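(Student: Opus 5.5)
The plan is to verify the identity for $\Re\nu$ large by a Mellin (heat-kernel) argument that factorizes the lattice sum into a product of one-dimensional theta series. We then extend to all $\nu\in\mathds C$ by uniqueness of the meromorphic continuation. Since $\bm y=\bm 1/2\notin\Lambda^*=\mathds Z^8$, \Cref{hol} shows that the left-hand side is entire in $\nu$, so it suffices to prove the identity for $\Re\nu>8+|\bm 1|=16$.

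\textbf{Step 1: rewrite the sum.} In the region of absolute convergence, \Cref{def-zeta-aniso} gives
$$
Z_{\mathds Z^8,\nu,\bm 1}(-\bm 1/2,\bm 1/2)=\sum_{\bm z\in\mathds Z^8}\prod_{j=1}^8 (-1)^{z^{(j)}}\,\frac{\prod_{j}(z^{(j)}+1/2)}{|\bm z+\bm 1/2|^{\nu}}\,,
$$
because $e^{-\pi i\,\bm 1\cdot\bm z}=\prod_j(-1)^{z^{(j)}}$. No summand is excluded, since $\bm z+\bm 1/2\neq\bm 0$. We insert
$$
|\bm w|^{-\nu}=\Gamma(\nu/2)^{-1}\int_0^\infty t^{\nu/2-1}e^{-t\bm w^2}\d t\,.
$$
Exchanging sum and integral is justified by Tonelli applied to the absolute values, where $\sum|\bm w^{\bm 1}|\,|\bm w|^{-\Re\nu}<\infty$. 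The result is
$$
Z=\Gamma(\nu/2)^{-1}\int_0^\infty t^{\nu/2-1}\,\theta(t)^8\d t,\qquad \theta(t)=\sum_{z\in\mathds Z}(-1)^z(z+1/2)e^{-t(z+1/2)^2}\,.
$$

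\textbf{Step 2: identify $\theta^8$ with the discriminant.} Pair $z$ with $-1-z$. This map sends $w=z+1/2\mapsto -w$ and $(-1)^z\mapsto-(-1)^z$, so both terms agree, and we get
$$
\theta(t)=\sum_{n\ge0}(-1)^n(2n+1)e^{-t(2n+1)^2/4}\,.
$$
Jacobi's identity $\eta(\tau)^3=\sum_{n\ge0}(-1)^n(2n+1)q^{(2n+1)^2/8}$ with $q=e^{2\pi i\tau}$ applies with $q=e^{-2t}$, and shows $\theta(t)=\eta^3$ at that argument. Hence
$$
\theta(t)^8=q\prod_{k\ge1}(1-q^k)^{24}=\sum_{k\ge1}\tau(k)e^{-2tk}\,,
$$
which is the generating function from the DLMF relation quoted before the lemma.

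\textbf{Step 3: integrate termwise.} Substituting the expansion gives
$$
\Gamma(\nu/2)^{-1}\sum_k\tau(k)\int_0^\infty t^{\nu/2-1}e^{-2kt}\d t=\sum_k\tau(k)(2k)^{-\nu/2}=2^{-\nu/2}L(\nu/2)\,.
$$
The interchange is justified for $\Re\nu$ large using the elementary polynomial bound $|\tau(k)|=O(k^{c})$, which follows from the product expansion. Both sides are then meromorphic in $\nu$, and the left side is even entire, so the identity theorem yields equality on all of $\mathds C$. In particular this also reproves that $L(\nu/2)$ continues to an entire function.

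The main obstacle is Step 2: correctly matching normalizations, namely the factor $1/2$ versus $1/8$ in the exponent and the resulting $2k$ in place of $k$. This must be checked carefully, because it produces exactly the prefactor $2^{-\nu/2}$. The analytic interchanges in Steps 1 and 3 are routine.
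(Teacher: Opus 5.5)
Your proposal is correct and follows essentially the paper's route: Mellin transform, pairing $z\leftrightarrow -1-z$, identifying the one-dimensional series with $\eta^3$ at $q=e^{-2t}$, termwise integration, and the identity theorem. The paper expresses the same Jacobi identity through $\tfrac12\theta_1'(0,q)$ and rescales the lattice by $2$. You also get a small bonus: you deduce the entire continuation of $L$ from the left-hand side instead of citing it.

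One correction. A polynomial bound on $\tau(k)$ does not follow from the product $q\prod(1-q^k)^{24}$ by crude estimates; these give only $e^{C\sqrt{k}}$, which is useless for the interchange. Take the bound instead from your Jacobi series, where counting the $8$-tuples gives $|\tau(k)|=O(k^{8})$, or cite Hecke's bound $O(k^{6})$ as the paper does.
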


In \Cref{fig:ramanujan} we compare the anisotropic Epstein zeta function for the special values from \Cref{ramanujan} over a range of values $-10\le \nu\le 10$
in increments of $\Delta\nu=1/20$, where the reference values are obtained through the Ramanujan tau $L$-function which is available in software packages such as Mathematica.
We observe machine precision with a maximum error of $E<3\cdot 10^{-14}$.

\begin{figure} 
    \centering
        \includegraphics[width=.45\linewidth]{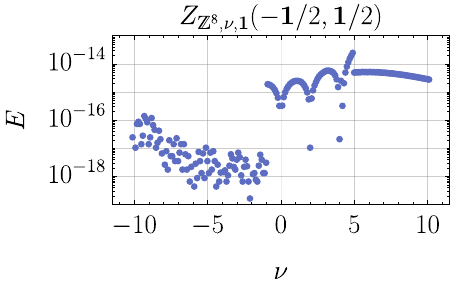}
    \caption{Error for a special case of the eight-dimensional anisotropic Epstein zeta function for the square lattice $\Lambda=\mathds Z^8$, $\bm x=-\bm 1/2$, $\bm y=\bm 1/2$ and $\bm\alpha=\bm 1$ as a function of $-10\le \nu\le 10$, where it can be exactly represented by the Ramanujan tau $L$-function.
    We achieve machine precision with a maximum error $E< 3\cdot 10^{-14}$ overall 
and an error $E<2\cdot 10^{-16}$ for $\nu<1$.
    }
    \label{fig:ramanujan}
\end{figure}

For large exponents, reference values for anisotropic Epstein zeta functions can be readily obtained through direct summation of the defining lattice sum, see \Cref{def-zeta-aniso}.
In \Cref{fig:sum_low}
we compute the error for two-dimensional anisotropic Epstein zeta function with second order anisotropy $|\bm\alpha|=2$ for large $\nu\ge 9$ 
so that the sum can be accurately computed by direct summation of squares
$\{-1000,\ldots,999,1000\}^2$.
The error is shown as a function of $9\le\nu\le 13$
in increments of $\Delta \nu=1/50$ for the square lattice $\Lambda=\mathds Z^2$ and the hexagonal lattice
$\Lambda_{\rm hex}$ with lattice matrix
$$
A_{\rm{hex}}=
\begin{bmatrix}
    1& 1/2 \\ 0 & \sqrt{3}/2
\end{bmatrix}
$$
for different values of $\bm x$.
Furthermore, in \Cref{fig:sum_high} we show the error of the two-dimensional anisotropic Epstein zeta function for high-order anisotropy $|\bm\alpha|=18$
as a function of $25\le\nu\le29$, where direct summation of $\{-1000,\ldots,999,1000\}^2$ squares offers accurate reference values.
The error is flat and stays below $E<2\cdot 10^{-15}$ for the low- and below $E<3\cdot 10^{-16}$ for the high-order anisotropy direct summation benchmark.

For generic arguments $\bm x,\bm y$,
no closed-form reference values exist, and the derivative Crandall representation in
\Cref{derivative-crandall-representation} requires a working precision that grows linearly
in $|\bm\alpha|$ due to catastrophic cancellation, which makes a dense scan over parameters costly.
Comparing the floating point algorithm against its own high-precision implementation, we observe a very mild loss of precision in the generic case with, typically, errors 
$E\lesssim 10^{-15}$ for $|\bm\alpha|\le 10$, 
$E\lesssim 10^{-14}$ for $|\bm\alpha|\le 20$ and
$E\lesssim 10^{-12}$ for $|\bm\alpha|\le 40$.

\begin{figure} 
    \centering
\includegraphics[width=1.\linewidth]{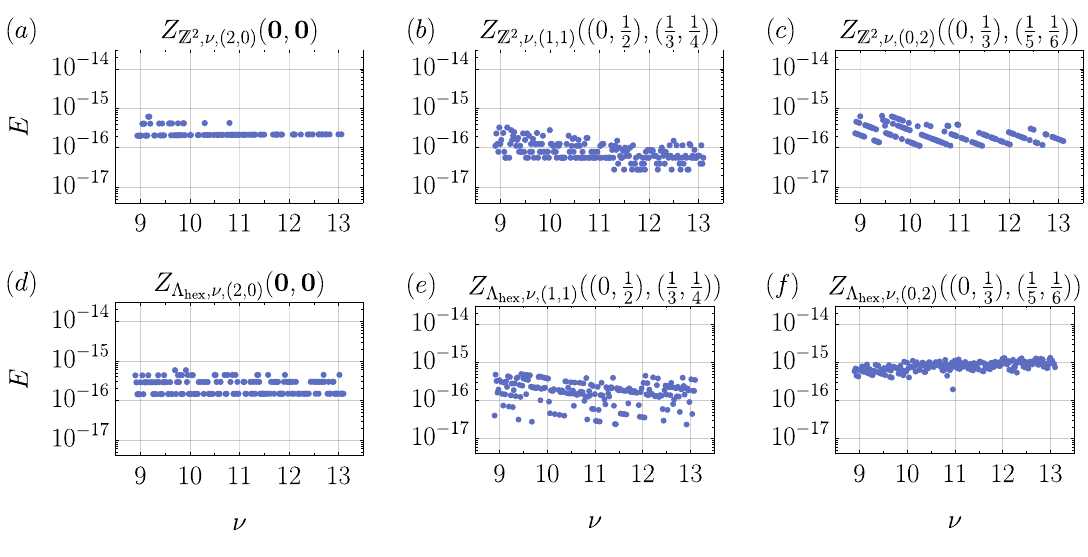}
    \caption{Minimum of absolute and relative error of $Z_{\Lambda,\nu,\bm\alpha}(\bm x,\bm y)$ for various values of $\bm x,\bm y$
    and second order derivatives $\bm\alpha$ for
    the square lattice
    $\Lambda=\mathds Z^2$ (a-c)
    and the hexagonal lattice $\Lambda=\Lambda_{\rm{hex}}$ (d-f)
    as a function of $9\le \nu\le 13$
    where the direct sum gives accurate reference values.
    The error is flat and bounded by $E<2\cdot 10^{-15}$.
    }
\label{fig:sum_low}
\end{figure}

\begin{figure} 
    \centering
\includegraphics[width=1.\linewidth]{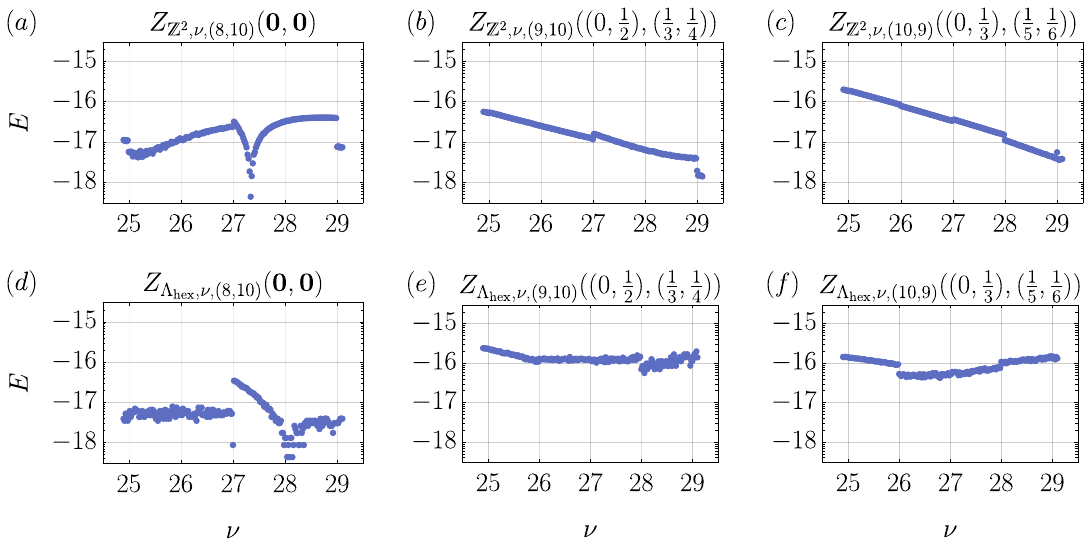}
    \caption{Minimum of absolute and relative error of $Z_{\Lambda,\nu,\bm\alpha}(\bm x,\bm y)$ for various values of $\bm x,\bm y$
    and high-order anisotropy $\bm\alpha$ for
    the square lattice
    $\Lambda=\mathds Z^2$ (a-c)
    and the hexagonal lattice $\Lambda=\Lambda_{\rm{hex}}$ (d-f)
    as a function of $25\le \nu\le 29$
    where the direct sum gives accurate reference values.
    The error is flat and bounded by $E<3\cdot 10^{-16}$.
    }
\label{fig:sum_high}
\end{figure}

\subsection{Stability for high-order anisotropy}
\label{sec:num-stab}

The preceding section showcases machine precision over a wide parameter range with a maximum error bounded by $E<2\cdot 10^{-13}$ for all benchmarks with independent reference values which indicates stability of the algorithm over the real parameters considered due to the small step sizes tested.
For low-order anisotropy, the stability of the algorithm corresponds to the stability of the Crandall formula for the Epstein zeta function, whose stability was thoroughly discussed in \cite[Ch. 4]{buchheitComputationPropertiesEpstein2026}.
We now focus on the stability in the order of the anisotropy $|\bm\alpha|$.

The main feature of our algorithm based on the harmonic-decomposition Crandall representation, \Cref{harmonic-decomposition-crandall-representation}, is its stability in the order of the anisotropy $|\bm\alpha|$.
While the derivative Crandall representation \Cref{derivative-crandall-representation} is exact and a useful analytical tool it is unsuitable for stable computations due to cancellation errors that get exponentially magnified in the order of the anisotropy in floating point arithmetic.
In \Cref{fig:fig_stab} we compare \Cref{alg} which is based on the harmonic-decomposition Crandall representation, \Cref{harmonic-decomposition-crandall-representation}, with an algorithm based on the derivative Crandall representation in \Cref{derivative-crandall-representation}.
The errors are shown
for the square lattice $\Lambda=\mathds Z^d$, for $\nu=5/2$
for various combinations of values $\bm x\in\{\bm 0,\bm e_1/3\}$ and $\bm y\in\{\bm 0,\bm e_1/4\}$
and for $\bm\alpha=n\bm e_1$ as a function of the anisotropy order $0\le n\le 60$.
The values obtained by the floating point arithmetic algorithm based on the upper Crandall derivative representation displays an exponential error scaling in the order of the anisotropy $n$.
In contrast the error of the anisotropic Epstein zeta function values obtained by \Cref{alg} in its EpsteinLib implementation remains flat over the entire parameter range, even for values corresponding to extremely high-order derivatives $n=60$.

\begin{figure} 
    \centering
\includegraphics[width=1.\linewidth]{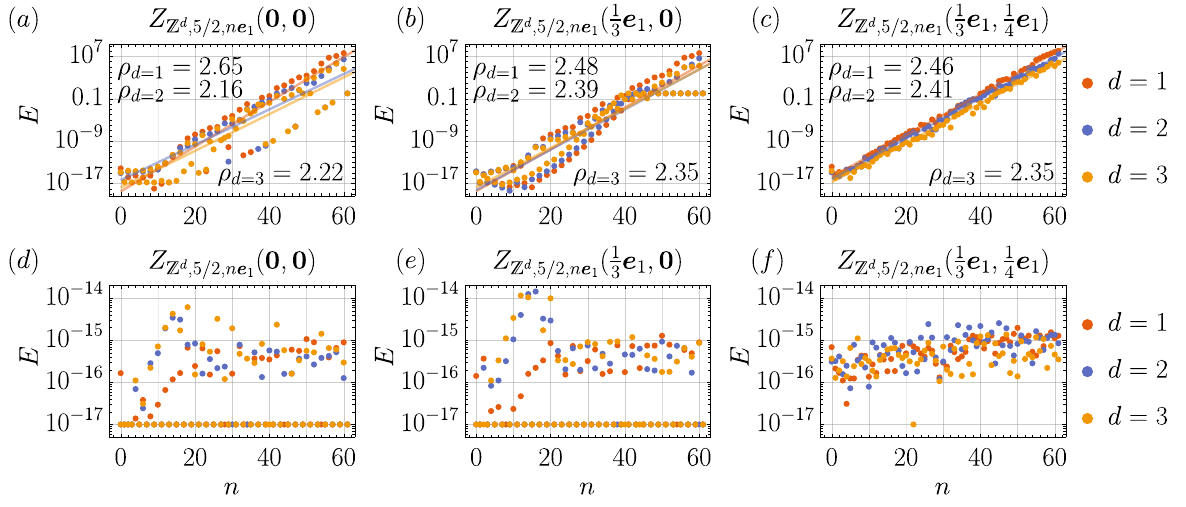}
    \caption{
Minimum of absolute and relative error of
    the anisotropic Epstein zeta function computed by 
    the derivative Crandall representation, \Cref{derivative-crandall-representation}, (a--c)
    in comparison with \Cref{alg}
    based on the harmonic-decomposition Crandall representation, \Cref{harmonic-decomposition-crandall-representation}, (d--f) for $\Lambda=\mathds Z^d$, $\bm\alpha=n\bm e_1$
    and various values of $\bm x\in \{\bm 0,\bm e_1/3\}$ and $\bm y\in \{\bm 0,\bm e_1/4\}$
    as a function of the order of the anisotropy $0\le n\le 60$
in $d\in\{1,2,3\}$ dimensions shown as red, blue, and green dots respectively.
    Here the reference values are obtained by a Mathematica prototype of the derivative Crandall representation, \Cref{derivative-crandall-representation}, with high-precision arithmetic.
For the algorithm based on the derivative Crandall representation (a--c)
we observe that the error scales exponentially in $n$ and the values of $\rho$ such that
$E\approx C\rho^n$ for some $C>0$ 
are shown in (b,c) with the curve displayed as semi-transparent lines in the color of the corresponding dimension.
Note that in (a) the error values for odd $n$, where the reference value vanishes due to the mirror symmetry in \Cref{syms-zero}, are lower than those of neighboring even $n$.
The error of \Cref{alg} (d--f) is flat with $E<2\cdot 10^{-14}$ for all $0\le n\le 60$.
In contrast the maximum error in (a--b) exceeds $E>4\cdot 10^5$ for all six anisotropic Epstein zeta functions considered
with a maximum error of $E>10^8$ in (c) for $d=1$.
}
\label{fig:fig_stab}
\end{figure}

We now benchmark the stability of anisotropic Epstein zeta functions at zero through extremely high-order Taylor series.
For vanishing wave vectors, the Taylor coefficients of the regularized anisotropic Epstein zeta function are given by the anisotropic Epstein zeta function as the following lemma shows, where a proof is provided in \Cref{sec:appendix-num}.

\begin{lemma}[Anisotropic Epstein zeta function for vanishing wave vectors]
\label{aniso-equals-reg-in-zero}
Let $\Lambda$
be a $d$-dimensional
lattice, let $\bm\alpha\in\mathds N^d$, let $\bm x\in\mathds R^d$ and let $\nu\in\mathds C$ such that $\nu\neq d+|\bm\alpha|$ if $\chi(\bm\alpha)=1$. 
Then
$$
Z^{\rm reg}_{\Lambda,\nu,\bm\alpha}(\bm x,\bm 0)
=
Z_{\Lambda,\nu,\bm\alpha}(\bm x,\bm 0)
\,.
$$
\end{lemma}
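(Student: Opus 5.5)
The plan is to compare the two Crandall type representations at $\bm y=\bm 0$ term by term: the derivative Crandall representation of $Z_{\Lambda,\nu,\bm\alpha}$ (\Cref{derivative-crandall-representation}) and that of $Z^{\rm reg}_{\Lambda,\nu,\bm\alpha}$ (\Cref{low-ders-aniso-reg}). Fix $\lambda>0$ and set $\bm y=\bm 0$. The oscillatory prefactor $e^{2\pi i\bm x\cdot\bm y}$ in \Cref{def-zeta-aniso-reg} is then $1$. The real-space sums in the two representations then coincide. So do the reciprocal sums over $\bm p\in\Lambda^*\setminus\{\bm 0\}$, since $\Lambda^*+\bm 0=\Lambda^*$. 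The only term left to compare is the $\bm p=\bm 0$ summand. It suffices to show
$$
\lambda^{d+|\bm\alpha|}G^{(\bm\alpha)}_{d-\nu}(\bm 0)=\lambda^{d}G^{\mathrm{reg},(\bm\alpha)}_{d-\nu,\lambda}(\bm 0),
$$
where the left-hand side is understood as the continuous extension and meromorphic continuation from \Cref{derivative-crandall-representation}.

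\emph{Generic exponents.} First take $\nu\notin d+2\mathds N$, so that $d-\nu\notin-2\mathds N$. By \Cref{low-ders-aniso-reg} we have $G^{\mathrm{reg},(\bm\alpha)}_{d-\nu,\lambda}(\bm 0)=-\lambda^{|\bm\alpha|}g^{(\bm\alpha)}_{d-\nu}(\bm 0)$. By \Cref{crandall-lower-der},
$$
g^{(\bm\alpha)}_{d-\nu}(\bm 0)=\frac{2\chi(\bm\alpha)\,p_{\bm\alpha,\bm\alpha/2}(\bm 0)}{d-\nu+|\bm\alpha|}.
$$
On the other side, \Cref{derivative-crandall-representation} gives
$$
G^{(\bm\alpha)}_{d-\nu}(\bm 0)=-\frac{2\chi(\bm\alpha)\,p_{\bm\alpha,\bm\alpha/2}(\bm 0)}{d-\nu+|\bm\alpha|}.
$$
This value is first obtained as a genuine limit for $\Re(d-\nu)+|\bm\alpha|<0$, i.e.\ $\Re\nu>d+|\bm\alpha|$, and then holds by meromorphic continuation. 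In \Cref{derivative-crandall-representation}, the $\bm p=\bm 0$ term at $\bm y\in\Lambda^*$ is defined precisely through this continuation; see the proof of \Cref{hol}. Hence both sides equal $-2\lambda^{d+|\bm\alpha|}\chi(\bm\alpha)p_{\bm\alpha,\bm\alpha/2}(\bm 0)/(d-\nu+|\bm\alpha|)$, and the identity follows. The excluded value $\nu=d+|\bm\alpha|$ with $\chi(\bm\alpha)=1$ is exactly the common pole. If $\chi(\bm\alpha)=0$, both terms vanish identically.

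\emph{Exceptional exponents.} It remains to treat $\nu=d+2\ell$ with $\ell\in\mathds N$. If $\ell<|\lceil\bm\alpha/2\rceil|$, the formula $G^{\mathrm{reg},(\bm\alpha)}=-\lambda^{|\bm\alpha|}g^{(\bm\alpha)}$ still applies by \Cref{low-ders-aniso-reg}, and the argument above carries over verbatim, with $g^{(\bm\alpha)}$ defined through \Cref{crandall-lower-der}. Otherwise I use the explicit series from \Cref{low-ders-aniso-reg}. At $\bm y=\bm 0$, \Cref{derivative-y2n} shows that $Y^{(\bm\alpha)}_j(\bm 0)$ vanishes unless $2j=|\bm\alpha|$ and $\chi(\bm\alpha)=1$, in which case it equals $\bm\alpha!/(\bm\alpha/2)!$. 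The logarithmic term carries $Y_\ell^{(\bm\alpha)}(\bm 0)$, which is nonzero only if $2\ell=|\bm\alpha|$; that is the excluded pole. So only the $j=|\bm\alpha|/2$ term of the series survives, and
$$
G^{\mathrm{reg},(\bm\alpha)}_{-2\ell,\lambda}(\bm 0)=-\frac{(-\pi\lambda^2)^{|\bm\alpha|/2}}{|\bm\alpha|/2-\ell}\,\frac{\bm\alpha!}{(\bm\alpha/2)!}.
$$
The last step is to check that this agrees with $-2\lambda^{|\bm\alpha|}p_{\bm\alpha,\bm\alpha/2}(\bm 0)/(|\bm\alpha|-2\ell)$. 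This reduces to the identity
$$
p_{\bm\alpha,\bm\alpha/2}(\bm 0)=(-\pi)^{|\bm\alpha|/2}(\bm\alpha/2)_{\bm\alpha/2}\binom{\bm\alpha}{\bm\alpha/2}=(-\pi)^{|\bm\alpha|/2}\frac{\bm\alpha!}{(\bm\alpha/2)!},
$$
which holds because $(\bm\alpha/2)_{\bm\alpha/2}=(\bm\alpha/2)!$.

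I expect the main obstacle to be the bookkeeping in the exceptional case $\nu\in d+2\mathds N$. There one has to make sure that the logarithmic and harmonic-number contributions really drop out at $\bm y=\bm 0$, and that the convention for $\hat s_{d+2\ell}$ introduces no extra polynomial term at the origin. An alternative is to argue by continuity in $\nu$ at fixed $\bm y=\bm 0$ from the generic case. However, this requires $Z^{\rm reg}_{\Lambda,\nu,\bm\alpha}(\bm x,\bm 0)$ to be continuous in $\nu$ there, which \Cref{hol-reg} does not directly provide. So I would do the direct computation.
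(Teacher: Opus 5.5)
Your proposal is correct, but after the shared first step (reducing everything to the $\bm p=\bm 0$ reciprocal summand) it takes a different route from the paper.

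The paper never evaluates the two $\bm p=\bm 0$ terms in closed form. It fixes $\Re\nu>d+|\bm\alpha|$ and writes $G^{\mathrm{reg},(\bm\alpha)}_{d-\nu,1}(\bm y)=G^{(\bm\alpha)}_{d-\nu}(\bm y)-\pi^{-\nu/2}\Gamma(\nu/2)\hat s^{(\bm\alpha)}_\nu(\bm y)$. It then uses the Fa\`a di Bruno formula and homogeneity bounds to show $\hat s^{(\bm\alpha)}_\nu(\bm y)\to 0$ as $\bm y\to\bm 0$. For $\nu=d+2\ell$ it adds the estimate $|\log z|<z^{-\varepsilon}/\varepsilon$. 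Hence the two summands agree by continuity. The identity theorem, via \Cref{hol} and \Cref{hol-reg}, extends this to $\nu\in\mathds C\setminus(d+2|\lceil\bm\alpha/2\rceil|+2\mathds N)$. Finally, every remaining exponent in $d+2|\lceil\bm\alpha/2\rceil|+2\mathds N$ other than the excluded pole already has $\Re\nu>d+|\bm\alpha|$, so the limit argument covers it pointwise. This is how the paper avoids the continuity-in-$\nu$ issue you correctly flag at the end.

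You instead replace both the limit and the analytic continuation by an algebraic check at $\bm y=\bm 0$ for each admissible $\nu$. Your ingredients all check out:
\begin{itemize}
\item the closed forms for $G^{(\bm\alpha)}_{d-\nu}(\bm 0)$ and $g^{(\bm\alpha)}_{d-\nu}(\bm 0)$, including the branch $\ell<|\lceil\bm\alpha/2\rceil|$ where $g^{(\bm\alpha)}$ is defined through the right-hand side of \Cref{crandall-lower-der};
\item the vanishing pattern $Y_j^{(\bm\alpha)}(\bm 0)=\chi(\bm\alpha)\,\delta_{2j,|\bm\alpha|}\,\bm\alpha!/(\bm\alpha/2)!$ from \Cref{derivative-y2n};
\item the identity $p_{\bm\alpha,\bm\alpha/2}(\bm 0)=(-\pi)^{|\bm\alpha|/2}\bm\alpha!/(\bm\alpha/2)!$.
\end{itemize}

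Your version gives the explicit common value of the $\bm p=\bm 0$ term for every $\nu$. It also serves as a consistency check of the logarithmic series for $G^{\mathrm{reg},(\bm\alpha)}_{-2\ell,\lambda}$ in \Cref{low-ders-aniso-reg} against the meromorphic continuation in \Cref{derivative-crandall-representation}. The cost is a three-way case split and heavier reliance on exact formulas. The paper needs only the crude decay of $\hat s^{(\bm\alpha)}_\nu$ at the origin plus holomorphy in $\nu$.
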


As an immediate consequence of \Cref{aniso-equals-reg-in-zero},
we can express coefficients of the Taylor series of the regularized Epstein zeta function around $\bm y=\bm 0$ in terms of the anisotropic Epstein zeta function,
where the choice of the convergence radius parameter is justified in \Cref{sec:appendix-num}.

\begin{corollary}
\label{taylor}
Let $\Lambda$ be a $d$-dimensional lattice, let $\bm x,\bm y\in\mathds R^d$ and let $\nu\in\mathds C$
such that $\nu\notin(d+2\mathds N)$.
Then
$$
Z^{\rm reg}_{\Lambda,\nu}(\bm x,\bm y)
=
\sum_{\bm\alpha \ge \bm 0} \frac{1}{\bm\alpha!} (-2 \pi i \bm 
y)^{\bm\alpha} Z_{\Lambda,\nu,\bm\alpha}(\bm x,\bm 0),
\qquad \sqrt{2}|\bm y|<\lambda_1^*\,
$$
where $\lambda_1^*=\min_{\bm z\in\Lambda^*\setminus\{\bm 0\}}|\bm z|$
is the minimal distance of the reciprocal lattice.
\end{corollary}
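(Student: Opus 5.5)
The plan is to expand $f(\bm y)=Z^{\rm reg}_{\Lambda,\nu}(\bm x,\bm y)$ as a multivariate Taylor series around $\bm y=\bm 0$, identify its coefficients, and then control the radius of convergence. By \Cref{hol-reg} with $\bm\alpha=\bm 0$, and since $\nu\notin d+2\mathds N$, the function $f$ extends holomorphically to $\bm y\in D_{\Lambda^*\setminus\{\bm 0\}}$. If $\bm x\in\Lambda$ we use the corresponding case of that theorem instead; it gives the same domain in $\bm y$.

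For the coefficients, \Cref{aniso-reg-derivative-id} gives $\diffop^{\bm\alpha}f(\bm y)=(-2\pi i)^{|\bm\alpha|}Z^{\rm reg}_{\Lambda,\nu,\bm\alpha}(\bm x,\bm y)$ for real $\bm y\in(\mathds R^d\setminus\Lambda^*)\cup\{\bm 0\}$. The value at $\bm y=\bm 0$ is obtained by continuity, which \Cref{hol-reg} guarantees. By \Cref{aniso-equals-reg-in-zero}, the value at $\bm 0$ equals $(-2\pi i)^{|\bm\alpha|}Z_{\Lambda,\nu,\bm\alpha}(\bm x,\bm 0)$. The hypothesis of that lemma requires $\nu\neq d+|\bm\alpha|$ when $\chi(\bm\alpha)=1$. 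When $\chi(\bm\alpha)=1$, $|\bm\alpha|$ is even, so $d+|\bm\alpha|\in d+2\mathds N$, and our assumption on $\nu$ rules this case out. Hence the formal Taylor series of $f$ at the origin is exactly the stated series.

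For convergence, we show that $f$ is holomorphic on a polydisc-type neighbourhood $\{\bm u\in\mathds C^d:\sqrt2|\bm u|<\lambda_1^*\}$ of the origin, after which the multivariate Cauchy estimates give absolute convergence of the power series there. This geometric step is the main obstacle. We must show that every $\bm u$ in this ball lies in $D_{\Lambda^*\setminus\{\bm 0\}}$, that is, $|\Re{\bm u}-\bm p|>|\Im{\bm u}|$ for all $\bm p\in\Lambda^*\setminus\{\bm 0\}$. Write $\bm a=\Re{\bm u}$ and $\bm b=\Im{\bm u}$, so $|\bm a|^2+|\bm b|^2<(\lambda_1^*)^2/2$. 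Then $|\bm a-\bm p|\ge\lambda_1^*-|\bm a|$, and it suffices to have $|\bm a|+|\bm b|<\lambda_1^*$. This follows from Cauchy--Schwarz: $|\bm a|+|\bm b|\le\sqrt2\sqrt{|\bm a|^2+|\bm b|^2}<\lambda_1^*$. This is where the factor $\sqrt2$ comes from.

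The last point to check is that a function holomorphic on the Euclidean ball $B$ of radius $R=\lambda_1^*/\sqrt2$ in $\mathds C^d$ has its Taylor series at $\bm 0$ converging on all of $B$. We handle this by restricting to complex lines. For fixed $\bm y$ with $|\bm y|<R$, the map $t\mapsto f(t\bm y)$ is holomorphic for $|t|<R/|\bm y|$, which exceeds $1$. Its one-variable Taylor series at $t=1$ converges, and it is obtained by grouping the multivariate series by homogeneous degree. To get absolute convergence of the ungrouped multi-index sum, we use the Cauchy estimates on a polydisc contained in $B$ that contains $\bm y$ and is centred at the origin. The polydisc with radii $|y^{(j)}|(1+\epsilon)$ lies in $B$ for small $\epsilon$, since its Euclidean extent is $|\bm y|(1+\epsilon)$. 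On it, the Cauchy estimates bound the coefficients and give geometric convergence. Finally, the identification of the sum with $f(\bm y)$ holds by uniqueness of holomorphic functions on the connected set $B$.
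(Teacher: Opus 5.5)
Your proposal is correct and follows the paper's proof essentially step for step. It identifies the coefficients through \Cref{aniso-reg-derivative-id} and \Cref{aniso-equals-reg-in-zero} (your explicit $(-2\pi i)^{|\bm\alpha|}$ factor is more careful than the paper's displayed identity), shows the ball $\sqrt2|\bm u|<\lambda_1^*$ lies in $D_{\Lambda^*\setminus\{\bm 0\}}$ via $|\Re{\bm u}|+|\Im{\bm u}|\le\sqrt2|\bm u|$, and obtains normal convergence on a polydisc around $\bm y$ inside that ball. One small fix: your radii $|y^{(j)}|(1+\epsilon)$ vanish when some $y^{(j)}=0$, so take radii $|y^{(j)}|+\varepsilon$ as the paper does, and drop the complex-line detour, which is not needed.
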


In \Cref{fig:error_taylor} we reconstruct the regularized Epstein zeta function in a ball around the origin of the wave vector $\bm y \in B_{1/2}(\bm 0)$ by its Taylor series in $\bm y=\bm 0$, where the coefficients are computed through anisotropic Epstein zeta functions, see \Cref{aniso-equals-reg-in-zero}.
The regularized anisotropic Epstein zeta function is analytic in the wave vector around this origin, see \Cref{hol-reg} and we numerically recover the expected exponential convergence of its Taylor series.

\begin{figure} 
    \centering
        \includegraphics[width=.6\linewidth]{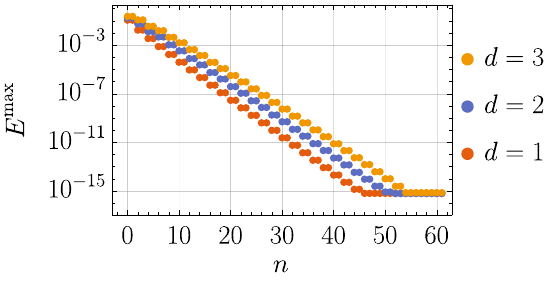}
    \caption{Error of the regularized Epstein zeta function 
    $Z^{\rm reg}_{\mathds Z^d,1/2}(\bm 0,\bm y)$
    reconstructed through
    its Taylor series in terms of the anisotropic Epstein zeta function
    $
\sum_{|\bm\alpha| \le n} \frac{1}{\bm\alpha!} (-2 \pi i \bm 
y)^{\bm\alpha} Z_{\mathds Z^d,1/2,\bm\alpha}(\bm 0,\bm 0)
    $
    in $d\in\{1,2,3\}$ dimensions
    , where the maximum error over
    a grid of values in a ball around the origin $\bm y\in [-1/2,1/2]^d$, $|\bm y| \le 1/2$ in increments of $\Delta y^{(j)}=1/20$
    is shown as a function of the order of the Taylor series $0\le n\le 60$.
    Note that error of odd orders $n$ is the same as the error at even orders $n-1$ since  all odd coefficients vanish by the mirror symmetry, see \Cref{syms-zero}.
    We observe exponential convergence of the reconstruction of the regularized Epstein zeta function through the anisotropic Epstein zeta function in the order of the Taylor series with $E^{\rm max}<10^{-15}$ for $n\ge 54$ in the dimensions considered.
    }
    \label{fig:error_taylor}
\end{figure}

\subsection{Linear runtime for low-order anisotropy}
\label{sec:num-time}

In the most relevant cases of low- to mid-order anisotropy we observe near linear runtime of our algorithm for anisotropic Epstein zeta functions in the order of the anisotropy $|\bm\alpha|$.
In \Cref{fig:time}
we show the evaluation times of the anisotropic Epstein zeta function in $d\in\{1,\ldots,6\}$ dimensions up to tenth anisotropy order $0\le|\bm\alpha|\le 10$ (a) and the relative evaluation times, where the runtime value is divided by the runtime value at $\bm\alpha=\bm 0$, in $d\in\{1,2,3\}$ up to twentieth order anisotropy $0\le|\bm\alpha|\le 20$ (b).
We observe a moderate increase in runtime in the order of the anisotropy, in particular in the most relevant two- and three-dimensional case an increase of the anisotropy order from zero to ten amounts to linear increase in runtime of one order of magnitude.
The increase in evaluation time of around ten derivatives
is similar to the increase of evaluation time of increasing the dimension.
Note that \Cref{tab:harmonic_high_order} features the evaluation times of the harmonic polynomials, where the precomputation of the coefficient $c_{k,\bm\alpha,\bm\gamma}$, shown as $t_{\rm pre}$ in the first column,
dominates the runtime.
The subsequent evaluation of the harmonic polynomials $h_{k,\bm\alpha}(\bm y)$ for different values of $k=0,1,\ldots,\lfloor|\bm\alpha|/2\rfloor$ and $\bm y\in \mathds R^d$ has fast maximum evaluation times $t_{\rm eval}^{\rm max}$, as
shown in the second column.
The values were obtained on an Intel Core i7-1260P (12th Gen) 16-core processor with 32 GB of RAM.

\begin{figure} 
    \centering
\includegraphics[width=1.\linewidth]{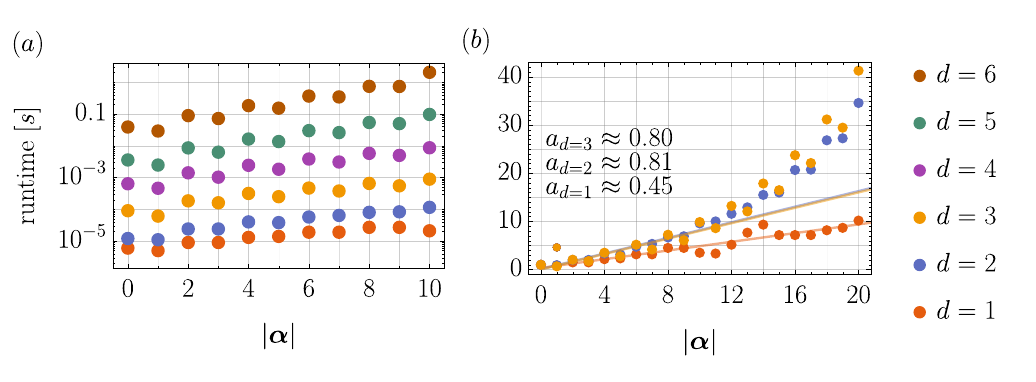}
    \caption{Evaluation times of the anisotropic Epstein zeta function $Z_{\mathds Z^d,1/2,\bm\alpha}(\bm 0,y \,\bm e_1)$ for $\bm\alpha=n \bm e_1$, $n\in\mathds N$
    for $0\le n\le 10$ 
    in $d\in \{1,\ldots,6\}$ dimensions (a).
    The values shown are the maximum evaluation times obtained for $-1/2\le y\le 1/2$ in steps of $\Delta y=1/10$ where for each $y$ the median evaluation time over $10$ runs was chosen.
    For $|\bm\alpha|\le 10$ we observe evaluation times of less than $30$ microseconds in $d=1$, less than $0.2$ milliseconds in $d=2$, less than $0.9$ milliseconds in $d=3$ and less than $2.1$ seconds in $d=6$.
    For odd-order anisotropy $|\bm\alpha|$, the symmetries reduce the computational work so that the runtime is sometimes lower than for $|\bm\alpha|-1$.
    In (b) the evaluation times in $d\in\{1,2,3\}$
    for $0\le n\le 20$ are shown,
    where each time is normalized by dividing by the respective evaluation time at $n=0$.
    In (b) we observe a relative increase of the runtime for low-dimensions and low- to mid-order derivatives of one order of magnitude in $0\le n\le 20$ for $d=1$ and $0\le n\le 10$ in $d\in\{2,3\}$.
There the semi-transparent lines $a\,n+C$, $a,C>0$ and their slope $a_d$ obtained by a linear fit on the values $0\le n\le 20$ for $d=1$ and $0\le n\le 10$ for $d\in\{2,3\}$ are shown as well.
    }
\label{fig:time}
\end{figure}

\subsection{Runtime and stability of harmonic polynomial evaluation}
\label{sec:num-harmonic}

We now showcase numerical stability and fast evaluation times of 
our algorithm for the
computation of
the harmonic polynomials $h_{k,\bm\alpha}(\bm y)$,
$\bm y\in\mathds R^d$, $\bm\alpha\in\mathds N^d$ and $0\le k\le \lfloor|\bm\alpha|/2\rfloor$,
based on \Cref{harmonic-comp}, where the inner dyadic coefficients $c_{k,\bm\alpha,\bm\gamma}$ are computed exactly.
We denote by $h^{\rm num}_{k,\bm\alpha}(\bm y)$ the values obtained by the implementation of \Cref{harmonic-comp} in EpsteinLib and compare with exact values $h^{\rm exact}_{k,\bm\alpha}(\bm y)$ obtained by an exact addition of the dyadic summands for component-wise dyadic $\bm y$.
Denote by
$h_{k,\bm\alpha}^{\rm abs}(\bm y)$
the sum of the exact absolute values of the representation in \Cref{harmonic-comp}, that is
$$
h_{k,\bm\alpha}^{\rm abs}(\bm y)
=
\sum_{\substack{
2\bm\gamma\ge\bm\alpha  
\\ 
|\bm\gamma|=|\bm\alpha|-k
}}
|c^{\rm full}_{k,\bm\alpha,\bm\gamma}
\,
\bm y^{2\bm\gamma-\bm\alpha}
|
\,,
$$
with $c^{\rm full}_{k,\bm\alpha,\bm\gamma} 
=
\omega_{k,|\bm\alpha|}c_{k,\bm\alpha,\bm\gamma}$
and $\omega_{k,|\bm\alpha|}$ from \Cref{harmonic-comp}.
Then,
for
 $h^{\rm exact}_{k,\bm\alpha}(\bm y) \neq 0$,
we denote by
$$
\mathcal{E}_{\rm bwd}
=
\frac{
|h^{\rm num}_{k,\bm\alpha}(\bm y)-h^{\rm exact}_{k,\bm\alpha}(\bm y)|}{h^{\rm abs}_{k,\bm\alpha}(\bm y)}
$$
the relative backward error
of the algorithm based on the representation in \Cref{harmonic-comp} with the associated condition number
$$
\kappa_{h}= 
\frac{h^{\rm abs}_{k,\bm\alpha}(\bm y)}{|h^{\rm exact}_{k,\bm\alpha}(\bm y)|}\,,
$$
see \cite[Ch. 1.5]{highamAccuracyStabilityNumerical2002},
where $\kappa_{h}$ is well defined since $h^{\rm exact}_{k,\bm\alpha}(\bm y) \neq 0$ implies $h^{\rm abs}_{k,\bm\alpha}(\bm y) \neq 0$.
Note that $\mathcal{E}_{\rm bwd}$
 corresponds to relatively perturbing the
coefficients $c^{\rm full}_{k,\bm\alpha,\bm\gamma}$
to $c^{\rm full}_{k,\bm\alpha,\bm\gamma}(1+\delta_{\bm\gamma})$
by at most $\delta_{\bm\gamma}\in\mathds R$,
$|\delta_{\bm\gamma}|\le\mathcal{E}_{\rm bwd}$.
Then, the relative error of the algorithm
is approximately bounded by the product of the condition number with the relative backward error
$$
E_{\rm rel}\lesssim
\kappa_{h} \cdot \mathcal{E}_{\rm bwd}
\,,
$$
where we can interpret $\mathcal{E}_{\rm bwd}$ as the error due to the algorithm 
and $\kappa_{h}$, which can be large due to cancellation of the summands of $h_{k,\bm\alpha}(\bm y)$, is a property of the problem.
We now show through a wide range of test cases that $\mathcal{E}_{\rm bwd}$ is small irrespective of $\kappa_{h}$ and that $\kappa_{h}$ is small in the median. 

In \Cref{tab:harmonic_high_order}
we compute the runtime, the condition numbers and the relative backward error over 
our implementation of the
harmonic polynomials $h_{k,\bm\alpha}(\bm y)$
based on \Cref{harmonic-comp} 
for
high-order $|\bm\alpha|=24$ 
with all-equal $\bm\alpha=(n,\ldots,n)^T$, $d\,n=24$,
over a grid of values $k\in\{0,1,\ldots,12\}$ and $\bm y\in[-1,1]^d$ in 
$d\in\{1,\ldots,4\}$ dimensions.
The increments $\Delta y^{(j)}$ are chosen
to ensure a similar total number of points per dimension
and as dyadic rationals so that the error is due to the algorithm and not rounding of the input values.
In all test cases we observe that the relative backward error is flat and strictly less than two times the machine epsilon
$
\mathcal{E}_{\rm bwd}^{\rm max}<2\varepsilon_{\rm M}
$
with 
$\varepsilon_{\rm M}\approx 2.2\cdot 10^{-16}$,
irrespective of the dimension or the maximum condition number  with values $\kappa_{h}^{\rm max}\ge 2 \cdot 10^{5}$ in $d\in\{2,3,4\}$ dimensions.
This strongly suggests that the algorithm based on the representation in \Cref{harmonic-comp} gives the exact solution to a problem with coefficients $c^{\rm full}_{k,\bm\alpha,\bm\gamma}$ perturbed by no more than an unavoidable rounding error.
As a consequence, we have that, in the median, the relative error is approximately bounded by
$$
E_{\rm rel}\lesssim
\kappa_{h}^{\rm med} \mathcal{E}_{\rm bwd}^{\rm max}< 10^{-14}
\,.
$$
For the runtime we observe that the one-time pre-computation $t_{\rm pre}$ of the coefficients $c_{k,\bm\alpha,\bm\gamma}$ for fixed $\bm\alpha$ dominates the computation time
with an increase of roughly three orders of magnitude across dimensions $d=1$ to $d=4$ with $t_{\rm pre}< 6\cdot 10^{-2}$ in $d=4$.
Given these pre-computed coefficients we observe very fast maximum evaluation times with an increase of less than three orders of magnitude across $d=1$ up to 
$t^{\rm max}_{\rm eval}<9\cdot 10^{-6}$ in $d=4$.

In \Cref{tab:harmonic_stab}
we compute the runtime, the condition numbers and the relative backward error for low- to high-order two-dimensional harmonic polynomials $h_{k,\bm\alpha}(\bm y)$,
with fixed anisotropy ratio and alternatingly even and odd order $\bm\alpha=(n,2n)$, $n\in\{0,1,\ldots,10\}$.
We consider a grid of values $k\in\{0,1,\ldots,\lfloor|\bm\alpha|/2\rfloor\}$
and
$\bm y\in [-1,1]^2$ in increments of $\Delta y^{(j)}=1/64$.
The backward error is bounded by less than two times the machine epsilon 
$\mathcal{E}_{\rm bwd}^{\rm max}<2\varepsilon_{\rm M}$
irrespective of the order of the anisotropy $0\le|\bm\alpha|\le 30$ and the maximum condition number with
$\kappa_{h}^{\rm max}> 10^5$ for $|\bm\alpha|\ge 20$, so that the numerical data suggest backward stability in the order of the anisotropy $|\bm\alpha|$.
In particular,
the median of the relative error is approximately bounded by
$$
E_{\rm rel}\lesssim
\kappa_{h}^{\rm med} \mathcal{E}_{\rm bwd}^{\rm max}< 2\cdot 10^{-14}
\,.
$$
For the runtime we observe an increase in runtime for the pre-computation of the coefficients $c_{k,\bm\alpha,\bm\gamma}$
of four orders of magnitude across anisotropy order $0\le|\bm\alpha|\le 30$
up to
$t_{\rm pre}\le 2\cdot 10^{-3}$
at $\bm\alpha=(10,20)$.
Given these pre-computed coefficients, we observe very fast maximum evaluation times with an increase of only a single order of magnitude across $0\le|\bm\alpha|\le 30$
with $t_{\rm eval}^{\rm max}< 2\cdot 10^{-7}$ throughout.
The values were obtained on an Intel Core i7-1260P (12th Gen) 16-core processor with 32 GB of RAM.
The numerical experiments gathered here strongly suggest backward stability with, given the precomputed coefficient, very fast evaluation times of our algorithm based on \Cref{harmonic-comp} for high-order anisotropy.

\begin{table}[h]
\centering
\renewcommand*{\arraystretch}{1.2}
\begin{tabular}{c|cc|cc|c}
\toprule
 $d$
 & $t_{\rm pre}$ [s] 
 & $t_{\rm eval}^{\rm max}$ [s] 
 & $\kappa_{h}^{\rm med}$
 & $\kappa_{h}^{\rm max}$ 
 & $\mathcal{E}_{\rm bwd}^{\rm max}$ 
\\
\midrule

$1$ & $4.8\cdot 10^{-5}$ & $2.2\cdot 10^{-8}$ & $1.0$ & $1.0$ & $1.11\cdot 10^{-16}$ \\

$2$ & $8.3\cdot 10^{-4}$ & $1.4\cdot 10^{-7}$ & $12.3$ & $2.0\cdot 10^{5}$ & $2.29\cdot 10^{-16}$ \\

$3$ & $7.8\cdot 10^{-3}$ & $9.8\cdot 10^{-7}$ & $28.2$ & $2.6\cdot 10^{6}$ & $3.40\cdot 10^{-16}$ \\

$4$ & $5.1\cdot 10^{-2}$ & $8.1\cdot 10^{-6}$ & $28.4$ & $5.5\cdot 10^{5}$ & $2.11\cdot 10^{-16}$ \\

\bottomrule
\end{tabular}
 \caption{%
Evaluation times, condition numbers and backward errors 
of the
high-performance implementation of the harmonic polynomials $h_{k,\bm\alpha}(\bm y)$
of degree $0,2,\ldots ,24$ in EpsteinLib in comparison with an exact implementation
over a grid of values $k$ and $\bm y$ for fixed $\bm\alpha$ in dimensions $d=1,\ldots,4$.
Here we fix $\bm\alpha=(24)$ in 1D,
 $\bm\alpha=(12,12)$ in 2D,
 $\bm\alpha=(8,8,8)$ in 3D
 and $\bm\alpha=(6,6,6,6)$ in 4D
 so that $|\bm\alpha|=24$ in each dimension and compute the evaluation times, condition numbers and backward error over
 a grid of values $k=0,1\ldots,12$ and $\bm y\in [-1,1]^d$
  in increments of 
  $\Delta y^{(j)}=1/512$ in 1D,
 $\Delta y^{(j)}=1/32$ in 2D,
 $\Delta y^{(j)}=1/8$ in 3D
 and 
 $\Delta y^{(j)}=1/4$ in 4D.
For each fixed $\bm\alpha$, 
all coefficients $c_{k,\bm\alpha,\bm\gamma}$, $k=0,\ldots,12$, $|\bm\gamma|=|\bm\alpha|-k$ are precomputed
and reused for all $k$ and $\bm y$ with a runtime of $t_{\rm pre}$ shown in the second column,
and the maximum evaluation times of the harmonic polynomials using these precomputed coefficients are shown as $t_{\rm eval}^{\rm max}$
in the third column.
Note that throughout each calculation is repeated an appropriate number of times and the median evaluation time of these repeated calculations is chosen before the overall time is reported, where, in the case of $t_{\rm eval}$, the values are restricted to a subset of the grid of $\bm y$, since the times depend only on $\bm\alpha$ and $k$, with the longest evaluation times obtained for $k=0$.
We observe an increase in runtime from 
$t_{\rm pre}<5\cdot 10^{-5}$ and $t_{\rm eval}^{\rm max}<3\cdot 10^{-8}$ at $d=1$ 
up to
$t_{\rm pre}<6\cdot 10^{-2}$ and $t_{\rm eval}^{\rm max}<9\cdot 10^{-6}$ at $d=4$.
The median and maximum condition numbers
$\kappa_{h}^{\rm med}$
and
$\kappa_{h}^{\rm max}$
and
the maximum backward error $\mathcal{E}_{\rm bwd}^{\rm max}$
are shown in the last three columns.
Here, all values with $h^{\rm num}_{k,\bm\alpha}(\bm y)=h^{\rm exact}_{k,\bm\alpha}(\bm y)=0$ are skipped as the algorithm is exact there even though the associated condition number is infinite
and the backward error stays well defined because 
$h^{\rm exact}_{k,\bm\alpha}(\bm y)=0$
implies $h^{\rm num}_{k,\bm\alpha}(\bm y)=0$
in all test cases.
We observe median condition numbers $\kappa_{h}^{\rm med}< 29$ 
and a relative backward error
bounded by $\mathcal{E}_{\rm bwd}^{\rm max}\le 3.4\cdot 10^{-16}$ throughout 
and maximum condition numbers $\kappa_{h}^{\rm max}\ge 2\cdot 10^5$ for $d\ge 2$.
}
\label{tab:harmonic_high_order}
\end{table}

\begin{table}[h]
\centering
\renewcommand*{\arraystretch}{1.2}
\begin{tabular}{c|cc|cc|c}
\toprule
 $\bm\alpha$
 & $t_{\rm pre}$ [s] 
 & $t_{\rm eval}^{\rm max}$ [s] 
 & $\kappa_{h}^{\rm med}$
 & $\kappa_{h}^{\rm max}$ 
 & $\mathcal{E}_{\rm bwd}^{\rm max}$ 
\\
\midrule

$(0,0)$ & $2.2\cdot 10^{-7}$ & $1.4\cdot 10^{-8}$ & $1.0$ & $1.0$ & $0$ \\

$(1,2)$ & $1.1\cdot 10^{-6}$ & $2.2\cdot 10^{-8}$ & $1.0$ & $1.4\cdot 10^{3}$ & $0$ \\

$(2,4)$ & $1.0\cdot 10^{-5}$ & $3.8\cdot 10^{-8}$ & $2.0$ & $3.4\cdot 10^{3}$ & $0$ \\

$(3,6)$ & $2.1\cdot 10^{-5}$ & $4.3\cdot 10^{-8}$ & $2.0$ & $1.1\cdot 10^{4}$ & $2.87\cdot 10^{-16}$ \\

$(4,8)$ & $6.8\cdot 10^{-5}$ & $6.5\cdot 10^{-8}$ & $4.0$ & $3.3\cdot 10^{4}$ & $2.43\cdot 10^{-16}$ \\

$(5,10)$ & $1.1\cdot 10^{-4}$ & $8.9\cdot 10^{-8}$ & $5.6$ & $2.9\cdot 10^{4}$ & $2.66\cdot 10^{-16}$ \\

$(6,12)$ & $2.8\cdot 10^{-4}$ & $1.1\cdot 10^{-7}$ & $7.1$ & $1.8\cdot 10^{5}$ & $3.08\cdot 10^{-16}$ \\

$(7,14)$ & $4.1\cdot 10^{-4}$ & $1.2\cdot 10^{-7}$ & $11.9$ & $1.2\cdot 10^{6}$ & $3.06\cdot 10^{-16}$ \\

$(8,16)$ & $8.3\cdot 10^{-4}$ & $1.5\cdot 10^{-7}$ & $14.1$ & $6.6\cdot 10^{5}$ & $2.65\cdot 10^{-16}$ \\

$(9,18)$ & $1.1\cdot 10^{-3}$ & $1.6\cdot 10^{-7}$ & $23.6$ & $9.1\cdot 10^{7}$ & $3.24\cdot 10^{-16}$ \\

$(10,20)$ & $2.0\cdot 10^{-3}$ & $1.8\cdot 10^{-7}$ & $29.0$ & $1.3\cdot 10^{7}$ & $4.15\cdot 10^{-16}$ \\

\bottomrule
\end{tabular}
 \caption{%
Evaluation times, condition numbers and backward errors 
of the
2D
high-performance implementation of the harmonic polynomials $h_{k,\bm\alpha}(\bm y)$ based on \Cref{harmonic-comp},
where for each $\bm\alpha=(n,2n)$ for $0\le n\le 10$
the maximum error over a grid of values
$0\le k \le \lfloor|\bm\alpha|/2\rfloor$
and 
$y\in [-1,1]^2$
in increments of $\Delta y^{(j)}=1/64$
is taken.
The runtime for the pre-computation of the coefficients $c_{k,\bm\alpha,\bm\gamma}$ for fixed $\alpha=(n,2n)$ is shown as 
$t_{\rm pre}$ in seconds in the second column
and the maximum evaluation times for specific harmonic polynomials $h_{k,\bm\alpha}(\bm y)$ are shown in the third column.
Note that throughout each calculation is repeated an appropriate number of times and the median evaluation time of these repeated calculations is chosen before the overall time is reported, where, in the case of $t_{\rm eval}$, the values are restricted to a subset of the grid of $\bm y$, since the times depend only on $\bm\alpha$ and $k$, with the longest evaluation times obtained for $k=0$.
We observe an increase in runtime from 
$t_{\rm pre}<3\cdot 10^{-7}$ and $t_{\rm eval}^{\rm max}<2\cdot 10^{-8}$ at $\bm\alpha=(0,0)$ 
up to
$t_{\rm pre}\le 2\cdot 10^{-3}$ and $t_{\rm eval}^{\rm max}<2\cdot 10^{-7}$ at $\bm\alpha=(10,20)$.
The median and maximum condition numbers
$\kappa_{h}^{\rm med}$
and
$\kappa_{h}^{\rm max}$
and 
the maximum backward error $\mathcal{E}_{\rm bwd}^{\rm max}$
are shown in the last three columns.
Here, all values with $h^{\rm num}_{k,\bm\alpha}(\bm y)=h^{\rm exact}_{k,\bm\alpha}(\bm y)=0$ are skipped as the algorithm is exact there even though the associated condition number is infinite
and the backward error stays well defined because 
$h^{\rm exact}_{k,\bm\alpha}(\bm y)=0$
implies $h^{\rm num}_{k,\bm\alpha}(\bm y)=0$
in all test cases.
The backward error is bounded by 
$\mathcal{E}_{\rm bwd}^{\rm max}<4.2\cdot 10^{-16}$
throughout.
 }
\label{tab:harmonic_stab}
\end{table}

\subsection{Singular Euler-Maclaurin expansion}
\label{sec:num-sem}

\begin{figure} 
    \centering
\includegraphics[width=1.\linewidth]{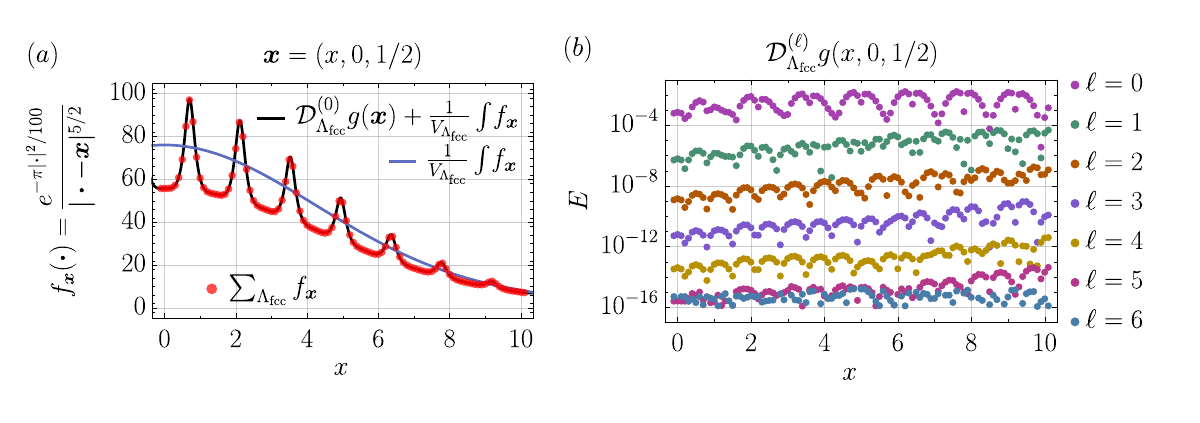}
    \caption{%
    Singular Euler-Maclaurin expansion (SEM)
 $f_{\bm x}(\bm \cdot)=s_{\nu}(\bm \cdot-\bm x)g(\bm \cdot)$ for the Gaussian $g(\bm \cdot)=e^{-\pi |\bm \cdot|^2/\sigma^2}$
with $\nu=5/2$, $\sigma=10$ and
$\bm x=(x,0,1/2)$,
as a function of $x$ for $0\le x\le 10$.
In (a),
the integral
$
\textstyle \frac 1{V_{\Lambda_{\rm fcc}}}\int_{\mathds R^3}f_{(x,0,1/2)}(\bm y)\d \bm y
$
is shown as a smooth blue line,
and the sum
$\sum_{\bm z\in\Lambda_{\rm fcc}}f_{(x,0,1/2)}(\bm z)$
whose values are obtained by summing over the truncated range $\bm z\in A_{\rm fcc}\{-50,-49,\ldots,50\}^3$,
is shown as red dots
in increments of $\Delta x=1/10$.
The zero SEM operator correction term 
$\mathcal{D}^{(0)}_{\Lambda_{\rm fcc}}g(x,0,1/2)$
added to the integral
is shown as a black line which intersects the sum at the values of $x$ where the sum is shown.
In (b)
the error of the SEM is shown for the orders $\ell\in\{0,1,\ldots,6\}$, 
where the difference of the sum with the integral is taken as the reference.
We observe a maximum error $E<2\cdot 10^{-2}$ for $\ell=0$ and $E< 2\cdot 10^{-15}$
for $\ell=6$.
}
\label{fig:sem}
\end{figure}

We showcase our algorithm for the computation of the anisotropic Epstein zeta function which appears in the coefficients of the
singular Euler-Maclaurin (SEM) expansion,
one of the authors' recent generalization of the classical Euler-Maclaurin expansion to singular kernels $s_\nu(\bm \cdot)=\vert\bm\cdot\vert^{-\nu}$
\cite{buchheit2022efficient,buchheit2022singular}
with applications to topological superconductors \cite{buchheit2023exact}.
For a $d$-dimensional lattice $\Lambda$,
$\bm x\in\mathds R^d$, $\nu\in \mathds C$,
and a smooth function $g\colon \mathds R^d\to\mathds C$ consider
$f_{\bm x}(\bm \cdot)=s_\nu(\bm \cdot-\bm x)g(\bm \cdot)$,
then, following
\cite[Ch. 5]{buchheit2022singular},
we have that the difference of the sums and the integral
$$
\sums_{\bm z\in\Lambda}f_{\bm x}(\bm z)
-\frac{1}{V_{\Lambda}}\int_{\mathds R^d}f_{\bm x}(\bm y)\d \bm y
=
\mathcal{D}^{(\ell)}_{\Lambda}g(\bm x)
+\mathcal{R}^{(\ell)}_{\Lambda}g(\bm x)
$$
is given in terms of the SEM operator
$$
\mathcal D^{(\ell)}_{\Lambda}g(\bm x)=\sum_{
\substack{
\bm\alpha \in \mathds N^d
\\
|\bm\alpha|\le 2\ell
}
}
\frac{1}{\bm\alpha!}
Z^{\rm reg}_{\Lambda,\nu,\bm\alpha}(\bm  x,\bm 0)
    g^{(\bm\alpha)}(\bm x)
$$
with an error $\mathcal{R}^{(\ell)}_{\Lambda}g$
that scales as $\nabla^{2\ell+1} g$.

\begin{lemma}[Singular Gaussian integral in three dimensions]
\label{gauss-int-3d}
Let $\bm x\in\mathds R^3$, let $\nu\in\mathds C\setminus(3+2\mathds N)$
and $f_{\bm x}(\bm \cdot)=s_\nu(\bm \cdot-\bm x)g(\bm \cdot)$ with
$g=g_{\sigma}= e^{-\pi (\bm\cdot)^2/\sigma^2}$, $\sigma>0$.
Then
$$
\int_{\mathds R^3} f_{\bm x}(\bm y)\, \d \bm y
=
2\;\!\pi^{(\nu-1)/2}\sigma^{3-\nu}
\Gamma\Big(\frac{3-\nu}{2}\Big)
\,_1\!\!\:F_1\Big(\frac \nu2;\frac 32;-\pi \frac{\bm x^2}{\sigma^2}\Big)
,\qquad 
\Re\nu < 3
$$
where the right hand side is the holomorphic continuation of the integral to $\nu\in\mathds C\setminus(3+2\mathds N)$ and
where $_1\!\!\:F_1(\cdot,\cdot,\cdot)$ is the confluent hypergeometric function of the first kind (also known as Kummer's function
\cite[Sec. 6.1, Eq. (1)]{Batemann1953a})
$$
_1\!\!\:F_1(a;b;z)
=
\sum_{n=0}^{\infty}
\frac{(a)_{\overline{n}}}{(b)_{\overline{n}}}\frac{z^n}{n!}
\,,
\qquad 
a,b,z\in\mathds C,\ b\notin-\mathds N
\,,
$$
where $(q)_{\overline{n}}=q(q+1)\ldots (q+n-1)$ is the rising Pochhammer symbol.
\end{lemma}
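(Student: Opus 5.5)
We compute the integral for real $\nu<3$ (for $\Re\nu<3$ it converges absolutely: the singularity $|\bm y-\bm x|^{-\nu}$ is locally integrable in three dimensions and the Gaussian decays). The extension to $\nu\in\mathds C\setminus(3+2\mathds N)$ then follows from the identity theorem. The right-hand side is holomorphic there: $\Gamma((3-\nu)/2)$ has poles only at $\nu\in 3+2\mathds N$, and $_1F_1(\nu/2;3/2;\cdot)$ is entire in $\nu$.

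First, substitute $\bm y=\bm x+\bm u$ and use spherical coordinates for $\bm u=r\bm\omega$ with the polar axis along $\bm x$. Then $|\bm x+\bm u|^2=\bm x^2+r^2+2r|\bm x|\cos\theta$, and the angular integral is elementary:
$$
\int_{S^2}e^{-\pi(\bm x^2+r^2+2r|\bm x|\cos\theta)/\sigma^2}\d\bm\omega
=4\pi e^{-\pi(\bm x^2+r^2)/\sigma^2}\frac{\sinh(2\pi r|\bm x|/\sigma^2)}{2\pi r|\bm x|/\sigma^2}.
$$
This is the key simplification particular to $d=3$. For $\bm x=\bm 0$ the quotient is read as $1$.

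Second, expand the $\sinh$ quotient as $\sum_n c^{2n}r^{2n}/(2n+1)!$ with $c=2\pi|\bm x|/\sigma^2$. Integrating termwise, which is justified by positivity and monotone convergence, gives radial integrals
$$
\int_0^\infty r^{2-\nu+2n}e^{-\pi r^2/\sigma^2}\d r=\tfrac12(\sigma^2/\pi)^{(3-\nu)/2+n}\Gamma\big((3-\nu)/2+n\big).
$$

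Third, resum. Write $\Gamma((3-\nu)/2+n)=\Gamma((3-\nu)/2)\,((3-\nu)/2)_{\overline n}$ and $(2n+1)!=4^n n!\,(3/2)_{\overline n}$. Collecting powers, $c^{2n}(\sigma^2/\pi)^n/4^n=(\pi\bm x^2/\sigma^2)^n$. The result is
$$
2\pi^{(\nu-1)/2}\sigma^{3-\nu}\Gamma\big((3-\nu)/2\big)\,e^{-\pi\bm x^2/\sigma^2}\,{}_1F_1\big((3-\nu)/2;3/2;\pi\bm x^2/\sigma^2\big).
$$
The prefactor checks out as $4\pi\cdot\tfrac12\,\sigma^{3-\nu}\pi^{(\nu-3)/2}=2\pi^{(\nu-1)/2}\sigma^{3-\nu}$.

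Finally, apply Kummer's transformation $e^{-z}{}_1F_1(b-a;b;z)={}_1F_1(a;b;-z)$ with $b=3/2$, $a=\nu/2$ and $z=\pi\bm x^2/\sigma^2$. This yields the stated form.

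The main obstacle is keeping the constants straight in the resummation and choosing the correct Kummer transformation. The analytic continuation and the interchange of sum and integral are routine.
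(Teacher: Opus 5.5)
Your proof is correct, and it takes a genuinely different route from the paper's.

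The paper works in Fourier space. For $0<\Re\nu<3$ it pairs $s_\nu$ with the Gaussian using the distributional Fourier transform $\hat s_\nu(\bm k)\propto|\bm k|^{\nu-3}$ together with $\hat g_\sigma=\sigma^3g_{1/\sigma}$. The angular integral then produces the oscillatory factor $\sin(2\pi r|\bm x|)/(\pi r|\bm x|)$. The remaining radial integral $\int_0^\infty r^{\nu-2}e^{-\pi\sigma^2r^2}\sin(2\pi r|\bm x|)\,\mathrm{d}r$ is taken from an integral table (Gradshteyn 3.953.7).

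You stay in real space instead. You center the spherical coordinates at the singularity, obtain the positive, non-oscillatory factor $\sinh(cr)/(cr)$, and reduce everything to Gamma integrals through a nonnegative series. That series resums to $e^{-z}\,{}_1F_1((3-\nu)/2;3/2;z)$. Both routes finish with Kummer's transformation and the identity theorem.

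Your route buys several things:
\begin{enumerate}
\item It is more elementary and self-contained: Tonelli and the Gamma integral replace distribution theory and a tabulated integral.
\item It is valid directly for all real $\nu<3$. The paper's computation needs $\Re\nu>0$ so that $\hat s_\nu$ is integrable at the origin.
\item It treats $\bm x=\bm 0$ uniformly. The paper's radial formula carries a factor $1/|\bm x|$, so that case is only covered by a limit.
\end{enumerate}
The paper's route, in turn, reuses the Fourier transform $\hat s_\nu$ that already underlies the regularized zeta functions and the singular Euler--Maclaurin framework in which this lemma is applied.

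One small point should be made explicit. To transfer the identity from real $\nu<3$ to the half-plane $\Re\nu<3$, you need the left-hand side to be holomorphic there. This follows from the locally uniform (in $\nu$) integrable domination of $|\bm u|^{-\nu}e^{-\pi|\bm x+\bm u|^2/\sigma^2}$. The paper also asserts this without proof.
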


Under the conditions of \Cref{gauss-int-3d}, choosing $g=g_{\sigma}$ with $\sigma$ large enough such that $\nabla^{2\ell+1}g$ is negligible for small and moderately large $\ell\in\mathds N$ we have by the SEM that
$$
\sums_{\bm z\in\Lambda}f_{\bm x}(\bm z)
\approx
\frac{1}{V_{\Lambda}}\int_{\mathds R^d}f_{\bm x}(\bm y)\d \bm y
+
\mathcal{D}^{(\ell)}_{\Lambda}g(\bm x)\,.
$$
By the correspondence of the anisotropic Epstein zeta function with its regularized version for vanishing wave vectors, see \Cref{aniso-equals-reg-in-zero},
in the equation for the SEM operator we can, for $\nu\notin d+2\mathds N$, replace
the regularized anisotropic Epstein zeta function with the non-regularized variant
so that by the Faà di Bruno formula \Cref{derivative-fy2}
we have the explicit representation
$$
\mathcal D^{(\ell)}_{\Lambda}g(\bm x)=\sum_{
\substack{
\bm\alpha \in \mathds N^d
\\
|\bm\alpha|\le 2\ell
}
}
\frac{1}{\bm\alpha!}
Z_{\Lambda,\nu,\bm\alpha}(\bm  x,\bm 0)
\sum_{2\bm\beta\le\bm\alpha}
\frac{p_{\bm\alpha,\bm\beta}(\bm x)}{(\sigma^2)^{|\bm\alpha|-|\bm\beta|}}g(\bm x)
$$
with the polynomial $p_{\bm\alpha,\bm\beta}(\bm \cdot)$ as in \Cref{derivative-crandall-representation}.
We now consider the face-centered cubic lattice with unit nearest neighbor distance $\Lambda_{\rm fcc}=A_{\rm fcc}\mathds Z^3$ with lattice matrix
$$
A_{\rm fcc}=
\frac{1}{\sqrt{2}}
\begin{bmatrix}
1 & 1 & 0 \\ 
1 & 0 & 1 \\
0 & 1 & 1
\end{bmatrix}
\,.
$$
In \Cref{fig:sem}
we showcase the three-dimensional SEM for the Gaussian
$f_{\bm x}=s_{\nu}(\bm \cdot-\bm x)g(\bm\cdot)$
with $g=g_{\sigma}$ as in \Cref{gauss-int-3d} and $\sigma=10$
so that the error of the SEM is small and so that
summation over the truncated range
$\bm z\in A_{\rm fcc}\{-50,-49,\ldots,50\}^3$
gives accurate reference values for the infinite sum
$\sum_{\bm z\in\Lambda_{\rm fcc}}f_{\bm x}(\bm z)$.
In (a) we observe that the values of the sum are sensitive to the distance of $\bm x$ to the lattice $\Lambda_{\rm fcc}$
with big differences to the value of the integral, especially for small $x$.
In contrast, the integral plus the zero order SEM correction term $\mathcal D^{(0)}_{\Lambda_{\rm fcc}}g$ is visually indistinguishable from the sum and accurately captures the structure of the lattice.
In (b) we observe an error of less than $2\%$ for the zero order SEM operator $\mathcal D^{(\ell)}g$, $\ell=0$
and machine precision $E<2\cdot 10^{-15}$ for $\ell=6$.

\section{Outlook and Conclusions}
\label{sec:outlook}

We have defined, analyzed and derived stably computable representations for anisotropic Epstein zeta functions, a general class of lattice sums over anisotropic interaction kernels central to the numerical mathematics and physics of long-range interactions.
Together with our account of its properties 
and the implementation in EpsteinLib \cite{epsteinlib}, this extends methods based on Epstein zeta functions beyond isotropic interaction kernels and makes the anisotropic Epstein zeta function available as a new standard special function.

Based on the work of Crandall, we have derived two distinct analytic representations that give access to the meromorphic continuation of anisotropic Epstein zeta functions.
With the derivative Crandall representation, we have shown that the anisotropic Epstein zeta function is jointly holomorphic in the exponent $\nu$ and in complex cones of the vector arguments $\bm x$ and $\bm y$, with a simple pole in $\nu$ whose presence depends on whether $\bm y\in\Lambda^*$ and all components of $\bm\alpha$ are even.
Furthermore, we have shown that the anisotropic Epstein zeta function can be decomposed into a power-law singularity
and a holomorphic function near $\bm y=\bm 0$, allowing for its efficient precomputation through interpolation and computations without singularity related cancellation errors.

Our main result is a second, stably computable harmonic-decomposition Crandall representation that reaches the previously intractable strongly anisotropic regime.
Numerical benchmarks against independent reference values show  machine precision up to extremely high-order anisotropy $|\bm\alpha|=60$ and a linear increase in runtime for low- and mid-order anisotropy in one to three dimensions.
The method is based on a harmonic decomposition of the differential operator which rests on the backward stable evaluation of the harmonic polynomials, for which we derive an algorithm.

The anisotropic Epstein zeta function is now included
in EpsteinLib \cite{epsteinlib}
that is in active use by the quantum condensed matter community \cite{koziolQuantumAnnealingLattice2025,koziol2025melting,yadavObservationUnprecedentedFractional2025,buchheitEpsteinZetaMethod2025,buchheitZetaExpansionLongrange2026}.
Through the singular Euler-Maclaurin expansion, the anisotropic Epstein zeta function yields the exact difference between sums and integrals.
It further gives access to the Taylor series of the Epstein zeta function and the regularized Epstein zeta function,
enabling pre-computations on a grid and stabilizing computations near singularities.
We plan to include variants of Epstein zeta functions for
complex arguments and general geometries, such as lattices embedded in higher-dimensional space \cite{buchheitZetaExpansionLongrange2026}
 and continue our investigation into long-range interacting classical and quantum systems.

\section*{Acknowledgements}

We thank David Haink, Tage Malby, Gary Schmiedinghoff, Benedikt Fauseweh, and Sergej Rjasanow for insightful comments and valuable discussions that helped improve this work.

\section*{Declarations}

\subsection*{Funding}
\noindent
J.B.~acknowledges the support of the Quantum Fellowship Program of the German Aerospace Center (DLR) for funding their contribution to this work. A.B.~gratefully acknowledges support by the Klaus-Tschira Stiftung under Grant No. 00.025.2025.

\subsection*{Declaration of generative AI and AI-assisted technologies in the writing process}
\noindent
During the preparation of this work, the authors used Claude AI for grammar checking and minor language enhancements.
Furthermore, Claude Opus 5 was employed in an adversarial role and iteratively tasked with identifying errors and inconsistencies in drafts of the proofs.
Claude Opus 5 found the identity of the special case of the anisotropic Epstein zeta function as a Ramanujan tau $L$-function in \Cref{ramanujan}.

\appendix

\section{Absolute uniform convergence of set zeta derivatives}
\label{sec:zetadirectsum}

The following lemma shows that the sum in \Cref{defsetzeta} is well defined.

\begin{lemma}
\label{zetadirectsum}
Let $L\subseteq \mathds R^d$ be uniformly discrete,
let $\bm\alpha\in\mathds N^d$
and 
let $\nu\in \mathds C$, 
$\Re{\nu}>|\bm\alpha|+d$.
Then the differentiated series
$$
S_{\bm\alpha}(\bm y)=
\sums_{\bm z\in L} 
\diffop^{\bm\alpha}
e^{-2\pi i \bm z\cdot \bm y}|\bm z|^{-\nu}
,\qquad \bm y\in\mathds R^d
$$
converges absolutely and uniformly in $\bm y$ 
and $S_{\bm\alpha}(\bm y)=Z^{(\bm\alpha)}_{L,\nu}(\bm y)$
for $\bm y\in\mathds R^d$.
\end{lemma}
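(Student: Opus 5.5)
The plan is to bound each differentiated summand uniformly in $\bm y$, sum the bounds over $L$ with a shell-counting argument, and then identify the resulting series with $Z^{(\bm\alpha)}_{L,\nu}$ by repeatedly exchanging differentiation and summation. For every $\bm\gamma\le\bm\alpha$ we have
$$
\diffop^{\bm\gamma}e^{-2\pi i\bm z\cdot\bm y}|\bm z|^{-\nu}=(-2\pi i\bm z)^{\bm\gamma}e^{-2\pi i\bm z\cdot\bm y}|\bm z|^{-\nu}.
$$
Since $\bm y$ is real, the exponential has modulus one. Hence the summand is bounded in absolute value by $(2\pi)^{|\bm\gamma|}|\bm z|^{|\bm\gamma|-\Re\nu}$, uniformly in $\bm y\in\mathds R^d$. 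The primed sum excludes $\bm z=\bm 0$, so no singular term appears.

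The key step is to show that $\sum'_{\bm z\in L}|\bm z|^{-s}<\infty$ for real $s>d$, applied with $s=\Re\nu-|\bm\gamma|\ge\Re\nu-|\bm\alpha|>d$.
\begin{enumerate}
\item Let $\delta>0$ be the separation constant of $L$. The balls $B_{\delta/2}(\bm z)$, $\bm z\in L$, are pairwise disjoint.
\item A volume comparison bounds the number of points of $L$ in the shell $2^j\le|\bm z|<2^{j+1}$ by $C(2^{j}+\delta)^d\delta^{-d}\le C'2^{jd}$ for $j\ge j_0$.
\item The points of $L$ with $0<|\bm z|<2^{j_0}$ form a finite set, because a bounded uniformly discrete set is finite.
\item Summing over shells, the tail is dominated by $C'\sum_j 2^{j(d-s)}<\infty$.
\end{enumerate}

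The Weierstrass M-test then gives absolute and uniform convergence in $\bm y$ of each series $S_{\bm\gamma}$ with $\bm\gamma\le\bm\alpha$. In particular this holds for $S_{\bm\alpha}$, which is the first claim.

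For the identity $S_{\bm\alpha}=Z^{(\bm\alpha)}_{L,\nu}$, recall that $Z^{(\bm\alpha)}_{L,\nu}$ is defined for $\Re\nu>d+|\bm\alpha|$ as $\diffop^{\bm\alpha}$ applied to the undifferentiated sum $S_{\bm 0}$. We build up the derivative one partial derivative at a time, along a chain $\bm 0=\bm\gamma_0<\bm\gamma_1<\dots<\bm\gamma_{|\bm\alpha|}=\bm\alpha$ that increases one component at each step.
\begin{enumerate}
\item Each step applies a single partial derivative $\partial_{y^{(j)}}$.
\item At each step the series $S_{\bm\gamma_{i}}$ and the termwise differentiated series $S_{\bm\gamma_{i+1}}$ both converge uniformly. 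Together with smoothness of the summands, this justifies differentiating termwise by the standard theorem on uniformly convergent series of derivatives.
\item The result is that $\partial_{y^{(j)}}S_{\bm\gamma_i}=S_{\bm\gamma_{i+1}}$, and each $S_{\bm\gamma_i}$ is continuously differentiable.
\end{enumerate}
Induction along the chain yields $\diffop^{\bm\alpha}S_{\bm 0}=S_{\bm\alpha}$ on all of $\mathds R^d$.

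The only real obstacle is the counting estimate for a general uniformly discrete set, since no lattice structure is available. The packing argument in the shell-counting step handles it. Everything else is routine once the uniform majorant is in place.
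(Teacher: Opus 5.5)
Your proof is correct. Its architecture matches the paper's: a $\bm y$-uniform majorant $(2\pi)^{|\bm\gamma|}|\bm z|^{|\bm\gamma|-\Re{\nu}}$ for every $\bm\gamma\le\bm\alpha$, the Weierstrass M-test, and iterated termwise differentiation justified by uniform convergence of each differentiated series. Your explicit chain of single partial derivatives spells out what the paper compresses into one sentence.

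The real difference is the counting step that shows $\sum'_{\bm z\in L}|\bm z|^{-s}<\infty$ for $s>d$.

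The paper sets $\varepsilon=\delta/\sqrt d$ and sends each $\bm z\in L$ to the corner $\bm n_{\bm z}\in\varepsilon\mathds Z^d$ of the half-open cube $\bm n_{\bm z}+[0,\varepsilon)^d$ that contains it. Such a cube has diameter $\delta$, and two points inside it are strictly closer than $\delta$, so the map is injective. For $|\bm z|>3\delta$ one has $|\bm z|\ge|\bm n_{\bm z}|/2$. The tail is therefore dominated by $2^{s-|\bm\beta|}$ times the Epstein sum $Z_{\varepsilon\mathds Z^d,s-|\bm\beta|}(\bm 0,\bm 0)$, whose absolute convergence is quoted from earlier work on Epstein zeta functions.

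You instead count points in dyadic shells using the disjoint balls $B_{\delta/2}(\bm z)$ and then sum a geometric series. Your argument is fully self-contained and never invokes a lattice result. The paper's version is shorter because it reuses a known convergence result for lattice sums.
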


\begin{proof}
For any $\bm z\in L\setminus\{\bm 0\}$ and $\bm \beta\in\mathds N^d$ let
$f_{\bm z}(\bm y)=e^{-2\pi i \bm z\cdot\bm y}|\bm z|^{-\nu}$
so that
$
S_{\bm\beta}(\bm y)=
\textstyle{\sum_{\bm z\in L}^{\prime}}
f^{(\bm\beta)}_{\bm z}(\bm y)
$.
By the Weierstrass M-test we show absolute and uniform convergence of $S_{\bm\beta}(\bm y)$ in $\bm y$ for $\bm\beta\le\bm\alpha$;
iterated termwise differentiation is then justified at every step by uniform convergence of the differentiated series, so 
$Z^{(\bm\alpha)}_{L,\nu}(\bm y)=S_{\bm\alpha}(\bm y)$.

The summand function $f_{\bm z}(\bm y)$ is $|\bm\beta|$-times continuously differentiable in $\bm y$ with derivatives
$f^{(\bm\beta)}_{\bm z}(\bm y)=(-2\pi i \bm z)^{\bm\beta}e^{-2\pi i \bm z\cdot\bm y}|\bm z|^{-\nu}$
and the right-hand side of the bound
$$
|f^{(\bm\beta)}_{\bm z}(\bm y)|
\le 
(2\pi)^{|\bm\beta|}
|\bm z|^{-(s-|\bm\beta|)}
$$
is independent of $\bm y$, so from here it is enough to show absolute convergence; here
we used that $||\bm z|^{-\nu}|=|\bm z|^{-s}$
for $\Re{\nu}=s$
and $|\bm z^{\bm\beta}|\le |\bm z|^{|\bm\beta|}$.
We split the lattice sum into a truncated sum with truncation parameter $r>0$, and a series away from the origin
$$
\sideset{}{'}\sum_{\bm z\in L}|f^{(\bm\beta)}_{\bm z}(\bm y)|
\le
(2\pi)^{|\bm\beta|}
\Big(
\sideset{}{'}\sum_{\substack{\bm z\in L \\ |\bm z|\le r}}
|\bm z|^{-(s-|\bm\beta|)}
+
\sum_{\substack{\bm z\in L \\ |\bm z|>r}}
|\bm z|^{-(s-|\bm\beta|)}
\Big)\,,
$$
so it is enough to show that the second sum converges absolutely, as the index set of the first sum is bounded and uniformly discrete and thus contains only finitely many points.
Let $\varepsilon=\delta/\sqrt{d}$ with
$\delta= \inf\{|\bm x-\bm y|:\bm x,\bm y\in L,\ \bm y\neq \bm x\}$
and define $\bm n_{\bm z}\in\varepsilon\mathds Z^d$ by $\bm z\in \bm n_{\bm z}+[0,\varepsilon)^d$.
Let $r=3\delta$.
For $|\bm z|>r$, we have
$
|\bm n_{\bm z}|
\ge |\bm z|-\delta 
\ge 2\delta
$, which implies
$$
|\bm z|\ge |\bm n_{\bm z}|-\delta
\ge \frac{1}{2}|\bm n_{\bm z}|\,,
$$
and we have
$$
\sum_{\substack{\bm z\in L \\ |\bm z|>r}}
|\bm z|^{-(s-|\bm\beta|)}
\le 
2^{s-|\bm\beta|}
\sum_{\substack{\bm z\in L \\ |\bm z|>r}}
|\bm n_{\bm z}|^{-(s-|\bm\beta|)}\,.
$$
Note that the map 
$L\to\varepsilon\mathds Z^d$, $\bm z\mapsto\bm n_{\bm z}$
is injective, because the distance between points in a half-open lattice cell $\bm n_{\bm z}+[0,\varepsilon)^d$ of the scaled square lattice $\varepsilon\mathds Z$ is strictly less than the diameter $\sqrt{d}\varepsilon=\delta$, which is also the infimum of the distance in $L$.
Therefore, the right-hand side can be augmented to include the full scaled square lattice
$$
\sum_{\substack{\bm z\in L \\ |\bm z|>r}}
|\bm n_{\bm z}|^{-(s-|\bm\beta|)}
\le
\sideset{}{'}
\sum_{\bm n\in \varepsilon\mathds Z^d}|\bm n|^{-(s-|\bm\beta|)}
=
Z_{\varepsilon\mathds Z^d,s-|\bm \beta|}(\bm 0,\bm 0)\,,
$$
and the Epstein zeta function sum converges absolutely for $s-|\bm\beta|\ge s-|\bm\alpha|>d$ by
\cite[Lem. 1]{buchheitComputationPropertiesEpstein2026}.
\end{proof}

\section{Derivatives of Crandall functions}
\label{sec:crandall-derivatives}

The Faà di Bruno formula allows one to write the derivatives of the composition of a function with a squared argument as a weighted sum over the functions themselves.

\begin{lemma}[Faà di Bruno formula]
\label{derivative-fy2}
Let $\Omega\subseteq\mathds C$ be open and $U\subseteq \mathds C^d$ be open such that $\bm y^2\in \Omega$ for every $\bm y\in U$.
Let $f$ be holomorphic in $\Omega$ and let
 $\bm\alpha \in\mathds N^d$.
Then it holds that
$$
\diffop^{\bm\alpha}f(\bm y^2)
=
\sum_{2\bm\beta\le\bm\alpha}
w_{\bm\alpha,\bm\beta}(\bm y)
f^{(|\bm\alpha|-|\bm\beta|)}(\bm y^2)
$$
where $w_{\bm\alpha,\bm\beta}$ denotes the monomial
$$
w_{\bm\alpha,\bm\beta}(\bm y)=
(\bm\alpha-\bm\beta)_{\bm\beta}
\binom{\bm\alpha}{\bm\beta}
(2\bm y)^{\bm\alpha-2\bm \beta}\,.
$$
\end{lemma}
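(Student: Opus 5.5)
Since the claimed identity is linear in $\diffop^{\bm\alpha}$, I will prove it by induction on $|\bm\alpha|$, using the product and chain rules. For $\bm\alpha=\bm 0$ the sum has only $\bm\beta=\bm 0$, with $w_{\bm 0,\bm 0}=1$, so the identity is trivial. Holomorphy of $f$ on $\Omega$, together with holomorphy of $\bm y\mapsto\bm y^2$, makes $f(\bm y^2)$ holomorphic on $U$. Hence all mixed partial derivatives exist and commute, and it suffices to verify the recursion obtained by applying one more partial derivative $\partial_{y^{(j)}}$.

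For the inductive step I differentiate a generic term $w_{\bm\alpha,\bm\beta}(\bm y)f^{(|\bm\alpha|-|\bm\beta|)}(\bm y^2)$ with respect to $y^{(j)}$. By the chain rule,
$$
\partial_{y^{(j)}}f^{(m)}(\bm y^2)=2y^{(j)}f^{(m+1)}(\bm y^2).
$$
Since $w_{\bm\alpha,\bm\beta}$ is a monomial in $2\bm y$, differentiating it lowers the exponent $\alpha^{(j)}-2\beta^{(j)}$ by one and contributes the factor $2(\alpha^{(j)}-2\beta^{(j)})$. The first contribution therefore lands on the term indexed by $(\bm\alpha+\bm e_j,\bm\beta)$. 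The second keeps the order $|\bm\alpha|-|\bm\beta|=|\bm\alpha+\bm e_j|-|\bm\beta+\bm e_j|$ and the monomial $(2\bm y)^{\bm\alpha-2\bm\beta-\bm e_j}=(2\bm y)^{(\bm\alpha+\bm e_j)-2(\bm\beta+\bm e_j)}$, so it lands on the term indexed by $(\bm\alpha+\bm e_j,\bm\beta+\bm e_j)$. Collecting terms, the claim reduces to the coefficient recursion
$$
c_{\bm\alpha+\bm e_j,\bm\beta}=c_{\bm\alpha,\bm\beta}+2(\alpha^{(j)}-2\beta^{(j)}+2)\,c_{\bm\alpha,\bm\beta-\bm e_j}\cdot\tfrac12,
$$
where $c_{\bm\alpha,\bm\beta}=(\bm\alpha-\bm\beta)_{\bm\beta}\binom{\bm\alpha}{\bm\beta}$. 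The constant factors must be tracked carefully against the $2^{|\bm\alpha|-2|\bm\beta|}$ carried inside $(2\bm y)^{\bm\alpha-2\bm\beta}$. The boundary conventions are that terms with $2\bm\beta>\bm\alpha$ are absent and that the coefficient vanishes whenever $\alpha^{(j)}-2\beta^{(j)}=0$.

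Every factor except the $j$-th component is unchanged, so the recursion reduces to a one-dimensional identity. Writing $a=\alpha^{(j)}$ and $b=\beta^{(j)}$, the target is
$$
(a+1-b)_b\binom{a+1}{b}=(a-b)_b\binom{a}{b}+(a-2b+2)\,(a-b+1)_{b-1}\binom{a}{b-1}.
$$
Both sides simplify to factorial quotients, since $(a-b)_b\binom ab=\frac{(a-b)!}{(a-2b)!}\cdot\frac{a!}{b!\,(a-b)!}=\frac{a!}{b!\,(a-2b)!}$. The identity then becomes
$$
\frac{(a+1)!}{b!\,(a+1-2b)!}=\frac{a!}{b!\,(a-2b)!}+\frac{a!}{(b-1)!\,(a-2b+1)!},
$$
which follows by factoring out $\frac{a!}{b!\,(a+1-2b)!}$: the right-hand side becomes $\bigl((a+1-2b)+2b\bigr)\frac{a!}{b!\,(a+1-2b)!}$, which equals the left-hand side.

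I expect the main obstacle to be pinning down this one-dimensional identity with the exact powers of $2$, together with the edge cases $b=0$ and $2b=a+1$. An alternative that sidesteps bookkeeping would be to write $f(\bm y^2)=f\bigl(\sum_j (y^{(j)})^2\bigr)$ and apply the one-dimensional Hermite-type formula $\partial_t^{a}g(t^2)=\sum_b\frac{a!}{b!(a-2b)!}(2t)^{a-2b}g^{(a-b)}(t^2)$ one coordinate at a time. The orders of the derivatives add across coordinates, giving $f^{(|\bm\alpha|-|\bm\beta|)}$, and the coefficients multiply, which reproduces the product structure of $w_{\bm\alpha,\bm\beta}$ directly. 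I would use this coordinatewise route, proving only the one-dimensional formula by induction on $a$.
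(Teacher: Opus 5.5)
Your overall plan matches the structure of the paper's proof: establish the one-variable formula $\partial_t^{a}g(t^2)=\sum_{b}\frac{a!}{b!(a-2b)!}(2t)^{a-2b}g^{(a-b)}(t^2)$, then apply it coordinate by coordinate, with derivative orders adding and coefficients multiplying. The paper gets the one-variable formula from the classical Faà di Bruno formula: only $\bm\gamma=(n-2k,k,0,\ldots)$ survives because $g^{(j\bm e_i)}=0$ for $j>2$, and this directly produces $\frac{n!}{k!(n-2k)!}=(n-k)_k\binom nk$. The paper then says the result tensorizes. You instead prove the one-variable formula by induction on $a$ using only the product and chain rules. That is a legitimate and more self-contained alternative.

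However, the recursion you display for that induction is wrong. Differentiating $(2\bm y)^{\bm\gamma}$ in $y^{(j)}$ gives $2\gamma^{(j)}(2\bm y)^{\bm\gamma-\bm e_j}$, and nothing cancels the factor $2$. Writing $c_{a,b}=(a-b)_b\binom ab=\frac{a!}{b!(a-2b)!}$, the correct recursion is
\[
c_{a+1,b}=c_{a,b}+2(a-2b+2)\,c_{a,b-1},
\]
without the $\tfrac12$. The one-dimensional identity you wrote, with $(a-2b+2)$ in place of $2(a-2b+2)$, is false. For $a=b=1$ its left side is $(1)_1\binom21=2$, while its right side is $(0)_1\binom11+1\cdot(1)_0\binom10=1$. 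Directly, $\partial_t^2g(t^2)=4t^2g''(t^2)+2g'(t^2)$, so indeed $c_{2,1}=2$. Your verification only appears to succeed because of a second slip. In fact $\frac{a!}{(b-1)!(a-2b+1)!}=b\cdot\frac{a!}{b!(a+1-2b)!}$, not $2b\cdot\frac{a!}{b!(a+1-2b)!}$. So the right-hand side you wrote actually equals $(a+1-b)\frac{a!}{b!(a+1-2b)!}$.

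The fix is immediate. With the correct factor, the second term becomes $\frac{2\,a!}{(b-1)!(a-2b+1)!}=2b\cdot\frac{a!}{b!(a+1-2b)!}$. Factoring then gives $\bigl((a+1-2b)+2b\bigr)\frac{a!}{b!(a+1-2b)!}=\frac{(a+1)!}{b!(a+1-2b)!}$, as required. This also covers the edge cases $b=0$, where $c_{a,-1}=0$, and $2b=a+1$, where the term $c_{a,b}$ is absent. With this correction, the one-variable induction and the coordinatewise assembly give a complete proof.
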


\begin{proof}
Let $g:U\to\Omega$,
$g(\bm y)=\bm y^2$,
then $f\circ g:U\to\mathds C$ is holomorphic, as it is a composition of holomorphic functions.
For $\bm\alpha=\bm 0$, the formula is the identity and for $\alpha^{(1)},\ldots,\alpha^{(d)}\le 1$ we have the componentwise chain rule.
Suppose there is some $n=\alpha^{(i)}\ge 2$. By the Faà di Bruno formula \cite{faadiBruno1855}, we have
$$
\frac{\partial^n}{\partial (y^{(i)})^n}f
=
\sum_{\substack{\bm\gamma\in\mathds N^n \\ \gamma^{(1)}+2\gamma^{(2)}+\ldots+n\gamma^{(n)}=n}}
\frac{n!}{\bm \gamma!}
f^{(|\bm\gamma|)}\circ g
\prod_{j=1}^n\Big(\frac{g^{(j\bm e_i)}}{j!}\Big)^{\gamma^{(j)}}
\,.
$$
Note that \cite{faadiBruno1855} naturally extends to this complex setting, as the proof relies only on the chain rule.
Further note that $g^{(j \bm e_i)}$ vanishes for $j>2$.
It is therefore sufficient to sum over $\bm\gamma\in\mathds N^2$ for $n=2$ and $\bm\gamma\in\mathds N^2\times \{0\}^{n-2}$ for $n>2$.
Consider such fixed $\bm\gamma$ and define $k=\gamma^{(2)}$, then $\gamma^{(1)}=n-2k$ and $k\le n/2$, as both $\gamma^{(1)}$ and $\gamma^{(2)}$ are non-negative.
As $g^{(\bm e_i)}(\bm y)/1!=2y^{(i)}$ and $g^{(2 \bm e_i)}(\bm y)/2!=1$, the restricted sum is given by
$$
\frac{\partial^n}{\partial (y^{(i)})^n}f(\bm y^2)
=
\sum_{k=0}^{\lfloor n/2\rfloor}
\frac{n!}{k!(n-2k)!}f^{(n-k)}(\bm y^2) (2y^{(i)})^{n-2k}\,.
$$
The one-dimensional result follows from
$
\frac{n!}{k!(n-2k)!}=
(n-k)_{k}
\binom{n}{k}
$.
For the $d$-dimensional result observe that all further derivatives only act on $f^{(n-k)}(\bm y^2)$, so that the result tensorizes.
\end{proof}

The following lemma is vital for the continuity statement in \Cref{derivative-crandall-representation}.

\begin{lemma}
\label{continuity-crandall-upper}
Let $m\in\mathds N$ and let $h_m\colon\mathds R^d\to\mathds R$ be a homogeneous polynomial
of degree $m$.
Let $\bm u\in\mathds R^d\setminus\{\bm 0\}$ and let $t\in\mathds C$, $\Re{t}<m$.
Then $h_m(\bm u)G_t(\bm u)$ continuously
extends to $\bm u=\bm 0$, and the limit vanishes if $m>0$ and is equal to  $-2h_0(\bm 0)/t$ if $m=0$.
\end{lemma}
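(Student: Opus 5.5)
We use the fundamental relation of \Cref{crandall-lower-prop} to split $G_t$ into an explicit power-law part and the bounded lower Crandall function. For $\bm u\in\mathds R^d\setminus\{\bm 0\}$ and $t\notin(-2\mathds N)$,
$$
h_m(\bm u)G_t(\bm u)=h_m(\bm u)\frac{\Gamma(t/2)}{(\pi\bm u^2)^{t/2}}-h_m(\bm u)g_t(\bm u).
$$
By \Cref{crandall-lower-prop}, $g_t$ is continuous at $\bm u=\bm 0$ with $g_t(\bm 0)=2/t$. Hence the second term tends to $-h_m(\bm 0)\,2/t$.

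For $m>0$ homogeneity gives $h_m(\bm 0)=0$, so this term vanishes in the limit. For $m=0$ the polynomial is the constant $h_0(\bm 0)$, and the term tends to $-2h_0(\bm 0)/t$.

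For the first term we use homogeneity: $|h_m(\bm u)|\le C|\bm u|^m$ with $C=\max_{|\bm e|=1}|h_m(\bm e)|$. For real $\bm u$ we have $|(\pi\bm u^2)^{-t/2}|=(\pi|\bm u|^2)^{-\Re t/2}$. The first term is therefore bounded by $C\,|\Gamma(t/2)|\,\pi^{-\Re t/2}|\bm u|^{m-\Re t}$, which tends to $0$ because $\Re t<m$.

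It remains to treat $t\in(-2\mathds N)$. There $\Gamma(t/2)$ has a pole and $g_t$ is undefined, so the splitting above fails. Note $t=0$ forces $m\ge1$, and $t=-2j$ with $j\ge1$ must be handled for every $m$; the formula $-2h_0(\bm 0)/t$ is still finite in that case. The simplest fix is to avoid the splitting altogether.

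For real $x>0$ and $t=-2j$ we have $x^{j}\Gamma(-j,x)$ with $x=\pi\bm u^2\to0$. We use the series already quoted in the proof of \Cref{low-ders-aniso-reg}:
$$
z^{\ell}\Gamma(-\ell,z)=\frac{(-z)^\ell}{\ell!}(H_\ell-\gamma-\log z)-\sum_{i\ne\ell}\frac{(-z)^i}{(i-\ell)i!}.
$$
This equals $1/\ell+O(z\log z)$ for $\ell\ge1$, and for $\ell=0$ it equals $-\gamma-\log z+O(z)$. So $G_{-2j}(\bm u)\to 1/j=-2/t$, which gives the claimed limit for $m=0$. For $m>0$ the product vanishes, since $|\bm u|^m$ dominates the bounded limit, and for $t=0$ it dominates the $\log|\bm u|$ growth.

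An alternative is to use continuity of $G_t(\bm u)$ in $t$ uniformly near $\bm u=\bm 0$, but the explicit series is cleaner. The main obstacle is handling these exceptional exponents without the fundamental relation. Otherwise the argument is a routine homogeneity estimate.
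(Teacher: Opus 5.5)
Your proof is correct and uses the same ingredients as the paper's. For generic $t$ it combines the fundamental relation $G_t=\Gamma(t/2)(\pi\bm u^2)^{-t/2}-g_t$ with the homogeneity bound $|h_m(\bm u)|\le C|\bm u|^m$, and at the exceptional exponents it uses the series for $z^{\ell}\Gamma(-\ell,z)$. The only difference is the case split: the paper handles all $\Re t<0$ by citing the continuity of $G_t$ at $\bm 0$ from \Cref{crandall-upper-prop}, uses the splitting only for $0\le\Re t<m$, and needs the series only at $t=0$. You instead apply the splitting for every $t\notin-2\mathds N$ and the series also at $t=-2j$ with $j\ge 1$, which re-derives that continuity rather than citing it.
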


\begin{proof}
If $\Re{t}<0$, 
we have that 
the upper Crandall function is continuous at zero by \Cref{crandall-upper-prop} so the product is continuous at $\bm 0$ and, since $h_m$ is homogeneous of degree $m$, the limit is zero whenever $m>0$
and, by the definition of the upper Crandall function, is equal to
$-2h_0(\bm 0)/t$ if $m=0$.
Let $0\le \Re{t}<m$; we show this implies that the product converges to zero.
Let $t\neq 0$ and use that by the fundamental relation, \Cref{crandall-lower-prop},
$$
h_m(\bm u)G_t(\bm u)
=
\Gamma(t/2)\frac{h_m(\bm u)}{(\pi\bm u^2)^{t/2}}
-
h_m(\bm u)g_t(\bm u)\,.
$$
By the smoothness of the lower Crandall function, see \Cref{crandall-lower-prop},
the second term vanishes in the $\bm u\to\bm 0$ limit.
For the first term note that
$|h_m(\bm u)|\le C|\bm u|^m$ 
with $C=\max_{|\bm w|=1}|h_m(\bm w)|$
so that
$$
|
h_m(\bm u)
(\pi\bm u^2)^{-t/2}
|
\le 
C
\pi^{-\Re t/2}
|
\bm u
|^{m-\Re t}
$$
vanishes since $\Re{t}<m$.
Finally, let $t=0$.
From \cite[Eq. (5.1.12) \& (6.5.9)]{abramowitz} we have
$$
G_{0}(\bm u)
=
-\gamma
-\log (\pi\bm u^2)-\sum_{\substack{j=1}}^{\infty}
\frac{(-\pi \bm u^2)^j}{j\;\!j!}\,,
$$
where $\gamma$ is the Euler–Mascheroni constant and
where the  series tends to zero as $\bm u\to\bm 0$ and so does its product with the polynomial.
For the logarithm, use that $|\log(x)|<x^{-\varepsilon}/\varepsilon$ for $0<x\le 1$ and $\varepsilon >0$.
Note that $t=0$ implies 
$
m>0
$
and choose $2\varepsilon <m$, so that for $\pi\bm u^2\le 1$
$$
|h_m(\bm u)
\log(\pi \bm u^2)|
\le
C\pi^{-\varepsilon}|\bm u|^{m-2\varepsilon}/\varepsilon
$$
which
converges to zero as $\bm u\to\bm 0$.
\end{proof}

By the Faà di Bruno formula, the upper Crandall derivatives are immediate from their integral representation.

\begin{proof}[Proof of \Cref{derivative-crandall-representation}]
Let $\bm u\in D$ and let $K \subseteq D$ be compact
such that $\bm u\in K$.
We employ the representation
$$
G_{\nu}(\bm u)=
\int_{-1}^{1}|t|^{-\nu}e^{-\pi\bm u^2/t^2}\frac{\mathrm{d} t}{|t|}\,.
$$
The integral over the differentiated integrand
$$
|t|^{-\nu-1}\diffop[u]^{\bm\alpha}e^{-\pi\bm u^2/t^2}
=
|t|^{-\nu-1}\sum_{2\bm\beta\le\bm\alpha}
(-\pi/t^2)^{|\bm\alpha|-|\bm\beta|}
w_{\bm\alpha,\bm\beta}(\bm u)
e^{-\pi \bm u^2/t^2}
$$
with $w$ as in \Cref{derivative-fy2}
is integrable uniformly in $K$.
We thus may differentiate under the integral and with  $(-\pi)^{|\bm\alpha|-|\bm\beta|}w_{\bm\alpha,\bm\beta}=p_{\bm\alpha,\bm\beta}$  
we obtain
$$
G_{\nu}^{(\bm\alpha)}(\bm u)=
\sum_{2\bm\beta\le\bm\alpha}
p_{\bm\alpha,\bm\beta}(\bm u)
\int_{-1}^{1}|t|^{-\nu-2(|\bm\alpha|-|\bm\beta|)}e^{-\pi\bm u^2/t^2}\frac{\mathrm{d} t}{|t|}\,,
$$
where the integral is equal to $G_{\nu+2|\bm\alpha|-2|\bm\beta|}(\bm u)$.
Let $\bm u\in\mathds R^d\setminus\{\bm 0\}$ and $\Re{\nu}+|\bm\alpha|<0$.
Note that for $2\bm\beta\le\bm\alpha$, $\bm\beta\in\mathds N^d$,
the polynomial $p_{\bm\alpha,\bm\beta}$ is homogeneous of degree $m=|\bm\alpha|-2|\bm\beta|$ 
and that for $t=\nu+2|\bm\alpha|-2|\bm\beta|$  we have
$\Re t -m =\Re \nu+|\bm\alpha|<0$.
Therefore, by \Cref{continuity-crandall-upper}, all summands
$p_{\bm\alpha,\bm\beta}(\bm u)G_{\nu+2|\bm\alpha|-2|\bm\beta|}(\bm u)$
of $G^{(\bm\alpha)}_{\nu}(\bm u)$
continuously extend to $\bm u=\bm 0$
and the limit 
$$
-2\chi(\bm\alpha)\frac{p_{\bm\alpha,\bm\alpha/2}(\bm 0)}{\nu+|\bm\alpha|}
$$
is non-zero if and only if $\chi(\bm\alpha)=1$.
Here we used that $p_{\bm\alpha,\bm\beta}(\bm 0)$ is non-zero if and only if $2\bm\beta=\bm\alpha$ and that such $\bm\beta\in\mathds N^d$, $2\bm\beta\le\bm\alpha$ exists if and only if $\chi(\bm\alpha)=1$.
Therefore, the display above
provides the unique meromorphic continuation of $G_{\nu}^{(\bm\alpha)}(\bm 0)$ to $\nu\in\mathds C$
with a simple pole at $\nu=-|\bm\alpha|$ that appears if and only if $\chi(\bm\alpha)=1$,
which concludes the proof.
\end{proof}

The lower Crandall derivative identity follows analogously.

\begin{proof}[Proof of \Cref{crandall-lower-der}]
Let $\nu\in\mathds C$ with $\Re{\nu}>0$ and let $\bm u\in\mathds C^d$.
Then, by using the integral representation
$$
g_{\nu}(\bm u)=
\int_{-1}^{1}|t|^{\nu}e^{-\pi t^2\bm u^2}\frac{\mathrm{d} t}{|t|}\,,
$$
the lower Crandall derivative identity 
$$
g_\nu^{(\bm\alpha)}(\bm u) = 
\sum_{2\bm\beta\le \bm\alpha}p_{\bm\alpha,\bm \beta}(\bm u)
g_{\nu+2|\bm\alpha|-2|\bm\beta|}(\bm u)\,,
$$
in \Cref{crandall-lower-der}
follows analogously to the upper Crandall derivative identity derived in the proof above.
The holomorphy of the lower Crandall function in \Cref{crandall-lower-prop} implies the holomorphy of its partial derivative in the same holomorphy region, see \cite[Cor. 2.2.2]{hormanderIntroductionComplexAnalysis1990}.
Therefore the left-hand side of lower Crandall derivative identity is jointly holomorphic in
$$
(\nu,\bm u)\in\big(\mathds C\setminus(-2\mathds N)\big)\times \mathds C^d
\,
$$
and
the right-hand side is jointly holomorphic in the extended region
$$
(\nu,\bm u)\in\big(\mathds C \setminus(-2|\lceil\bm\alpha/2\rceil|-2\mathds N)\big)\times \mathds C^d
$$
where we used that, here, $2\bm\beta\le\bm\alpha$, $\bm\beta\in\mathds N^d$ implies
$\nu+2|\bm\alpha|-2|\bm\beta|\notin -2\mathds N$
since $|\bm\alpha|-|\bm\beta|\ge |\lceil\bm\alpha/2\rceil|$.
By the identity theorem, the lower Crandall derivative identity holds in $\nu\in  \mathds C\setminus(-2|\lceil\bm\alpha/2\rceil|-2\mathds N)$
with the left-hand side at  $\nu=-2\ell$, $\ell\in\mathds N$, $\ell<|\lceil\bm\alpha/2\rceil|$ defined in terms of the right-hand side.
Note that by the integral representation $g_t(\bm 0)=2/t$, $t\in\mathds C$, $\Re{t}>0$, which, by the holomorphy in \Cref{crandall-lower-prop}, extends to $t\in\mathds C\setminus(-2\mathds N)$.
Let $\nu\in\mathds C\setminus(-2|\lceil\bm\alpha/2\rceil|-2\mathds N)$.
Evaluating the lower Crandall derivative at $\bm u=\bm 0$,
all summands beyond $2\bm\beta=\bm\alpha$ are zero, so that
with $g_{\nu+|\bm\alpha|}(\bm 0)=2/(\nu+|\bm\alpha|)$ we obtain
that
$$
2\chi(\bm\alpha)\frac{p_{\bm\alpha,\bm\alpha/2}(\bm 0)}{\nu+|\bm\alpha|}
$$
provides the unique meromorphic continuation of $g_{\nu}^{(\bm\alpha)}(\bm 0)$ to $\nu\in\mathds C$, with a simple pole at $\nu=-|\bm\alpha|$ that appears if and only if $\chi(\bm\alpha)=1$ by the same argument as in the proof above.
\end{proof}

\section{Harmonic decomposition of the singularity}
\label{sec:appendix-reg}

We begin by deriving an identity for the derivative of the falling Pochhammer symbol.

\begin{lemma}
\label{pochhammer-der}
Let $k\in\mathds N$ and $\nu\in\mathds C$.
Then
$$
\partial_\nu(\nu)_{k}=
\begin{cases}
(-1)^{k-\nu-1}\nu!(k-\nu-1)!
& \nu\in\{0,1,\ldots,k-1\}
\\
(\nu)_{k}(\psi(\nu+1)-\psi(\nu+1-k))
&\text{else}
\end{cases}\,,
$$
where $\psi$ is the digamma function
and $(\nu)_{k}$ is the falling Pochhammer symbol.
\end{lemma}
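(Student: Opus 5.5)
The plan is to treat $(\nu)_k=\prod_{j=0}^{k-1}(\nu-j)$ as a polynomial in $\nu$ and differentiate it with the product rule:
$$
\partial_\nu(\nu)_k=\sum_{j=0}^{k-1}\prod_{\substack{i=0\\ i\neq j}}^{k-1}(\nu-i).
$$
For $k=0$ both sides vanish, since the first case is empty and $(\nu)_0=1$ is constant. So assume $k\ge1$ and split into the two cases of the statement according to whether $\nu$ is a root of $(\nu)_k$.

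\emph{Generic case.} Let $\nu\notin\{0,1,\ldots,k-1\}$. Then $(\nu)_k\neq0$, and the logarithmic derivative gives
$$
\partial_\nu(\nu)_k=(\nu)_k\sum_{j=0}^{k-1}\frac{1}{\nu-j}.
$$
I would then identify the sum with $\psi(\nu+1)-\psi(\nu+1-k)$ by telescoping the recurrence $\psi(z+1)=\psi(z)+1/z$ at $z=\nu-j$, $j=0,\ldots,k-1$. This is valid wherever $\psi$ is finite at all the arguments involved, i.e.\ $\nu\notin\{-1,-2,\ldots\}$.

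\emph{Negative integers.} If $\nu\in-\mathds N_+$, then both $\psi(\nu+1)$ and $\psi(\nu+1-k)$ are poles. Here the formula is read in the sense of the limit: by the reflection formula, or simply by the telescoped identity holding for nearby non-integer $\nu$, the difference $\psi(\nu+1)-\psi(\nu+1-k)$ extends continuously and equals $\sum_{j=0}^{k-1}1/(\nu-j)$, which is finite. This interpretation is the main subtlety, and I would state it explicitly as the meaning of the ``else'' branch at those points.

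\emph{Integer roots.} Let $\nu=m\in\{0,\ldots,k-1\}$. Every product term with $j\neq m$ contains the factor $(\nu-m)=0$, so only $j=m$ survives:
$$
\prod_{i=0}^{m-1}(m-i)\prod_{i=m+1}^{k-1}(m-i)=m!\cdot(-1)^{k-m-1}(k-m-1)!,
$$
because the second product runs over $-1,-2,\ldots,-(k-m-1)$. This is the claimed first case.
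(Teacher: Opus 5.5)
Your proposal is correct and follows the paper's proof essentially step for step. You use the product rule, factor out $(\nu)_k$ and identify $\sum_{j=0}^{k-1}1/(\nu-j)$ with the digamma difference in the generic case, and keep only the single surviving summand at the integer roots. Your extra remarks on $k=0$ and on $\nu\in-\mathds N_+$ are a worthwhile clarification that the paper's proof leaves implicit: at those $\nu$ both digamma values are poles, so the second branch must be read as the continuous extension $\sum_{j=0}^{k-1}1/(\nu-j)$.
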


\begin{proof}
By the product rule for the derivative
$$
\partial_{\nu}(\nu)_{k}
=
\sum_{i=0}^{k-1}
\prod_{\substack{j=0 \\ j\neq i}}^{k-1} (\nu-j)\,.
$$
Let $\nu\notin\{0,1,\ldots,k-1\}$, then $(\nu)_{k}\neq 0$ and factoring out the Pochhammer symbol yields
$$
\partial_{\nu}(\nu)_{k}
=
(\nu)_{k}
\sum_{i=0}^{k-1}
(\nu-i)^{-1}\,.
$$
The sum on the right-hand side is the difference of digamma functions as stated, see
\cite[Eq. 6.3.6]{abramowitz}.
If, on the other hand, $\nu\in\{0,1,\ldots,k-1\}$, then only a single summand of the first display survives
$$
\partial_{\nu}(\nu)_{k}
=
\prod_{\substack{j=0\\j\neq \nu}}^{k-1}(\nu-j)
=
(-1)^{(k-1)-(\nu+1)+1}\nu!
\prod_{j=\nu+1}^{k-1}
|\nu-j|
$$
where the last product is the factorial $(k-\nu-1)!$.
\end{proof}

The derivatives of the falling Pochhammer symbol are useful in simplifying calculations involving derivatives of polynomials with logarithmic terms, as the proof of the following lemma shows.

\begin{lemma}
\label{delta-ysquared}
Let $\bm y\in\mathds R^d\setminus\{\bm 0\}$ and $k\in\mathds N$.
Let $\nu\in\mathds C$, then
$$
\Delta^k (\bm y^2)^\nu= 4^k (\nu)_{k}(\nu+d/2-1)_{k}(\bm y^2)^{\nu-k}\,.
$$
Let $\ell\in\mathds N$, $\ell<k$, then
$$
\Delta^k \Big[(\bm y^2)^\ell \log(\bm y^2)\Big]
=
-4^k\ell!(k-\ell-1)!
(\ell+d/2-1)_{k}
(-\bm y^2)^{\ell-k}\,.
$$
Let $\ell\in\mathds N$, $\ell\ge k$, then
$$
\Delta^k \Big[(\bm y^2)^\ell \log(\bm y^2)\Big]
=
4^{k}
(\ell)_{k}(\ell+d/2-1)_{k}
(\bm y^2)^{\ell-k}\Big(
\log(\bm y^2)
+
H_{\ell,k,d}
\Big).
$$
Here $H_{\ell,k,d}=H_{\ell}-H_{\ell-k}+\psi(\ell+d/2)-\psi(\ell+d/2-k)$,
where $H_\ell$ is the $\ell$'th harmonic number and $\psi$ is the digamma function.
\end{lemma}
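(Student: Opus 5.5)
The plan is to prove the first identity by induction on $k$ using the radial form of the Laplacian, and then to obtain the two logarithmic identities by differentiating the first one with respect to the exponent $\nu$ at $\nu=\ell$. The $\nu$-derivative turns $(\bm y^2)^\nu$ into $(\bm y^2)^\nu\log(\bm y^2)$, and $\Delta^k$ commutes with $\partial_\nu$ because both sides are jointly smooth in $(\nu,\bm y)$ for $\bm y\neq\bm 0$.

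\textbf{Step 1 (power identity).} For $\bm y\neq\bm 0$ and $f(\bm y)=(\bm y^2)^\nu$, I compute directly
$$\Delta (\bm y^2)^\nu=\sum_j\partial_j\big(2\nu y^{(j)}(\bm y^2)^{\nu-1}\big)=\big(2\nu d+4\nu(\nu-1)\big)(\bm y^2)^{\nu-1}=4\nu(\nu+d/2-1)(\bm y^2)^{\nu-1}.$$
Iterating $k$ times replaces $\nu$ by $\nu-1,\dots,\nu-k+1$. The accumulated factors are $\prod_{j=0}^{k-1}(\nu-j)=(\nu)_k$ and $\prod_{j=0}^{k-1}(\nu+d/2-1-j)=(\nu+d/2-1)_k$, which gives the first identity.

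\textbf{Step 2 (differentiate in $\nu$).} The power-law identity holds for all $\nu\in\mathds C$, and both sides are entire in $\nu$ for fixed $\bm y\neq\bm 0$. The dependence on $\bm y$ is smooth, so I may apply $\partial_\nu$ to both sides and exchange it with $\Delta^k$. This gives
$$\Delta^k\big[(\bm y^2)^\nu\log\bm y^2\big]=4^k\,\partial_\nu\big[(\nu)_k(\nu+d/2-1)_k\big](\bm y^2)^{\nu-k}+4^k(\nu)_k(\nu+d/2-1)_k(\bm y^2)^{\nu-k}\log\bm y^2.$$
I then set $\nu=\ell$ and evaluate $\partial_\nu$ of the Pochhammer product by \Cref{pochhammer-der}, splitting into two cases.

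\textbf{Step 3 (the two cases).} If $\ell<k$, then $(\ell)_k=0$, so both the log term and the term containing $\partial_\nu(\nu+d/2-1)_k$ vanish. The only surviving term is
$$(\ell+d/2-1)_k\,\partial_\nu(\nu)_k|_{\nu=\ell}=(\ell+d/2-1)_k(-1)^{k-\ell-1}\ell!(k-\ell-1)!.$$
Writing $(-1)^{k-\ell-1}(\bm y^2)^{\ell-k}=-(-\bm y^2)^{\ell-k}$ gives the second identity.

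If $\ell\ge k$, then $(\ell)_k\neq0$, and the generic branch of \Cref{pochhammer-der} applies to $(\nu)_k$, giving $\psi(\ell+1)-\psi(\ell+1-k)=H_\ell-H_{\ell-k}$. For the factor $(\nu+d/2-1)_k$ I use the log-derivative $\psi(\ell+d/2)-\psi(\ell+d/2-k)$. This yields the product $(\ell)_k(\ell+d/2-1)_k$ times $H_{\ell,k,d}$.

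\textbf{Main obstacle.} The delicate point is the second factor when $\ell+d/2-1$ lies in $\{0,\dots,k-1\}$, which can happen for small even $d$ (e.g.\ $d=2$ with $\ell<k$, or $\ell\ge k$ when $d/2-1$ is small). In that situation $(\ell+d/2-1)_k$ vanishes and the digamma expression has a pole. I would handle it by reading the stated product as its limit: $(\nu+d/2-1)_k(\psi(\nu+d/2)-\psi(\nu+d/2-k))=\partial_\nu(\nu+d/2-1)_k$ is a polynomial, so the formula stays valid as an identity of entire functions. I would check that $\ell+d/2-1\ge\ell\ge k$ whenever $d\ge2$ and $\ell\ge k$, which makes the generic branch directly applicable there. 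The case $d=1$ (where $\ell-1/2$ is never an integer) is also harmless, so in fact no degenerate case arises when $\ell\ge k$.
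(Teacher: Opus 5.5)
Your proposal is correct and follows the paper's proof essentially step by step: you compute $\Delta(\bm y^2)^\nu=4\nu(\nu+d/2-1)(\bm y^2)^{\nu-1}$ directly and iterate by induction, then take $\partial_\nu$ at $\nu=\ell$, commute it with $\Delta^k$ by joint smoothness, and evaluate with \Cref{pochhammer-der}, splitting on whether $(\ell)_k$ vanishes. Your closing check that $\ell+d/2-1\notin\{0,\dots,k-1\}$ whenever $\ell\ge k$ is exactly the paper's remark, so the degenerate case and the fallback limit interpretation in your last paragraph never arise.
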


\begin{proof}
Applying a single second derivative to $(\bm y^2)^\nu$ yields
$$
\partial_{j}^2(\bm y^2)^\nu
=
4 \nu(\nu-1)(y^{(j)})^2(\bm y^2)^{\nu-2}
+
2 \nu (\bm y^2)^{\nu-1}\,.
$$
Summing over $j$ we obtain
$$
\Delta (\bm y^2)^\nu
=4 \nu (\nu+d/2-1) (\bm y^2)^{\nu-1}\,,
$$
where we used that $2 d \nu + 4\nu (\nu-1)=4\nu(\nu+d/2-1)$,
and $(y^{(1)})^2+\ldots +(y^{(d)})^2=\bm y^2$.
The first part of the statement follows by induction.

For the second part note that $(\bm y^2)^\ell \log(\bm y^2)=\partial_{\nu}(\bm y^2)^{\nu}|_{\nu=\ell}$ so that by the first part of the proof
$$
\Delta^k (\bm y^2)^\ell \log(\bm y^2)
=
4^k
\partial_{\nu}\Big[
(\nu)_{k}(\nu+d/2-1)_{k}
(\bm y^2)^{\nu-k}\Big]_{\nu=\ell}\,,
$$
where we used that $(\bm y^2)^\nu$ is smooth in $(\nu,\bm y)\in \mathds C\times (\mathds R^d\setminus\{\bm 0\})$
so the operators $\Delta^k$ and $\partial_{\nu}$ commute.
Let $\ell<k$, then $(\ell)_{k}=0$, so that the only term that survives is
$$
\partial_{\nu}
\Big[ 
(\nu)_{k}(\nu+d/2-1)_{k}
(\bm y^2)^{\nu-k}
\Big]_{\nu=\ell}
=
(\ell+d/2-1)_{k}
(\bm y^2)^{\ell-k}
 \partial_{\nu}
(\nu)_{k}|_{\nu=\ell}
$$
and the statement follows from \Cref{pochhammer-der}.
Let $\ell\ge k$, so that in particular $\ell+d/2-1\notin \{0,1,\ldots,k-1\}$ and by \Cref{pochhammer-der} and the product rule
\begin{align*}
\partial_{\nu}
\Big[ 
(\nu)_{k}(\nu+d/2-1)_{k}
(\bm y^2)^{\nu-k}
\Big]_{\nu=\ell}
=&(\ell)_{k}(\ell+d/2-1)_{k}
H_{\ell,k,d}\,
(\bm y^2)^{\ell-k}
\\ 
&+
(\ell)_{k}(\ell+d/2-1)_{k}
\partial_{\nu}(\bm y^2)^{\nu-k}|_{\nu=\ell}
\end{align*}
and 
$
\partial_{\nu}(\bm y^2)^{\nu-k}|_{\nu=\ell}
=
(\bm y^2)^{\ell-k} \log(\bm y^2)
$ concludes the proof.
\end{proof}

We now derive how the differential operators obtained from homogeneous harmonic polynomials act on smooth functions with squared arguments.

\begin{lemma}
\label{harmonic-applied-to-f(y2)}
Let $\Omega\subseteq \mathds R$ be open and let $U\subseteq \mathds R^d$ be open such that
$\bm y^2\in \Omega$ for every $\bm y\in U$ and let $m\in\mathds N$.
Let $f:\Omega\to\mathds C$ be $m$-times differentiable and let
 $h_m$ be a homogeneous harmonic polynomial of degree $m$.
Then
$$
h_m(\diffop) f(\bm y^2)
=2^m h_m(\bm y)f^{(m)}(\bm y^2)\,.
$$
\end{lemma}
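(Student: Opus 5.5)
We prove the identity $h_m(\diffop) f(\bm y^2)=2^m h_m(\bm y)f^{(m)}(\bm y^2)$ by reducing to powers of linear forms in isotropic vectors, as in the direct representation of \Cref{harmonic-decomp}. The case $d=1$ is handled separately: a homogeneous harmonic polynomial in one variable has degree $m\le 1$, so it is $c$ or $c\,y$. The claim is then trivial for $m=0$, and for $m=1$ it is the chain rule $c\,\partial_y f(y^2)=2c\,y f'(y^2)$.

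For $d\ge 2$, by the second part of \Cref{harmonic-decomp} we write $h_m(\bm y)=\sum_{\bm a\in V}c_{\bm a}(\bm a\cdot\bm y)^m$ with $\bm a^2=0$. By linearity it suffices to show
$$
(\bm a\cdot\diffop)^m f(\bm y^2)=2^m(\bm a\cdot\bm y)^m f^{(m)}(\bm y^2)
$$
for every complex $\bm a$ with $\bm a^2=0$. Here $(\bm a\cdot\diffop)^m$ is understood as a polynomial in the commuting real partial derivatives with complex coefficients. This substitution is legitimate because replacing $\bm y$ by $\diffop$ in a polynomial identity yields an identity of constant coefficient operators.

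The single-form identity follows by induction on $j\le m$, with the hypothesis $(\bm a\cdot\diffop)^j f(\bm y^2)=2^j(\bm a\cdot\bm y)^j f^{(j)}(\bm y^2)$. For the induction step, apply $\bm a\cdot\diffop$ and use the product rule. Two facts make this work. First, $(\bm a\cdot\diffop)(\bm a\cdot\bm y)=\bm a^2=0$, so the polynomial factor is annihilated. Second, the chain rule gives $(\bm a\cdot\diffop)f^{(j)}(\bm y^2)=2(\bm a\cdot\bm y)f^{(j+1)}(\bm y^2)$. Together they give exactly the claimed form at step $j+1$. Summing over $\bm a\in V$ with the weights $c_{\bm a}$ then reassembles $2^m h_m(\bm y)f^{(m)}(\bm y^2)$.

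The main obstacle is regularity. The statement only assumes that $f$ is $m$-times differentiable, while the induction differentiates $f^{(j)}(\bm y^2)$ for $j<m$, so it needs exactly $m$ derivatives of $f$. This is fine if we read the operator $h_m(\diffop)$ as being applied to a function that is at least $m$-times differentiable. If the mixed partial derivatives need to commute, which is required for $(\bm a\cdot\diffop)^m$ to equal the expanded monomial operator, I would assume $f\in C^m$. Alternatively, I would note that all uses in the paper (the Gaussian in \Cref{harmonic-crandall}, the power functions) involve smooth $f$.

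A route avoiding the complex isotropic representation would be an induction on $m$ using Euler's identity $\bm y\cdot\diffop h_m=m h_m$ together with $\Delta h_m=0$. I would fall back on it only if the direct representation proved awkward.
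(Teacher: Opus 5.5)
Your proposal is correct and is essentially the paper's proof. The paper also treats the one-dimensional case by hand ($m\le 1$), then for $d\ge 2$ uses the isotropic representation $h_m(\bm y)=\sum_{\bm a\in V}c_{\bm a}(\bm a\cdot\bm y)^m$ from \Cref{harmonic-decomp}, an induction via the product rule with $(\bm a\cdot\diffop)(\bm a\cdot\bm y)^{j}=j\,\bm a^2(\bm a\cdot\bm y)^{j-1}=0$, and linearity. Your $C^m$ caveat is unnecessary, and the paper does not raise it: each iterated partial derivative of $f(\bm y^2)$ of order at most $m$ comes from one-variable chain and product rules, and is a polynomial combination of the $f^{(k)}(\bm y^2)$, $k\le m$, independent of the order of differentiation.
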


\begin{proof}
Note that $h_0(\bm y)=C_0$ is constant, so
$$
h_0(\diffop)f(\bm y^2)=C_0f(\bm y^2)=h_0(\bm y)f(\bm y^2)\,,
$$
and $h_1(\bm y)=\sum_{j=1}^d C_j y^{(j)}$, so by the product rule
$$
h_1(\diffop)f(\bm y^2)= \sum_{j=1}^d C_j 2 y^{(j)} f'(\bm y^2)=2 h_1(\bm y)f'(\bm y^2)\,.
$$
The higher-order one-dimensional harmonic polynomials vanish, so in what follows let $d\ge 2$.
We proceed by induction on the degree $n\in\mathds N$ with $1\le n\le m$, where we assume the statement holds for $n-1$.
Let $\bm a\in V$ be as in \Cref{harmonic-decomp}. Then, as a special case, the induction hypothesis holds for the harmonic polynomial $(\bm a\cdot\bm y)^{n-1}$.
By the product rule
$
(\bm a\cdot\diffop)^n f(\bm y^2)
$
is equal to
$$
2^{n-1}
(\bm a\cdot\diffop)
(\bm a\cdot \bm y)^{n-1}
f^{(n-1)}(\bm y^2)
+
2^{n}(\bm a\cdot\bm y)^{n}f^{(n)}(\bm y^2)\,.
$$
Here the first term vanishes, as
$$
(\bm a\cdot\diffop)
(\bm a\cdot \bm y)
=
\sum_{i=1}^d\sum_{j=1}^d a_i a_j \partial_{i} y^{(j)}
=
\bm a^2
$$
since $\partial_{i}y^{(j)}=\delta_{ij}$ and thus
$(\bm a\cdot\diffop)(\bm a\cdot \bm y)^{n-1}=(n-1)\bm a^2 (\bm a\cdot \bm y)^{n-2}=0$ as $\bm a^2= 0$.
It follows that
$$
(\bm a\cdot \diffop)^n f(\bm y^2)=
2^n(\bm a\cdot \bm y)^n f^{(n)}(\bm y^2)
$$
and choosing $V$ so that
$h_{n}(\bm y)=\sum_{\bm a\in V}c_{\bm a}(\bm a\cdot \bm y)^{n}$
concludes the proof by linearity.
\end{proof}

We conclude by applying \Cref{harmonic-applied-to-f(y2)} to the polynomials occurring in the singularity, including a logarithmic factor.

\begin{lemma}
\label{h-ysquared}
Let $h_m$ be a homogeneous harmonic polynomial of degree $m\in\mathds N$
and let $\bm y\in\mathds R^d\setminus\{\bm 0\}$.
Let $\nu\in\mathds C$, then
$$
h_m(\diffop)(\bm y^2)^\nu = 2^{m}(\nu)_{m} h_m(\bm y)(\bm y^2)^{\nu-m}
\,.
$$
Let $\ell\in\mathds N$ with $\ell<m$, then 
$$
h_m(\diffop)\Big[(\bm y^2)^\ell \log(\bm y^2)\Big]=
-
 2^{m}\ell!(m-\ell-1)! h_m(\bm y)(-\bm y^2)^{\ell-m}
\,.
$$
Let $\ell\in\mathds N$ with $\ell\ge m$, then
$$
h_m(\diffop)\Big[(\bm y^2)^\ell \log(\bm y^2)\Big]
=
2^{m} (\ell)_{m}\, h_m(\bm y) (\bm y^2)^{\ell-m}\Big( \log(\bm y^2) + H_{\ell,m}\Big),
$$
where $H_{\ell,m}=H_{\ell}-H_{\ell-m}$ and where $H_\ell$ is the $\ell$'th harmonic number.
\end{lemma}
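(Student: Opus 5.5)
The plan is to mirror the proof of \Cref{delta-ysquared}, with \Cref{harmonic-applied-to-f(y2)} playing the role of the Laplacian computation. For the first identity, fix $\bm y\ne\bm 0$ and apply \Cref{harmonic-applied-to-f(y2)} with $f(t)=t^\nu$ on $\Omega=(0,\infty)$ and $U=\mathds R^d\setminus\{\bm 0\}$; this $f$ is smooth there. Since $f^{(m)}(t)=(\nu)_m t^{\nu-m}$, we obtain directly
\[
h_m(\diffop)(\bm y^2)^\nu=2^m(\nu)_m h_m(\bm y)(\bm y^2)^{\nu-m}.
\]

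For the logarithmic cases I will write $(\bm y^2)^\ell\log(\bm y^2)=\partial_\nu(\bm y^2)^\nu|_{\nu=\ell}$. The function $(\nu,\bm y)\mapsto(\bm y^2)^\nu$ is smooth on $\mathds C\times(\mathds R^d\setminus\{\bm 0\})$, so the constant-coefficient operator $h_m(\diffop)$ commutes with $\partial_\nu$. By the first part this gives
\[
h_m(\diffop)\big[(\bm y^2)^\ell\log(\bm y^2)\big]=2^m h_m(\bm y)\,\partial_\nu\big[(\nu)_m(\bm y^2)^{\nu-m}\big]_{\nu=\ell}.
\]
Alternatively, one can apply \Cref{harmonic-applied-to-f(y2)} to $f(t)=t^\ell\log t$ and compute $f^{(m)}$ directly; both routes lead to the same product-rule computation.

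Now split into cases.

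\emph{Case $\ell<m$.} Here $(\ell)_m=0$, so only the term in which $\partial_\nu$ hits the Pochhammer symbol survives. \Cref{pochhammer-der} gives $\partial_\nu(\nu)_m|_{\nu=\ell}=(-1)^{m-\ell-1}\ell!(m-\ell-1)!$. The remaining step is the sign bookkeeping
\[
(-1)^{m-\ell-1}(\bm y^2)^{\ell-m}=-(-\bm y^2)^{\ell-m},
\]
which holds because $(-1)^{\ell-m}=(-1)^{m-\ell}$. This yields the stated formula.

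\emph{Case $\ell\ge m$.} The product rule produces two terms. The first comes from $\partial_\nu(\bm y^2)^{\nu-m}=(\bm y^2)^{\nu-m}\log(\bm y^2)$. The second uses \Cref{pochhammer-der} in its generic branch, which applies since $\ell\notin\{0,\dots,m-1\}$, giving $(\ell)_m\big(\psi(\ell+1)-\psi(\ell+1-m)\big)$. It then remains to identify $\psi(\ell+1)-\psi(\ell-m+1)=H_\ell-H_{\ell-m}$, which follows from $\psi(n+1)=H_n-\gamma$.

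The main subtlety is justifying the interchange of $h_m(\diffop)$ with $\partial_\nu$. I would handle it exactly as in \Cref{delta-ysquared}, by invoking joint smoothness of $(\bm y^2)^\nu$ away from the origin. Apart from that, the $\ell<m$ case relies on the degenerate branch of \Cref{pochhammer-der} and needs the sign tracked carefully.
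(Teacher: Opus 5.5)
Your proof is correct and follows the paper's argument essentially step for step. You get the first identity from \Cref{harmonic-applied-to-f(y2)} with $f(t)=t^\nu$, then write $(\bm y^2)^\ell\log(\bm y^2)=\partial_\nu(\bm y^2)^\nu|_{\nu=\ell}$, commute $\partial_\nu$ with $h_m(\diffop)$, and split into $\ell<m$ and $\ell\ge m$ using the two branches of \Cref{pochhammer-der} together with $\psi(n+1)=H_n-\gamma$; your explicit sign check $(-1)^{m-\ell-1}(\bm y^2)^{\ell-m}=-(-\bm y^2)^{\ell-m}$ spells out the step the paper leaves as ``assembling''.
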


\begin{proof}
The first part of the statement follows from \Cref{harmonic-applied-to-f(y2)} for $f(x)=x^\nu$.

Now let $f(x)=x^\ell\log x=\partial_{\nu}x^{\nu}|_{\nu=\ell}$ with $\ell\in\mathds N$. 
By \Cref{harmonic-applied-to-f(y2)} we have
$$
h_m(\diffop)f(\bm y^2)
=
2^{m} h_m(\bm y)\,\partial_{\nu}\Big[ (\nu)_{m} (\bm y^2)^{\nu-m}\Big]_{\nu=\ell}
$$
where by the same argument as in the proof of \Cref{delta-ysquared}, the operators $h_m(\diffop)$ and $\partial_{\nu}$ commute.
If $\ell<m$, the second product rule term $(\ell)_{m}\partial_{\nu}[(\bm y^2)^{\nu-m}]_{\nu=\ell}$ vanishes and with  \Cref{pochhammer-der} we obtain
$$
\partial_{\nu}\big[
(\nu)_{m} (\bm y^2)^{\nu-m}\big]_{\nu=\ell}
=
(-1)^{m-\ell-1}\ell!(m-\ell-1)!(\bm y^2)^{\ell-m}\,,
$$
and assembling yields the second part of the statement.
Let $\ell \ge m$ then by \Cref{pochhammer-der} and the product rule we have
that
$
\partial_{\nu}\big[
(\nu)_{m} (\bm y^2)^{\nu-m}\big]_{\nu=\ell}
$
is equal to
$$
(\ell)_{m}(\psi(\ell+1)-\psi(\ell+1-m))(\bm y^2)^{\ell-m}
+
(\ell)_{m}
(\bm y^2)^{\ell-m}\log(\bm y^2)
$$
where we used that $\partial_{\nu}(\bm y^2)^{\nu-m}\big|_{\nu=\ell}=(\bm y^2)^{\ell-m}\log(\bm y^2)$.
By the formula for the digamma function for natural arguments
\cite[Eq. 6.3.2]{abramowitz} we have
$
\psi(\ell+1)-\psi(\ell+1-m)
=H_{\ell,m}
$
and assembling concludes the proof.
\end{proof}

We can now prove the central statement regarding the harmonic decomposition of the singularity.

\begin{proof}[Proof of \Cref{harmonic-decomp-sing}]
Let $\nu\notin (d+2\mathds N)$.
Applying \Cref{delta-ysquared} to the singularity and grouping the terms we obtain
$$
\Delta^k 
\hat{s}_\nu(\bm y)=
(-4\pi^2)^k 
\hat{s}_{\nu-2k}(\bm y)
$$
where we used the factorial identities
$(-1)^k\Gamma(-a)(a)_{k}=\Gamma(k-a)$
and $(b-1)_{k}\Gamma(b-k)=\Gamma(b)$
for $a=(\nu-d)/2$ and $b=\nu/2$.
Let
$\ell\in\mathds N$, $\ell<k$.
By \Cref{delta-ysquared}, we have
$$
\Delta^k 
\hat s_{d+2\ell}(\bm y)= 
(-4)^k(k-\ell-1)!
(\ell+d/2-1)_{k}
\frac{\pi^{\ell+k+d/2}}{\Gamma(\ell+d/2)}( \pi \bm y^2 )^{\ell-k}.
$$
As $k>\ell$, we have $\nu-2k\notin (d+2\mathds N)$
for $\nu=d+2\ell$
and by using that $(k-\ell-1)!=\Gamma((d-\nu)/2+k)$
and 
$
(\ell+d/2-1)_{k}=\Gamma(\nu/2)/\Gamma(\nu/2-k)
$
we again obtain that
$$
\Delta^k 
\hat s_{d+2\ell}(\bm y)
=
(-4 \pi^2)^k
\hat
s_{\nu-2k}(\bm y)\,.
$$
Applying the differential operator obtained from the harmonic polynomials, by \Cref{h-ysquared} we have
$$
h_m(\diffop)\Delta^k 
\hat{s}_\nu(\bm y)
=
\frac{(-4\pi^2)^k
(-2\pi^2)^{m}}{(\nu/2-k-1)_{m}}
h_m(\bm y)
\hat{s}_{\nu-2k-2m}(\bm y)
$$
where
we used the factorial identity
$
(-1)^m\Gamma(-a')(a')_{m}=\Gamma(m-a')
$
where $a'=(\nu-d)/2-k$
as well as
$(b'-1)_{m} \Gamma(b'-m)=\Gamma(b')$
where $b'=\nu/2-k$.
Therefore, the first formula holds for
$\nu\notin (d+2k+2\mathds N)$.

Let $\ell\in\mathds N$, $\ell\ge k $.
We apply \Cref{delta-ysquared}
to the singularity and obtain
$$
\Delta^k 
\hat s_{d+2\ell}(\bm y)=
(-4\pi^2)^k
\hat s_{d+2(\ell-k)}(\bm y)\Big(1+\frac{H_{\ell,k,d}}{\log(\pi\bm y^2)}\Big)
$$
where
$H_{\ell,k,d}=H_{\ell}-H_{\ell-k}+\psi(\ell+d/2)-\psi(\ell+d/2-k)$
and
where we used the factorial identities
$\ell!=(\ell)_{k}(\ell-k)!$ 
and
$
\Gamma(\nu/2)
=
(\nu/2-1)_{k}
\Gamma(\nu/2-k)
$
for $\nu=d+2\ell$.

In the case of $\ell< m+k$,
the reciprocal of the logarithm cancels the logarithm in the singularity leaving a power law, so that
$$
h_{m}(\diffop)\hat s_{d+2(\ell-k)}(\bm y)\frac{H_{\ell,k,d}}{\log(\pi\bm y^2)}=0
$$
as
$
(\ell-k)_{m}=0
$
for $\ell < k+m$.
We have
$\nu-2k-2m\notin (d+2\mathds N)$
and obtain
$$
\Delta^k 
h_m(\diffop)
\hat{s}_{d+2\ell}(\bm y)
=
\frac{(-4\pi^2)^k
(-2\pi^2)^{m}}{(\nu/2-k-1)_{m}}
h_m(\bm y)
\hat{s}_{\nu-2k-2m}(\bm y)
$$
by the argument analogous to the above, as the differential operators commute.
The first formula therefore holds for $\nu\notin (d+2k+2m+2\mathds N)$.

In the case of $\ell\ge m+k$,
by \Cref{h-ysquared}
we obtain that
$
h_m(\diffop)
\Delta^k
\hat s_{d+2\ell}(\bm y)
$
is equal to
$$
\frac{(-4\pi^2)^k
(-2\pi^2)^{m}}{(\ell+d/2-k-1)_{m}}
h_m(\bm y)
\hat s_{d+2(\ell-k-m)}(\bm y)
\bigg( 
1
+
\frac{H_{\ell,k,d}+H_{\ell-k,m}}{\log (\pi  \bm y^2)}
\bigg)
$$
where
$H_{\ell,m}=H_{\ell}-H_{\ell-m}$ and
where we used the factorial identity
$(\ell-k)!=(\ell-k)_{m}(\ell-k-m)!$.
The statement follows from
$
H_{\ell,k,d}+H_{\ell-k,m}
=
H_{\ell,k,m+2k,d}
$.
\end{proof}

Finally, the following lemma allows one to skip branches in the algorithm where the harmonic polynomials vanish.

\begin{lemma}
\label{h-vanish}
Let $\bm\alpha\in\mathds N^d$ and let $k\in \{0,1,\ldots \lfloor |\bm\alpha|/2\rfloor\}$
such that either $\chi(\bm\alpha)=0$ or $2k\neq |\bm\alpha|$.
Then $h_{k,\bm\alpha}(\bm 0)=0$, where $h_{k,\bm\alpha}$ is the harmonic polynomial defined in \Cref{harmonic-decomp}.
\end{lemma}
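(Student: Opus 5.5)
Since $h_{k,\bm\alpha}$ is homogeneous of degree $m=|\bm\alpha|-2k$ by \Cref{harmonic-decomp}, it vanishes at the origin whenever $m>0$, i.e.\ whenever $2k\neq|\bm\alpha|$. So the plan reduces to one remaining case. In that case $2k=|\bm\alpha|$, we have $\chi(\bm\alpha)=0$, and $h_{k,\bm\alpha}$ is a constant $c$. We need to show $c=0$.

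To identify the constant, we compare both sides of the harmonic decomposition under a sign symmetry. Since $\chi(\bm\alpha)=0$, some component $\alpha^{(j)}$ is odd. Let $R_j$ be the reflection $y^{(j)}\mapsto -y^{(j)}$. Then
$$\bm y^{\bm\alpha}\circ R_j=-\bm y^{\bm\alpha},$$
and also $|R_j\bm y|=|\bm y|$. Moreover, $h\circ R_j$ is again a homogeneous harmonic polynomial of the same degree whenever $h$ is, because the Laplacian commutes with $R_j$.

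Composing the decomposition
$$\bm y^{\bm\alpha}=\sum_k|\bm y|^{2k}h_{k,\bm\alpha}(\bm y)$$
with $R_j$ and multiplying by $-1$ gives a second harmonic decomposition of $\bm y^{\bm\alpha}$, namely one with components $-h_{k,\bm\alpha}\circ R_j$. The uniqueness part of \Cref{harmonic-decomp} then yields
$$h_{k,\bm\alpha}\circ R_j=-h_{k,\bm\alpha}\quad\text{for all }k.$$
Evaluating at $\bm y=\bm 0$, which is fixed by $R_j$, gives $h_{k,\bm\alpha}(\bm 0)=-h_{k,\bm\alpha}(\bm 0)$, hence $h_{k,\bm\alpha}(\bm 0)=0$.

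This odd-component argument does not depend on whether $2k=|\bm\alpha|$. It therefore also covers the case $\chi(\bm\alpha)=0$ with $2k\neq|\bm\alpha|$, although the homogeneity argument already handles that case.

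The only point requiring care is the appeal to uniqueness. We must check that $|\bm y|^{2k}h\circ R_j$ has exactly the required form, with $h\circ R_j$ homogeneous harmonic of degree $|\bm\alpha|-2k$. This is immediate, since $R_j$ is linear and orthogonal.
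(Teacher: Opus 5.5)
Your proof is correct. It takes a genuinely different route from the paper for the case $\chi(\bm\alpha)=0$.

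The paper handles $2k\neq|\bm\alpha|$ by homogeneity, exactly as you do. For $\chi(\bm\alpha)=0$ it then reads the result off the explicit representation in \Cref{harmonic-comp}: every monomial $\bm y^{2\bm\gamma-\bm\alpha}$ there has an odd exponent in the odd component of $\bm\alpha$, so it vanishes at $\bm 0$.

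You instead use only the existence-and-uniqueness part of \Cref{harmonic-decomp}, together with the fact that the orthogonal reflection $R_j$ preserves homogeneous harmonic polynomials of each degree. This gives the stronger statement $h_{k,\bm\alpha}\circ R_j=-h_{k,\bm\alpha}$ for every $k$. In other words, $h_{k,\bm\alpha}$ is odd in $y^{(j)}$, which is exactly the parity structure the explicit coefficient formula exhibits.

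The paper's route is a one-liner once \Cref{harmonic-comp} is available, and that formula is needed for the algorithm anyway. Your route is independent of that formula and of the coefficient computation behind it (Axler's Th.~5.21). It also extends directly to any orthogonal $Q$ with $(Q\bm y)^{\bm\alpha}=c\,\bm y^{\bm\alpha}$: the same uniqueness argument gives $h_{k,\bm\alpha}\circ Q=c\,h_{k,\bm\alpha}$.
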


\begin{proof}
Since $h_{k,\bm\alpha}$ is a homogeneous polynomial of degree $|\bm\alpha|-2k$ we have $h_{k,\bm\alpha}(\bm 0)=0$ for $2k\neq |\bm\alpha|$.
Let $\chi(\bm\alpha)=0$, then each summand in
the explicit representation of $h_{k,\bm\alpha}(\bm 0)$ in \Cref{harmonic-comp}
has a vanishing factor $\bm 0^{2\bm\gamma-\bm\alpha}=0$, $\bm\gamma\in\mathds N^d$, $2\bm\gamma-\bm\alpha\ge \bm 0$
since $\chi(\bm\alpha)=0$ implies that there is a factor with an odd exponent $0^{2\gamma_i-\alpha_i}=0$.
\end{proof}

\section{Derivation of the details on the algorithm}
\label{sec:appendix-alg}

\begin{proof}[Proof of \Cref{harmonic-comp}]
Let $n=|\bm\alpha|$.
We use the representation
\cite[Th. 5.21]{axlerHarmonicFunctionTheory2001}
that states that
$$
h_{k,\bm\alpha}(\bm y)=
\sum_{j=k}^{\lfloor n/2\rfloor}
|\bm y|^{2(j-k)}
\overline{c}_{k,n,j}
\,\Delta^{j}\,\bm y^{\bm\alpha}\,,
$$
with the recursively defined coefficients 
$$
\overline{c}_{0,n,0}=1,\qquad
\overline{c}_{k,n,j}
=
\begin{cases}
\frac{\overline{c}_{k,n,j-1}}{2(k-j)(2n+d-2-2k-2j)}& j=k+1,\ldots,\lfloor n/2\rfloor\\
\frac{(2n+d-2k)\overline c_{k-1,n,k-1}}{2k(2n+d+2-4k)(2n+d-4k)}&j=k>0
\end{cases}
$$
for $n,j,k\in\mathds N$, $j\ge k$. 
By inserting
$$
\diffop^{2\bm\beta}\bm y^{\bm\alpha}=
	\begin{cases}
    (\bm\alpha)_{2\bm\beta}
\,\bm y^{\bm\alpha-2\bm\beta}
&2\bm\beta\le\bm\alpha \\
0&\text{else}\\
\end{cases}
$$
on the right side of the 
multinomial expansion
$$
\Delta^{j}\,\bm y^{\bm\alpha}
=
j!
\sum_{\substack{\bm\beta\ge\bm 0\\|\bm\beta|=j}}
\frac{1}{\bm\beta!}
\diffop^{2\bm\beta}\bm y^{\bm\alpha},
$$
and using that
$
(\bm\alpha)_{2\bm\beta}
=\bm\beta!(\bm\alpha-\bm\beta)_{\bm\beta}
\binom{\bm\alpha}{\bm\beta}
$,
we obtain
$$
h_{k,\bm\alpha}(\bm y)=
\sum_{\substack{2\bm\beta\le \bm\alpha\\|\bm\beta|\ge k}}
(\bm\alpha-\bm\beta)_{\bm\beta}
\,
\overline{c}_{k,n,|\bm\beta|}
\,
|\bm\beta|!
\binom{\bm\alpha}{\bm\beta}
|\bm y|^{2(|\bm\beta|-k)}\bm y^{\bm\alpha-2\bm\beta}\,.
$$
By a multinomial expansion of $(\bm y^2)^{|\bm\beta|-k}$ 
and the index substitution $\bm\gamma\mapsto\bm\alpha+\bm\beta-\bm\gamma$
we obtain
$$
|\bm y|^{2(|\bm\beta|-k)}
\bm y^{\bm\alpha-2\bm\beta}
=
\sum_{\substack{
\bm\gamma\ge\bm\alpha-\bm\beta
\\
|\bm\gamma|=n-k
}}
\frac{(|\bm\beta|-k)!}{(\bm\gamma-\bm\alpha+\bm\beta)!}
\bm y^{2\bm\gamma-\bm\alpha}\,.
$$
Insertion into  $h_{k,\bm\alpha}(\bm y)$
and exchanging both sums
yields 
$$
h_{k,\bm\alpha}(\bm y)=
\sum_{\substack{
2\bm\gamma\ge\bm\alpha
\\
|\bm\gamma|=n-k
}}
\bm y^{2\bm\gamma-\bm\alpha}
\sum_{\substack{
\bm\beta\ge\bm\alpha-\bm\gamma
\\
\bm\beta\le\bm\alpha/2
\\
|\bm\beta|\ge k
}}
(\bm\alpha-\bm\beta)_{\bm\beta} 
\,
\overline c_{k,n,|\bm\beta|}
\,
|\bm\beta|!
\binom{|\bm\beta|-k}{\bm\gamma-\bm\alpha+\bm\beta}
\binom{\bm\alpha}{\bm\beta}
\,.
$$
The index substitution $\bm\beta\mapsto\bm\alpha+\bm\beta-\bm\gamma$ yields
that $h_{k,\bm\alpha}(\bm y)$ is equal to
$$
\!\!\!
\sum_{\substack{
2\bm\gamma\ge\bm\alpha
\\
|\bm\gamma|=n-k
}}
\!
\bm y^{2\bm\gamma-\bm\alpha}
\!\!\!\!\!\!\!\!\!
\sum_{\substack{
\bm\beta\ge\bm0
\\
\bm 0 
\le 
\bm\alpha+\bm\beta-\bm\gamma
\le \bm\alpha/2
}}
\!\!\!\!\!\!
(\bm\gamma-\bm\beta)_{\bm\alpha+\bm\beta-\bm\gamma}
\,
\overline{c}_{k,n,|\bm\beta|+k}
\,
(|\bm\beta|+k)!
\frac{|\bm\beta|!}{\bm\beta!}
\binom{\bm\alpha}{\bm\alpha+\bm\beta-\bm\gamma}
\,.
$$
The representation follows from rewriting the coefficients explicitly as
$$
\overline{c}_{k,n,m+k}=
\frac{(-2)^{-m}}{k!m!}\omega_{k,n}q_{k,n,m}\,,
$$
and that $(|\bm\beta|+k)!/(k!|\bm\beta|!)=\binom{|\bm\beta|+k}{k}$ with $m=|\bm\beta|$.
Finally notice that
the Pochhammer symbol and $q_{k,|\bm\alpha|,|\bm\beta|}$ are products of integers, the binomials are integers and that $|\bm\beta|!/\bm\beta!$ is a multinomial coefficient and thus an integer.
Therefore all the summands in $c_{k,\bm\alpha,\bm\gamma}$ are a product of integers divided by $2^{|\bm\beta|}$, hence a dyadic rational.
\end{proof}

We now show the validity of the reordering of the terms in \Cref{high-exp} for the harmonic-decomposition Crandall representation, \Cref{harmonic-decomposition-crandall-representation}, that stabilizes computations for large exponents.

\begin{proof}[Proof of \Cref{high-exp}]
We first derive the statement with the sums on both sides restricted to $k\notin \nu/2+\mathds N$.
Let $\bm z\in M$ with $\bm z\neq \bm 0$.
Then by the functional equation for the Crandall functions in \Cref{crandall-lower-prop}, split the upper Crandall function into the singularity and the lower Crandall function
$$
G_{\nu-2k}(\bm z/\lambda)
=
\Gamma(\nu/2-k)\pi^{-(\nu/2-k)}\lambda^{\nu-2k}|\bm z|^{2k-\nu}
-
g_{\nu-2k}(\bm z/\lambda)\,.
$$
Since $M$ is bounded we may freely exchange the sums and refactoring yields 
\begin{align*}
&\sum_{\substack{k=0\\ k\notin \nu/2+\mathds N}}^{\lfloor |\bm\alpha|/2\rfloor}
\frac{(\pi/\lambda^2)^{(\nu-2k)/2}}{\Gamma((\nu-2k)/2)}
\sums_{\bm z\in M}
h_{k,\bm\alpha}(\bm z)G_{\nu-2k}(\bm{z}/\lambda)e^{-2\pi i\bm y\cdot (\bm z+\bm x)}
\\
=&\sums_{\substack{\bm z\in M}}
|\bm z|^{-\nu}
e^{-2\pi i\bm y\cdot (\bm z+\bm x)}
\sum_{\substack{k=0\\ k\notin \nu/2+\mathds N}}^{\lfloor |\bm\alpha|/2\rfloor}
h_{k,\bm\alpha}(\bm z)
|\bm z|^{2k}
\\
&-\sum_{\substack{k=0\\ k\notin \nu/2+\mathds N}}^{\lfloor |\bm\alpha|/2\rfloor}
\frac{(\pi/\lambda^2)^{(\nu-2k)/2}}{\Gamma((\nu-2k)/2)}
\sums_{\bm z\in M}
h_{k,\bm\alpha}(\bm z)g_{\nu-2k}(\bm{z}/\lambda)e^{-2\pi i\bm y\cdot (\bm z+\bm x)}\,.
\end{align*}
Since $G_{\nu-2k}(\bm 0)=-g_{\nu-2k}(\bm 0)$, we can include the $\bm z=\bm 0$ summand in the sum over $M$ on the left-hand side and on the last line of the right-hand side in the equation above.
Note that on the left-hand side of the equation above, the summands are bounded in $k\in \nu/2+\mathds N_+$ and that these terms vanish due to the zeros of the reciprocal Gamma function, so they can be included in the left-hand side of the statement.
\end{proof}

Here we show the limit-identity for the special $k$-summands in the harmonic-decomposition Crandall representation.

\begin{proof}[Proof of \Cref{special-k-sum}]
If $\bm x\notin\Lambda$ and $\bm y\notin\Lambda^*$,
by the properties of the lower and upper Crandall functions, see \Cref{crandall-lower-prop} and \Cref{crandall-upper-prop}, each individual term in the function is entire in $\nu$ so that the limit of the product with $1/\Gamma(\nu/2-k)$ vanishes as $\nu\to 2k-2\ell$, $\ell\in\mathds N$ due to the zeros of the reciprocal Gamma function.
For $\bm y \in \Lambda^*$ we have an additional $\bm p=\bm 0$ summand in the reciprocal sum that, without the prefactors, reads
$$
h_{k,\bm\alpha}(\bm 0)
G_{d-\nu+2(|\bm\alpha|-k)}(\bm 0)
=
-\frac{2h_{k,\bm\alpha}(\bm 0)}{d-\nu+2(|\bm\alpha|-k)}\,,
$$
that is meromorphic in $\nu\in\mathds C$ with a single pole
where the denominator vanishes.
Since $\nu=2k-2\ell$, $\ell\in\mathds N$ and $k\le \lfloor |\bm\alpha|/2\rfloor$ imply $\nu\neq d+2(|\bm\alpha|-k)$,
this term multiplied by $1/\Gamma(\nu/2-k)$ also vanishes  there.
If $\bm x\in\Lambda$ we have an
additional $\bm z=\bm 0$ summand in the real sum that, with all prefactors, reads
$$
\frac{(\pi/\lambda^2)^{\nu/2-k}}{\Gamma(\nu/2-k)}
G_{\nu-2k}(\bm 0)
h_{k,\bm\alpha}(\bm 0)
e^{-2\pi i\bm y\cdot \bm x}
$$
where $G_{\nu-2k}(\bm 0)$ is holomorphic in $\nu\in\mathds C\setminus\{2k\}$ and all other factors are entire in $\nu$.
Therefore, the term vanishes as $\nu\to 2k-2\ell$ with $\ell>0$ and as $\nu\to 2k$ we have
$$
\frac{(\pi/\lambda^2)^{\nu/2-k}}{\Gamma(\nu/2-k)}G_{\nu-2k}(\bm 0)
=
-\frac{(\pi/\lambda^2)^{\nu/2-k}}{\Gamma(\nu/2-k)(\nu/2-k)}
=-\frac{(\pi/\lambda^2)^{\nu/2-k}}{\Gamma(\nu/2-k+1)}\to -1
\,,
$$
while the remaining factors tend to $h_{k,\bm\alpha}(\bm 0)e^{-2\pi i \bm x\cdot\bm y}$.
By \Cref{h-vanish} we have $h_{k,\bm\alpha}(\bm 0) = 0$ if either $2k\neq |\bm\alpha|$ or $\chi(\bm\alpha) = 0$, which concludes the proof.
\end{proof}

We now show the convergence of the  Taylor series in \Cref{taylor}.

\begin{proof}[Proof of \Cref{taylor}]
Let $\bm\alpha\in\mathds N^d$.
By the relation between anisotropic and differentiated regularized Epstein zeta functions, \Cref{aniso-reg-derivative-id},
and by the equivalence 
of regularized anisotropic and non-regularized anisotropic Epstein zeta functions at $\bm y=\bm 0$, \Cref{aniso-equals-reg-in-zero},
and since $\nu\notin d+2\mathds N$ implies 
$\nu\neq d+|\bm\alpha|$ or $\chi(\bm\alpha)=0$,
we have
$$
\diffop^{\bm\alpha}
[Z^{\rm reg}_{\Lambda,\nu}(\bm x,\bm y)]_{\bm y=\bm 0}
=
Z_{\Lambda,\nu,\bm\alpha}(\bm x,\bm 0)
$$
which yields the representation of the Taylor series in terms of anisotropic Epstein zeta functions.

Let $r=\lambda_1^*/\sqrt{2}$ and let $\bm y\in\mathds R^d$, $|\bm y|<r$ be fixed.
Choose $0<\varepsilon\le 1$ small enough such that
$
\varepsilon(2\|\bm y\|_1+d) < r^2-|\bm y|^2
$
and consider the polydisc
$$
P
=\{
\bm u\in\mathds C^d
:
 |u^{(j)}|<|y^{(j)}|+\varepsilon,\ 1\le j\le d
\}\,.
$$
We now show that $P$ is a subset of the region of holomorphy
$D_{\Lambda^*\setminus\{\bm 0\}}$
of the regularized anisotropic Epstein zeta function, see \Cref{hol-reg}.
Let $\bm u\in P$.
Then
$$
|\bm u|^2
<
|\bm y|^2+2\varepsilon \|\bm y\|_1+d\varepsilon^2<r^2
\,
$$
and,
for every $\bm z\in\Lambda^*\setminus\{\bm 0\}$,
we have
$
|\Re{\bm u}|+|\Im{\bm u}| 
\le \sqrt{2}|\bm u|
< |\bm z|
$
which implies
 $|\Re{\bm u}-\bm z|>|\Im{\bm u}|$
 and therefore $\bm u\in D_{\Lambda^*\setminus\{\bm 0\}}$.
By \cite[Th. 2.2.6]{hormanderIntroductionComplexAnalysis1990},
the Taylor series of the regularized Epstein zeta function converges normally in $P$ which, since $\bm y\in P$, implies point-wise convergence at $\bm y$.
\end{proof}

We finally show the equivalence of the condition of the early zero return in the third step of \Cref{alg} with the conditions in \Cref{syms-zero}.

\begin{proof}[Proof of \Cref{equv-sym-zeros}]
Define $W=\{z^{(j)}\colon \bm z\in\Lambda\}$, which is a subgroup of $\mathds R$.
Due to the mirror symmetry in the $j$'th component,
$\bm z\in\Lambda$ implies $\bm z-2z^{(j)}\bm e_j\in\Lambda$,
which implies $2W\subseteq\{t:t\bm e_j\in\Lambda\}$.
Therefore $2W$ is discrete,
and so $W$ is a discrete subgroup of $\mathds R$ and there exists some $\mu>0$ such that
$W=\mu\mathds Z$.
Let $t\in\mathds R$.
By the definition of the reciprocal lattice,
 $t\bm e_j\in\Lambda^*$ if and only if 
$tz^{(j)}\in\mathds Z$ for all $\bm z\in\Lambda$,
that is if and only if $t W\subseteq \mathds Z$.
Since $W=\mu\mathds Z$, the last inclusion is equivalent to $t\in\frac 1\mu\mathds Z$
and we have shown that
$$\Lambda^*\cap \mathds R\bm e_j=\frac 1\mu\mathds Z\bm e_j
\,.
$$
Therefore
$
(\Lambda^*\cap \mathds R\bm e_j)
\cdot \bm u
\subseteq \mathds Z
$
is equivalent to $u^{(j)}\in W=-W$
which is equivalent to the existence of some $\bm v \in\Lambda$ such that $u^{(j)}+v^{(j)}=0$.

Let $\bm w\in\Lambda$.
Due to the translational invariance of $\Lambda$,
some $\bm v\in\Lambda$ with $u^{(j)}+v^{(j)}=0$ exists if and only if some $\tilde{\bm v}\in\Lambda$ with
$(u^{(j)}+w^{(j)})+\tilde v^{(j)}=0$ exists, as is seen by setting $\tilde{\bm v}=\bm v-\bm w$.

Finally note that the reflection of the $j$'th component, that is the diagonal matrix $Q\in\mathds R^{d\times d}$ with entries $1$, where the $j$'th entry is replaced by $-1$, is orthogonal
so that $Q\bm p\cdot \bm z=\bm p\cdot Q\bm z\in \mathds Z$, $\bm p\in\Lambda^*$, $\bm z\in\Lambda$.
This implies $Q\Lambda^*\subseteq\Lambda^*$, 
and since $Q$ is also an involution we have the equality
so that $\Lambda^*$ is mirror-symmetric in the $j$'th component as well
and, using $(\Lambda^*)^*=\Lambda$, the statement holds when replacing $\Lambda$ with $\Lambda^*$.
\end{proof}

\section{Truncation}
\label{sec:appendix-trunc}

We now derive a rigorous error bound remainder of the harmonic-decomposition Crandall representation, \Cref{harmonic-decomposition-crandall-representation}, for some truncation value.
We begin with the properties of upper Crandall functions for real parameters.

\begin{lemma}
\label{crandall-real-prop}
Let $\nu \in \mathds{R}$ and let $\bm u \in \mathds{R}^d\setminus\{\bm 0\}$.
Then $G_{\nu}(\bm u)$ is strictly positive,
rotationally symmetric in $\bm u$, strictly decreasing in $|\bm u|$ and strictly increasing in $\nu$.
\end{lemma}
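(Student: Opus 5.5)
We work throughout with the integral representation of the upper Crandall function that already appeared in the proof of \Cref{derivative-crandall-representation},
$$
G_{\nu}(\bm u)=\int_{-1}^{1}|t|^{-\nu}e^{-\pi\bm u^2/t^2}\frac{\mathrm d t}{|t|}
=2\int_0^1 t^{-\nu-1}e^{-\pi\bm u^2/t^2}\,\mathrm d t\,,
$$
which holds for every $\nu\in\mathds R$ as long as $\bm u\in\mathds R^d\setminus\{\bm 0\}$. We first check this identity. Substituting $s=\pi\bm u^2/t^2$ turns the right-hand side into $(\pi\bm u^2)^{-\nu/2}\int_{\pi\bm u^2}^\infty s^{\nu/2-1}e^{-s}\,\mathrm ds$, and this is exactly $\Gamma(\nu/2,\pi\bm u^2)/(\pi\bm u^2)^{\nu/2}$. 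Since $\pi\bm u^2>0$, the integrand near $t=0$ is dominated by $e^{-\pi\bm u^2/t^2}$, so the integral converges for every real $\nu$ and no continuation argument is needed. All four properties will be read off from this formula, because for real $\nu$ its integrand is real and positive on $(0,1)$.

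\emph{Positivity.} For $t\in(0,1)$ the integrand $t^{-\nu-1}e^{-\pi\bm u^2/t^2}$ is continuous and strictly positive. Its integral over $(0,1)$ is therefore strictly positive, so $G_\nu(\bm u)>0$.

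\emph{Rotational symmetry.} The integrand depends on $\bm u$ only through $\bm u^2=|\bm u|^2$. Hence $G_\nu(Q\bm u)=G_\nu(\bm u)$ for every orthogonal $Q$.

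\emph{Monotonicity in $|\bm u|$.} Write $\rho=|\bm u|$. If $0<\rho_1<\rho_2$, then for every $t\in(0,1)$ we have $e^{-\pi\rho_2^2/t^2}<e^{-\pi\rho_1^2/t^2}$. Multiplying by the positive factor $t^{-\nu-1}$ and integrating preserves the strict inequality, because the two integrands differ by a continuous, positive function on a set of positive measure. Alternatively, differentiate under the integral in $\rho$: the derivative integrand $-2\pi\rho\,t^{-\nu-3}e^{-\pi\rho^2/t^2}$ is integrable and strictly negative.

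\emph{Monotonicity in $\nu$.} If $\nu_1<\nu_2$, then for every $t\in(0,1)$ we have $t^{-\nu_2-1}>t^{-\nu_1-1}$, since $t<1$ and $-\nu_2-1<-\nu_1-1$. The Gaussian factor is positive and the same for both, so the same strict-integral argument gives $G_{\nu_1}(\bm u)<G_{\nu_2}(\bm u)$.

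The only points needing care are the validity of the integral representation for all real $\nu$ (settled by the substitution above, whose convergence rests on $\bm u\neq\bm 0$) and the passage from a pointwise strict inequality to a strict inequality of integrals. The latter follows because the difference of the integrands is continuous and strictly positive on the open interval $(0,1)$.
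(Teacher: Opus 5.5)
Your proof is correct and follows the same approach as the paper. The paper also reads all four properties directly off the integral representation $G_\nu(\bm u)=\int_{-1}^{1}|\eta|^{-\nu}e^{-\pi\bm u^2/\eta^2}\,\mathrm{d}\eta/|\eta|$, using that its integrand is strictly positive, depends on $\bm u$ only through $|\bm u|$, decreases in $|\bm u|$, and increases in $\nu$ since $|\eta|\le 1$. Your check of the representation via $s=\pi\bm u^2/t^2$ and your justification that pointwise strict inequalities give strict inequalities of the integrals simply spell out details the paper leaves implicit.
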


\begin{proof}
These are immediate from 
$$
G_{\nu}(\bm u)=\int_{-1}^1|\eta|^{-\nu}e^{-\pi \bm u^2/\eta^2}\,\frac{\rm d\eta}{|\eta|}
$$
as the integrand is strictly positive,
rotationally symmetric in $\bm u$, strictly decreasing in $|\bm u|$ and, since $|\eta|\le 1$, strictly increasing in $\nu$ and since
the integral  inherits these properties.
\end{proof}

For the investigation of the summation cutoff, it is sufficient to consider the lattice $\mathds{Z}^d$, as the following lemma shows.

\begin{lemma}
\label{lattice-to-zd-bound}
Let $\Lambda=A\mathds Z^d$ for $A\in\mathds R^{d\times d}$ regular, let $t\ge 0$, let $\nu\in\mathds R$ and let $\mu>0$.
Then, for any set $M\subseteq\mathds R^d\setminus\{\bm 0\}$,
and $\bm u\in\mathds R^d$, we have
$$
\sum_{\bm z \in \Lambda \cap M} |\bm z+\bm u|^tG_\nu(\mu(\bm z+\bm u)) \leq 
\|A\|^t
\sum_{\bm w \in \mathds{Z}^d \cap A^{-1}M} 
|\bm w+A^{-1}\bm u|^t
G_\nu\left(\frac{\mu(\bm w +A^{-1}\bm u)}{\Vert A^{-1}\Vert} \right),
$$
with $\|\cdot\|$ the spectral norm.
\end{lemma}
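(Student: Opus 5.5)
The plan is to reparametrize the lattice sum over $\mathds Z^d$ and then compare the two sides term by term, using the monotonicity of the upper Crandall function from \Cref{crandall-real-prop}.

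First, write every $\bm z\in\Lambda\cap M$ uniquely as $\bm z=A\bm w$ with $\bm w\in\mathds Z^d\cap A^{-1}M$. This map is a bijection, so the left-hand side becomes
$$
\sum_{\bm w\in\mathds Z^d\cap A^{-1}M}|A(\bm w+A^{-1}\bm u)|^t\,G_\nu\big(\mu A(\bm w+A^{-1}\bm u)\big).
$$
Set $\bm v=\bm w+A^{-1}\bm u$. It then suffices to prove the summand-wise inequality
$$
|A\bm v|^t G_\nu(\mu A\bm v)\le \|A\|^t|\bm v|^tG_\nu(\mu\bm v/\|A^{-1}\|),
$$
since summing it over the common index set gives the claim.

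The polynomial factor is immediate: $|A\bm v|\le\|A\|\,|\bm v|$, and since $t\ge 0$ this gives $|A\bm v|^t\le \|A\|^t|\bm v|^t$.

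For the Crandall factor, note that $|\bm v|=|A^{-1}A\bm v|\le\|A^{-1}\|\,|A\bm v|$, which means $|\mu A\bm v|\ge \mu|\bm v|/\|A^{-1}\|$. By \Cref{crandall-real-prop}, $G_\nu$ is positive, rotationally symmetric, and decreasing in the norm of its argument. Hence $G_\nu(\mu A\bm v)\le G_\nu(\mu\bm v/\|A^{-1}\|)$. Both factors are nonnegative, so multiplying the two inequalities gives the summand-wise bound.

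The only delicate point is the degenerate case $\bm v=\bm 0$, where \Cref{crandall-real-prop} does not apply. In that case $A\bm v=\bm 0$ as well, so both summands are the identical expression $0^t G_\nu(\bm 0)$. For $t>0$ both vanish. For $t=0$ they are equal, given the convention $0^0=1$ and the value $G_\nu(\bm 0)=-2/\nu$ (or its limit). Either way the inequality holds with equality, and I expect no further obstacle.
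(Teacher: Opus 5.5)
Your proof is correct and follows the paper's argument: reindex via $\bm z=A\bm w$, bound the power factor by $|A\bm v|\le\|A\|\,|\bm v|$ with $t\ge 0$, and bound the Crandall factor by $|A\bm v|\ge|\bm v|/\|A^{-1}\|$ together with the radial monotonicity from \Cref{crandall-real-prop}. Your explicit handling of the degenerate term $\bm v=\bm 0$, where both sides contain the identical summand, is a small point the paper leaves implicit.
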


\begin{proof}
We have that
$G_\nu(\bm \cdot)$ is rotationally symmetric and strictly decreasing in the norm of its vector argument, see \Cref{crandall-real-prop}, so an upper bound on this factor follows from
$$
|A(\bm w + A^{-1} \bm u)|
\ge 
\| A^{-1}\|^{-1}|\bm w+A^{-1}\bm u|
$$
where we used that
$
|\bm u|=|A^{-1}A\bm u|\le \|A^{-1}\||A\bm u|
$.
Similarly, $|\bm z+\bm u|^t$ is increasing since $t\ge 0$ and the upper bound on that factor follows from
\[
|A(\bm w + A^{-1} \bm u)|
\le 
\|A\||\bm w+A^{-1}\bm u|
\,,
\qedhere 
\]
\end{proof}

The following statement bounds the tail of lattice sums over upper Crandall functions with a strictly increasing power-law prefactor.

\begin{lemma}
\label{crandall-weight-lattice-sum-bound}
Let $\bm u\in\mathds R^d$, $d\in\mathds N_+$,
let $\nu\in\mathds R$,
let $r,\mu>0$
and let $t\ge d$.
Then for any $\varepsilon>0$ with 
$
\varepsilon \le (r-\sqrt{d})/2
$
and
$
\mu(r-\varepsilon)>\sqrt{t/(2\pi)}
$
it holds that
$$
\sum_{\substack{
\bm w\in\mathds Z^d
\\
|\bm w+\bm u|>r
}}
|\bm w+\bm u|^{t-d}
G_{\nu}(\mu(\bm w+\bm u))
\le
\frac{C_t}{\varepsilon}
r^{t+1}
\frac{G_{t+1}(\mu(r-\varepsilon)) - G_\nu(\mu(r-\varepsilon))}{t+1-\nu}
\,
$$
where $C_t=(3/2)^t\pi^{d/2}/\Gamma(d/2+1)$
and where for $\nu=t+1$ the right-hand side is understood in the sense of the limit.
\end{lemma}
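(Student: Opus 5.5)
The proof compares the lattice sum with an integral over a radial shell. The right-hand side, divided by $r^{t+1}$, looks like an antiderivative in $\rho$, namely $\int_{r-\varepsilon}^\infty \rho^{t}G_\nu(\mu\rho)\,\mathrm d\rho$, evaluated via an identity relating $G_{t+1}-G_\nu$ to such an integral. So the plan has three steps: replace each summand by a cube integral, pass to radial coordinates, and evaluate the radial integral in closed form.

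\emph{Step 1: sum to integral.} Assign to each $\bm w$ with $|\bm w+\bm u|>r$ the unit cube $Q_{\bm w}=\bm w+\bm u+[-1/2,1/2]^d$. These cubes are disjoint with volume one, and for $\bm v\in Q_{\bm w}$ we have $\big||\bm v|-|\bm w+\bm u|\big|\le\sqrt d/2$.

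Since $G_\nu$ is radially decreasing by \Cref{crandall-real-prop}, a pointwise lower bound on the cube is not available directly. I would therefore aim instead for a local-maximum-type estimate. Define $\phi(\rho)=\rho^{t-d}G_\nu(\mu\rho)$. I would show $\phi$ is decreasing for $\rho\ge r-\varepsilon$, which is where the hypothesis $\mu(r-\varepsilon)>\sqrt{t/(2\pi)}$ enters. From the integral representation $G_\nu(\bm u)=\int_{-1}^1|\eta|^{-\nu-1}e^{-\pi\bm u^2/\eta^2}\mathrm d\eta$, each integrand term $\rho^{t-d}e^{-\pi\mu^2\rho^2/\eta^2}$ is decreasing once $2\pi\mu^2\rho^2/\eta^2\ge t-d$. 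This certainly holds for $|\eta|\le1$ if $\rho\mu>\sqrt{t/(2\pi)}$.

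Monotonicity alone goes the wrong way for bounding a summand by the integral over its own cube. So I would shift: bound each summand by the average of $\phi$ over the inner part of a shell. More concretely, $\phi(|\bm w+\bm u|)\le \phi(|\bm v|)\cdot(\text{ratio})$ fails, and the workable route is the standard one: $\phi(|\bm w+\bm u|)\le \phi(|\bm v|)$ for $\bm v$ in the half-cube pointing toward the origin. The cleaner alternative is a radial layer-counting argument.

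\emph{Step 2: radial counting.} Group the points into shells $\rho<|\bm w+\bm u|\le\rho+\varepsilon$. The number of lattice points in the ball of radius $R$ is bounded by the volume of the ball of radius $R+\sqrt d/2$. Using $\varepsilon\le(r-\sqrt d)/2$, so that $\sqrt d\le r-2\varepsilon$, radii $\ge r$ inflate by at most a factor $3/2$. This produces $C_t=(3/2)^t\pi^{d/2}/\Gamma(d/2+1)$. The count of points with radius in $(\rho,\rho+\varepsilon]$ times $\phi$ at the lower endpoint, summed over shells, is then bounded by $\frac{1}{\varepsilon}\int_{r-\varepsilon}^\infty d\,V_d\,((3/2)\rho)^{d-1}\cdot\rho\,\phi(\rho)\,\mathrm d\rho$, up to the constant. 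Tracking the powers of $\rho$ gives the weight $\rho^{t}$, and the remaining factor $r^{t+1}$ absorbs polynomial slack via $\rho\le$ const. I will have to be careful here: the factor $1/\varepsilon$ together with $r^{t+1}$ suggests using $\rho^{t}\le r^{t}(\rho/r)^{t}$ and moving the power into the Crandall index.

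\emph{Step 3: closed form.} Substitute $\rho= r s$ and use the representation of $G$ to compute
$$
\int_{a}^\infty \rho^{t}G_\nu(\mu\rho)\,\mathrm d\rho
$$
by Fubini. After the substitution $\rho\mapsto\eta\sigma$, the inner integral becomes a $G$ with shifted index. The identity $\int_{-1}^1|\eta|^{-\nu-1}(|\eta|^{\nu-t-1}-1)\dots$ produces the difference quotient $(G_{t+1}-G_\nu)/(t+1-\nu)$, since $\int_{|\eta|}^1 s^{\nu-t-2}\,\mathrm ds$-type integrals give $(1-|\eta|^{\dots})/(t+1-\nu)$. The limit case $\nu=t+1$ follows by continuity.

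I expect the main obstacle to be matching Step 2 and Step 3 exactly: getting precisely $r^{t+1}/\varepsilon$ and the argument $\mu(r-\varepsilon)$. I would first try bounding $\phi$ on each $\varepsilon$-shell by its value at the inner radius $r-\varepsilon$ shifted shellwise, then converting the shell sum into the integral via monotonicity.
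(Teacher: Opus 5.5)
There is a genuine gap. You have most of the right ingredients: shells of width $\varepsilon$, the ball-count estimate with inflation factor $3/2$ from $\varepsilon\le(r-\sqrt d)/2$, monotonicity from $\mu\rho>\sqrt{t/(2\pi)}$, and the closed form for $\int_a^\infty\rho^{t}G_\nu(\mu\rho)\,\mathrm d\rho$. But Step 2, where the proof actually happens, is not carried out, and what is written there does not yield the statement.

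The per-shell weight $dV_d((3/2)\rho)^{d-1}\rho$ you write down is a surface-area heuristic. A count of order $\rho^{d-1}\varepsilon$ lattice points per shell is false for thin shells: the number of points in a shell need not tend to zero as $\varepsilon\to0$, while $\rho^{d-1}\varepsilon$ does. Moreover, the resulting constant $d(3/2)^{d-1}\pi^{d/2}/\Gamma(d/2+1)$ exceeds $C_t$ when $t=d\ge2$. So ``up to the constant'' does not give the inequality as stated.

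The missing step is a deliberately crude count.
\begin{itemize}
\item Bound the number of $\bm w$ in the shell $r+n\varepsilon<|\bm w+\bm u|\le r+(n+1)\varepsilon$ by the number in the closed ball of radius $r+(n+1)\varepsilon$. With $V_d=\pi^{d/2}/\Gamma(d/2+1)$ this is at most $V_d\bigl(r+(n+1)\varepsilon+\sqrt d/2\bigr)^d\le V_d(3/2)^d(r+n\varepsilon)^d$, using $2\varepsilon+\sqrt d\le r$.
\item Bound each summand in that shell by $(3/2)^{t-d}(r+n\varepsilon)^{t-d}G_\nu(\mu(r+n\varepsilon))$. Alternatively, your monotonicity of $\phi$ bounds it directly by $\phi(r+n\varepsilon)$.
\item This gives $S\le C_t\mu^{-t}\sum_{n\ge0}f(n)$ with $f(n)=u^tG_\nu(u)$ and $u=\mu(r+n\varepsilon)$, with exactly the constant $C_t$.
\end{itemize}
It is this ball count, not a shell volume, that eventually produces the factor $1/\varepsilon$.

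Consequently, the function that has to be monotone is $u\mapsto u^{t}G_\nu(u)$, with weight $t$ rather than the weight $t-d$ of your $\phi$. That is why the hypothesis involves $\sqrt{t/(2\pi)}$. Your integrand-wise argument carries over unchanged, since $u^te^{-\pi u^2/\eta^2}$ decreases once $2\pi u^2\ge t\eta^2$. The paper instead uses $G_\nu'(u)=-2\pi uG_{\nu+2}(u)$ together with $G_{\nu+2}>G_\nu$.

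Since $f$ is decreasing on $[-1,\infty)$, $\sum_{n\ge0}f(n)\le\int_{-1}^\infty f(n)\,\mathrm dn$. The substitution $\mathrm dn=\mathrm du/(\mu\varepsilon)$ is what produces the factor $1/\varepsilon$ and the lower limit $\mu(r-\varepsilon)$. Your closed form then gives $\frac{C_t}{\varepsilon}(r-\varepsilon)^{t+1}\frac{G_{t+1}(\mu(r-\varepsilon))-G_\nu(\mu(r-\varepsilon))}{t+1-\nu}$.

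The final step $(r-\varepsilon)^{t+1}\le r^{t+1}$ requires this difference quotient to be positive. That follows from $G_\nu$ being strictly increasing in $\nu$ (\Cref{crandall-real-prop}). Your proposed mechanisms for $r^{t+1}$, absorbing slack via $\rho\le$ const or moving powers into the Crandall index, cannot work, because $\rho$ is unbounded on the range of integration.
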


\begin{proof}
Denote by $S$ the left-hand side of the equation in the statement
which we rewrite as sums over spherical shells of width 
$\varepsilon>0$
$$
S
=
\sum_{n=0}^{\infty}
\sum_{\substack{\bm w\in \mathds Z^d\\ |\bm w + \bm u|> r+n\varepsilon\\|\bm w + \bm u|\le  r+(n+1)\varepsilon }} |\bm w+\bm u|^{t-d}
G_{\nu}(\mu(\bm w+\bm u))\,.
$$
Under the condition $\varepsilon\le (r-\sqrt{d})/2$
we showed in \cite[Lem. A.2]{buchheitComputationPropertiesEpstein2026} that the number of lattice points within each shell, that is at most the number of the points within the closed ball of radius $r+(n+1)\varepsilon$, is bounded by
$
N_{\rm shells}(r+n\varepsilon)^d
$
with $N_{\rm shells}=(3/2)^d \pi^{d/2}/\Gamma(d/2+1)$.
By \Cref{crandall-real-prop} we have
 that the summands of $S$ are rotationally symmetric, we have that $G_{\nu}(\bm \cdot)$ is globally decreasing in $|\bm \cdot|$
 and, since $t\ge d$ and $\varepsilon\le (r-\sqrt{d})/2\le r/2$
imply
 $(r+(n+1)\varepsilon)^{t-d}\le (3/2)^{t-d}(r+n\varepsilon)^{t-d}$,
 we have
$$
S
\le
\frac{C_t}{\mu^t}
\sum_{n=0}^{\infty}
(\mu(r+n\varepsilon))^tG_{\nu}(\mu(r+n\varepsilon))\,,
$$
where we used that the powers of $3/2$ cancel each other.
We now show that the summand on the right-hand side of the equation above
$$
u^t G_{\nu}(u)
,\qquad 
u=\mu(r+n\varepsilon)
$$
is monotonically decreasing as a function of the continuous variable $n\in [-1,\infty)$ by showing that its derivative with respect to $n$ is negative.
By the representation for the upper Crandall derivatives, see \Cref{derivative-crandall-representation}, we have
$G_\nu'(u)=-2\pi u G_{\nu+2}(u)$
so that by the chain rule and the product rule and since $u>0$
$$
\frac 1{\mu\varepsilon}\frac{\mathrm{d}}{\mathrm{d}n}u^t G_{\nu}(u)
=
u^t\Big(\frac{t}{u}G_{\nu}(u)-2\pi u  G_{\nu+2}(u)\Big)\,.
$$
The bracketed term on the right-hand side of the equation above is negative since $G_{\nu+2}(u)>G_{\nu}(u)$ by the monotonicity of the upper Crandall function in $\nu$, see \Cref{crandall-real-prop},
and since
$
t/u<2\pi u
$
is implied by
$u>\mu(r-\varepsilon)>\sqrt{t/(2\pi)}$.
Therefore, by monotonicity we have
$$
S\le 
\frac{C_t}{\mu^t}
\int_{-1}^{\infty}
(\mu(r+n\varepsilon))^tG_{\nu}(\mu(r+n\varepsilon))
\, \d n
$$
where the integral is bounded near $n=-1$ since $t\ge 1$ and therefore integrable by the superexponential decay of the upper Crandall function, see \Cref{crandall-upper-prop}.
With the substitution $u=\mu(r+n\varepsilon)$ we have
$$
S\le 
\frac{C_t}{\mu^{t+1}\varepsilon}
\int_{\mu(r-\varepsilon)}^{\infty}
u^{t+1}G_{\nu}(u)
\, \frac{\d u}{u}
$$
and by the integral identity \cite[Lem. A.4]{buchheitComputationPropertiesEpstein2026} we have that for $\nu\neq t+1$ the right-hand side of the equation above
is equal to
$$
\frac{C_t}{\varepsilon}
(r-\varepsilon)^{t+1}
\frac{G_{t+1}(\mu(r-\varepsilon)) - G_\nu(\mu(r-\varepsilon))}{t+1-\nu}
$$
and therefore smaller than the right-hand side of the equation of the statement since $\varepsilon>0$.
Here the limit $\nu\to t+1$ is well defined
due to the continuity of the integral in $\nu\in\mathds R$.
\end{proof}

The polynomial weights in the two preceding statements appear due to the following upper bound on the homogeneous harmonic polynomials in \Cref{harmonic-decomp}.

\begin{lemma}[Harmonic upper bound]
\label{harmonic-upper-bound}
Let $\bm\alpha\in\mathds N^d$,
let $k=0,1,\ldots,\lfloor|\bm\alpha|/2\rfloor$
and let $m=|\bm\alpha|-2k$.
Then
$$
|h_{k,\bm\alpha}(\bm u)|
\le 
\binom{d+m-1}{d-1}^{1/2}|\bm u|^{m}
,\qquad 
\bm u\in\mathds R^d
$$ 
where $h_{k,\bm\alpha}$ are as in \Cref{harmonic-decomp}.
\end{lemma}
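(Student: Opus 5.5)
The plan is to reduce to the unit sphere, bound $h_{k,\bm\alpha}$ in $L^2$ using orthogonality of the harmonic decomposition, and then pass from the $L^2$ bound to a pointwise bound with the reproducing kernel of the space of spherical harmonics. Since $h_{k,\bm\alpha}$ is homogeneous of degree $m=|\bm\alpha|-2k$, we have $h_{k,\bm\alpha}(\bm u)=|\bm u|^m h_{k,\bm\alpha}(\bm u/|\bm u|)$ for $\bm u\neq\bm 0$, and the case $\bm u=\bm 0$ is trivial. It therefore suffices to show $|h_{k,\bm\alpha}(\bm\zeta)|\le \binom{d+m-1}{d-1}^{1/2}$ for $\bm\zeta\in S^{d-1}$.

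\textbf{Step 1 ($L^2$ bound).} Let $\sigma$ be the normalized surface measure on $S^{d-1}$; for $d=1$ it is the uniform measure on $\{\pm1\}$. Restricting the harmonic decomposition of \Cref{harmonic-decomp} to the sphere gives
$$
\bm\zeta^{\bm\alpha}=\sum_{k=0}^{\lfloor|\bm\alpha|/2\rfloor}h_{k,\bm\alpha}(\bm\zeta),\qquad \bm\zeta\in S^{d-1}.
$$
Here the summands are spherical harmonics of the distinct degrees $|\bm\alpha|-2k$. Spherical harmonics of different degrees are orthogonal in $L^2(\sigma)$ (Axler et al., Ch.~5). Pythagoras then gives
$$
\|h_{k,\bm\alpha}\|_{L^2(\sigma)}\le\|\bm\zeta^{\bm\alpha}\|_{L^2(\sigma)}\le \sup_{S^{d-1}}|\bm\zeta^{\bm\alpha}|\le 1,
$$
because $|\zeta^{(j)}|\le 1$ on the sphere.

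\textbf{Step 2 (pointwise from $L^2$).} Let $\mathcal H_m$ be the space of degree-$m$ homogeneous harmonic polynomials restricted to $S^{d-1}$, with the $L^2(\sigma)$ inner product. This space is finite dimensional, so it has a reproducing kernel, the zonal harmonic $Z_m(\bm\zeta,\bm\eta)$. It satisfies $h(\bm\zeta)=\langle h,Z_m(\bm\zeta,\cdot)\rangle$ and $Z_m(\bm\zeta,\bm\zeta)=\|Z_m(\bm\zeta,\cdot)\|^2=\dim\mathcal H_m$. The diagonal value is constant because of rotation invariance: integrating the diagonal over the sphere gives the sum over an orthonormal basis of $\|e_i\|^2$, which equals $\dim\mathcal H_m$. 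Cauchy–Schwarz then gives
$$
|h(\bm\zeta)|\le(\dim\mathcal H_m)^{1/2}\|h\|_{L^2(\sigma)}.
$$
Combining this with Step 1 and the estimate $\dim\mathcal H_m\le\dim\mathcal P_m=\binom{d+m-1}{d-1}$, where $\mathcal P_m$ is the space of all homogeneous polynomials of degree $m$, yields the claim. In fact $\dim\mathcal H_m=\binom{d+m-1}{d-1}-\binom{d+m-3}{d-1}$.

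\textbf{Main obstacle.} The only delicate point is justifying the orthogonality and the reproducing-kernel identity in full generality, including $d=1$ and $d=2$ and complex coefficients. For $d=1$ everything reduces to $m\in\{0,1\}$: then $h_{0,\alpha}(u)=u^\alpha$ with $\alpha\le1$ and $|u^\alpha|=|u|^\alpha$, so the bound holds with constant $1$. For $d\ge2$ I would cite Axler et al.\ (Prop.~5.9 for orthogonality, Prop.~5.27 for $Z_m(\bm\zeta,\bm\zeta)=\dim\mathcal H_m$). Alternatively, orthogonality follows from Green's identity applied to harmonic homogeneous polynomials of differing degrees, using Euler's relation $\partial_r h=m h$ on the sphere.
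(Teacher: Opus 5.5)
Your proposal is correct and follows the paper's proof essentially step for step: you reduce to the unit sphere by homogeneity, you get $\|h_{k,\bm\alpha}\|_{L^2(\mathrm d\sigma)}\le 1$ from orthogonality of the components of the harmonic decomposition, and you apply Cauchy--Schwarz against the zonal harmonic with $\|Z_m(\cdot,\bm w)\|^2_{L^2(\mathrm d\sigma)}\le\dim\mathcal H_m\le\binom{d+m-1}{d-1}$. One small slip in your $d=1$ aside: the only nonvanishing term is $h_{\lfloor\alpha/2\rfloor,\alpha}(u)=u^{\alpha-2\lfloor\alpha/2\rfloor}$ (not $h_{0,\alpha}$), and every $h_{k,\alpha}$ with $m\ge 2$ vanishes, so the bound still holds trivially with constant $1$.
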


\begin{proof}
Let $\bm u\in\mathds R^d$ and let $\bm w\in S^{d-1}$.
Since $h_{k,\bm\alpha}$ is homogeneous of degree $m$, we have
$|h_{k,\bm\alpha}(\bm u)|\le M_{m,\bm\alpha} |\bm u|^m$
with
$$
M_{m,\bm\alpha}=\sup_{\bm w\in S^{d-1}}|h_{k,\bm\alpha}(\bm w)|\,,
$$
where $S^{d-1}$ is the unit sphere in $\mathds R^d$.
We show that $M_{m,\bm\alpha}\le \binom{d+m-1}{d-1}^{1/2}$.
Denote by
$Z_{m}(\bm\cdot,\bm w)$ a $d$-dimensional zonal harmonic of degree $m$, see \cite[Ch. 5]{axlerHarmonicFunctionTheory2001}, that is a $d$-dimensional harmonic polynomial of degree $m$ that satisfies
$$
h_{k,\bm\alpha}(\bm w)
=
\left(h_{k,\bm\alpha}(\bm \cdot),Z_m(\bm\cdot,\bm w)\right)_{L^2(\mathrm{d}\sigma)}
,\qquad 
\bm w\in S^{d-1}
$$
with $\left(\cdot,\cdot\right)_{L^2(\mathrm{d}\sigma)}$ the inner product on the Lebesgue space of square-integrable functions on $S^{d-1}$ with the normalized surface measure $\sigma(S^{d-1})=1$.
Then by the Cauchy-Schwarz inequality
$$
|h_{k,\bm\alpha}(\bm w)|
\le 
\|h_{k,\bm\alpha}\|_{L^2(\mathrm{d}\sigma)}
\|Z_m(\bm \cdot,\bm w)\|_{L^2(\mathrm{d}\sigma)}
\,.
$$
By
\cite[Prop. 5.27(e)]{axlerHarmonicFunctionTheory2001},
and since $\sigma(S^{d-1})=1$,
we have that
$
\|Z_m(\bm \cdot,\bm w)\|_{L^2(\mathrm{d}\sigma)}^2
$
is bounded by the dimension of the space of harmonics of degree $m$, and therefore in particular by the dimension $\binom{d+m-1}{d-1}$ of the space of homogeneous polynomials of degree $m$, see 
\cite[Prop. 5.8]{axlerHarmonicFunctionTheory2001}.
What is left is to show that $\|h_{k,\bm\alpha}\|_{L^2(\mathrm{d}\sigma)}\le 1$.

By the harmonic decomposition, \Cref{harmonic-decomp}, and since $|\bm w|=1$ we have
$$
\bm w^{\bm\alpha}
=\sum_{j=0}^{\lfloor|\bm\alpha|/2\rfloor}
h_{j,\bm\alpha}(\bm w)\,,
$$
so that by the bilinearity of the scalar-product we have
$$
\|(\bm \cdot)^{\bm\alpha}\|^2_{L^2(\mathrm{d}\sigma)}
=\Big(
\sum_{j=0}^{\lfloor|\bm\alpha|/2\rfloor}
h_{j,\bm\alpha}
,
\sum_{j'=0}^{\lfloor|\bm\alpha|/2\rfloor}
h_{j',\bm\alpha}
\Big)_{L^2(\mathrm{d}\sigma)}
=
\sum_{j,j'=0}^{\lfloor|\bm\alpha|/2\rfloor}
(
h_{j,\bm\alpha}
,
h_{j',\bm\alpha}
)_{L^2(\mathrm{d}\sigma)}
\,.
$$
By the orthogonality of the harmonic polynomials \cite[Prop. 5.9]{axlerHarmonicFunctionTheory2001} the scalar products on the right-hand side of the equation above vanish whenever $j\neq j'$
so that we have
$$
\|(\bm \cdot)^{\bm\alpha}\|_{L^2(\mathrm{d}\sigma)}^2
=
\sum_{j=0}^{\lfloor|\bm\alpha|/2\rfloor}\|h_{j,\bm\alpha}\|_{L^2(\mathrm{d}\sigma)}^2
\,.
$$
Since $|\bm w^{\bm\alpha}|\le |\bm w|^{|\bm\alpha|}=1$ and $\sigma(S^{d-1})=1$ we have $\|(\bm \cdot)^{\bm\alpha}\|_{L^2(\mathrm{d}\sigma)}^2\le 1$ so that we have shown that
$$
1 \ge 
\|(\bm \cdot)^{\bm\alpha}\|_{L^2(\mathrm{d}\sigma)}^2=
\sum_{j=0}^{\lfloor|\bm\alpha|/2\rfloor}\|h_{j,\bm\alpha}\|_{L^2(\mathrm{d}\sigma)}^2
\ge
\|h_{k,\bm\alpha}\|_{L^2(\mathrm{d}\sigma)}^2
$$
which concludes the proof.
\end{proof}

We can now derive the remainder estimate.
This is a generalization of the remainder estimate of the Crandall formula of the Epstein zeta function \cite[Th. 15]{buchheitComputationPropertiesEpstein2026}.

\begin{theorem}[Remainder estimate]
  \label{theorem:trunc}
Let $\Lambda=A\mathds Z^d$ with $A\in\mathds R^{d\times d}$ be regular and $\det(A)=1$, let $n\in\mathds N$ and let $\nu\in\mathds R$.
Denote by $\kappa(A)=\|A\|\|A^{-1}\|$ the condition number, where $\|\cdot\|$ denotes the spectral norm.
For $\varepsilon,r>0$ define
$$
\mathcal R^{(\varepsilon)}_{\Lambda,\nu,n}(r)=\kappa(A)^{d+n+1} v_n(\nu)
\big(
R^{(\varepsilon)}_{\nu,n}\big(r/\kappa(A)\big)
+
R^{(\varepsilon)}_{d-\nu+2n,n}\big(r/\kappa(A)\big)
\big)
\,,
$$
with the prefactor 
$$v_n(\nu) = \frac{(\lfloor n/2\rfloor+1)(3/2)^{n+d}\pi^{(\nu+d)/2}}{%
\Gamma(d/2+1)\min_{k=0,1,\ldots,\lfloor n/2\rfloor}|\Gamma(\nu/2-k)|
}\sqrt{\binom{d+n-1}{d-1}}
\,,
$$
and with the function
$$
R^{(\varepsilon)}_{\nu,n}(r)
=\frac {r^{d+n+1}}\varepsilon
\Big(
\frac{G_{d+n+1}(r-\varepsilon) - 
G_{\nu}(r-\varepsilon)}{d+n+1-\nu}\Big),
$$
where the limit $\nu\to d+n+1$ is well-defined.
For $\Lambda=A\mathds Z^d$, $\bm\alpha\in\mathds N^d$ and $\bm x,\bm y\in\mathds R^d$ denote by $\mathfrak R_{\Lambda,\nu,\bm\alpha}(r)$ the remainder obtained by truncation at $r>0$ in the harmonic-decomposition Crandall representation, \Cref{harmonic-decomposition-crandall-representation}, at
$\lambda =1$
\begin{align*}
Z_{\Lambda,\nu,\bm\alpha}(\bm x,\bm y)
&=
\sum_{k=0}^{\lfloor |\bm\alpha|/2\rfloor}
\frac{\pi^{\nu/2-k}}{\Gamma(\nu/2-k)}
\Bigg[
\sum_{\substack{
\bm w\in \Lambda-\bm x
\\
|\bm w|\le r
}}
h_{k,\bm\alpha}(\bm w)G_{\nu-2k}(\bm w)e^{-2\pi i\bm y\cdot (\bm w+\bm x)}\\
&+
\frac{(-1)^k}{i^{|\bm\alpha|}V_{\Lambda}}
\sum_{\substack{
\bm p\in \Lambda^{\ast}+\bm y
\\ 
|\bm p|\le r
}}
h_{k,\bm\alpha}(\bm p)G_{d-\nu+2(|\bm\alpha|-k)}
(\bm p)e^{-2\pi i\bm x\cdot \bm p}
\Bigg]
+
\mathfrak R_{\Lambda,\nu,\bm\alpha}(r)
\,.
\end{align*}
Then for any $\varepsilon>0$ and
$
r>\kappa(A)\big(\max\{\sqrt d,\sqrt{(n+d)/(2\pi)}\}+2\varepsilon\big)
$
and for all $\bm\alpha\in\mathds N^d$ with $|\bm\alpha|\le n$ it holds
$$
|\mathfrak R_{\Lambda,\nu,\bm\alpha}(r)|<
\mathcal R^{(\varepsilon)}_{\Lambda,\nu,n}(r)
$$
and, in particular, we have
$$
|\mathfrak R_{\Lambda,\nu,\bm\alpha}(\kappa(A)r)|
< \kappa(A)^{d+n+1}
\mathcal R^{(\varepsilon)}_{\mathds Z^{d},\nu,n}(r)
\,.
$$
\end{theorem}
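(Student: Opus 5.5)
The remainder $\mathfrak R_{\Lambda,\nu,\bm\alpha}(r)$ is exactly the sum of the two tails, $|\bm w|>r$ and $|\bm p|>r$, in the harmonic-decomposition Crandall representation, \Cref{harmonic-decomposition-crandall-representation}, with $\lambda=1$. The plan is to bound each $k$-summand of each tail separately and then add up the at most $\lfloor n/2\rfloor+1$ values of $k$; this produces the factor $\lfloor n/2\rfloor+1$ in $v_n(\nu)$.

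\textbf{Reduction of a single $k$-term to a radial sum.} Fix $k$. For the real-space tail I use $|e^{-2\pi i\bm y\cdot(\bm w+\bm x)}|=1$ together with the harmonic upper bound, \Cref{harmonic-upper-bound}, which gives $|h_{k,\bm\alpha}(\bm w)|\le\binom{d+m-1}{d-1}^{1/2}|\bm w|^{m}$ with $m=|\bm\alpha|-2k\le n$. The binomial is monotone in $m$, so it is bounded by $\binom{d+n-1}{d-1}^{1/2}$. The tail is then at most
$$\binom{d+n-1}{d-1}^{1/2}\sum_{|\bm w|>r}|\bm w|^{m}G_{\nu-2k}(\bm w).$$
By \Cref{crandall-real-prop}, $G$ is increasing in its index, so $G_{\nu-2k}\le G_\nu$. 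Next I want to replace $|\bm w|^m$ by $|\bm w|^{n}$. This is valid once $|\bm w|\ge 1$, which holds because $r>\kappa(A)\sqrt d\ge 1$ (note $\kappa(A)\ge1$). The prefactor $\pi^{\nu/2-k}/|\Gamma(\nu/2-k)|$ is bounded by $\pi^{(\nu+d)/2}/\min_k|\Gamma(\nu/2-k)|$, using $\pi^{-k}\le 1\le \pi^{d/2}$.

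For the reciprocal tail I use $V_\Lambda=1$ and $|i^{-|\bm\alpha|}|=1$, and the same harmonic bound. The index of the Crandall function is $d-\nu+2|\bm\alpha|-2k\le d-\nu+2n$, so monotonicity in the index bounds it by $G_{d-\nu+2n}$. This is exactly why the second term in $\mathcal R^{(\varepsilon)}$ carries the index $d-\nu+2n$.

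\textbf{Passing to $\mathds Z^d$ and summing the tails.} I apply \Cref{lattice-to-zd-bound} with $t=n$, $\mu=1$ and $M=\{|\bm z+\bm u|>r\}$. This converts the sum to one over $\mathds Z^d$ with a factor $\|A\|^{n}$ and Crandall argument $(\bm w+\bm u')/\|A^{-1}\|$. The cutoff region $|A(\bm w+\bm u')|>r$ is contained in $|\bm w+\bm u'|>r/\|A\|$. Since $\det A=1$, we have $\|A\|,\|A^{-1}\|\ge 1$, so both $r/\|A\|$ and the scaling $1/\|A^{-1}\|$ can be controlled by $r/\kappa(A)$. The aim is to rewrite the $\mathds Z^d$ sum in the form covered by \Cref{crandall-weight-lattice-sum-bound}, with $t=n+d$ and radius $r/\kappa(A)$, where $\mu$ and the radius are matched by rescaling.

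The hypotheses of that lemma, $\varepsilon\le(r'-\sqrt d)/2$ and $r'-\varepsilon>\sqrt{(n+d)/(2\pi)}$ with $r'=r/\kappa(A)$, follow from the assumed lower bound on $r$. The lemma then yields $C_{n+d}\,R^{(\varepsilon)}_{\nu,n}(r/\kappa(A))$. Here $C_{n+d}$ supplies $(3/2)^{n+d}\pi^{d/2}/\Gamma(d/2+1)$, and I absorb the $\pi^{d/2}$ into $v_n$. The reciprocal tail is handled identically with $\Lambda^*=A^{-T}\mathds Z^d$; it has the same condition number and yields $R^{(\varepsilon)}_{d-\nu+2n,n}$. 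The collected powers of $\|A\|$ and $\|A^{-1}\|$ are at most $\kappa(A)^{d+n+1}$. The strict inequality comes from the strict inequality already obtained in the proof of \Cref{crandall-weight-lattice-sum-bound}, where the factor $(r-\varepsilon)^{t+1}$ is replaced by $r^{t+1}$.

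\textbf{Second claim and main obstacle.} The second claim follows from the first by substituting $\kappa(A)r$ for $r$ and comparing with the definition of $\mathcal R$ for $\mathds Z^d$, for which $\kappa=1$.

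The main obstacle is the bookkeeping in the change of lattice. Rescaling the Crandall argument by $1/\|A^{-1}\|$ while the cutoff scales by $1/\|A\|$ must still give the homogeneous form $|\bm w|^{t-d}G(\mu\bm w)$ with a single radius $r/\kappa(A)$. I expect to handle this by substituting $\bm w'=\bm w/\|A^{-1}\|$, in effect treating a scaled integer lattice, or by bounding the point count in the shells directly. The price is extra powers of $\kappa(A)$, which is why I would first check that the exponent $d+n+1$ in $\mathcal R$ is enough to absorb them.
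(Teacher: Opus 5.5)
Your outline matches the paper's: harmonic upper bound, monotonicity in the index and the power, \Cref{lattice-to-zd-bound}, then \Cref{crandall-weight-lattice-sum-bound} with $t=n+d$. The gap is in the step you postpone as the ``main obstacle''. That step is what produces $R^{(\varepsilon)}_{\nu,n}(r/\kappa(A))$, and as written it is not a valid application of the lemma.

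After \Cref{lattice-to-zd-bound}, the real-space tail is $\|A\|^{n}\sum|\bm v|^{n}G_\nu(\bm v/\|A^{-1}\|)$ over $\bm v\in\mathds Z^d-A^{-1}\bm x$ with $|\bm v|>r/\|A\|$. You check the hypotheses for radius $r/\kappa(A)$ with $\mu=1$. But this sum is not dominated by $\sum_{|\bm v|>r/\kappa(A)}|\bm v|^nG_\nu(\bm v)$: since $\|A^{-1}\|\ge 1$ and $G_\nu$ decreases in the norm, $G_\nu(\bm v/\|A^{-1}\|)\ge G_\nu(\bm v)$.

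Nor can you ``control'' $r/\|A\|$ by $r/\kappa(A)$, i.e.\ enlarge the cutoff, while keeping $\mu=1/\|A^{-1}\|$. The lemma would then evaluate $G$ at $(r/\kappa(A)-\varepsilon)/\|A^{-1}\|$. For $\kappa(A)>1$ this exceeds $G$ at $r/\kappa(A)-\varepsilon$ by a Gaussian factor in $r$, which no power of $\kappa(A)$ absorbs. In addition, the hypothesis $\mu(\rho-\varepsilon)>\sqrt{(n+d)/(2\pi)}$ no longer follows from the assumption on $r$.

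The missing idea is that \Cref{crandall-weight-lattice-sum-bound} already has a free scale $\mu$, and cutoff, scale and shell width must be matched together. Keep $\rho=r/\|A\|$, take $\mu=1/\|A^{-1}\|$ and shell width $\tilde\varepsilon=\|A^{-1}\|\varepsilon$. Then:
\begin{enumerate}
\item $\mu(\rho-\tilde\varepsilon)=r/\kappa(A)-\varepsilon$ exactly.
\item The hypotheses become $2\|A^{-1}\|\varepsilon\le r/\|A\|-\sqrt d$ and $r/\kappa(A)-\varepsilon>\sqrt{(n+d)/(2\pi)}$. Both follow from $r>\kappa(A)(\max\{\sqrt d,\sqrt{(n+d)/(2\pi)}\}+2\varepsilon)$, because $\|A\|,\|A^{-1}\|\ge 1$ when $\det A=1$.
\item The geometric prefactor is $\|A\|^n\rho^{d+n+1}/\tilde\varepsilon=r^{d+n+1}/(\|A\|^{d+1}\|A^{-1}\|\varepsilon)\le\kappa(A)^{d+n+1}(r/\kappa(A))^{d+n+1}/\varepsilon$.
\end{enumerate}
With the constants collected into $v_n(\nu)$, the real-space tail is therefore at most $\kappa(A)^{d+n+1}v_n(\nu)R^{(\varepsilon)}_{\nu,n}(r/\kappa(A))$. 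The power $\kappa(A)^{d+n+1}$ only undoes $(r/\kappa(A))^{d+n+1}$.

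Your substitution $\bm w'=\bm v/\|A^{-1}\|$ amounts to the same thing, provided the shell width is rescaled with it and the cutoff is not enlarged. The reciprocal tail is handled identically with $A$ and $A^{-T}$ exchanged, using $\|A^{-T}\|=\|A^{-1}\|$.

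A smaller slip: bound $\pi^{\nu/2-k}$ by $\pi^{\nu/2}$, not by $\pi^{(\nu+d)/2}$. The $\pi^{d/2}$ in $v_n(\nu)$ comes from $C_{n+d}$, so as written you count it twice.
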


\begin{proof}[Proof of \Cref{theorem:trunc}]
By the upper bound on the homogeneous harmonic polynomials, \Cref{harmonic-upper-bound},
we find that
\begin{align*}
|\mathfrak R_{\Lambda,\nu,\bm\alpha}(r)|\le
\sum_{k=0}^{\lfloor |\bm\alpha|/2\rfloor}
&\frac{\pi^{\nu/2-k}\binom{d+|\bm\alpha|-2k-1}{d-1}^{1/2}}{|\Gamma(\nu/2-k)|}
\Bigg[
\sum_{\substack{
\bm z\in \Lambda
\\
|\bm z-\bm x|>r
}}
|\bm z-\bm x|^{|\bm\alpha|-2k}G_{\nu-2k}(\bm z-\bm x)
\\
&+
\sum_{\substack{
\bm p\in \Lambda^{\ast}
\\
|\bm p+\bm y|>r
}}
|\bm p+\bm y|^{|\bm\alpha|-2k}G_{d-\nu+2(|\bm\alpha|-k)}
(\bm p+\bm y)
\Bigg]\,.
\end{align*}
Since both the upper Crandall function and the polynomial weights are monotonically increasing in the exponent due to $r>1$ and \Cref{crandall-real-prop} we have that every factor beyond the Gamma function is monotonically increasing in $|\bm\alpha|$ and decreasing in $k\in\mathds N$, so that,
since enlarging the summation from $\lfloor|\bm\alpha|/2\rfloor$ to $\lfloor n/2\rfloor$ also increases the value, we have
$$
|\mathfrak R_{\Lambda,\nu,\bm\alpha}(r)|\le
\tilde C_{n}(\nu)\Bigg[
\sum_{\substack{
\bm z\in \Lambda
\\
|\bm z-\bm x|>r
}}
|\bm z-\bm x|^{n}G_{\nu}(\bm z-\bm x)
+
\sum_{\substack{
\bm p\in \Lambda^{\ast}
\\
|\bm p+\bm y|>r
}}
|\bm p+\bm y|^{n}G_{d-\nu+2n}
(\bm p+\bm y)
\Bigg]
$$
with
$
\tilde C_{n}(\nu)=(\lfloor n/2\rfloor+1)\pi^{\nu/2}\binom{d+n-1}{d-1}^{1/2}/(\min_{k\in\{0,1,\ldots,\lfloor n/2\rfloor\}}|\Gamma(\nu/2-k)|)
$.
By \Cref{lattice-to-zd-bound} it is enough to consider the square lattice, that is we obtain
\begin{align*}
|\mathfrak R_{\Lambda,\nu,\bm\alpha}(r)|\le
&\tilde C_{n}(\nu)\Bigg[
\|A\|^{n}
\!\!\!\!\!\!\!\!
\sum_{\substack{
\bm w\in\mathds Z^d
\\ |\bm w - A^{-1}\bm x|> r/{\|A\|}
}}
\!\!\!\!\!\!\!\!
|\bm w-A^{-1}\bm x|^{n}
G_{\nu}
\left(\frac{\bm w -A^{-1}\bm x}{\| A^{-1}\|} \right)
\\
&\qquad\qquad
+
\|A^{-1}\|^{n}
\!\!\!\!\!\!\!\!
\sum_{\substack{
\bm w \in\mathds Z^d
\\ 
|\bm w + A^T\bm y|> r/{\|A^{-1}\|}
}}
\!\!\!\!\!\!\!\!
|\bm w +A^T\bm y|^{n}
G_{d-\nu+2n}
\left(\frac{\bm w +A^T\bm y}{\| A\|} \right)
\Bigg]
\,,
\end{align*}
where we enlarged the summation range using $A^{-1}(\mathds R^d\setminus B_r)\subseteq \mathds R^d\setminus B_{r/\Vert A\Vert}$ and
$A^{T}(\mathds R^d\setminus B_r)\subseteq \mathds R^d\setminus B_{r/\Vert A^{-T}\Vert}$ with $\Vert A^{-T}\Vert=\Vert A^{-1}\Vert$
where $B_r\subseteq\mathds R^d$ is the open ball around the origin with radius $r>0$.
By \Cref{crandall-weight-lattice-sum-bound}
we obtain that the real space sum including all prefactors
$$
\tilde C_{n}(\nu)\,\|A\|^{n}
\!\!\!\!\!\!
\sum_{\substack{
\bm w\in\mathds Z^d
\\ |\bm w - A^{-1}\bm x|> r/\|A\|
}}
\!\!\!\!\!\!
|\bm w-A^{-1}\bm x|^{n}
G_{\nu}
\left(\frac{\bm w -A^{-1}\bm x}{\| A^{-1}\|} \right)
$$
is
bounded by
$$
\tilde C_{n}(\nu)\,
\|A\|^{n}
\frac {C_{n+d}}{\tilde\varepsilon}
\Big(\frac{r}{\|A\|}\Big)^{d+n+1}
\frac{%
\Big(
G_{d+n+1}\big(\frac{1}{\|A^{-1}\|}(\frac r{\|A\|}-\tilde\varepsilon)\big) - G_\nu\big(\frac 1{\|A^{-1}\|}(\frac r{\|A\|}-\tilde\varepsilon)\big)\Big)
}{%
d+n+1-\nu
}
\,,
$$
for every $\tilde\varepsilon >0$ that satisfies the conditions in \Cref{crandall-weight-lattice-sum-bound}, that is
$\tilde\varepsilon \le (r/\|A\|-\sqrt{d})/2$ 
and $(r/\|A\|-\tilde\varepsilon)/\|A^{-1}\|>\sqrt{(d+n)/(2\pi)}$,
where $C_{n+d}$ is defined as the constant $C_t$ in \Cref{crandall-weight-lattice-sum-bound}.
Letting $\tilde\varepsilon=\|A^{-1}\|\varepsilon$
for which 
$
(r/\|A\|-\tilde\varepsilon)/\|A^{-1}\|=r/\kappa(A)-\varepsilon
$,
so that both conditions are implied by the assumptions on $r$,
and using $v_n(\nu)=\tilde C_{n}(\nu)\,C_{n+d}$
and $\|A\|^n\le \kappa(A)^{d+n+1}/\|A^{-1}\|^{d+n+1}$ we obtain
that the equation above is bounded by
$$
\kappa(A)^{d+n+1}
\frac {v_{n}(\nu)}{\varepsilon}
\Big(\frac{r}{\kappa(A)}\Big)^{d+n+1}
\frac{G_{d+n+1}\big(r/\kappa(A)-\varepsilon\big) - G_\nu\big(r/\kappa(A)-\varepsilon\big)}{d+n+1-\nu}
\,.
$$
By an analogous treatment for the sum in reciprocal space, and since $\kappa(A)=\kappa(A^{-T})$ we find that
$$
\tilde C_{n}(\nu)\,\|A^{-1}\|^{n}
\!\!\!\!\!\!
\sum_{\substack{
\bm w\in\mathds Z^d
\\ |\bm w + A^{T}\bm y|> r/\|A^{-1}\|
}}
\!\!\!\!\!\!
|\bm w+ A^T\bm y|^{n}
G_{d-\nu+2n}
\left(\frac{\bm w +A^{T}\bm y}{\| A\|} \right)
$$
is bounded by
$$
\kappa(A)^{d+n+1}
\frac {v_{n}(\nu)}{\varepsilon}
\Big(\frac{r}{\kappa(A)}\Big)^{d+n+1}
\frac{G_{d+n+1}\big(r/\kappa(A)-\varepsilon\big) - G_{d-\nu+2n}\big(r/\kappa(A)-\varepsilon\big)}{d+n+1-(d-\nu+2n)}
\,,
$$
so that the representation for $\mathcal R^{(\varepsilon)}_{\Lambda,\nu,n}$ in terms of $R^{(\varepsilon)}_{\nu,n}$ is obtained from reassembling the upper bound on $|\mathfrak R_{\Lambda,\nu,\bm\alpha}(r)|$.
When substituting $r$ with $r\kappa(A)$, the bound on the remainder depends on the lattice only through a single global power of the condition number
$$
\mathcal R^{(\varepsilon)}_{\Lambda,\nu,n}(r\kappa(A))
=\kappa(A)^{d+n+1} \mathcal R^{(\varepsilon)}_{\mathds Z^d,\nu,n}(r)
\,,
$$
so that the last part of the statement follows.
\end{proof}

\begin{table}
\centering
\begin{tabular}{c | *{10}{c}}
\toprule
 $n$ & $d=1$ & $d=2$ & $d=3$ & $d=4$ & $d=5$ & $d=6$ & $d=7$ & $d=8$ & $d=9$ & $d=10$  \\
\midrule

$0$ & $3.8$ & $3.9$ & $4.0$ & $4.1$ & $4.2$ & $4.2$ & $4.3$ & $4.4$ & $4.4$ & $4.5$ \\ 

$10$ & $4.8$ & $4.9$ & $5.0$ & $5.1$ & $5.2$ & $5.3$ & $5.4$ & $5.5$ & $5.5$ & $5.6$ \\ 

$20$ & $5.9$ & $6.0$ & $6.1$ & $6.2$ & $6.3$ & $6.4$ & $6.5$ & $6.6$ & $6.6$ & $6.7$ \\ 

$50$ & $8.8$ & $8.9$ & $9.0$ & $9.1$ & $9.1$ & $9.2$ & $9.3$ & $9.4$ & $9.4$ & $9.5$ \\ 

\bottomrule
\end{tabular}
\caption{Truncation values $r_0$ to achieve an absolute truncation error 
$\sup_{|\nu|\le 10}|\mathfrak R_{\mathds Z^{d},\nu,\bm\alpha}(r_0)|<10^{-18}$ of the harmonic-decomposition Crandall representation, \Cref{harmonic-decomposition-crandall-representation}, for all $\bm\alpha\in\mathds N^d$ with $|\bm\alpha|\le n$
where the bound was
obtained by 
 \Cref{theorem:trunc} for the choice of $\varepsilon = 1/50$.}
\label{tab:square-lattice-trunc-values}
\end{table}

Choosing $\varepsilon=1/50$ eliminates the free parameter of the upper bound of the absolute value of the remainder in \Cref{theorem:trunc}.
In \Cref{tab:square-lattice-trunc-values}
we present truncation values $r=r_0$ for achieving absolute precision $<10^{-18}$ for the square lattice $\Lambda=\mathds Z^d$ and up to anisotropy order fifty, $|\bm\alpha|\le n$ with $n\in\{0,10,20,50\}$, as a function of dimension $d$.
An upper bound of the error due to the remainder for general lattices $\Lambda=A\mathds Z^d$, $\det(A)=1$ 
is then obtained by the choice
$$
r= \kappa(A) r_0,
$$
where the absolute value of the remainder lies below machine precision
if $\kappa(A)^{d+n+1}\le 10^2$
for the values $r_0$ in  \Cref{tab:square-lattice-trunc-values}.

\section{Identities for numerical experiments}
\label{sec:appendix-num}

Here we provide the proofs for the identities employed in the numerical experiments in \Cref{sec:num}.
We begin by showing that the anisotropic Epstein zeta function corresponds to its regularized counterpart for vanishing wave vectors.

\begin{proof}[Proof of \Cref{aniso-equals-reg-in-zero}]
Notice that the
derivative Crandall representation, \Cref{derivative-crandall-representation},
and its regularized counterpart, 
\Cref{low-ders-aniso-reg},
are the same, except for a single 
summand $\bm p=\bm 0$ in the reciprocal sum which reads
$G^{(\bm\alpha)}_{d-\nu}(\bm 0)$
for the non-regularized function
and $G^{{\rm reg},(\bm\alpha)}_{d-\nu,1}(\bm 0)$
for the regularized function,
where we choose to set the free splitting-parameter $\lambda$ to one.
Let
$\Re{\nu}>d+|\bm\alpha|$ and let $\bm y\in\mathds R^d\setminus\{\bm 0\}$.
We show that this implies 
that
$G^{{\rm reg},(\bm\alpha)}_{d-\nu,1}(\bm y)
\to
G^{(\bm\alpha)}_{d-\nu}(\bm 0)
$
as $\bm y\to \bm 0$
which implies
$G^{{\rm reg},(\bm\alpha)}_{d-\nu,1}(\bm 0)
=
G^{(\bm\alpha)}_{d-\nu}(\bm 0)
$
since for the restricted $\nu$ both the upper and lower Crandall function continuously extend to $\bm y=\bm 0$, see \Cref{crandall-upper-prop} and \Cref{remark:hol-low-ders-aniso-reg}.
By  \Cref{hol}, at $\bm y\in\Lambda^*$ the anisotropic Epstein zeta function is meromorphic in $\nu\in\mathds C$ 
with a simple pole at $\nu=d+|\bm\alpha|$ if and only if $\chi(\bm\alpha)=1$
 and  by \Cref{hol-reg},
its regularized counterpart is holomorphic in $\nu\in \mathds C\setminus (d+2|\lceil \bm\alpha/2\rceil| +2\mathds N)$.
The identity theorem for holomorphic functions yields the statement for the smaller region $\nu\in \mathds C\setminus (d+2|\lceil \bm\alpha/2\rceil| +2\mathds N)$,
and the special values of $\nu$ not in this region are included in the discussion for $\Re\nu>d+|\bm\alpha|$ as follows.

Write
$$
G^{{\rm reg},(\bm\alpha)}_{d-\nu,1}(\bm y)
=
G^{(\bm\alpha)}_{d-\nu}(\bm y)-\pi^{-\nu/2}\Gamma(\nu/2)\hat{s}^{(\bm\alpha)}_{\nu}(\bm y)
$$
so that by the continuity of the upper Crandall function it is enough to show that the derivative of the singularity $\hat{s}^{(\bm\alpha)}_{\nu}(\bm y)$ vanishes as $\bm y\to\bm 0$.
If $\nu\notin d+2\mathds N$ by the Faà di Bruno formula, \Cref{derivative-fy2}, we have that
$$
\hat{s}_{\nu}^{(\bm\alpha)}(\bm y)
=
\sum_{2\bm\beta\le\bm\alpha}
C_{\nu,|\bm\alpha|,|\bm\beta|}
w_{\bm\alpha,\bm\beta}(\bm y)
(\bm y^2)^{(\nu-d)/2-(|\bm\alpha|-|\bm\beta|)}
$$
with coefficients $C_{\nu,|\bm\alpha|,|\bm\beta|}\in\mathds C$ independent of $\bm y$.
Since  $w_{\bm\alpha,\bm\beta}(\bm y)$ is a homogeneous polynomial of order $|\bm\alpha|-2|\bm\beta|$ 
we have that 
$|\hat{s}^{(\bm\alpha)}_{\nu}(\bm y)|$
is bounded by a homogeneous polynomial of order 
$\Re{\nu}-d-|\bm\alpha|>0$
so we have $\hat{s}^{(\bm\alpha)}_{\nu}(\bm y)\to  0$ as $\bm y\to\bm 0$.

Let $\nu=d+2 \ell$ for $\ell\in\mathds N$.
Then, by writing $(\pi \bm y^2)^{\ell}\log(\pi \bm y^2)$
as a derivative of a continuous exponent 
$
\partial_{t}[(\pi \bm y^2)^{t}]_{t=\ell}
$
and commuting $\partial_t$ with $\diffop^{\bm\alpha}$ we have by the  Faà di Bruno formula,
up to terms with $\log(\pi\bm y^2)$ replaced by one, which vanish likewise, that
$$
\hat{s}^{(\bm\alpha)}_{d+2\ell}(\bm y)
=
\sum_{2\bm\beta\le\bm\alpha}
C_{\ell,|\bm\alpha|,|\bm\beta|}
w_{\bm\alpha,\bm\beta}(\bm y)
\partial_t\big[(\pi\bm y^2)^{t-(|\bm\alpha|-|\bm\beta|)}
\big]_{t=\ell}
$$
with coefficients $C_{\ell,|\bm\alpha|,|\bm\beta|}\in\mathds C$ independent of $\bm y$.
Using that $w_{\bm\alpha,\bm\beta}(\bm y)$ is homogeneous of degree $|\bm\alpha|-2|\bm\beta|$
we have
$$
|\hat{s}^{(\bm\alpha)}_{d+2\ell}(\bm y)|
\le
\sum_{2\bm\beta\le\bm\alpha}
|C_{\ell,|\bm\alpha|,|\bm\beta|}|
\,
\pi^{|\bm\beta|-|\bm\alpha|/2}
|w_{\bm\alpha,\bm\beta}(\bm 1)|
\,
\big|\partial_t\big[z^{t-|\bm\alpha|/2}
\big]_{t=\ell}\big|
$$
with $z=\pi\bm y^2$.
Note that
$\Re{\nu}>d+|\bm\alpha|$ implies $\ell>|\bm\alpha|/2$,
so that we can define $\varepsilon =(\ell-|\bm\alpha|/2)/4>0$,
and since $|\log z|<z^{-\varepsilon}/\varepsilon$, $0<z<1$ we have
$$
|\partial_t\big[
z^{t-|\bm\alpha|/2}
\big]_{t=\ell}|
=
z^{\ell-|\bm\alpha|/2}\,|\log z|
<
\frac 1\varepsilon
z^{\ell-|\bm\alpha|/2-\varepsilon}
\to 0
$$
as $z\to 0$.
Therefore $\hat s_{d+2\ell}^{(\bm\alpha)}(\bm y)\to 0$
as $\bm y\to \bm 0$, which concludes the proof.
\end{proof}

We now derive the identity regarding the Ramanujan tau $L$-function following a theta-Mellin approach similar to the approach in  \cite{zucker}.

\begin{proof}[Proof of \Cref{ramanujan}]
For $\Re{\nu}>16$, rewrite
$Z_{\mathds Z^8,\nu,\bm 1}(-\bm 1/2,\bm 1/2)$ in terms of its direct sum
$$
2^{\nu-8}
\sum_{\bm z\in\mathds Z^8}|2\bm z+\bm 1|^{-\nu}
\prod_{j=1}^8(-1)^{z^{(j)}}(2z^{(j)}+1)
$$
and by the Mellin transform
$
\Gamma(s)
x^{-s}
=
\int_0^{\infty} 
\eta^{s-1}e^{-\eta x}
\d \eta
$,
$\Re s,x>0$, see 
\cite[Ch. 6.3, Eq. 15]{erdelyiTablesIntegralTransforms1954},
applied at $s=\nu/2$
and $x=|2\bm z+\bm 1|^2$
we obtain
$$
\frac{2^{\nu-8}}{\Gamma(\nu/2)}
\int_0^{\infty}
\eta^{\nu/2}
\sum_{\bm z\in \mathds Z^8}
\prod_{j=1}^8(-1)^{z^{(j)}}(2z^{(j)}+1)e^{-\eta (2z^{(j)}+1)^2}
\frac{\mathrm{d}\eta}{\eta}
$$
where we swapped summation and integration, which is allowed by
\Cref{zetadirectsum} since $\Re{\nu}>d+|\bm\alpha|$.
Note that the summand at a point $\bm z\in\mathds N^8$ has the same value as the summand where a component of the index $z^{(j)}$ is swapped with $-1-z^{(j)}$ and that $z^{(j)}\to -1-z^{(j)}$ has no fixed point, so that we may replace the sum over $\mathds Z^8$ by $2^8$-times the sum over $\mathds N^8$ and by the distributive law we have
$$
\frac{2^{\nu}}{\Gamma(\nu/2)}
\int_0^{\infty}
\eta^{\nu/2}
\Big(
\sum_{k=0}^{\infty}
(-1)^{k}(2k+1)e^{-(2k+1)^2\eta}
\Big)^8
\frac{\mathrm{d}\eta}{\eta}
\,.
$$
Denote by $\theta_1(z,q)$, $z,q\in\mathds C$ the theta function \cite[Eq. 20.2.1]{NIST:DLMF}
$$
\theta_1(z,q) 
= 
2\sum_{k=0}^{\infty}(-1)^k q^{(2k+1)^2/4}\sin((2k+1)z)\,.
$$
Then the series in the integrand of the representation for the anisotropic Epstein zeta function is the first derivative $\theta_1'(0,q)$ of the theta function with respect to $z$ at $z=0$ and at $q=e^{-4\eta}$,
that is we have
$$
Z_{\mathds Z^8,\nu,\bm 1}(-\bm 1/2,\bm 1/2)
=
\frac{2^{\nu}}{\Gamma(\nu/2)}
\int_0^{\infty}
\eta^{\nu/2}
\Big(\frac12
\theta_1'(0,e^{-4\eta})\Big)^8
\frac{\mathrm{d}\eta}{\eta}\,.
$$
By the identity for the derivative of the theta function and the definition of the Ramanujan tau function for $q\in(0,1)$ we have
$$
\Big(\frac12\theta_1'(0,q)\Big)^8
=
q^{2}\prod_{k=1}^{\infty}(1-(q^2)^{k})^{24}
=
\sum_{k=1}^{\infty}\tau(k)q^{2k}
$$
where we used  \cite[Eq. 20.4.2]{NIST:DLMF} and  \cite[Eq. 27.14.18]{NIST:DLMF} respectively.
Using this identity at $q=e^{-4\eta}$ we have
$$
Z_{\mathds Z^8,\nu,\bm 1}(-\bm 1/2,\bm 1/2)
=
2^{\nu}
\sum_{k=1}^{\infty}
\frac{\tau(k)}{\Gamma(\nu/2)}
\int_0^{\infty}
\eta^{\nu/2}
e^{-8k\eta}
\frac{\mathrm{d}\eta}{\eta}\,.
$$
where we changed the order of summation and integration which is justified by Fubini since
by \cite[Ex. p. 137]{apostolModularFunctionsDirichlet1990} we have that $|\tau(k)|\le C k^6$ for some $C>0$ 
which implies that
$
\sum_{k=1}^{\infty}|\tau(k)| k^{-\Re \nu/2}<\infty
$
for
$\Re\nu>14$.
Notice that by the Mellin transform identity already used in this proof, the integral including the $1/\Gamma(\nu/2)$ prefactor is equal to $(8k)^{-\nu/2}$ and that $2^\nu 8^{-\nu/2}=2^{-\nu/2}$
so we obtain the identity of the statement.
Since the left-hand side is entire in $\nu$ by the holomorphicity of the anisotropic Epstein zeta function, \Cref{hol}, and the right hand side is entire, see \cite[Th. 6.20]{apostolModularFunctionsDirichlet1990} applied to the modular form with weight twelve, Fourier coefficients $\tau(k)$, and vanishing constant term, they agree by the identity theorem for holomorphic functions.
\end{proof}

To derive the identity for the singular Gaussian integral, we first define the Fourier transform.

\begin{definition}[Fourier transform]
Let $f :\mathds{R}^d \rightarrow \mathds{C}$ be integrable. We define its Fourier transform $\mathcal{F} f = \hat{f}$ via
\[
(\mathcal{F}f)(\bm k)
=\int_{\mathds{R}^d} f(\bm x)e^{-2\pi i\bm k\cdot\bm x}\,\mathrm{d}\bm x
,\qquad \bm k \in\mathds R^d.
\]
\end{definition}

The identity for the singular Gaussian integral in three dimensions then follows.

\begin{proof}[Proof of \Cref{gauss-int-3d}]
Let $0<\Re{\nu}<3$.
By the superexponential decay of the Gaussian $g_{\sigma}(\bm y)$ for $|\bm y|\to \infty$ and the integrability of
$s_{\nu}(\bm y-\bm x)$ near $\bm x$ for $\Re{\nu}<3$, the left-hand side integral
$I(\bm x)=\int_{\mathds R^3} f_{\bm x}(\bm y)\, \d \bm y$
is well defined.
Due to the translationally invariant integration region by substituting $\bm y$ with $\bm y+\bm x$ we can write
$I(\bm x)=\int_{\mathds R^3}s_\nu(\bm y)g_{\sigma}(\bm y+\bm x)\d \bm y$.
Since $\varphi(\bm k) = e^{-2\pi i\bm k\cdot \bm x}\hat{g}_{\sigma}(\bm k)$
is a Schwartz function with Fourier transform $\hat{\varphi}(\bm y)=g_{\sigma}(\bm y+\bm x)$,
where we used that $g_{\sigma}$ is even,
we have $I(\bm x)=\int_{\mathds R^3}s_\nu(\bm y)\hat{\varphi}(\bm y)\d \bm y =\int_{\mathds R^3}\hat{s}_{\nu}(\bm k)\varphi(\bm k)\d \bm k$
with the distributional Fourier transform $\hat{s}_{\nu}$ as in \Cref{def-zeta-aniso-reg}.
Since
the Gaussian $g_1$ is a fixed point of the Fourier transform
and therefore by the scaling property $\widehat{g_{\sigma}}
=\sigma^3g_{1/\sigma}$,
we obtain the representation in Fourier space
$$
I(\bm x)
=
\sigma^3
\int_{\mathds R^3}
g_{1/\sigma}(\bm k)
\hat{s}_{\nu}(\bm k)
e^{-2\pi i \bm x\cdot\bm k}
\d \bm k
$$
where the integral is well defined due to the superexponential decay of the Gaussian and since, for $\Re{\nu}>0$ we have that $\hat{s}_{\nu}(\bm k)$
is integrable near $\bm k=\bm 0$.
By introducing spherical coordinates
$\bm k = r\bm k'$
for 
$\bm k'=(\sin\varphi\cos\psi,\sin\varphi\sin\psi,\cos\varphi)^T$
we obtain
$$
I(\bm x)=
C_{\nu}
\int_{0}^{\infty}
g_{1/\sigma}(r)r^{\nu-1}
\int_0^{2\pi}
\int_{0}^{\pi}
e^{-2\pi ir \bm x\cdot\bm k'}
\sin\varphi
\d \varphi 
\d \psi
\d r
$$
where $C_{\nu}=\sigma^3\pi^{\nu-3/2}\Gamma((3-\nu)/2)/\Gamma(\nu/2)$.
As we integrate over the whole sphere, we may rotate $\bm x$ to $(0,0,|\bm x|)^T$ without changing the value of the integral
so that the inner integral is independent of $\psi$
and evaluates as
$$
\int_0^\pi e^{-i(2 \pi r |\bm x|)\cos\varphi}\sin\varphi\d\varphi 
=\frac{\sin(2\pi r |\bm x|)}{\pi r |\bm x|}
$$
by \cite[Eq. (10.1.14) and Eq. (10.1.11)]{abramowitz}.
We finally apply \cite[Eq. 3.953.7]{gradshteinTableIntegralsSeries2015}
to the resulting integral
$$
I(\bm x)=
C_{\nu}\frac{2}{|\bm x|}
\int_{0}^{\infty}
r^{\nu-2}g_{1/\sigma}(r)
\sin(2\pi r |\bm x|)
\d r
$$
and together with Kummers transformation
$
e^{-z}\,_1\!\!\:F_1(a;b;z)=
\,_1\!\!\:F_1(b-a;b;-z)
$
\cite[Sec. 6.3, Eq. (7)]{Batemann1953b}
obtain
$$
I(\bm x)
=
2\;\!\pi^{(2-\nu)/2}\sigma^{-\nu}C_{\nu}
\Gamma\Big(\frac{\nu}{2}\Big)
\,_1\!\!\:F_1\Big(\frac \nu2;\frac 32;-\pi \frac{\bm x^2}{\sigma^2}\Big)
$$
for $\Re{\nu}>0$.
On the one hand, we have that the left-hand side $I(\bm x)$ of the statement is holomorphic in $\nu\in\mathds C$, $\Re{\nu}<3$.
On the other hand, for
 $b\notin -\mathds N$ we have that
$_1\!\!\:F_1(a;b;z)$
is an entire function of both $a\in\mathds C$ and $z\in\mathds C$
\cite[Sec. 6.7.1]{Batemann1953a},
and $C_{\nu}\Gamma(\nu/2)$ is holomorphic in $\nu\in\mathds C\setminus(3+2\mathds N)$,
so that the right-hand side of the statement is holomorphic in $\nu\in\mathds C\setminus(3+2\mathds N)$.
Since both sides agree on the strip $0<\Re{\nu}<3$,
by the identity theorem for holomorphic functions we have that the right-hand side is the holomorphic continuation of the left hand side from $\Re{\nu}<3$ to $\nu \in\mathds C\setminus(3+2\mathds N)$.
\end{proof}

\printbibliography

\end{document}